\numberwithin{equation}{section} 
\theoremstyle{plain}
\newtheorem{thm}{Theorem}[section] 
\newtheorem{cor}[thm]{Corollary}
\newtheorem{prop}[thm]{Proposition}
\newtheorem{conj}[thm]{Conjecture}
\newtheorem{lem}[thm]{Lemma}
\newtheorem{thmA}{Theorem}[section]
\theoremstyle{definition} 
\newtheorem{defn}[thm]{Definition}
\newtheorem{setting}[thm]{Setting}
\newtheorem{eg}[thm]{Example}
\theoremstyle{remark}
\newtheorem{rem}[thm]{Remark}
\newtheorem{ques}[thm]{Question}
\newtheorem*{acknowledgement}{Acknowledgments}
\newtheorem*{claim}{Claim}
\def\ge{\geqslant}
\def\le{\leqslant}
\def\phi{\varphi}
\def\epsilon{\varepsilon}
\def\tilde{\widetilde}
\def\to{\longrightarrow}
\def\mapsto{\longmapsto}
\def\onto{\relbar\joinrel\twoheadrightarrow}
\newcommand{\sO}{\mathcal{O}}
\newcommand{\N}{\mathbb{N}}
\newcommand{\Q}{\mathbb{Q}} 
\newcommand{\R}{\mathbb{R}} 
\newcommand{\Z}{\mathbb{Z}}
\newcommand{\PP}{\mathbb{P}}
\newcommand{\ba}{\mathfrak{a}}
\newcommand{\bb}{\mathfrak{b}}
\newcommand{\m}{\mathfrak{m}}
\newcommand{\WDiv}{\mathrm{WDiv}}
\newcommand{\CDiv}{\mathrm{Div}}
\newcommand{\vol}{\mathrm{vol}}
\newcommand{\ses}[2][x]{\epsilon(#2; #1)}
\newcommand{\ejet}[2][x]{\epsilon_{\mathrm{jet}}(||#2||; #1)}
\newcommand{\B}{\mathbf{B}}
\newcommand{\Bs}{\mathrm{Bs}}
\newcommand{\fpt}{\mathrm{fpt}}
\newcommand{\mld}{\mathrm{mld}}
\newcommand{\RInt}{\mathrm{Relint}}
\newcommand{\NE}{\overline{\mathrm{NE}}}
\newsavebox{\circlebox}
\savebox{\circlebox}{\fontencoding{OMS}\selectfont\Large\char13}
\newlength{\circleboxwdht}
\def\Hom{\operatorname{Hom}}
\def\Spec{\operatorname{Spec}}
\def\Supp{\operatorname{Supp}}
\def\Pic{\operatorname{Pic}}
\def\Div{\operatorname{div}}
\def\Exc{\operatorname{Exc}}
\def\mult{\operatorname{mult}}
\def\ord{\operatorname{ord}}
\def\id{\operatorname{id}}
\def\cod{\operatorname{cod}}
\renewcommand{\Im}{\mathrm{Im}}
\newcommand{\Ker}{\mathrm{Ker}}
\newcommand{\Cok}{\mathrm{Cok}}
\title{Boundedness of weak Fano threefolds with fixed Gorenstein index in positive characteristic}
\author{Kenta Sato}
\address{Faculty of Mathematics, Kyushu University, 744 Motooka, Nishi-ku, Fukuoka 819-0395, Japan}
\email{ksato@math.kyushu-u.ac.jp}
\thanks{}
\keywords{Boundedness, weak Fano, Seshadri constant, minimal model program, positive characteristic}
\subjclass[2010]{14J10, 14J45, 14J30}
\dedicatory{}
\begin{document}

\begin{abstract}
In this paper, we give a partial affirmative answer to the BAB conjecture for $3$-folds in characteristic $p>5$.
Specifically, we prove that a set $\mathcal{D}$ of weak Fano $3$-folds over an uncountable algebraically closed field is bounded, if each element $X \in \mathcal{D}$ satisfies certain conditions regarding the Gorenstein index, a complement and Kodaira type vanishing.
In the course of the proof, we also study a uniform lower bound for Seshadri constants of nef and big invertible sheaves on projective $3$-folds.

\end{abstract}

\maketitle
\markboth{K.~SATO}{BOUNDEDNESS OF WEAK FANO THREEFOLDS WITH FIXED GORENSTEIN INDEX}

\setcounter{tocdepth}{1}
\tableofcontents

\section{Introduction}

A normal projective variety over an algebraically closed field $k$ is \emph{weak Fano} if $X$ has only klt singularities and the anti-canonical divisor $-K_X$ is nef and big.
Weak Fano varieties enjoy several nice properties and are in the center of interest in algebraic geometry.
One such noteworthy attribute is the finiteness of them.
In dimension one, there is only one weak Fano variety up to isomorphism, namely $\PP^1_k$.
In dimension two, Alexeev (\cite{Ale}) proved that if we fix a real number $\epsilon>0$, then the set of weak Fano surfaces with the minimal log discrepancy at least $\epsilon$ forms a bounded family.
This result is generalized and encapsulated in what is known as the BAB conjecture:

\begin{conj}[BAB conjecture]
Fix an algebraically closed field $k$, an integer $d>0$ and a rational number $\epsilon > 0$.
Then $d$-dimensional normal projective varieties $X$ over $k$ such that 
\begin{itemize}
  \item $-(K_X+B)$ is nef and big for some $\Q$-divisor $B$ on $X$, and 
  \item the minimal log discrepancy $\mld(X,B)$ of $(X,B)$ is larger than $\epsilon$
\end{itemize}
forms a bounded family.
\end{conj}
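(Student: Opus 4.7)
The plan is to follow Birkar's strategy from his proof of the BAB conjecture in characteristic zero, proceeding by induction on $d$ through an interlocking package of effective boundedness of complements, effective birationality of the anti-log-canonical map, and a terminating Hilbert-scheme argument. The base case $d=1$ is immediate since $\PP^1_k$ is the only klt weak Fano curve. For the inductive step I would package the following three statements and prove them simultaneously in dimension $d$ assuming the whole package in dimension $<d$.

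First, I would prove boundedness of complements: there exists $n = n(d,\epsilon)$ such that every $(X,B)$ in the family admits an $n$-complement, a $\Q$-divisor $B^{+} \ge B$ with $n(K_X+B^{+}) \sim 0$ and $(X,B^{+})$ log canonical. The construction passes to a suitable log resolution, uses Shokurov's lifting technique from an exceptional non-klt center, applies adjunction along that center to land in a lower-dimensional log Calabi-Yau pair, and invokes the inductive hypothesis combined with boundedness of log Calabi-Yau fibrations. A side product of this step is an upper bound on the anti-log-canonical volume $\vol(-(K_X+B))$ depending only on $d$ and $\epsilon$, obtained by coupling the complement with the bigness of $-(K_X+B)$ against the $\epsilon$-lc condition.

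Second, and hardest, I would establish effective birationality: an integer $m = m(d,\epsilon)$ for which $|-m(K_X+B)|$ defines a birational map. The idea is to fix two general points $x,y\in X$, use the volume upper bound to produce a $\Q$-divisor $D \sim_{\Q} -\lambda(K_X+B)$ with $\lambda$ bounded so that the non-klt locus of $(X,B+D)$ is isolated at $x$, then tie-break and cut further so that a minimal non-klt center through $x$ avoids $y$. Kawamata-Viehweg vanishing on a log resolution then lifts sections separating $x$ and $y$. Once effective birationality and the volume bound are in hand, one embeds every such $X$ into a fixed $\PP^{N}$ with bounded degree via a Matsusaka-type argument, and Noetherianity of the Hilbert scheme concludes the boundedness of the family.

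The main obstacle is that in positive characteristic Kawamata-Viehweg vanishing is unavailable, and it is used critically in both lifting complements across non-klt centers and lifting sections separating pairs of points. Substitutes must be developed: Frobenius-splitting and $F$-singularity methods (Hacon-Witaszek and others), or direct low-dimensional inputs such as the Seshadri-constant bounds and Kodaira-type vanishing studied in the body of this paper. A secondary obstacle is that Shokurov's lifting of complements depends on Kollár's connectedness, whose characteristic-$p$ form is sharply restricted. Removing the paper's auxiliary assumptions — fixed Gorenstein index, bounded complements, and assumed vanishing — to obtain the conjecture in full generality remains an essential open difficulty, and the proposed strategy therefore only gives an unconditional result in characteristic zero (Birkar's theorem), while in characteristic $p$ one must at present settle for conditional statements such as the one proved here.
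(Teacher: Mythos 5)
The statement you were asked to prove is labelled as a conjecture in the paper, and the paper does not prove it: its main result (Theorem \ref{bdd of weak Fano}) is only a partial, conditional case in dimension $3$ and characteristic $p>5$, obtained by \emph{assuming} the existence of a complement, a fixed Gorenstein index, and Kodaira-type vanishing statements. So there is no proof in the paper against which to measure your proposal, and your proposal itself is not a proof either --- as you candidly concede in your final paragraph. The gaps you name are genuine and, at present, fatal in positive characteristic: Kawamata--Viehweg vanishing is used in Birkar's argument both to lift complements across non-klt centers and to separate points for effective birationality; Koll\'ar's connectedness and the creation of isolated non-klt centers via multiplier-ideal techniques have no adequate characteristic-$p$ substitute in the required generality; and Matsusaka's big theorem, which you invoke to pass from effective birationality to very-ampleness of a bounded multiple, is explicitly unavailable (the paper cites this as the reason it must work with birational boundedness first). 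A sketch that defers all of these to ``substitutes must be developed'' is an accurate survey of the state of the art, not an argument.

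It is worth noting how the paper's conditional theorem sidesteps each of these obstructions, since that is the substance of its contribution: instead of proving boundedness of complements, it takes the complement $\Delta_j$ and the Gorenstein index $r$ as hypotheses; instead of Kodaira vanishing for separating points, it proves a uniform lower bound on Seshadri constants at very general points (Theorem \ref{Lower bound for seshadri}) and feeds it into the Frobenius-theoretic jet-separation machinery of Musta\c{t}\u{a}--Schwede and Murayama (Theorem \ref{MS 3.2}); instead of invariance of plurigenera, it uses abundance for positive Iitaka dimension to control bigness on fibers (Theorem \ref{bigness constancy}); and the needed cohomological vanishing is simply assumed (conditions (iii) and (iv) of Theorem \ref{A}). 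If your goal is the unconditional BAB conjecture in positive characteristic, the problem remains open, and your proposal does not close it.
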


In characteristic $0$, this conjecture has been thoroughly investigated by numerous authors (\cite{KMM}, \cite{HMX2}, and so on), and was ultimately solved in the celebrated paper by Birkar (\cite{BirBdd}).

In positive characteristic, the classification by Tanaka and Asai-Tanaka (\cite{Tan1}, \cite{Tan2}, \cite{AT3}, \cite{Tan4}) implies that smooth Fano threefolds form a bounded family.
For the singular case, Zhuang (\cite{Zhu}) proved the birational boundedness of Fano threefolds with sufficiently large Seshadri constants of $-K_X$.
However, the BAB conjecture for singular $3$-folds in positive characteristic still far from being fully resolved.
Our main result provides a partial solution to this challenge.

\begin{thmA}[Theorem \ref{bdd of weak Fano}]\label{A}
  Fix a DCC subset $I \subseteq [0,1] \cap \Q$, a rational number $\epsilon>0$, integers $r,A>0$ and an uncountable algebraically closed field $k$ of characteristic larger than $5$.
  Suppose that $\mathcal{D} =\{(X_j, \Delta_j)\}_{j \in J}$ is a set of $3$-dimensional projective log pairs over $k$ such that for every $j \in J$, the following conditions are satisfied:
  \begin{enumerate}[label=\textup{(\roman*)}]
    \item $-rK_{X_j}$ is a nef and big Cartier divisor.
    \item $K_{X_j}+\Delta_j \sim_{\Q} 0$, $\epsilon< \mld(X_j,\Delta_j)$ and every coefficient of $\Delta_j$ is contained in $I$.
    \item $H^1(X_j, \sO_{X_j}(irK_{X_j}))=0$ for every integer $i \ge 0$.
    \item $H^2(X_j, \sO_{X_j})=0$ and $h^2(X_j, \sO_{X_j}(irK_{X_j})) < A $ for $i=1,2,3$.
  \end{enumerate}
  Then $\mathcal{D}$ is bounded.
\end{thmA}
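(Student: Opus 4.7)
The plan is to combine a uniform lower bound on Seshadri constants of $-rK_{X_j}$ at very general points with a uniform upper bound on $\vol(-K_{X_j})$ obtained from Riemann--Roch and the cohomological hypotheses, so that a bounded multiple of $-rK_{X_j}$ defines a birational morphism into a fixed projective space of uniformly bounded degree. This places the $X_j$ in a bounded family, and the DCC plus mld hypotheses then propagate this to boundedness of the log pairs $(X_j,\Delta_j)$.

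First, I would invoke the uniform Seshadri estimate announced in the abstract, applied to the nef and big Cartier divisor $-rK_{X_j}$. Because $k$ is uncountable, a very general point $x_j \in X_j$ is available, and the desired output is a constant $\delta = \delta(r,\epsilon,I,A) > 0$, independent of $j$, with $\epsilon(-rK_{X_j}; x_j) \ge \delta$. The proof presumably runs by contradiction: if the Seshadri constant were too small, one would produce a highly singular effective $\Q$-divisor $D \sim_{\Q} \lambda(-rK_{X_j})$ with small $\lambda$, and the positive-characteristic threefold MMP (available for $p>5$), the complement $\Delta_j$, the mld bound, and a Kodaira-type vanishing fed by (iii)--(iv) would combine to a contradiction. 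This is the main obstacle: characteristic-zero Ein--K\"uchle--Lazarsfeld arguments rely on multiplier ideals and Nadel vanishing, neither of which is available in this setting, and the role of the rather strong hypotheses (iii)--(iv) is almost certainly to engineer a workable substitute.

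Second, choose an integer $m_0$ with $m_0 \delta > 3$; the standard separation-of-jets criterion then forces $|-m_0 r K_{X_j}|$ to separate points and tangent vectors at very general points, and hence to define a birational map to $\PP^{N_j}$ whose image has degree at most $m_0^3 r^3 \vol(-K_{X_j})$. To bound $\vol(-K_{X_j})$ uniformly I would apply Reid's singular Riemann--Roch on a threefold with Gorenstein index $r$: the leading term of $\chi(\sO_{X_j}(-irK_{X_j}))$ is cubic in $i$ with coefficient proportional to $(-K_{X_j})^3$, the correction terms are controlled by $r$ and the mld bound on the finitely many non-Gorenstein points, hypothesis (iii) kills $h^1$, hypothesis (iv) together with Serre duality and $K_{X_j}+\Delta_j \sim_{\Q} 0$ bounds $h^2$ and $h^3$, and $h^0 \ge 0$. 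Evaluating Riemann--Roch at $i=1,2,3$ then produces a linear system whose solution uniformly bounds $(-K_{X_j})^3$ from above.

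Finally, a birational map of bounded degree into a fixed projective space places the $X_j$ in a bounded family by a Hilbert-scheme argument. Since the components of $\Delta_j$ are effective Weil divisors whose coefficients lie in the DCC set $I$ and whose weighted sum is $\Q$-linearly equivalent to $-K_{X_j}$, their degrees against the embedding are bounded, and the number of distinct coefficients appearing is controlled by the DCC condition together with the volume cap, so boundedness extends from the $X_j$ to the pairs $(X_j,\Delta_j)$.
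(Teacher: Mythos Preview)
Your proposal has a genuine and decisive gap at the final stage. You conclude that ``a birational map of bounded degree into a fixed projective space places the $X_j$ in a bounded family by a Hilbert-scheme argument.'' This is false: a birational map $\Phi_{|-m_0rK_{X_j}|}$ bounds only the \emph{image} $Y_j$, so you obtain at most \emph{birational} boundedness of the $X_j$, not boundedness. In characteristic zero one could appeal to an effective Matsusaka-type theorem to upgrade to a very ample multiple, but the paper explicitly remarks that no positive-characteristic analogue of Matsusaka's big theorem is available; this is precisely why the problem is hard. The entire content of Sections~\ref{sec6} and~\ref{sec7} (culminating in Theorem~\ref{Bir bdd to bdd}) is devoted to bridging this gap: one passes to a relatively SNC log-resolution family $(\mathcal{Z},B)\to T$, proves finiteness of log minimal and lc models on the generic fiber, checks invariance of Iitaka dimension along fibers (a substitute for invariance of plurigenera, which fails here), establishes $\Q$-factoriality and constancy of N\'eron--Severi rank on general fibers, and finally shows that each $X_j$ is isomorphic to a fiber of one of finitely many ample models. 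None of this is present in your outline.

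A secondary point: your description of how the Seshadri lower bound is obtained (producing a highly singular $D\sim_{\Q}\lambda(-rK_X)$ and invoking mld bounds and MMP) does not match the paper's actual mechanism. Theorem~\ref{Lower bound for seshadri} uses neither $\mld$ nor the complement $\Delta_j$; it is a direct EKL-style sweeping-out argument, with the hypotheses (iii)--(iv) entering only through Riemann--Roch to guarantee $h^0(L^\ell)>\ell\vol(L)+1$ for some bounded $\ell$ (Corollary~\ref{lower bound for seshadri weak Fano}). Likewise, the volume bound $\vol(-K_{X_j})<V$ is not extracted from Riemann--Roch and your cohomological bookkeeping but quoted directly from \cite{Das}. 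These discrepancies are survivable, but the birational-versus-biregular gap is not: as written, your argument does not prove boundedness of~$\mathcal{D}$.
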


\begin{rem}
  Given the failure of Kodaira type vanishing in positive characteristic, it becomes necessary to impose conditions (iii) and (iv) in Theorem \ref{A}. Nonetheless, there are instances where we can relax these assumptions as follows:
  \begin{enumerate}
    \item If $X_j$ is Fano, then the assumption (iii) follows from \cite[Theorem 1.9]{PW} except for the case $i=0$.
    \item If $X_j$ is \emph{globally $F$-regular}, then the assumptions (iii) and (iv) follow from \cite[Corollary 4.4]{Smi}.
    Here, globally $F$-regular varieties are a special type of varieties in positive characteristic defined in terms of Frobenius morphisms (see \cite[Section 3]{Smi} for more details).
  \end{enumerate}
\end{rem}

Our hypothesis concerning the Gorenstein index (i) and a complement (ii) is a standard assumption frequently encountered in various statements related to boundedness. 
For instance, refer to \cite[Corollary 1.7 and Corollary 1.8]{HMX2} for a partial result of the BAB conjecture in characteristic $0$, which involves complements and the Gorenstein index.
In this sense, our result is an important first step toward the solution of the BAB conjecture in positive characteristic.

Given that the anti canonical volume $(-K_{X_j})^3$ is uniformly bounded according to \cite{Das}, Theorem \ref{A} above would be readily apparent if we could establish the existence of a uniform number $M>0$ such that $-MK_{X_j}$ is very ample for every $j$.
Unfortunately, this approach is not viable due to the absence of a positive characteristic analogue of Matsusaka's big theorem (\cite{MatBig}, \cite{KMBig}). 
Consequently, we commence by seeking a uniform number $M>0$ such that $-MK_{X_j}$ defines a birational map.

\begin{thmA}[Corollary \ref{bb dim3}]\label{B}
  Let $X$ be a $3$-dimensional normal projective variety with rational singularities over an uncountable algebraically closed field of positive characteristic, $\ell \ge 1$ be an integer and $D$ be a nef and big Cartier divisor on $X$.
  We further assume that the following conditions are satisfied:
  \begin{enumerate}[label=\textup{(\roman*)}]
  \item $H^1(X, \sO_X)=0$.
  \item $h^0(X,\sO_X({\ell}D)) > \ell \vol(D)+1$.
  \end{enumerate}
Then $\Phi_{|K_X+(6\ell+1)D|}$ is birational onto its image.
\end{thmA}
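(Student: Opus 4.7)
The plan is to exhibit, for very general pairs $(x, y)$ of distinct closed points of $X$, enough sections of $\sO_X(K_X + (6\ell+1)D)$ to separate $x$ and $y$; since $k$ is uncountable, this is enough to force $\Phi_{|K_X+(6\ell+1)D|}$ to be birational onto its image.

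First I would invoke the jet Seshadri constant bound developed earlier in the paper and apply it to the nef and big Cartier divisor $D$. The bound I would want to extract is
\[
\ejet{D} \ \ge \ \tfrac{1}{\ell} \quad \text{at a very general point } x \in X,
\]
with condition (ii) furnishing the decisive numerical input: having strictly more than $\ell\vol(D)+1$ global sections of $\ell D$ produces, at a very general $x$, sections of $m\ell D$ vanishing to order linear in $m$, yielding the asymptotic $1/\ell$ bound on jet separation. The hypothesis (i) $H^1(X,\sO_X)=0$ should enter in identifying $h^0$ with the relevant Euler characteristic and comparing the result against $\vol(D)$.

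Given $\ejet{D}\ge 1/\ell$ at very general $x$ and $y$, I would pass to birationality by the usual adjoint-series construction. Combining sections of multiples of $\ell D$ with high jet order at $x$ and at $y$, one builds an effective $\Q$-divisor $\Gamma \sim_\Q tD$ with $t$ slightly less than $6\ell = 2(\dim X)\ell$ and $\mult_x\Gamma,\mult_y\Gamma > \dim X = 3$. Choosing $c \in (0,1)$ so that the non-klt locus of $(X,c\Gamma)$ is isolated at $\{x,y\}$, one writes
\[
K_X + (6\ell+1)D \ \sim_\Q \ K_X + c\Gamma + A
\]
with $A$ nef and big. A Nadel-type vanishing would then lift sections from $\{x,y\}$ to all of $X$, separating $x$ from $y$.

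The main obstacle is this lifting step: in characteristic zero it is immediate from Nadel vanishing on the multiplier ideal $\mathcal{J}(X,c\Gamma)$, but in positive characteristic no such general vanishing is available. The rational singularities hypothesis on $X$ together with $H^1(X,\sO_X)=0$ is the tool to get around this; through a log resolution of $(X,\Gamma)$ and the relation $R\pi_*\sO_{\tilde X}=\sO_X$, the required $H^1$ vanishing should reduce to an $H^1$ vanishing on $\tilde X$ that is controlled by the explicit construction of $\Gamma$ out of sections of $\ell D$, where the freedom in choosing those sections can be exploited to make the relevant cohomology vanish. This reduction is the step I expect to be the technical heart of the proof.
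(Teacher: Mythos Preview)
Your first step is essentially correct and matches the paper: Theorem~\ref{Lower bound for seshadri} (Theorem~C) gives $\ses{D}\ge 1/\ell$ at a very general point, hence $\ses{(6\ell+1)D}=(6\ell+1)\ses{D}\ge 6+1/\ell>6=2\dim X$. However, your explanation of where hypothesis~(i) enters is off: $H^1(X,\sO_X)=0$ is \emph{not} used to compare $h^0$ with an Euler characteristic. In the proof of Theorem~\ref{Lower bound for seshadri} it is used in the two-dimensional sweep-out case to force the algebraically equivalent surfaces $S$ and $S'$ to be \emph{linearly} equivalent, so that $h^0(X,\sO_X(S))\ge 2$ and Proposition~\ref{key lemma for lower bound dim3} applies.

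The genuine gap is your second step. You propose the characteristic-zero adjoint construction: build $\Gamma\sim_{\Q}tD$ with high multiplicity at $x,y$, cut out a non-klt locus, and lift sections via a Nadel-type vanishing. You correctly flag that Nadel vanishing is unavailable in positive characteristic, but the proposed workaround---reducing via $R\pi_*\sO_{\tilde X}=\sO_X$ and rational singularities to an $H^1$ vanishing on a log resolution, then ``exploiting the freedom in choosing sections''---is not a known mechanism and there is no reason to expect it to succeed. Rational singularities and $H^1(X,\sO_X)=0$ give you nothing about $H^1$ of twists by multiplier-type ideals.

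The paper takes a completely different route for this step, replacing multiplier ideals by Frobenius techniques. Once $\ses{(6\ell+1)D}>2n$ at a smooth point, Theorem~\ref{MS 3.2} (after Musta\c{t}\u{a}--Schwede and Murayama) directly yields birationality: by Proposition~\ref{ejet semiconti} and Lemma~\ref{ses vs ejet} one has $\ejet{(6\ell+1)D}>2n$ on an open set $V$; Proposition~\ref{Mur 7.3.1}, which uses $F$-injectivity and the Frobenius trace rather than any Kodaira-type vanishing, then shows $\omega_X\otimes\sO_X((6\ell+1)D)$ separates $1$-jets at each $y\in V$. For injectivity on pairs, Proposition~\ref{MS 2.16} gives $\ejet[\{y,z\}]{2(6\ell+1)D}>2n$, and Proposition~\ref{Mur 7.3.1} again separates $\{y,z\}$. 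The rational singularities hypothesis is used only in the Seshadri bound (to pass to a resolution), not in the separation argument.
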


In the situation of Theorem \ref{A}, combining the assumptions on the cohomologies with Theorem \ref{B}, we find a uniform integer $M>0$ such that $-MK_{X_j}$ is birational for every $j$ (Corollary \ref{bb weak Fano}).
See also \cite{Zha} for related results.

The cornerstone of the proof of Theorem \ref{B} is a uniform lower bound of \emph{Seshadri constants}.
Let $X$ be a normal projective variety and $L$ be a nef and big invertible sheaf on $X$.
Then the Seshadri constant $\ses{L}$ of $L$ at a closed point $x$ measures the local positivity of $L$ at $x$ (see Definition \ref{defn ses}).
In characteristic $0$, Ein-K\"{u}chle-Lazarsfeld proved in their celebrated work \cite{EKL} that we have
\[
  \ses{L} \ge \frac{1}{\dim X}
\]
if $x \in X$ is very general.
It is natural to ask whether such an inequality holds in positive characteristic:

\begin{ques}\label{question on lower bound}
Let $k$ be an uncountable algebraically closed field of positive characteristic and $d>0$ be an integer.
Is there a real number $\delta(k,d)>0$ such that for any normal projective variety $X$ over $k$ with dimension $d$ and for any nef and big line bundle $L$ on $X$, we have
\[
  \ses{L} \ge \delta(k,d)
\]
at a very general point $x \in X$?
\end{ques}

It is worth mentioning that in positive characteristic, this question remains unresolved even in dimension $2$. 
In Section \ref{sec3}, we provide a partial answer to Question \ref{question on lower bound} for dimension $2$ (see Proposition \ref{ses dim2}) and subsequently, extend our analysis to the $3$-dimensional case as follows.

\begin{thmA}[Theorem \ref{Lower bound for seshadri}]\label{C}
    Let $X$ be a $3$-dimensional normal projective variety with rational singularities over an uncountable algebraically closed field, $\ell \ge 1$ be an integer and $L$ be a nef and big invertible sheaf on $X$.
    We further assume that the following conditions are satisfied:
    \begin{enumerate}[label=\textup{(\roman*)}]
    \item $H^1(X, \sO_X)=0$.
    \item $h^0(X,L^{\ell}) > \ell \vol(L)+1$.
    \end{enumerate}
    Then we have
    \[
      \ses{L} \ge \frac{1}{\ell}
      \]
    for a very general closed point $x \in X$.
\end{thmA}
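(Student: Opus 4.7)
The plan is to reduce the three-dimensional statement to the two-dimensional Proposition \ref{ses dim2} by cutting $X$ with a suitable surface $S$ through $x$, and splitting curves through $x$ into two classes depending on whether they lie on $S$.

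For a very general $x \in X$, hypothesis (ii) forces $h^0(X, L^\ell) \ge \ell \vol(L) + 2 \ge 3$, so $|L^\ell|$ is a positive-dimensional linear system; for $x$ outside the base locus of $|L^\ell|$ (a proper closed subset, avoided by any very general point), we may choose a divisor $S \in |L^\ell|$ through $x$. Using a Bertini-type argument together with the rational singularity hypothesis on $X$, the aim is to arrange that $S$ is a normal, irreducible, and reduced surface with rational singularities and $\mult_x(S) = 1$.

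Granted such an $S$, the key dichotomy is the following. For an irreducible curve $C \subset X$ through $x$ with $C \not\subset S$, the local intersection inequality at the smooth point $x$ of $X$ yields
\[
  L^\ell \cdot C \;=\; S \cdot C \;\ge\; \mult_x(S) \cdot \mult_x(C) \;=\; \mult_x(C),
\]
so $L \cdot C \ge \mult_x(C)/\ell$. For $C \subset S$, we instead apply Proposition \ref{ses dim2} to the pair $(S, L|_S)$; provided its hypotheses hold on $S$, this gives $\ses{L|_S} \ge 1/\ell$ and hence $L \cdot C = L|_S \cdot C \ge \mult_x(C)/\ell$. Taking the infimum over all curves $C \ni x$ then yields the desired bound $\ses{L} \ge 1/\ell$.

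The main technical obstacle is the verification of the hypotheses of Proposition \ref{ses dim2} on the cut surface $S$. The vanishing $H^1(S, \sO_S) = 0$ is to be extracted from the restriction sequence $0 \to \sO_X(-S) \to \sO_X \to \sO_S \to 0$ together with hypothesis (i), but this requires control of $H^2(X, \sO_X(-S))$, a quantity for which Kodaira-type vanishing is unavailable in positive characteristic; one expects to exploit the rational singularities of $X$ and perhaps a Frobenius-twisted duality argument. The section-count inequality $h^0(S, (L|_S)^{\ell'}) > \ell' \vol(L|_S) + 1$ for some $\ell' \le \ell$ is similarly delicate: with $S \in |L^\ell|$ one has $\vol(L|_S) = \ell \vol(L)$, and the restriction sequence for $\ell' = \ell$ only gives $h^0(L|_S^\ell) > \ell \vol(L)$, whereas the target is roughly $\ell^2 \vol(L)$. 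This gap suggests either cutting with a different linear system (for instance $|L^k|$ for some $k \le \ell$), working with a smaller exponent $\ell'$, or performing a finer comparison of sections on $X$ versus $S$ in order to close the gap.
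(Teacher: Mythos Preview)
Your hyperplane-section approach is fundamentally different from the paper's and has a genuine gap beyond the ones you flag. The paper follows the Ein--K\"uchle--Lazarsfeld scheme: assuming the bound fails on a dense set, one extracts (via \cite[Paragraph (3.3)]{EKL}) a family $\Sigma$ of bad pointed curves and applies the sweeping Lemma \ref{swipe out} to build a tower $\mathcal{Z}_0 \subset \cdots \subset \mathcal{Z}_m$ in $X \times T$. The hard case is when a general fiber $S = (\mathcal{Z}_m)_t$ is a surface; by construction the bad curves through a general point of $S$ lie \emph{in} $S$, so Proposition \ref{ses dim2} forces $h^0(S, L^\ell|_S) \le 1$. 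Because the surfaces $(\mathcal{Z}_m)_t$ form an algebraic family with varying fiber and $H^1(X,\sO_X)=0$, one gets $h^0(X,\sO_X(S)) \ge 2$, and the new Proposition \ref{key lemma for lower bound dim3} (with $\Gamma = S$, $N = L^\ell$, $H = L$) then gives $h^0(S, L^\ell|_S) \ge 2$, a contradiction. Note that hypothesis (ii) enters \emph{only} as the numerical hypothesis of Proposition \ref{key lemma for lower bound dim3}; it is never transported to $S$, and no restriction sequence or vanishing on $S$ is needed.

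The fatal issue in your plan is the ``very general'' mismatch: Proposition \ref{ses dim2} bounds the Seshadri constant only at a very general point of the surface, but you have fixed $x$ in $X$ first and then passed $S \in |L^\ell|$ through it, so there is no reason $x$ is very general in $S$. A bad curve $C \subset S$ through $x$ is not excluded by the generic statement on $S$. The EKL family machinery is precisely what resolves this: the surface is swept out by the bad curves themselves, so they lie in it by design, and one works at a general point of that surface. Separately, you overstate the hypothesis of Proposition \ref{ses dim2}: it requires only $h^0(S, L^\ell|_S) \ge 2$ (yielding $\ses{L^\ell|_S} \ge 1$), not an inequality of the shape $h^0 > \ell' \vol + 1$; the section-count ``gap'' you describe is an artifact of demanding too much. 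Even with the correct, weaker target, the restriction-sequence route still hits the $H^1$ obstruction you identify, and there is no fix for this in positive characteristic; the paper's argument avoids it entirely.
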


We note that under specific assumptions on higher cohomologies and volumes, we can find a uniform integer $\ell$ such that the assumption (ii) is satisfied (Corollary \ref{lower bound for seshadri vanishing} or Corollary \ref{lower bound for seshadri weak Fano}).
By combining Theorem \ref{C} with a theory of Seshadri constants developed by Musta\c{t}\u{a}-Schwede (\cite{MS}) and Murayama (\cite{Mur}), we prove Theorem \ref{B} in Section \ref{sec4}.

Now, let us return to the outline of Theorem \ref{A}. 
It follows from Theorem \ref{B} that the set $\{X_j \}_{j \in J}$, obtained from $\mathcal{D}$ by forgetting the complements $\Delta_j$, is birationally bounded.
To progress further, it is imperative to establish the boundedness of the complements $\Delta_j$ as well. 
To this end, we provide an upper bound for intersection numbers in terms of volumes (Proposition \ref{HMX lemma}), and as a consequence, we establish the following result:

\begin{thmA}[Theorem \ref{criterion of lbb}]\label{D}
  Let $V>0$ be an integer and $k$ be an algebraically closed field of positive characteristic. 
  Suppose that $\mathcal{E}$ is a set of $n$-dimensional projective log pairs over $k$ such that for every $(X,\Gamma) \in \mathcal{E}$, the following conditions are satisfied:
  \begin{enumerate}[label=\textup{(\roman*)}]
  \item $\Gamma$ is reduced, 
  \item there is a log resolution of $(X, \Gamma)$, and 
  \item there exists a Weil divisor $E$ on $X$ such that $\Phi_{|E|}$ is birational onto its image, $\vol(E)<V$ and $\vol(K_X+\Gamma+2(2n+1) E)<V$.
  \end{enumerate}
  Then $\mathcal{E}$ is log birationally bounded.
\end{thmA}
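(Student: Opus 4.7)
The plan is to adapt the Hacon--McKernan--Xu strategy (\cite{HMX2}) to positive characteristic, with Proposition \ref{HMX lemma} playing the role that vanishing theorems play in characteristic zero. For a pair $(X,\Gamma) \in \mathcal{E}$, take the log resolution $f \colon Y \to X$ provided by (ii) and decompose $f^{*}|E| = |M| + F$ into its mobile and fixed parts. The base-point-free system $|M|$ defines a birational morphism $\phi \colon Y \to Z \subseteq \PP^{N}$ with $N = h^{0}(X,E)-1$. The ambient data are easy to bound: since $\phi$ is generically one-to-one onto $Z$, the nef/effective comparison $0 \le M \le f^{*}E$ gives $\deg_{\PP^{N}}(Z) = M^{n} \le \vol(f^{*}E) = \vol(E) < V$, and after replacing $\PP^{N}$ by the linear span of $Z$ the classical nondegenerate estimate $N \le \deg(Z) + n - 1 < V + n$ bounds the ambient dimension. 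Hence the pairs $(\PP^{N}, Z)$ range over a bounded Chow family.

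\medskip

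Introduce the log birational transform $\widetilde{\Gamma} := f_{*}^{-1}\Gamma + \Exc(f)$ on $Y$ and let $D$ be the reduced divisor on $Z$ underlying $\phi_{*}\widetilde{\Gamma}$. Under the induced birational map $g := \phi \circ f^{-1} \colon X \dashrightarrow Z$, the strict transform $g_{*}\Gamma = \phi_{*}f_{*}^{-1}\Gamma$ is supported in $D$, and every $g^{-1}$-exceptional divisor on $Z$ is the $\phi$-image of some $f$-exceptional divisor, hence contained in $\Supp(\phi_{*}\Exc(f)) \subseteq \Supp(D)$. It therefore suffices to bound $\deg_{\PP^{N}}(D) \le \widetilde{\Gamma} \cdot M^{n-1}$ uniformly. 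For this, apply Proposition \ref{HMX lemma} to $(X,\Gamma,E)$: its hypotheses are precisely conditions (i)--(iii) of the theorem, and its conclusion yields a uniform upper bound on the intersection number $\Gamma \cdot E^{n-1}$ depending only on $n$ and $V$. The passage from $\widetilde{\Gamma} \cdot M^{n-1}$ on $Y$ to $\Gamma \cdot E^{n-1}$ on $X$ is the standard log-resolution/projection-formula bookkeeping, using $M \le f^{*}E$ and $f_{*}\widetilde{\Gamma} = \Gamma$ (the exceptional components vanish under pushforward).

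\medskip

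Combining the boundedness of the ambient pair $(\PP^{N}, Z)$ with the uniform bound on $\deg_{\PP^{N}}(D)$, the marked pairs $(Z, D)$ lie in a bounded family of polarised pairs of fixed dimension and bounded degree, and the birational maps $g \colon X \dashrightarrow Z$ realise $\mathcal{E}$ as log birationally bounded.

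\medskip

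\emph{Main obstacle.} The substantive input is Proposition \ref{HMX lemma}; once it is available, the reduction above is essentially formal. The coefficient $2(2n+1)$ in hypothesis (iii) is calibrated precisely for the proposition: the $2n+1$ functions as an Angehrn--Siu--type effective-generation threshold attached to the birational system $|E|$, while the extra factor of $2$ accommodates the reduced boundary $\Gamma$ (e.g.\ to cut transversely twice without losing the birationality of the resulting map). Because the whole estimate is volumetric, no Kodaira-type vanishing enters the reduction, so the argument is characteristic-free as soon as Proposition \ref{HMX lemma} is established.
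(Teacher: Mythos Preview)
Your overall architecture matches the paper's: pass to a resolution, map birationally to a projective space via the mobile part $M$ of $|E|$, and bound both the degree of the image and the degree of the pushed-forward boundary using Proposition~\ref{HMX lemma}. The gap is in how you invoke that proposition and in which direction you run the comparison.

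Proposition~\ref{HMX lemma} requires a \emph{base-point-free Cartier} divisor defining the birational map. The divisor $E$ on $X$ is only a Weil divisor with $\Phi_{|E|}$ birational; it need be neither Cartier nor free, so the hypotheses of the proposition are \emph{not} ``precisely conditions (i)--(iii)'' and you cannot apply it to $(X,\Gamma,E)$ to bound $\Gamma\cdot E^{n-1}$. The proposition must be applied on the resolution $Y$ to the pair $(\widetilde\Gamma,M)$, where $M$ \emph{is} free; its output is
\[
(2(2n+1)M)^{n-1}\cdot\widetilde\Gamma \;\le\; 2^{n}\,\vol\bigl(K_Y+\widetilde\Gamma+2(2n+1)M\bigr).
\]
What remains is to compare this volume on $Y$ with the hypothesis on $X$. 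This is the step you have backwards: one pushes the \emph{volume} down, not the \emph{intersection number}. Since $f_*K_Y=K_X$, $f_*\widetilde\Gamma=\Gamma$, $f_*M=E$ (after stripping the fixed part), and $\vol(D)\le\vol(f_*D)$ for any birational morphism, one gets
\[
\vol\bigl(K_Y+\widetilde\Gamma+2(2n+1)M\bigr)\;\le\;\vol\bigl(K_X+\Gamma+2(2n+1)E\bigr)<V,
\]
which bounds $M^{n-1}\cdot\widetilde\Gamma$ and hence $\deg D$ directly. Your proposed ``projection-formula bookkeeping'' in the opposite direction fails: from $M\le f^*E$ with $M$ nef you cannot conclude $M^{n-1}\cdot\widetilde\Gamma\le (f^*E)^{n-1}\cdot\widetilde\Gamma$ (the difference involves intersecting the effective but non-nef class $F=f^*E-M$ against $\widetilde\Gamma$, with which it may share components), $f^*E$ need not even make sense, and the exceptional summand $\Exc(f)\subset\widetilde\Gamma$ contributes upstairs but vanishes under $f_*$, so $\Gamma\cdot E^{n-1}$ cannot control $\widetilde\Gamma\cdot M^{n-1}$ anyway.
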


The basic idea of the proof is similar to that of \cite[Theorem 3.1]{HMX13}.
However, their approach depends on Kodaira-type vanishing, which is not applicable in positive characteristic.
To circumvent this limitation, we once again focus on Seshadri constants.

In the final step, we demonstrate how to reduce the boundedness of weak Fano varieties to the log birationally boundedness of them.
More specifically, we prove the following result:

\begin{thmA}[Theorem \ref{Bir bdd to bdd}]\label{E}
    Let $0<\delta, \epsilon$ be rational numbers, $\pi : \mathcal{Z} \to T$ be a smooth projective morphism between smooth varieties over an algebraically closed field $k$ of characteristic $p>5$ and $B$ be a reduced divisor on $\mathcal{Z}$ such that $(\mathcal{Z}, B)$ is relatively SNC over $T$.
    
    For an open dense subset $U \subseteq T$, we denote by $\mathcal{D}_U$ the set of all $3$-dimensional projective log pairs $(X,\Delta)$ with the following properties:
    \begin{enumerate}[label=\textup{(\roman*)}]
      \item There exist a closed point $t \in U(k)$ and a birational contractions 
      \[
        f: \mathcal{Z}_{t} \dashrightarrow X
      \] 
      such that $B_t$ coincides with the union of $\Supp(f^{-1}_*\Delta)$ and all $f$-exceptional divisors.
      \item $-K_{X}$ is nef and big.
      \item $K_{X}+\Delta \sim_{\Q} 0$, $\epsilon < \mld(X,\Delta)$ and every coefficient of $\Delta$ is larger than $\delta$
      \item $H^1(X, \sO_{X})=H^2(X, \sO_{X})=0$.
    \end{enumerate}
    We further assume that we have 
    \[
      \kappa(\mathcal{Z}/T, K_{\mathcal{Z}}+(1-\epsilon)B) \ge 1.
    \]
    Then there exists an open dense subset $U \subseteq T$ such that $\mathcal{D}_U$ is bounded.
  \end{thmA}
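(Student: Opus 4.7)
The plan is to use the hypothesis $\kappa(\mathcal{Z}/T, K_{\mathcal{Z}}+(1-\epsilon)B)\ge 1$ to run a relative minimal model program on the log smooth family $(\mathcal{Z}, \Theta)/T$ with $\Theta := (1-\epsilon)B$, and then identify each $(X,\Delta) \in \mathcal{D}_U$ as an intermediate output of a fiberwise MMP, inheriting boundedness from the bounded family $\mathcal{Z}/T$.

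First I would carry out a crepant-transform computation. For $(X,\Delta) \in \mathcal{D}_U$ arising from $t \in U(k)$ via $f : \mathcal{Z}_t \dashrightarrow X$, take a common resolution and define $\Delta_{\mathcal{Z}_t}$ by the crepant pullback of $K_X+\Delta$, so that $K_{\mathcal{Z}_t} + \Delta_{\mathcal{Z}_t} \sim_{\Q} 0$. The hypothesis that $B_t$ contains both $\Supp f_*^{-1}\Delta$ and every $f$-exceptional divisor ensures $\Supp \Delta_{\mathcal{Z}_t} \subseteq B_t$, while $\mld(X,\Delta) > \epsilon$ forces every coefficient of $\Delta_{\mathcal{Z}_t}$ to be strictly less than $1-\epsilon$ (for $f$-exceptional components this is $1-a(E,X,\Delta) < 1-\epsilon$, and for strict transforms it is the original coefficient of $\Delta$). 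Hence $\Delta_{\mathcal{Z}_t} \le \Theta_t$, and
\[
K_{\mathcal{Z}_t} + \Theta_t \sim_{\Q} \Theta_t - \Delta_{\mathcal{Z}_t} \ge 0,
\]
with every $f$-exceptional divisor of $B_t$ appearing with positive coefficient $a(E,X,\Delta)-\epsilon$ on the right.

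After shrinking $T$ to an open dense subset $U$, I would run a relative $(K_{\mathcal{Z}}+\Theta)$-MMP over $T$ using the threefold MMP in characteristic $p>5$ (Hacon--Xu, Birkar--Waldron, Witaszek, \emph{et al.}), obtaining a relative log minimal model $(\mathcal{Z}^m, \Theta^m)/T$ and the relative log Iitaka fibration $\mathcal{Z}^m \to \mathcal{Y}/T$, which after a further shrinking of $T$ commutes with base change to any $t \in U$. The assumption $\kappa\ge 1$ guarantees that $\mathcal{Y}/T$ has positive relative dimension, providing the rigidity needed to control intermediate models. For each $(X,\Delta)$ from $t$, the fiberwise $(K_{\mathcal{Z}_t}+\Theta_t)$-MMP can then be arranged so that its first contractions remove exactly the $f$-exceptional divisors (these being precisely the components with strictly positive coefficient in $\Theta_t - \Delta_{\mathcal{Z}_t}$), thereby realizing $X$ as an intermediate model. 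Because the number of MMP steps is uniformly bounded by the finite stratification of $B$, the resulting set of $X$ forms a bounded family, and $\Delta$ is then recovered from the pushforward of a sub-boundary of $\Theta_t$ whose coefficients are constrained by the lower bound $\delta$ and the condition $\mld > \epsilon$.

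The main obstacle is executing this relative MMP in families over $T$ in positive characteristic so that it specializes compatibly with each birational contraction $f$; this will require constructibility arguments to shrink $T$ enough that all MMP steps commute with base change, together with cohomological control coming from hypothesis (iv) on $H^i(X,\sO_X)$ for $i=1,2$ to ensure the parameter space of the outputs is of finite type and that the choice of extremal rays varies constructibly in $t$.
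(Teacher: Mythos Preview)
Your crepant computation is correct and matches the paper's opening step, but the subsequent strategy has two genuine gaps.

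First, the claim that a $(K_{\mathcal{Z}_t}+\Theta_t)$-MMP with the \emph{fixed} boundary $\Theta_t=(1-\epsilon)B_t$ can be arranged to recover $X$ as an intermediate model is not justified, and in general fails. Pushing $K_{\mathcal{Z}_t}+\Theta_t$ forward to $X$ gives $K_X+(1-\epsilon)f_*B_t\sim_{\Q}(1-\epsilon)f_*B_t-\Delta$, which is effective but typically not nef, so $f$ is not a weak lc model for this boundary; nor is there any reason a specific MMP run would pass through $X$ (or even something small-birational to it) at an intermediate stage. The boundary for which $f$ \emph{is} a log minimal model is $(1+a)(f^{-1})_*\Delta+(1-\epsilon)\Exc^1(f)$ for suitable $a>0$, and this depends on $(X,\Delta)$. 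The paper handles this by embedding all such boundaries (after a standard twist by a small big $\Q$-divisor $A$) into a fixed rational polytope $\mathcal{C}$ and invoking \emph{finiteness of log minimal models over a polytope} (Theorem~\ref{finiteness of minimal model relative}), producing finitely many families $\Phi_i:\mathcal{Z}_U\dashrightarrow\mathcal{W}_i$ such that each $X$ is isomorphic in codimension one to some fiber $(\mathcal{W}_i)_t$.

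Second, being small-birational to a fiber of a bounded family does \emph{not} by itself give boundedness: a fixed variety can admit infinitely many non-isomorphic small $\Q$-factorial modifications. The paper closes this gap with Theorem~\ref{Sbb to Bdd}, whose key input is that the geometric generic fiber of each $\mathcal{W}_i$ is of Fano type; establishing this (Step~4 of the proof) is where the bigness of $K_{\mathcal{Z}}+(1-\epsilon)B$ over $T$, upgraded from $\kappa\ge 1$ via the crepant argument of Step~1, is really used. Theorem~\ref{Sbb to Bdd} in turn relies on constancy of N\'eron--Severi groups in the family (Proposition~\ref{constancy of NS}), $\Q$-factoriality of general fibers (Proposition~\ref{Q-factoriality on fibers}), and finiteness of \emph{ample} models (Theorem~\ref{finiteness of lc model relative}). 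Hypothesis~(iv) is consumed precisely by these N\'eron--Severi and deformation arguments, not merely to make parameter spaces finite type as your sketch suggests.
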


Here, we outline the proof of Theorem \ref{E}.
Similar to the conventional approach in characteristic $0$ (cf.~\cite{HMX2}, \cite{HX}), our strategy is to reduce the theorem to the finiteness of minimal models and lc models on the generic fiber $\mathcal{Z}_{\eta}$.
Given the development of the theory of the minimal model program over an imperfect field presented in \cite{DW} and \cite{Wal}, such finiteness holds true for $\Q$-divisors on $\mathcal{Z}_{\eta}$ containing a fixed big $\Q$-divisor.

The challenge here is to give a sufficient condition for a given $\Q$-divisor on $\mathcal{Z}_{\eta}$ to be big.
In characteristic $0$, the "invariance of plurigenera" is a conventional tool for this purpose, which fails in positive characteristic (\cite{Bri}).
To circumvent this obstacle, we rely on the fact that the abundance conjecture is valid when the Iitaka dimension is positive (\cite{BirMMP}, \cite{DWAb}, \cite{WalAb}). 
This is the reason why we need the assumption on the Iitaka dimension in Theorem \ref{E}.

As a consequence of Theorems \ref{B}, \ref{C}, \ref{D} and \ref{E}, we obtain Theorem \ref{A}.

\begin{small}
\begin{acknowledgement}
The author wishes to express his gratitude to Shunsuke Takagi, Shou Yoshikawa, Tatsuro Kawakami, Teppei Takamatsu and Hiromu Tanaka for their helpful comments and suggestions.
This work was supported by JSPS KAKENHI Grant Number 20K14303.  
\end{acknowledgement}
\end{small}

\section{Preliminaries}

\subsection{Notation and conventions}\label{Notation}

In this subsection, we summarize notation used in this paper.
\begin{enumerate}
\item Throughout this paper, all rings are assumed to be commutative and with a unit element, all schemes are assumed to be quasi-projective over a ground field $k$, and all fields of characteristic $p>0$ are assumed to be \emph{$F$-finite}, that is, the extension degree $[k: k^p]$ is finite.

\item A \emph{variety} over a field $k$ is an irreducible and reduced scheme of finite type over $k$.
A normal variety $X$ is said to be \emph{$\Q$-factorial} if every Weil divisor is $\Q$-Cartier.
A \emph{curve} in a variety $X$ is a closed subvariety of dimension one.
A \emph{pointed curve} in $X$ is a pair $(C,x)$ of a curve $C$ in $X$ and a closed point $x \in X$ contained in $C$.

\item A \emph{log pair} (resp.~\emph{projective log pair}) is a pair $(X, \Delta)$ of a normal (resp.~normal projective) variety $X$ over a field $k$ and an effective $\Q$-Weil divisor $\Delta$ on $X$. 
A log pair $(X, \Delta)$ is said to be \emph{SNC} if $X$ is regular and every stratum of $\Supp(\Delta)$ is regular.
We further assume that there exists an flat morphism $\pi : X \to T$ to a regular variety $T$.
An SNC pair $(X,\Delta)$ is said to be \emph{relatively SNC over $T$} if for every closed point $t \in T$, the log pair $(X_t, \Delta_t)$ induced on the fiber $X_t$ is SNC.

\item 
Let $X$ be a variety over a field $k$, $K(X)$ be the function field of $X$ and $\mathcal{K}_X$ be the constant sheaf on $X$ determined by $K(X)$.
We denote by $\CDiv(X)=H^0(X, \mathcal{K}_X^*/\sO_X^*)$ the set of all Cartier divisors and by $\WDiv(X)$ the set of all Weil divisors.

\item
For a Cartier divisor $D$ on a variety $X$, we write $[D] \in \WDiv(X)$ the Weil divisor defined by the Cartier divisor $D$ (cf.~\cite[Section 2.1]{Ful}) and $\sO_X(D)$ the invertible sheaf associated to $D$.
When $X$ is normal and $\Gamma$ is a Weil divisor on $X$.
We also denote by $\sO_X(\Gamma)$ the coherent sheaf defined by 
\[
\sO_X(\Gamma)(U) = \{ 0 \neq f \in K(X) \mid D+ \Div_X(f) \ge 0\} \cup \{0\},
\]

\item
We say that a Cartier divisor $D$ on a variety $X$ is \emph{effective} if for every point $x \in X$, the defining equation $f_x \in K(X)$ of $D$ at $x \in X$ is contained in $\sO_{X,x}$.
We note that if $D$ is effective, then so is the Weil divisor $[D]$, and the converse holds true if $X$ is normal.
We sometimes identify an effective Cartier divisor $D$ on $X$ with the closed subscheme of $X$ defined by the ideal sheaf $\sO_X(-D)$.

\item Let $D$ be an effective Cartier divisor on a variety $X$ over a field $k$.
We denote by $\Supp(D)$ the set of all points $x \in X$ such that the equation $f_x \in \sO_{X,x}$ of $D$ at $x$ is contained in the maximal ideal $\m_x$.
Noting that $\Supp(D)$ is a purely codimension one closed subset of $X$, it coincides with the support of the Weil divisor $[D]$.

\item
For a $\Q$-Weil divisor $\Gamma=\sum_{i=1}^n a_i \Gamma_i$ on a variety $X$ where $a_i \neq 0$ is a rational number and $\Gamma_i$ is a prime divisor, the \emph{support} $\Supp(\Gamma)$ of $\Gamma$ is the union $\cup_{i=1}^n \Gamma_i$.
The round-down of $\Gamma$ is $\lfloor \Gamma \rfloor=\sum_i \lfloor a_i \rfloor \Gamma_i$, where $\lfloor a_i \rfloor$ denotes the largest integer smaller than or equal to $a_i$. 

\item
Let $Z \subseteq X$ be a closed subvariety of a variety $X$ and $x \in Z$ be a point.
The \emph{multiplicity} of $Z$ at $x$ is the Hilbert-Samuel multiplicity of the local ring $\sO_{Z,x}$ and denote it by $\mult_x(Z)$.
We make the convention that $\mult_x(Z) : = 0$ if $x \not\in Z$.
For a Weil divisor $\Gamma=\sum_{i=1}^n a_i E_i$ on $X$ where $E_i$ is a prime divisor, we define 
\[
\mult_{x} (\Gamma) : =\sum_{i=1}^n a_i \mult_x(E_i).
\]

\item
Let $X$ be a proper variety over a field $k$ and $\mathcal{F}$ be a coherent sheaf on $X$.
For every integer $i \ge 0$, we denote by $h^i(X, \mathcal{F})$ the dimension $\dim_k(H^i(X, \mathcal{F}))$ of the cohomology $H^i(X, \mathcal{F})$.

\item
For an invertible sheaf $L$ on a proper variety $X$, the \emph{complete linear system} of $L$ is
\[
|L| : = \{D \in \CDiv(X) \mid \sO_X(D) \cong L \}.
\]
For a non-zero global section $s \in H^0(X,L)$, there is an (unique) injective morphism $\iota : L \hookrightarrow \mathcal{K}_X$ which maps $s$ to $1$.
We denote by $\Div_L(s) \in |L|$ the Cartier divisor such that $\sO_X(\Div_L(s))=\Im(\iota)$.
We define the \emph{base locus} $\Bs(|L|)$ of $|L|$ by
\[
\Bs(|L|) : = \bigcap_{D \in |L|} \Supp(D)
\]
which is equal to the zero locus of the \emph{base ideal} $\bb(|L|) : = \Im( H^0(X,L) \otimes_k L^{-1} \to \sO_X)$.

\item Let $X$ be a proper variety and $L$ be an invertible sheaf on $X$ with $H^0(X,L) \neq 0$. 
We denote by $\Phi_{|L|} : X \dashrightarrow \PP(H^0(X,L))$ the rational map defined by $|L|$ (cf. \cite[Theorem II.7.1]{Har}).
If we further assume that $X$ is normal, then for a Weil divisor $D$ on $X$ with $H^0(X,\sO_X(D)) \neq 0$, we denote by $\Phi_{|D|} : X \dashrightarrow \PP(H^0(X, \sO_X(D))$ the rational map corresponding to 
\[
\Phi_{|\sO_U(D|_U)|}: U \dashrightarrow \PP(H^0(X,\sO_X(D))),
\]
where $U$ is the regular locus of $X$.

\item Let $X$ be a normal projective $n$-dimensional variety over a field $k$.
For a $\Q$-Weil divisor $D$, the \emph{volume} of $D$ is 
\[
\vol(D) = \vol(X,D) : = \limsup_{m \to \infty} \frac{h^0(X,\sO_X(mD))}{(m^n/n!)},
\]
where $m$ runs through all integers such that $mD$ is integral.
We say that $D$ is \emph{big} if we have $\vol(D)>0$.
We also say that $D$ is \emph{pseudo-effective} if $D+A$ is big for every ample $\Q$-Cartier divisor $A$ on $X$.

\item Let $X$ be a normal projective variety over an algebraically closed field $k$ and $D$ be a Cartier divisor on $X$.
The \emph{Iitaka dimension} of $D$ is defined to be
\[
  \kappa(D)=\kappa(X, D) : = \max_{m} \{\dim(\Im(\Phi_{|mD|}))\},
\]
where $m$ runs through all integers with $H^0(X, \sO_X(mD)) \neq 0$. 
If we have $H^0(X, \sO_X(mD))=0$ for all $m>0$, one puts $\kappa(D)=-\infty$.

\item 
Let $X$ be a projective normal variety over a field $k$ and $D$ be a Cartier divisor on $X$.
We further assume that $X$ is geometrically normal and geometrically connected over $k$.
The \emph{Iitaka dimension} of $D$ is defined to be 
\[
  \kappa(D)=\kappa(X, D) : = \kappa(X_{\overline{k}}, \overline{D}),
\]
where $\overline{D}$ is the flat pullback of $D$ by the natural morphism $X_{\overline{k}}=X \times_k \Spec \overline{k} \to X$.
By \cite[Corollary 2.1.38]{Laz} and \cite[Proposition III.9.3]{Har}, the Iitaka dimension is also characterized in terms of the growth of the sequence $\{h^0(X, \sO_X(mD))\}_{m}$.

\item 
Let $\pi : X \to T$ be a projective morphism between normal varieties over a field $k$ and $D$ be a $\Q$-Weil divisor on $X$.
We say that $D$ is \emph{big over $T$} (resp.~\emph{pseudo-effective over $T$}) if the flat pullback $D_{\eta}$ of $D$ to the generic fiber $X_{\eta}$ is big (resp.~pseudo-effective).
We further assume that $D$ is $\Q$-Cartier and $X_{\eta}$ is geometrically normal and geometrically connected over $K(T)$.
Then we write
\[
  \kappa(X/T, D) : = \kappa(X_{\eta}, D_{\eta}).
\]

\item Let $X, T$ be normal varieties over a field $k$ such that $X$ is projective over $T$.
An $\R$-Cartier divisor $D$ on $X$ is said to be \emph{semiample} over $T$ if there exist a $T$-morphism $f: X \to Y$ to a projective $T$-scheme $Y$ and an ample $\R$-Cartier divisor $H$ on $Y$ such that $D \sim_{\R} f^*H$.
When $D$ is $\Q$-Cartier, this definition coincides with the usual one, that is, $D$ is semiample if and only if some multiple of $D$ is free over $T$.

\end{enumerate}

\subsection{Singularities}

Here, we collect the definitions of singularities treated in this paper. 
Throughout this subsection, let $X$ be a normal variety over a field $k$.

The \emph{canonical sheaf} $\omega_X$ is the first non-zero cohomology of the dualizing complex $\omega_X^{\bullet} : = \pi^! \sO_{\Spec k}$, where $\pi : X \to \Spec k$ is the structure morphism.
It is well-known that $\omega_X$ satisfies the Serre's second condition $(S_2)$.
If $X$ is geometrically normal over $k$, then it follows from \cite[Lemma 0EA0]{Sta} that $\omega_X$ is isomorphic to the reflexive hull of $\Omega_{X/k}^{\wedge \dim X}$.
A \emph{canonical divisor} $K_X$ on $X$ is a Weil divisor on $X$ such that $\sO_X(K_X) \cong \omega_X$.

\begin{defn}
Let $\Delta$ be an effective $\R$-Weil divisor on $X$ such that $K_X+\Delta$ is $\R$-Cartier. 
\begin{enumerate}[label=\textup{(\roman*)}]
\item Given a projective birational morphism $\pi:Y \to X$ from a normal variety $Y$ and a prime divisor $E \subseteq Y$, we define the \emph{log discrepancy} $a_E(X,\Delta)$ of $(X,\Delta)$ at $E$ by 
\[
a_E(X,\Delta) : = \ord_E(K_Y -\pi^*(K_X+\Delta))+1 .
\]
\item We say that $(X, \Delta)$ is \emph{klt} (resp.~lc) if all the $a_E(X,\Delta) >0$ (resp.~$\ge 0$) for every prime divisor $E$ over $X$.
\item We define the \emph{minimal log discrepancy} $\mld(X,\Delta)$ of $(X,\Delta)$ by 
\[
  \mld(X,\Delta) : = \inf_{E} a_E(X,\Delta),
\]
where the infimum runs over all prime divisors $E$ over $X$.
\end{enumerate}
\end{defn}

\begin{lem}[\textup{\cite[Proposition 2.14]{BMPS+}}]
Let $V$ be a quasi-projective variety over a field $k$ of dimension three and a proper closed subset $T \subsetneq V$.
Then there exists a \emph{log resolution} $f : W \to V$, that is, a proper birational morphism $f$ from a regular variety $W$ such that $f^{-1}(T) \cup \Exc(f)$ is SNC.
Moreover, we may choose such a log resolution so that $f$ is a projective morphism.
\end{lem}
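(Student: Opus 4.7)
The plan is to invoke the resolution of singularities in dimension three that is available in arbitrary characteristic, together with an embedded resolution step. Concretely, I would proceed in three stages.

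First, I would apply resolution of singularities for threefolds over an arbitrary (not necessarily perfect) field — this is the Cossart--Piltant theorem — to produce a projective birational morphism $f_0 : W_0 \to V$ from a regular variety $W_0$. In fact, Cossart--Piltant yields this as a sequence of blowups along regular centers contained in the singular locus, which automatically gives projectivity of $f_0$. At this stage $W_0$ is regular, but the reduced closed subset $f_0^{-1}(T) \cup \Exc(f_0) \subseteq W_0$ need not be SNC.

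Second, I would perform an embedded resolution of this reduced divisor on the regular threefold $W_0$. Since $\dim W_0 = 3$, the centers that one needs to blow up to make a reduced Weil divisor SNC are either closed points or regular curves, and the strict transform of a divisor under the blowup of a regular center in a regular ambient variety behaves well. Using the standard iterated blowup procedure (blow up the non-SNC locus, which in this dimension can be stratified into finitely many regular curves and points after a preliminary normalization step) one obtains a second projective birational morphism $g : W \to W_0$ from a regular threefold $W$ such that $(f_0 \circ g)^{-1}(T) \cup \Exc(f_0 \circ g)$ is SNC. Composing, $f := f_0 \circ g : W \to V$ is a projective birational log resolution. The existence of such an embedded resolution in positive characteristic and dimension three is precisely the content that is recorded in \cite{BMPS+}.

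Third, the projectivity statement is automatic from the construction: a composition of blowups along closed subschemes of a quasi-projective variety is again projective, so nothing extra needs to be arranged. In particular, the only substantive ingredients are (a) Cossart--Piltant resolution for threefolds in arbitrary characteristic and (b) embedded resolution of reduced divisors on regular threefolds by blowups at regular centers.

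The main obstacle, and the reason this is nontrivial in positive characteristic, is step two: in higher dimensions embedded resolution by blowups along regular centers is delicate and not known in full generality, and one must control carefully which centers to blow up so that the process terminates and the output is SNC. In dimension three this is manageable because the non-SNC locus of a reduced divisor on a regular threefold can be dominated by finitely many regular curves and points after blowing up the singular locus of the divisor. Since the statement is already proved in \cite[Proposition 2.14]{BMPS+}, the cleanest route is simply to reduce to that reference after observing that the regularization of the exceptional plus preimage divisor falls into exactly their setting.
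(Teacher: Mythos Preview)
The paper gives no proof of this lemma at all: it is stated purely as a citation of \cite[Proposition 2.14]{BMPS+}, with no argument supplied. Your proposal is therefore not competing against any argument in the paper --- you are sketching the content behind the reference that the paper simply quotes.

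Your sketch is essentially correct and matches the standard architecture of the cited result: Cossart--Piltant gives resolution of singularities for threefolds over arbitrary fields as a sequence of blowups in regular centers (hence projective), and one then performs an embedded resolution of the preimage-plus-exceptional divisor on the resulting regular threefold. Your final paragraph, which defers to \cite[Proposition 2.14]{BMPS+} for the actual termination and SNC verification in step two, is exactly what the paper does in its entirety. One small caution: your informal description of step two (``blow up the non-SNC locus, which in this dimension can be stratified into finitely many regular curves and points after a preliminary normalization step'') is a bit loose --- the actual inductive invariant and termination argument in positive characteristic is more delicate than this suggests, and is not something one should claim to reconstruct on the fly. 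But since you ultimately invoke the reference for precisely this point, there is no gap.
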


\begin{rem}[\textup{\cite[Corollary 2.31]{KM}}]
Let $\Delta$ be an effective $\R$-Weil divisor such that $K_X+\Delta$ is $\R$-Cartier.
Suppose that $f : Y \to X$ is a log resolution of $(X,\Delta)$.
Then $(X,\Delta)$ is klt (resp.~lc) if and only if $a_E(X,\Delta)>0$ (resp.~$ \ge 0$) for every prime divisor $E$ in $Y$.
\end{rem}

\begin{lem}[\textup{\cite[Lemma 2.34]{BMPS+}, cf.~\cite[Lemma 2.2]{Wal}}]\label{Bertini}
Let $X$ be a $3$-dimensional normal projective variety over a field and $B$ be an effective $\Q$-Weil divisor on $X$ such that $(X,B)$ is klt.
For an ample $\Q$-Cartier divisor $A$, there exists $0 \le A' \sim_{\Q} A$ such that $(X, B+A')$ is klt. 
\end{lem}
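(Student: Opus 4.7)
The plan is to choose $m \gg 0$ so that $mA$ is a very ample Cartier divisor, and set $A' := \tfrac{1}{m}H$ for a sufficiently general $H \in |mA|$; then $A' \sim_{\Q} A$ and it remains to verify that $(X, B+A')$ is klt. First, I would invoke the log resolution lemma above (available in dimension three) to obtain a log resolution $f : Y \to X$ of $(X, B)$, and write
\[
K_Y = f^*(K_X+B) + \sum_i (a_i - 1) E_i,
\]
where $\sum_i E_i = f^{-1}(\Supp(B)) \cup \Exc(f)$ is an SNC divisor and each $a_i := a_{E_i}(X,B) > 0$ by the klt hypothesis.

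Since $mA$ is very ample, a sufficiently general $H \in |mA|$ avoids each of the finitely many proper subvarieties $f(E_i) \subsetneq X$, so $f^*H$ is a Cartier divisor on $Y$ whose support contains no $E_i$. Applying the log resolution lemma once more to the pair $(Y,\sum_i E_i + f^*H)$ yields a projective birational morphism $g: Y' \to Y$ such that $\pi := f\circ g$ is a log resolution of $(X, B+A')$, and by the characterization of klt in terms of a log resolution (noted in the remark above), it suffices to show $a_F(X,B+A') > 0$ for every prime divisor $F$ on $Y'$. Strict transforms of the $E_i$ retain log discrepancy $a_i > 0$ since $f^*H$ is disjoint from $E_i$ at its generic point, while the strict transform of $H$ has log discrepancy $1 - \tfrac{1}{m} > 0$ for $m \ge 2$.

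The main obstacle is the case of a $g$-exceptional divisor $F$ on $Y'$, whose center $g(F) \subseteq Y$ lies in the codimension-$\ge 2$ locus where $f^*H$ is singular or meets $\sum_i E_i$ non-transversely. In characteristic zero one would invoke Bertini for smoothness to force this locus to be empty for general $H$, but in positive characteristic $f^*H$ may carry embedded singular curves or higher-multiplicity points along the SNC boundary, so $g$-exceptional divisors genuinely appear. For each such $F$ we have
\[
a_F(X, B+A') = a_F(X, B) - \tfrac{1}{m}\ord_F(\pi^*H),
\]
and the plan is to extract a strictly positive lower bound on $a_F(X, B)$ from the smooth-center blow-up tower producing $F$ over the SNC pair $(Y, B_Y)$, while bounding $\mult_{g(F)}(f^*H)$ uniformly using the basepoint-freeness of $f^*|mA|$ and the generic choice of $H$. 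Choosing $m$ sufficiently large should then absorb the factor $\tfrac{1}{m}$ and secure positivity; the detailed execution of this scheme is the content of \cite[Lemma 2.34]{BMPS+} and \cite[Lemma 2.2]{Wal} cited in the statement.
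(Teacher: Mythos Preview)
The paper does not give its own proof of this lemma; it is stated as a citation to \cite[Lemma 2.34]{BMPS+} (cf.~\cite[Lemma 2.2]{Wal}) and used as a black box throughout. Your proposal sketches the standard reduction---pass to a log resolution, take $A'=\tfrac{1}{m}H$ for a general member $H\in|mA|$, and control discrepancies of the newly created exceptional divisors---and then explicitly defers the hard positive-characteristic step to exactly the same references the paper cites. In that sense your write-up is consistent with the paper's treatment.

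One small imprecision: you write that a general $H\in|mA|$ ``avoids'' each $f(E_i)$. For the non-exceptional $E_i$ (the strict transforms of components of $B$), the image $f(E_i)$ is a divisor on $X$, so a general $H$ certainly meets it; what you need (and what you actually use) is only that $H$ does not \emph{contain} $f(E_i)$, so that $\ord_{E_i}(f^*H)=0$. More substantively, your outline for the $g$-exceptional case is not yet a proof: the quantities $a_F(X,B)$ are not bounded away from zero uniformly over all $F$ on $Y'$, and $Y'$ itself depends on $H$ and hence on $m$, so the ``choose $m$ large enough'' step is circular as stated. The cited references resolve this by a more careful argument (controlling singularities of general members of a free linear system in positive characteristic), and since you already point to them for this step, your proposal is in line with how the paper handles the lemma.
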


\begin{defn}
We say that $X$ has only \textit{rational} singularities if $X$ is Cohen-Macaulay and if there exists a resolution of singularities $f: Y \to X$ with $R^i f_*\sO_Y = 0$ for every integer $i>0$.
\end{defn}

\begin{rem}[\cite{CR}]\label{rmk on rational sing}
Assuming the existence of resolution of singularities, $X$ has only rational singularities if and only if for \emph{every} resolution of singularities $g: Z \to X$, we have $R^ig_*\sO_Z=0$ for $i>0$. 
\end{rem}

\begin{thm}[\textup{\cite[Corollary 1.3]{ABL}}]\label{ABL}
Let $(X,\Delta)$ be a klt pair over a perfect field of characteristic $p>5$.
Then $X$ has rational singularities.
\end{thm}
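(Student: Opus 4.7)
The plan is to take a log resolution $f\colon Y \to X$ of $(X,\Delta)$, which exists in dimension up to three by the log-resolution result cited above from \cite{BMPS+}, and then to establish the two conditions in the definition of rational singularities: that $X$ is Cohen--Macaulay, and that $R^i f_*\sO_Y = 0$ for $i>0$. By Grothendieck duality on the smooth variety $Y$, given the second vanishing the Cohen--Macaulay property of $X$ is equivalent to the Grauert--Riemenschneider-type statement $R^i f_*\omega_Y = 0$ for $i>0$. So the proof reduces to supplying these two relative vanishing theorems for the birational morphism $f$.

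For the vanishing of $R^i f_*\sO_Y$, the strategy is to set up a relative Kawamata--Viehweg statement. Writing
\[
K_Y + f^{-1}_*\Delta \;=\; f^*(K_X+\Delta) + E,
\]
the klt hypothesis forces every coefficient of the $f$-exceptional $\Q$-divisor $E$ to exceed $-1$. Setting $F := \lceil E \rceil$, which is then effective and $f$-exceptional, one has
\[
F - \bigl(K_Y + f^{-1}_*\Delta + \{-E\}\bigr)\;\sim_{\Q}\;-f^*(K_X+\Delta),
\]
which is numerically $f$-trivial. A relative log Kawamata--Viehweg vanishing applied to the klt pair $(Y,\, f^{-1}_*\Delta + \{-E\})$ over $X$ would then yield $R^i f_*\sO_Y(F) = 0$ for $i>0$, and since $F\ge 0$ is $f$-exceptional, a standard comparison (peeling off the components of $F$ through short exact sequences and induction) transfers this to the desired vanishing of $R^i f_*\sO_Y$.

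The main obstacle, and the reason the hypothesis $p>5$ is essential, is that both the relative log Kawamata--Viehweg and the relative Grauert--Riemenschneider vanishings can fail in positive characteristic, where Kodaira-type vanishing is not available in general. Since the fibres of $f$ have dimension at most two, fibrewise and formal-function arguments reduce both vanishings to Kodaira-type statements on klt (in particular log del Pezzo) surfaces; these are established in characteristic $p>5$ in the recent surface-vanishing literature but are known to fail in characteristics $2$, $3$, and $5$. Combining this surface-level input with the three-dimensional minimal model program in positive characteristic---as carried out in \cite{ABL}---yields the required relative statements, and hence both that $X$ is Cohen--Macaulay and that $R^i f_*\sO_Y = 0$ for $i>0$.
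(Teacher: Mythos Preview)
The paper does not supply its own proof of this statement: it is quoted verbatim as \cite[Corollary 1.3]{ABL} and used as a black box. So there is no argument in the paper to compare your proposal against; what you have written is a sketch of the strategy underlying the cited reference rather than an alternative to anything in the present paper.

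As a sketch of the ABL argument, your outline is broadly correct in spirit but incomplete as stated. Two points deserve attention. First, the theorem as recorded here does not restrict the dimension, yet your argument leans on the existence of log resolutions (available only in dimension at most three) and on the fibres of $f$ having dimension at most two; this is consistent with the actual scope of \cite{ABL}, which is a three-dimensional result, but you should make that hypothesis explicit. Second, the step ``a standard comparison (peeling off the components of $F$ through short exact sequences and induction) transfers this to the desired vanishing of $R^i f_*\sO_Y$'' is not automatic: one needs control over the cohomology of the intermediate sheaves supported on the exceptional divisors, which again comes down to vanishing on surfaces in characteristic $p>5$. Your final paragraph acknowledges this dependence, but the earlier presentation reads as if the transfer were formal. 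If you intend this as a self-contained proof rather than a pointer to \cite{ABL}, those reductions need to be spelled out.
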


\begin{defn}
Suppose that $k$ is of characteristic $p>0$ and $F :X \to X$ denotes the Frobenius morphism. 
Let $\Delta \ge 0$ be an effective $\Q$-Weil divisor on $X$, $\ba \subseteq \sO_X$ be a non-zero coherent ideal and $t \ge 0$ be a real number.
\begin{enumerate}[label=\textup{(\roman*)}]
  \item We say that the triple $(X, \Delta, \ba^t)$ is \emph{$F$-pure} at $x \in X$ if for every large integer $e \gg 0$, there exists $d \in \ba_x^{\lfloor t(p^e-1) \rfloor}$ such that the morphism
\[
\sO_{X,x} \to (F^e_* \sO_{X}(\lfloor (p^e-1)\Delta \rfloor))_x \ ; \ a \mapsto F^e_*(a^{p^e}d)
\]
  splits as $\sO_{X,x}$-module homomorphism.
  \item We say that the pair $(X, \Delta)$ is $F$-pure at $x$ if so is the triple $(X, \Delta, \sO_X^{0})$.
  We say that $X$ is $F$-pure at $x$ if so is the pair $(X, 0)$.
  \item When $(X,\Delta)$ is $F$-pure at $x$, we define the \emph{$F$-pure threshold} $\fpt_x(X,\Delta; \ba)$ of $\ba$ at $x$ with respect to $(X,\Delta)$ by
  \[
\fpt_x(X,\Delta; \ba) : = \sup \{ t \ge 0 \mid (X, \Delta, \ba^t) \textup{ is $F$-pure at $x$}\}.
\]
\end{enumerate}
\end{defn}

\begin{defn}
Suppose that $k$ is of characteristic $p>0$. 
We say that $X$ is \emph{$F$-injective} at $x \in X$ if for every $i \ge 0$, the morphism
\[
H^i_{\m_x}(\sO_{X,x}) \to H^i_{\m_x}(\sO_{X,x})
\]
induced by the Frobenius homomorphism $F^{\#} : \sO_{X,x} \to \sO_{X,x}$ is injective.
\end{defn}

\begin{rem}\label{hierarchy}
Suppose that $k$ is of characteristic $p>0$.
\begin{enumerate}[label=\textup{(\roman*)}]
\item $($\textup{\cite{Fed}, \cite[Corollary 6.8]{HR}}$)$ We have the following hierarchy of the singularities:
\[
\textup{regular} \Longrightarrow \textup{$F$-pure} \Longrightarrow \textup{$F$-injective}.  
\]
\item $($\textup{\cite{Fed}, cf.~\cite[Proposition 2.6]{HW}}$)$ If $(X, D)$ is an SNC pair,  then $(X,D)$ is $F$-pure.
\end{enumerate}
\end{rem}

\subsection{Augmented base locus}
In this subsection, we recall the definition and basic properties of the augmented base loci.

\begin{defn}
Let $X$ be a proper variety over an algebraically closed field $k$ and $D$ be a Cartier divisor on $X$.
The \emph{stable base locus} of $D$ is the closed subset
\[
\B(D) : = \bigcap_{m \in \N_{>0}} \Bs(|mD|).
\]
\end{defn}

Noting that $\B(D)=\B(nD)$ for every integer $n>0$, we define the stable base locus $\B(E)$ of a $\Q$-Cartier divisor $E$ by $\B(E) : = \B(mE)$ where $m>0$ is any integer such that $mE$ is Cartier.

\begin{defn}
Let $X$ be a proper variety over an algebraically closed field $k$ and $D$ be a $\Q$-Cartier divisor on $X$.
The \emph{augmented base locus} of $D$ is the closed subset
\[
\B_+(D) : = \bigcap_{A} \B(D-A),
\]
where the intersection runs over all ample $\Q$-Cartier divisors $A$.
\end{defn}

\begin{rem}[\textup{\cite[Example 1.7]{ELMNP}}]\label{B+ basic}
Let $X$ be a proper variety over an algebraically closed field $k$ and $D$ be a $\Q$-Cartier divisor on $X$.
$D$ is ample if and only if $\B_+(D)=\emptyset$, and $D$ is big if and only if $\B_+(D) \neq X$.
\end{rem}

\begin{lem}[\textup{\cite[Theorem 1.4]{BirB+}, cf.~\cite[Theorem 0.3]{Nak}}]\label{B+ big}
Let $X$ be a projective variety over an algebraically closed field $k$, $L$ be a nef invertible sheaf on $X$ and $Z \subseteq X$ be a closed subvariety with $\dim Z \ge 1$.
If we have $Z \not\subseteq \B_+(L)$, then the restriction $L|_Z$ is big.
\end{lem}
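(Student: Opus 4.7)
The plan is to unwind the definition of the augmented base locus and produce an effective divisor decomposition of $L$ when restricted to $Z$. First, because $Z \not\subseteq \B_+(L) = \bigcap_A \B(L-A)$, there must exist an ample $\Q$-Cartier divisor $A$ on $X$ with $Z \not\subseteq \B(L-A)$. Choosing a positive integer $m$ such that both $mL$ and $mA$ are Cartier, the definition of the stable base locus yields some $N > 0$ together with an effective Cartier divisor $D \in |mN(L-A)|$ whose support does not contain $Z$; concretely, $D$ arises as the zero locus of a global section of $\sO_X(mN(L-A))$ that does not vanish identically on $Z$.

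Next, I would restrict the linear equivalence $mNL \sim mNA + D$ to $Z$. Since $Z \not\subseteq \Supp(D)$, the divisor $D|_Z$ is a bona fide effective Cartier divisor on $Z$. The restriction $mNA|_Z$ is ample on $Z$, as ampleness of a $\Q$-Cartier divisor is preserved under restriction to closed subvarieties. Hence $mNL|_Z$ is linearly equivalent to the sum of an ample $\Q$-Cartier divisor and an effective Cartier divisor on $Z$. Because $\dim Z \ge 1$, such a sum is big on $Z$, which then forces $L|_Z$ to be big, completing the argument.

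The only technical subtlety is ensuring that the restriction $D|_Z$ is an honest effective Cartier divisor on $Z$, which requires the condition $Z \not\subseteq \Supp(D)$; this is precisely what the choice of $D$ as the divisor of a section not vanishing on $Z$ guarantees. I would also remark that the nefness hypothesis on $L$ is not strictly needed for this direction of the implication; it enters rather in the converse, namely the theorem of Nakamaye and its extension to positive characteristic by Birkar, which asserts that $\B_+(L)$ is the union of all closed subvarieties on which $L$ fails to be big. Here we only need the easier containment, and the proof is essentially formal once the definitions are unpacked.
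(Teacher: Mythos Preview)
Your argument is correct: unwinding the definition of $\B_+(L)$ to produce $mNL \sim mNA + D$ with $A$ ample and $Z \not\subseteq \Supp(D)$, then restricting to $Z$ to write $L|_Z$ as a positive rational multiple of (ample $+$ effective), is exactly the standard elementary proof of this direction, and your remark that nefness is only needed for the converse is accurate.

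The paper itself does not supply a proof of this lemma; it simply cites \cite[Theorem 1.4]{BirB+} and \cite[Theorem 0.3]{Nak}. Those references establish the full equality $\B_+(L) = \bigcup \{ Z : L|_Z \text{ not big}\}$ for nef $L$, whose nontrivial content is the reverse inclusion. What you have written is the easy containment, which is all the paper ever uses (in the proofs of Lemma~\ref{intersection with nef big} and Theorem~\ref{Lower bound for seshadri}). So your approach is more direct than deferring to the cited theorems, and it makes transparent that the deep input of Nakamaye--Birkar is not actually required for the applications in this paper.
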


\subsection{Seshadri constant}
In this subsection, we recall the definition and basic properties of the Seshadri constant.

\begin{defn}[\textup{cf.~\cite[Proposition 5.1.5]{Laz}}]\label{defn ses}
Let $X$ be a proper variety over an algebraically closed field $k$,  $D$ be a nef $\Q$-Cartier divisor on $X$ and $x \in X(k)$ be a closed point.
The \emph{Seshadri constant} of $D$ at $x$ is
\[
\ses{D} : = \inf_{C} \frac{(D \cdot C)}{\mult_x(C)},
\]
where the infimum runs over all curves $C$ in $X$ containing $x$.
\end{defn}

\begin{defn}
Let $X$ be a proper variety over an algebraically closed field $k$,  $Z=\{x_1, \dots, x_r\} \subseteq X$ be a finite set of closed points of $X$ and $I_Z \subseteq \sO_X$ be the reduced ideal which defines $Z$.
\begin{enumerate}[label=\textup{(\roman*)}]
\item For an integer $\ell \ge -1$,  a coherent sheaf $\mathcal{F}$ \emph{separates $\ell$-jets} at $Z$ if the restriction morphism
\[
H^0(X, \mathcal{F}) \to H^0(X, \mathcal{F} \otimes \sO_X/I_Z^{\ell+1})
\]
is surjective.
\item For a coherent sheaf $\mathcal{F}$, we denote by $s(\mathcal{F}; Z)$ the largest integer $\ell \ge -1$ such that $\mathcal{F}$ separates $\ell$-jets at $Z$.
\item $($\cite[Definition 7.2.4]{Mur}$)$ For a Cartier divisor $D$ on $X$, we set
\[
\ejet[Z]{D}: =\limsup_{m \to \infty} \frac{s(\sO_X(mD); Z)}{m}
\]
If $Z=\{x\}$ is a single point, then we denote $\ejet{D} : = \ejet[Z]{D}$.
\end{enumerate}
\end{defn}

By Lemma \ref{ejet basic}(3) below,  we define $\ejet[Z]{E}$ of a $\Q$-Cartier divisor $E$ by 
\[
\ejet[Z]{E} : = \ejet[Z]{mE}/m
\] where $m>0$ is any integer such that $mE$ is Cartier.

\begin{lem}[\textup{\cite[Lemma 7.2.5]{Mur}}]\label{ejet basic}
Let $X$ be a proper variety over an algebraically closed field $k$,  $Z=\{x_1, \dots, x_r\} \subseteq X$ be a finite set of closed points of $X$ and $I_Z \subseteq \sO_X$ be the reduced ideal which defines $Z$.
\begin{enumerate}[label=\textup{(\arabic*)}]
\item For coherent sheaves $\mathcal{F}$ and $\mathcal{G}$,  if $s(\mathcal{F};Z), s(\mathcal{G}; Z)>0$, then we have
\[
s(\mathcal{F}; Z)+s(\mathcal{G}; Z) \le s(\mathcal{F} \otimes \mathcal{G} ; Z).
\]
\item For a Cartier divisor $D$ on $X$,  we have
\[
\ejet[Z]{D} = \lim_{m \to \infty} \frac{s(\sO_X(mD); Z)}{m} = \sup_{m \in \N_{>0}} \frac{s(\sO_X(mD); Z)}{m}
\]
\item For a Cartier divisor $D$ and an integer $n>0$,  we have
\[
\ejet[Z]{nD}=n\ejet[Z]{D}.
\]
\end{enumerate}
\end{lem}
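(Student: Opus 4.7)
The three parts are linked: (1) is the substantive content, while (2) and (3) follow formally from it by standard arguments about superadditive sequences.

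For (1), my plan is to reduce to a local computation at each point $x \in Z$, with $(R, \mathfrak{m}) = (\mathcal{O}_{X,x}, \mathfrak{m}_x)$ and stalks $M = \mathcal{F}_x$, $N = \mathcal{G}_x$. Setting $a = s(\mathcal{F}; Z)$ and $b = s(\mathcal{G}; Z)$, I would prove by descending induction on $c$ from $a+b$ down to $0$ that every element of $\mathfrak{m}^c(M \otimes_R N)/\mathfrak{m}^{a+b+1}(M \otimes_R N)$ lies in the image of the multiplication map
\[
H^0(\mathcal{F}) \otimes_k H^0(\mathcal{G}) \to (M \otimes_R N)/\mathfrak{m}^{a+b+1}(M \otimes_R N).
\]
For a representative $\mu(e \otimes f)$ with $\mu \in \mathfrak{m}^c$, factor $\mu = \sum_l \mu_l \mu_l'$ with $\mu_l \in \mathfrak{m}^p$, $\mu_l' \in \mathfrak{m}^q$, $p + q = c$, $p \le a$, $q \le b$ (possible exactly when $c \le a+b$). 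Using the jet-separation hypotheses, pick $v_l \in H^0(\mathcal{F})$ and $w_l \in H^0(\mathcal{G})$ whose stalks agree with $\mu_l e$ mod $\mathfrak{m}^{a+1}M$ and $\mu_l' f$ mod $\mathfrak{m}^{b+1}N$ respectively. Expanding $v_l \otimes w_l$ and tracking orders, the cross terms land in $\mathfrak{m}^{p+b+1} + \mathfrak{m}^{a+q+1} + \mathfrak{m}^{a+b+2} \subseteq \mathfrak{m}^{c+1}(M \otimes N)$, which by induction is already in the image. For the multi-point case, $\mathcal{O}_X/I_Z^{\ell+1}$ splits as $\bigoplus_i \mathcal{O}_{X,x_i}/\mathfrak{m}_i^{\ell+1}$, so sections can be chosen with prescribed jet at one $x_i$ and with stalks in $\mathfrak{m}_j^{a+1}M_j$ (resp.\ $\mathfrak{m}_j^{b+1}N_j$) at the other points; their contributions at $x_j$, $j \ne i$, then lie in $\mathfrak{m}_j^{a+b+2}$ and vanish in the target.

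For (2), set $a_m = s(\mathcal{O}_X(mD); Z)$; part (1) applied to $\mathcal{F} = \mathcal{O}_X(mD)$ and $\mathcal{G} = \mathcal{O}_X(nD)$ yields the superadditivity $a_m + a_n \le a_{m+n}$ whenever both are positive. Fekete's lemma for superadditive sequences then forces $\lim_m a_m/m = \sup_m a_m/m$, which is precisely the claim. For (3), part (2) applied to $nD$ gives
\[
\ejet[Z]{nD} = \sup_m \frac{s(\mathcal{O}_X(mnD); Z)}{m} = n \sup_m \frac{s(\mathcal{O}_X(mnD); Z)}{mn} \le n\ejet[Z]{D},
\]
while the reverse inequality follows by restricting the supremum defining $n\ejet[Z]{D}$ to the cofinal subsequence of multiples of $n$.

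The main obstacle is the bookkeeping in (1): one must factor $\mu$ with the correct order bounds on both factors and verify that all cross terms really land in $\mathfrak{m}^{c+1}(M \otimes N)$. Once this technical local step is in place, the passage from (1) to (2) and (3) is a formal consequence of superadditivity.
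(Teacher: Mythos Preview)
Your argument for (1) is correct and is the standard one; the paper itself simply defers to \cite[Lemma~7.2.5]{Mur}, remarking that the multi-point case is analogous to the single-point case treated there, so there is nothing further to compare. In fact your local computation goes through verbatim under the weaker hypothesis $s(\mathcal{F};Z),\, s(\mathcal{G};Z)\ge 0$.

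There is, however, a gap in your passage from (1) to (2). Fekete's lemma requires $a_{m+n}\ge a_m+a_n$ for \emph{all} $m,n\ge 1$, whereas (1) only supplies this when $a_m,a_n>0$, and that conditional form does not force the limit to exist: the abstract sequence with $a_m=m$ for $m$ even and $a_m=-1$ for $m$ odd satisfies your hypothesis (and $a_m\ge -1$), yet $\liminf a_m/m=0<1=\limsup a_m/m$. What \emph{does} follow from (1) is the equality $\limsup=\sup$: since $a_m\ge -1$ forces $\liminf a_m/m\ge 0$, any $m_0$ with $a_{m_0}/m_0>\limsup$ would have $a_{m_0}>0$, whence $a_{km_0}/(km_0)\ge a_{m_0}/m_0$ for all $k$ by iterating (1), a contradiction. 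This weaker statement already suffices for (3): one checks directly from (1) that $a_j/j\le \sup_k a_{nk}/(nk)$ for every $j$ (for $a_j>0$ use $a_{nj}\ge n a_j$, for $a_j\le 0$ use that the right-hand sup is $\ge 0$), so $\sup_j a_j/j=\sup_k a_{nk}/(nk)$ and hence $n\,\ejet[Z]{D}=\ejet[Z]{nD}$. But the existence of the honest limit in (2) needs more than your Fekete invocation --- either a positivity hypothesis on $D$ ensuring $a_m>0$ for all large $m$, or an argument specific to line bundles ruling out the ``period $>1$'' behaviour exhibited by the abstract counterexample.
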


\begin{proof}
The proof is similar to that of the case where $Z$ is a single point (see \cite[Lemma 7.2.5]{Mur}).
\end{proof}

\begin{lem}[\textup{\cite[Proposition 7.2.10]{Mur}}]\label{ses vs ejet}
Let $X$ be a proper variety over an algebraically closed field $k$,  $D$ be a Cartier divisor on $X$ and $x \in X(k)$ be a closed point.
If $D$ is nef and $x \not\in \B_+(D)$, then one has
\[
\ses{D}=\ejet{D}.
\]
\end{lem}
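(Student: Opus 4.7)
The plan is to establish the two inequalities $\ejet{D} \le \ses{D}$ and $\ses{D} \le \ejet{D}$ separately.

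For $\ejet{D} \le \ses{D}$, I fix an integer $m \ge 1$ with $\ell := s(\sO_X(mD); x) \ge 0$ and an arbitrary curve $C \subseteq X$ containing $x$. The main step is to produce a section $s \in H^0(X, \sO_X(mD))$ whose germ at $x$ lies in $\m_x^{\ell} \cdot \sO_X(mD)_x$ but whose restriction $s|_C$ is not identically zero. Since jet separation at level $\ell$ lets one prescribe an arbitrary leading term in $\m_x^\ell/\m_x^{\ell+1}$, and the leading terms vanishing on the tangent cone of $C$ at $x$ form a proper subspace, such an $s$ exists. Then pulling back along the normalization $\nu : \tilde{C} \to C$ and summing orders of vanishing at preimages of $x$ yields $(mD \cdot C) \ge \ell \cdot \mult_x(C)$; dividing by $m \cdot \mult_x(C)$, passing to the supremum in $m$ and then the infimum in $C$, gives the desired inequality. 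The hypothesis $x \notin \B_+(D)$ enters here to ensure that $\ell$ grows linearly in $m$ (via Lemma \ref{ejet basic}(1)), so that the above bound is non-trivial.

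For the reverse inequality $\ses{D} \le \ejet{D}$, I would pass to the blowup $\pi : X' \to X$ at $x$ with exceptional divisor $E$ (taking $x$ smooth for the model calculation). For any rational $t < \ses{D}$, the divisor $\pi^*D - tE$ is nef by the definition of the Seshadri constant, and the numerical identity $(\pi^*D - tE)^n = D^n - t^n$ together with the hypothesis $x \notin \B_+(D)$ forces positivity of the top self-intersection, so it is also big. Asymptotic Riemann--Roch for nef and big divisors then gives
\[
  h^0(X, \sO_X(mD)) - h^0(X, \sO_X(mD) \otimes \m_x^{\lceil mt \rceil}) \sim \frac{(mt)^n}{n!},
\]
matching $\dim_k(\sO_{X,x}/\m_x^{\lceil mt \rceil})$ to leading order. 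Allowing a small asymptotic slack in $t$, the restriction map $H^0(X, \sO_X(mD)) \to H^0(X, \sO_X(mD) \otimes \sO_X/\m_x^{\lceil mt \rceil})$ is surjective for $m$ sufficiently large, producing $\ejet{D} \ge t$, and letting $t \nearrow \ses{D}$ concludes.

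The main obstacle is carrying out the second step at a singular point $x$: the identity $\pi_*\sO_{X'}(-\ell E) = \m_x^{\ell}$ can fail, the blowup need not be normal, and the clean volume identity $(-E)^n = 1$ is unavailable. I would handle this by following Musta\c{t}\u{a}--Schwede and Murayama, working with the normalized blow-up of $\m_x$, bounding $\vol(\pi^*D - tE)$ from below via a Siu-type numerical inequality combined with the $\B_+$-hypothesis, and invoking the characteristic-$p$ machinery of trace maps of Frobenius to upgrade asymptotic volume bounds into genuine surjectivity of the jet restriction map, thereby circumventing the absence of Kodaira-type vanishing in positive characteristic.
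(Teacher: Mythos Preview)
The paper does not give its own proof of this lemma; it is simply cited from \cite[Proposition~7.2.10]{Mur}. So there is no in-paper argument to compare against, and your task is really to reconstruct Murayama's proof.

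Your argument for $\ejet{D}\le\ses{D}$ is essentially the standard one and is fine. Note, however, that this inequality needs no hypothesis beyond nefness; the condition $x\notin\B_+(D)$ plays no role here.

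The gap is in the reverse inequality. Your volume-counting argument does not yield surjectivity of the jet map. Even using asymptotic Riemann--Roch for nef divisors (which gives $h^0(mG)=G^n m^n/n!+O(m^{n-1})$, not merely $o(m^n)$), both the dimension of the image of $H^0(X,\sO_X(mD))\to\sO_{X,x}/\m_x^{\lceil mt\rceil}$ and the dimension of the target are $t^n m^n/n!+O(m^{n-1})$, and the sign of the $O(m^{n-1})$ discrepancy is uncontrolled. Introducing a ``slack'' $t'<t$ does not help: passing to the smaller jet space cuts both the target dimension and the image dimension by the same leading term, leaving you with the same $O(m^{n-1})$ ambiguity. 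A pure $h^0$-count cannot close this; one needs an actual vanishing statement.

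The standard route is to first prove the equality for \emph{ample} $D$ at a smooth point: for rational $t<\ses{D}$ the divisor $\pi^*D-tE$ is ample on the blow-up, so Serre vanishing (valid in every characteristic) gives $H^1(X',\sO_{X'}(m(\pi^*D-tE)))=0$ for $m\gg0$ along a suitable arithmetic progression, which is exactly the surjectivity you want. One then passes to nef $D$ with $x\notin\B_+(D)$ by writing $D\sim_{\Q}A+N$ with $A$ ample, $N\ge 0$ effective, and $x\notin\Supp(N)$, and comparing both invariants for $D$ and $A$ as $A$ approaches $D$. No Frobenius trace machinery is required for this statement; your last paragraph conflates this lemma with the genuinely characteristic-$p$ results (such as Proposition~\ref{Mur 8.1.1}) that appear later in the paper.
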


\begin{lem}\label{s semiconti}
Let $X$ be a projective variety over an algebraically closed field $k$ and $D$ be a Cartier divisor on $X$.
Then the map
\[
X(k) \to \Z \ ; \ x \mapsto s(\sO_{X}(D); x)
\]
is lower semicontinuous.
\end{lem}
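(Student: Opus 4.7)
The plan is to show that for every integer $n \geq -1$, the set
\[
  U_n := \{ x \in X(k) \mid s(\sO_X(D); x) \geq n \}
\]
is open, which gives the required lower semicontinuity. To do this I will reinterpret the condition ``$\sO_X(D)$ separates $n$-jets at $x$'' as the surjectivity at $x$ of a morphism of coherent sheaves on $X$, and then invoke the standard fact that the surjectivity locus of such a morphism is open.

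Concretely, let $\mathcal{I}_\Delta \subseteq \sO_{X \times X}$ be the ideal sheaf of the diagonal, and let $\Delta^{(n)} \subseteq X \times X$ denote the closed subscheme defined by $\mathcal{I}_\Delta^{n+1}$. Writing $p_1, p_2 : \Delta^{(n)} \to X$ for the two projections, I set
\[
  P^n(\sO_X(D)) := (p_1)_* \bigl( p_2^* \sO_X(D) \bigr),
\]
the sheaf of $n$-th order principal parts of $\sO_X(D)$. This is a coherent $\sO_X$-module, and there is a canonical evaluation morphism
\[
  \mathrm{ev}_n : H^0(X, \sO_X(D)) \otimes_k \sO_X \;\longrightarrow\; P^n(\sO_X(D)),
\]
obtained by pulling back a global section along $p_2$ and then pushing it forward along $p_1$. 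By the definition of $\Delta^{(n)}$, the fiber of $\mathrm{ev}_n$ over a closed point $x$ is naturally identified with the restriction map
\[
  H^0(X, \sO_X(D)) \;\longrightarrow\; H^0\bigl( X, \sO_X(D) \otimes \sO_X/\m_x^{n+1} \bigr).
\]
Thus $\sO_X(D)$ separates $n$-jets at $x$ precisely when $\mathrm{ev}_n$ is surjective at $x$ (equivalently, surjective on fibers over $x$, by Nakayama).

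Now I apply the standard semicontinuity result: if $\phi : \mathcal{G} \to \mathcal{H}$ is a morphism of coherent sheaves on a noetherian scheme, then the locus of points where $\phi$ is surjective is open, since the cokernel $\mathrm{Coker}(\phi)$ is coherent and its support is closed. Applying this to $\phi = \mathrm{ev}_n$ shows that $U_n$ is open, and hence that $x \mapsto s(\sO_X(D); x)$ is lower semicontinuous. I do not anticipate any real obstacle; the only point to handle carefully is the identification of the fiber of $P^n(\sO_X(D))$ at $x$ with $\sO_X(D) \otimes \sO_{X,x}/\m_x^{n+1}$, which follows because $p_1$ is finite on $\Delta^{(n)}$ and $p_2^*\sO_X(D)$ is locally free there.
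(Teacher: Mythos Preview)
Your proof is correct and takes essentially the same approach as the paper: both use the thickening $\Delta^{(n)}$ of the diagonal to package the jet map as a morphism of coherent sheaves and then conclude by Nakayama/closedness of the support of the cokernel. The paper simply unpacks your principal-parts construction by hand on an affine neighborhood (after trivializing $\sO_X(D)$ near $x$), whereas you phrase it globally via $P^n(\sO_X(D))$; the content is identical.
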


\begin{proof}
Fix a closed point $x \in X$ and we set $\ell : = s(\sO_{X}(D); x)$.
It suffices to show that there is an open neighborhood $U$ of $x$ such that for every closed point $y \in U$, $D$ separates $\ell$-jets at $y$.
If $\ell=-1$,  then there is nothing to say.
We assume that $\ell \ge 0$.
Since $\sO_{X}(D)$ is globally generated at $x$,  after replacing $D$ by a suitable divisor in $|D|$, we may assume that $x \not\in \Supp(D)$.

Since $V : = H^0(X, \sO_X(D)) \subseteq K(X)$ is contained in $\sO_{X,x}$,  we may find an open affine neighborhood $U=\Spec(A)$ of $x$ such that $V \subseteq A$.
We may also assume that $D|_U=0$.
Let $I \subseteq A \otimes_k A$ be the ideal of the diagonal immersion $U \hookrightarrow U \times_k U$ and we set
\[
U^{(\ell)} : = Z(I^{\ell+1}) = \Spec((A \otimes_k A)/I^{\ell+1})  \subseteq U \times_k U.
\]
We consider $M: = \sO_{U^{(\ell)}} = (A\otimes_k A)/I^{\ell+1}$ as an $A$-module by the action on the second factor.

Take any closed point $y \in U$ and let $\m_y \subseteq A$ be the corresponding maximal ideal.  
Then one has the following commutative diagram
\[
\xymatrix{
V \ar@{^{(}->}[r] & A \ar@{->>}[r] \ar^-{f}[d]& A/\m_y^{\ell+1} \ar^-{g}[d] \\
& M \ar@{->>}[r] & M/\m_y M
},\]
where $f$ is the $k$-linear map $a \mapsto \overline{a \otimes 1}$ and $g$ is the induced map which is bijective.
Then $D$ separates $\ell$-jets at $y$ if and only if the horizontal morphism $V \hookrightarrow A \onto A/\m_y^{\ell+1}$ is surjective,  which is equivalent to the surjectivity of the composite map $V \xrightarrow{f} M \onto M/\m_y M$.
Let $N \subseteq M$ be the sub $A$-module generated by $f(V) \subseteq M$.
Then it follows from the Nakayama's lemma that $D$ separates $\ell$-jets at $y$ if and only if $N_{\m_y}=M_{\m_y}$, which is an open condition as desired.
\end{proof}

\begin{prop}\label{ejet semiconti}
Let $X$ be a projective variety over an algebraically closed field $k$, $E$ be a Cartier divisor on $X$ and $\alpha>0$ be a real number.
Then the locus
\[
\{x \in X(k) \mid \ejet{E}>\alpha\}
\]
is open.
\end{prop}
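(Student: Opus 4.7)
The plan is to combine the pointwise supremum characterization of $\ejet{E}$ from Lemma \ref{ejet basic}(2) with the lower semicontinuity statement from Lemma \ref{s semiconti}. The key observation is that $\ejet{E}$ is a supremum over $m$ of the quantities $s(\sO_X(mE); x)/m$, and each of these is lower semicontinuous in $x$; the supremum of a family of lower semicontinuous functions is again lower semicontinuous, which is precisely what is needed.

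First, I would fix a positive integer $m_0$ such that $m_0 E$ is Cartier (if $E$ is already Cartier we may take $m_0 = 1$, which is the case in the statement), so that the displayed convention $\ejet{E} = \ejet{m_0 E}/m_0$ applies. By Lemma \ref{ejet basic}(2), applied with $Z = \{x\}$ and $D = m_0 E$, we have
\[
\ejet{m_0 E} \;=\; \sup_{m \in \N_{>0}} \frac{s(\sO_X(m\, m_0 E); x)}{m}.
\]
Dividing by $m_0$, the inequality $\ejet{E} > \alpha$ is therefore equivalent to the existence of some positive integer $m$ with $s(\sO_X(m \, m_0 E); x) > m m_0 \alpha$.

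Consequently, the locus in question can be written as the union
\[
\{x \in X(k) \mid \ejet{E} > \alpha\} \;=\; \bigcup_{m \in \N_{>0}} \bigl\{ x \in X(k) \;\big|\; s(\sO_X(m \, m_0 E); x) > m m_0 \alpha \bigr\}.
\]
For each fixed $m$, Lemma \ref{s semiconti} asserts that the function $x \mapsto s(\sO_X(m\,m_0 E); x)$ is lower semicontinuous, so the preimage of the open ray $(m m_0 \alpha, \infty)$ is open in $X(k)$. Thus the right-hand side is a union of open sets and is therefore open, which is exactly the conclusion of the proposition.

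There is no real obstacle here beyond correctly unpacking the definition: the work has already been done in Lemma \ref{ejet basic}(2) (to replace a $\limsup$ by a $\sup$, so that $\ejet{E} > \alpha$ is witnessed by a single $m$) and in Lemma \ref{s semiconti} (to upgrade openness for each fixed $m$). The mild point worth noting is that the strict inequality $> \alpha$ is essential; one cannot in general expect the locus $\{\ejet{E} \geq \alpha\}$ to be closed by this argument, since a supremum of lower semicontinuous functions need not have closed super-level sets at the boundary value.
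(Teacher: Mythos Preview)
Your proof is correct and follows exactly the approach the paper indicates: the paper's proof is the single line ``It follows from Lemma \ref{ejet basic} (2) and Lemma \ref{s semiconti},'' and you have unpacked precisely how those two lemmas combine---the $\sup$ formula from Lemma \ref{ejet basic}(2) lets a single $m$ witness the strict inequality, and Lemma \ref{s semiconti} makes each term open. The introduction of $m_0$ is harmless but unnecessary here since $E$ is already assumed Cartier.
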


\begin{proof}
It follows from Lemma \ref{ejet basic} (2) and Lemma \ref{s semiconti}.
\end{proof}

\subsection{Non-fixed effective divisors}
In this subsection, we collect several properties concerning a divisor $D$ on a proper variety $X$ with $h^0(X, \sO_X(D)) \ge 2$.

  \begin{lem}\label{universal family}
  Let $M$ be an invertible sheaf on a proper variety $X$ over an  algebraically closed field $k$.
  We further assume that $H^0(X,M) \neq 0$.
  We write $\PP : = \PP(H^0(X,M)^*)$
  \begin{enumerate}[label=\textup{(\arabic*)}]
  \item There exists an effective Cartier divisor $\mathcal{Z}_M \subseteq X \times_k \PP$ such that 
  \begin{enumerate}[label=\textup{(\roman*)}]
  \item $\mathcal{Z}_M$ is flat over $\PP$.
  \item For every closed point $t \in \PP$, the fiber $(\mathcal{Z}_M)_t \subseteq X$ is contained in $|M|$. 
  \item The induced map
  \[
  \Phi: \PP(k) \to |M| \ ; \ t \mapsto (\mathcal{Z}_M)_t
  \]
  is bijective.
  \end{enumerate}
  \item Let $N$ be another invertible sheaf with $H^0(X,N) \neq 0$.
  Then there exists a $k$-morphism $\mu: \PP(H^0(X,M)^*) \times_k \PP(H^0(X,N)^*) \to \PP(H^0(X, M \otimes N)^*)$ which corresponds to the map
  \[
  |M| \times |N| \to |M \otimes N| \ ; \ (D, E) \mapsto D+E.
  \]
  \end{enumerate}
  \end{lem}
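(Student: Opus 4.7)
The plan is to obtain $\mathcal{Z}_M$ as the zero scheme of a tautological section on $X \times_k \PP$. Write $V := H^0(X,M)$; then $\PP = \PP(V^*)$ carries a tautological injection $\sO_\PP(-1) \hookrightarrow V \otimes_k \sO_\PP$ whose fiber at a closed point $t$ is the one-dimensional subspace $\ell_t \subseteq V$ which $t$ represents. Pulling this back to $X \times_k \PP$ and composing with the canonical evaluation $V \otimes_k \sO_X \to M$ yields a morphism $p_\PP^* \sO_\PP(-1) \to p_X^* M$, equivalently a global section $\sigma$ of the line bundle $p_X^* M \otimes p_\PP^* \sO_\PP(1)$. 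I will define $\mathcal{Z}_M$ to be the zero scheme of $\sigma$. Its restriction to each fiber $X \times \{t\}$ is, after trivializing $\sO_\PP(1)$ near $t$, a nonzero section of $M$ generating $\ell_t$, so $\sigma$ is nonzero on the integral scheme $X \times_k \PP$, and $\mathcal{Z}_M$ is an effective Cartier divisor.

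Condition (ii) now reads off directly: the fiber $(\mathcal{Z}_M)_t$ is the divisor of zeros of the section spanning $\ell_t$, which is the element of $|M|$ indexed by $t$. Since two nonzero sections of $M$ define the same Cartier divisor if and only if they differ by a scalar in $k^* = H^0(X, \sO_X^*)$, the map $\Phi$ in (iii) is a bijection. The main obstacle is flatness in (i), which I handle via the local criterion for a hypersurface: the statement is local on $X \times_k \PP$, and there $\sigma$ is represented by a regular function $f$; its restriction $f|_{X \times \{t\}}$ to every fiber is nonzero by the preceding observation, hence a nonzerodivisor since $X$ is integral, so $\sO_{X \times_k \PP}/(f)$ is $\sO_\PP$-flat by the standard criterion that a function cutting out a nonzerodivisor in every fiber of a flat family yields a flat quotient.

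For part (2), put $W := H^0(X,N)$. Multiplication of sections defines a $k$-linear map $V \otimes_k W \to H^0(X, M \otimes N)$. Combining on $\PP(V^*) \times_k \PP(W^*)$ the external tensor product of the two tautological sub-line bundles with this multiplication produces a composite
\[
  \sO(-1) \boxtimes \sO(-1) \hookrightarrow V \otimes_k W \otimes \sO \longrightarrow H^0(X, M \otimes N) \otimes_k \sO.
\]
At a closed point corresponding to lines $\ell_V, \ell_W$ with generators $s, t$, this sends $s \otimes t$ to the product $s \cdot t \in H^0(X, M \otimes N)$, which is nonzero because $X$ is integral and $s, t$ are; hence the composite is fibrewise injective, so an injection of invertible sheaves. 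The universal property of $\PP(H^0(X, M \otimes N)^*)$ then supplies the desired morphism $\mu$, and the identity $\Div(s \cdot t) = \Div(s) + \Div(t)$ shows that on closed points $\mu$ realizes $(D,E) \mapsto D + E$.
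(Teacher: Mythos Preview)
Your proof is correct and is essentially the same as the paper's: both construct $\mathcal{Z}_M$ as the zero locus of the tautological section of $p_X^*M \otimes p_\PP^*\sO_\PP(1)$ (the paper writes this section in coordinates as $\sum_i p_1^*(s_i) \otimes p_2^*(s_i^*)$ after choosing a basis, whereas you describe it invariantly via the tautological sub-line-bundle and evaluation), and both deduce flatness from the local criterion that a nonzerodivisor in every fiber of a flat family gives a flat quotient (the paper cites this as \cite[Theorem 22.6]{Mat}). For part (2) the paper again writes the morphism $\mu$ explicitly in coordinates, while you invoke the universal property of projective space; these are the same map.
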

  
  \begin{proof}
  (1) Take a $k$-basis $s_0, s_1, \dots, s_m \in H^0(X,M)$ and let $s_0^*, \dots, s_m^* \in H^0(X,M)^*$ be the dual basis.
  Consider the invertible sheaf $\mathcal{M} : = p_1^*M \otimes_{\sO_X} p_2^* \sO_{\PP}(1)$ on $X \times_k \PP$ and the global section
  \[
  s : = \sum_{i=0}^m p_1^*(s_i) \otimes p_2^*(s_i^*) \in H^0(X \times \PP, \mathcal{M}).
  \]
  Let $\mathcal{Z}_{M}$ be the closed subscheme corresponding to the effective Cartier divisor $\Div_{\mathcal{M}}(s)$.
  The assertions in (ii) and (iii) are obvious and (i) follows from \cite[Theorem 22.6]{Mat}.
  
  For (2), we fix a $k$-basis $t_0, \dots, t_n$ of $H^0(X, N)$ and a $k$-basis $u_0, \dots, u_l \in H^0(X, M \otimes N)$.
  For every $i,j$, we can write 
  \[
  s_i \otimes t_j = \sum_{q=1}^l a_{i,j}^{(q)} u_q
  \]
  for some $a_{i,j}^{(q)} \in k$.
  Then we can show that a map 
  \[
  \mu: \PP(H^0(X,M)^*)(k) \times \PP(H^0(X,N)^*)(k) \to \PP(H^0(X, M \otimes N)^*)(k)
  \] which sends a closed point $( [b_0, \dots, b_m], [c_0, \dots, c_n])$ of $\PP(H^0(X,M)^*) \times_k \PP(H^0(X,N)^*)$ to $[\sum_{i,j} b_i c_j a_{i,j}^{(0)}, \sum_{i,j} b_i c_j a_{i,j}^{(1)}, \dots , \sum_{i,j} b_i c_j a_{i,j}^{(l)}]$ is a (well-defined) $k$-morphism, which corresponds to the map
  \[
  |M| \times |N| \to |M \otimes N| \ ; \ (D, E) \mapsto D+E.
  \]
  \end{proof}
  
  \begin{lem}\label{movable divisor}
  Let $L$ be an invertible sheaf on a proper variety $X$ over an algebraically closed field $k$.
  Then the following conditions are equivalent to each other.
  \begin{enumerate}[label=\textup{(\alph*)}]
  \item $h^0(X,L) \ge 2$.
  \item $\#|L| \ge 2$.
  \item For any closed point $x \in X$, there exists a divisor $D \in |L|$ such that $x \in \Supp(D)$.
  \end{enumerate}
  \end{lem}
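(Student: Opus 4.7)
My plan is to prove (a) $\iff$ (b) directly from the identification $|L| \leftrightarrow \PP(H^0(X, L))$, then prove (a) $\Rightarrow$ (c) via the evaluation map, and finally prove (c) $\Rightarrow$ (a) by contraposition. The first equivalence is immediate: the set $|L|$ is in canonical bijection with the $k$-points of a projective space of dimension $h^0(X, L) - 1$ (non-zero sections of $L$ modulo scalars), which has at least two points precisely when $h^0(X, L) \ge 2$.

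For (a) $\Rightarrow$ (c), I would fix an arbitrary closed point $x \in X$ and consider the evaluation morphism
\[
\operatorname{ev}_x : H^0(X, L) \to L \otimes_{\sO_X} k(x).
\]
Since the target is one-dimensional over $k$, rank-nullity forces $\dim_k \Ker(\operatorname{ev}_x) \ge h^0(X, L) - 1 \ge 1$ under (a). Taking any non-zero section $s$ in this kernel, the divisor $\Div_L(s) \in |L|$ contains $x$ in its support because $s$ vanishes at $x$.

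For (c) $\Rightarrow$ (a), I would argue by contrapositive. If $h^0(X, L) = 0$ then $|L|$ is empty, so (c) fails trivially. If $h^0(X, L) = 1$ then $|L|$ is a singleton $\{D_0\}$; the support of $D_0$ has pure codimension one (or is empty), hence is a proper closed subset of $X$ whenever $\dim X \ge 1$, and any closed point $x \in X \setminus \Supp(D_0)$ then violates (c). The only subtle point is the degenerate case $\dim X = 0$, where $X$ is a reduced closed point, every effective Cartier divisor has empty support, and conditions (a) and (c) fail simultaneously; I would simply flag this case explicitly rather than treat it as a real obstacle. Overall the argument is routine, relying solely on the evaluation map and the bijection $|L| \leftrightarrow \PP(H^0(X, L))$.
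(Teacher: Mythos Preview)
Your proof is correct and matches the paper's approach essentially verbatim: the paper also uses the evaluation/restriction map $H^0(X,L) \to L \otimes \kappa(x)$ for (a) $\Rightarrow$ (c) and the bijection $|L| \cong (H^0(X,L)\setminus\{0\})/k^*$ for (b) $\Leftrightarrow$ (a), organizing the implications as the cycle (a) $\Rightarrow$ (c) $\Rightarrow$ (b) $\Rightarrow$ (a) rather than your pairwise treatment. Your explicit handling of the $\dim X = 0$ case is a harmless elaboration of what the paper leaves as ``obvious.''
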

  
  \begin{proof}
  (a) $\Rightarrow$ (c): Since the kernel of the restriction map
  \[
  H^0(X, L) \to H^0(X, L \otimes \kappa(x)) \cong k
  \]
  is non-zero, we may find a non-zero global section $s \in H^0(X,L)$ which vanishes at $x$.
  Then $x$ is contained in the support of $D : = \Div_L(s)$.
    
  (c) $\Rightarrow$ (b): Obvious.
  
  (b) $\Rightarrow$ (a): This follows from the bijection $|L| \cong (H^0(X,L) \setminus \{0\}) /k^*$ (cf.~\cite[Proposition II. 6.15]{Har}).

  \end{proof}

\begin{lem}\label{intersection with nef big}
Let $X$ be a projective variety over an algebraically closed field $k$, $H$ be a nef and big Cartier divisor on $X$ and $\Gamma$ be a Cartier divisor on $X$ with $h^0(X, \sO_X(\Gamma)) \ge 2$.
Then we have 
\[
  (H^{\dim X -1}\cdot \Gamma) \ge 1.
  \]
\end{lem}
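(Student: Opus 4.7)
The plan is to reduce the inequality to the positivity of the intersection number, using that $H$ and $\Gamma$ are both Cartier so that $(H^{\dim X -1}\cdot \Gamma)$ is an integer. Hence $\ge 1$ is equivalent to $>0$, and since $\Gamma$ is linearly equivalent to some effective divisor $D = \sum a_i E_i$ (as $h^0 \ge 1$), it suffices to exhibit a prime component $E = E_{i_0}$ of $D$ for which $(H^{\dim X -1}\cdot E) > 0$. All remaining terms $a_i (H^{n-1}\cdot E_i)$ are $\ge 0$ by nefness of $H$, so one such positive summand forces the total intersection number to be strictly positive.

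The key point is to pick a representative $D \in |\Gamma|$ whose support meets the complement of the augmented base locus. First I would use Remark \ref{B+ basic} to get $\B_+(H) \neq X$, since $H$ is big. Choose a closed point $x \in X \setminus \B_+(H)$. By the hypothesis $h^0(X, \sO_X(\Gamma)) \ge 2$ and the implication (a)$\Rightarrow$(c) of Lemma \ref{movable divisor}, there exists $D \in |\Gamma|$ with $x \in \Supp(D)$. Then at least one prime component $E$ of $D$ contains $x$, and in particular $E \not\subseteq \B_+(H)$.

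Now apply Lemma \ref{B+ big} to $E$: as long as $\dim E \ge 1$, the restriction $H|_E$ is big, and being nef as well, its top self-intersection is strictly positive, giving
\[
(H^{\dim X - 1} \cdot E) = (H|_E)^{\dim E} = \vol(H|_E) > 0
\]
by the projection formula. Combined with the reduction of the first paragraph, this yields $(H^{\dim X - 1}\cdot \Gamma) = (H^{\dim X - 1}\cdot D) \ge 1$.

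The only subtlety, and the one place one must deviate from this outline, is the case $\dim X = 1$, where Lemma \ref{B+ big} does not apply because $\dim E = 0$. This case is handled directly: on a curve, $(H^{0}\cdot \Gamma)$ is just $\deg \Gamma$, and Lemma \ref{movable divisor} applied to any point shows that the effective divisor representing $\Gamma$ is nonzero, hence has degree at least one. Beyond this minor case distinction, I do not anticipate any real obstacle; the statement is really a bookkeeping consequence of the movability provided by $h^0 \ge 2$ together with the bigness of $H|_E$ off the augmented base locus.
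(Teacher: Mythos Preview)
Your proof is correct and follows essentially the same route as the paper's own argument: both choose a point outside $\B_+(H)$, use Lemma~\ref{movable divisor} to find an effective member of $|\Gamma|$ through it, and then apply Lemma~\ref{B+ big} to a component to conclude positivity. Your explicit observation that integrality of the intersection number converts $>0$ into $\ge 1$, and your separate treatment of the curve case, are minor refinements that the paper leaves implicit.
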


\begin{proof}
By Remark \ref{B+ basic}, we have $\B_+(H) \not=X$.
Take a smooth closed point $x \in X \setminus \B_+(D)$.
It follows from Lemma \ref{movable divisor} that there exists an effective divisor $E \in |\Gamma|$ such that $x \in \Supp(E)$.
Let $Z$ be an irreducible component of $\Supp(E)$ which contains $x$.
Noting that $H|_Z$ is nef and big by Lemma \ref{B+ big}, we have
\[
  (H^{\dim X -1} \cdot \Gamma) \ge (H^{\dim X -1} \cdot Z) =(H|_{Z}^{\dim Z}) =\vol(H|_Z) >0,
\]
as desired.
\end{proof}

\begin{lem}\label{positive Iitaka dim}
Let $X$ be a geometrically normal and geometrically connected projective variety over a field $k$ and $D$ be a Cartier divisor on $X$. 
If we have
\[
  h^0(X,\sO_X(D)) \ge 2,
\]
then one has $\kappa(D) \ge 1$.
\end{lem}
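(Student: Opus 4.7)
The plan is to construct $m+1$ linearly independent sections in $H^0(X,\sO_X(mD))$ for each $m\ge 1$ and invoke the characterization of $\kappa(D)$ via the growth of $h^0(X,\sO_X(mD))$ recorded in item~(14) of Subsection~\ref{Notation}. The underlying idea is simply that two independent sections of $\sO_X(D)$ produce a transcendental rational function whose powers give independent sections of the higher tensor powers.

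First I would reduce to the case $k=\overline{k}$. By definition $\kappa(X,D)=\kappa(X_{\overline{k}},\overline{D})$, while flat base change yields
\[
H^0(X_{\overline{k}},\sO_{X_{\overline{k}}}(\overline{D}))\;\cong\;H^0(X,\sO_X(D))\otimes_k \overline{k},
\]
so the hypothesis $h^0\ge 2$ is preserved. Since $X$ is geometrically normal and geometrically connected, $X_{\overline{k}}$ is integral, so after base change we may assume $k$ is algebraically closed and $X$ is a variety over $k$.

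Next, I pick two $k$-linearly independent global sections $s_0,s_1\in H^0(X,\sO_X(D))$ and set $f:=s_0/s_1\in K(X)$. Linear independence forces $f\notin k$, and since $k$ is algebraically closed every element of $K(X)\setminus k$ is transcendental over $k$ (an algebraic element over $\overline{k}$ would lie in $\overline{k}$). Consequently $1,f,f^2,\dots,f^m$ are $k$-linearly independent for every $m\ge 1$. Trivializing the stalk $\sO_X(mD)_\eta\cong K(X)$ at the generic point $\eta$ of $X$ by sending $s_1^m\mapsto 1$, the monomial $s_0^i s_1^{m-i}\in H^0(X,\sO_X(mD))$ corresponds to $f^i$; hence the $m+1$ sections $\{s_0^i s_1^{m-i}\}_{i=0}^{m}$ are linearly independent in $H^0(X,\sO_X(mD))$, giving $h^0(X,\sO_X(mD))\ge m+1$ and therefore $\kappa(D)\ge 1$.

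No serious obstacle is anticipated; the only mild subtlety is the transcendence observation, which is immediate once one has reduced to $k=\overline{k}$.
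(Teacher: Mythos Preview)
Your argument is correct. It takes a different route from the paper's proof: after the same reduction to $k=\overline{k}$, the paper works directly with the definition of $\kappa$ in item~(14), showing that the image $Y$ of $\Phi_{|D|}$ satisfies $h^0(Y,\sO_Y(1))\ge 2$ and hence $\dim Y\ge 1$. You instead exhibit the $m+1$ linearly independent monomials $s_0^{\,i} s_1^{\,m-i}$ in $H^0(X,\sO_X(mD))$ and invoke the growth characterization of the Iitaka dimension (which is recorded in item~(15), not~(14), of Subsection~\ref{Notation}). Both arguments encode the same underlying fact---two independent sections give a nonconstant rational map to $\PP^1$---but the paper's version is marginally more direct, reading off $\kappa(D)\ge 1$ already from $m=1$ without passing through the equivalence of the two descriptions of $\kappa$.
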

    
  \begin{proof}
  After replacing $X$ by $X \times_k \overline{k}$, we may assume that $k$ is algebraically closed.
  Let $Y$ be the image of $\Phi_{|D|}$ and 
  \[
    \phi : U : = X \setminus \Bs(|D|) \to Y
  \]
be the induced morphism.
Since the image of the homomorphism 
\[
\phi^* : H^0(Y, \sO_Y(1)) \to H^0(U, \sO_U(D))
\]
contains $H^0(X, \sO_X(D))$, we have $h^0(Y, \sO_Y(1)) \ge 2$.
This shows that $\dim(Y) \ge 1$, as desired.
\end{proof}

\subsection{Spreading out}
In this subsection, we recall several facts about spreading out sheaves, schemes and morphisms.

\begin{lem}\label{spreading out sheaves}
Let $A \subseteq C$ be a flat extension between Noetherian rings and $X$ be a projective scheme over $\Spec (A)$.
Suppose that $\mathcal{F}$ is a coherent sheaf on $X_C : = X \times_A \Spec (C)$.
Then there is a sub $A$-algebra $B \subseteq C$ of finite type over $A$ with a coherent sheaf $\mathcal{G}$ on $X_B$ such that 
\[
  \phi^* \mathcal{G} \cong \mathcal{F}, 
  \] where $\phi :X_C \to X_B$ is the natural morphism.
\end{lem}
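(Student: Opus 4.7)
The plan is to give $\mathcal{F}$ a finite presentation by twists of the structure sheaf, and then to descend this presentation to a finitely generated subalgebra of $C$ by noting that the matrix entries live in modules that base-change well. Fix a very ample invertible sheaf $\sO_X(1)$ on $X$ relative to $\Spec(A)$, and write $\sO_{X_C}(1)$ for its pullback to $X_C$. By Serre's theorem applied to the coherent sheaf $\mathcal{F}$ on the projective $C$-scheme $X_C$, there exists an integer $b \gg 0$ such that $\mathcal{F}(b)$ is globally generated by finitely many sections, giving a surjection $\sO_{X_C}^{\oplus q} \twoheadrightarrow \mathcal{F}(b)$ and, equivalently, a surjection $\sO_{X_C}(-b)^{\oplus q} \twoheadrightarrow \mathcal{F}$. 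Apply Serre's theorem once more to the kernel, twisted sufficiently, to obtain a finite presentation
\[
\sO_{X_C}(-a)^{\oplus p} \xrightarrow{\Phi} \sO_{X_C}(-b)^{\oplus q} \to \mathcal{F} \to 0
\]
for some integer $a \ge b$.

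Next I would analyze the morphism $\Phi$, which is determined by a $q \times p$ matrix of sections in $H^0(X_C, \sO_{X_C}(a-b))$. Since $A \to C$ is flat and $X$ is projective over $A$, flat base change yields the canonical isomorphism
\[
H^0(X_C, \sO_{X_C}(a-b)) \;\cong\; H^0(X, \sO_X(a-b)) \otimes_A C.
\]
Each of the finitely many matrix entries of $\Phi$ can therefore be written as a finite sum $\sum_k s_k \otimes c_k$ with $s_k \in H^0(X, \sO_X(a-b))$ and $c_k \in C$. Let $B \subseteq C$ be the $A$-subalgebra generated by all the elements $c_k$ appearing in these expressions. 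This $B$ is of finite type over $A$, and the same matrix entries now lie in $H^0(X, \sO_X(a-b)) \otimes_A B$, which, by flat base change again, equals $H^0(X_B, \sO_{X_B}(a-b))$. Hence $\Phi$ descends to a morphism
\[
\Phi_B : \sO_{X_B}(-a)^{\oplus p} \to \sO_{X_B}(-b)^{\oplus q},
\]
and I define $\mathcal{G} := \operatorname{coker}(\Phi_B)$, which is coherent on $X_B$.

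Finally, since pullback by $\phi : X_C \to X_B$ is right exact and commutes with direct sums and with tensoring by locally free sheaves, applying $\phi^*$ to the defining presentation of $\mathcal{G}$ gives an exact sequence
\[
\sO_{X_C}(-a)^{\oplus p} \xrightarrow{\phi^*\Phi_B} \sO_{X_C}(-b)^{\oplus q} \to \phi^*\mathcal{G} \to 0,
\]
and by construction $\phi^*\Phi_B$ agrees with $\Phi$. Comparing cokernels yields the desired isomorphism $\phi^*\mathcal{G} \cong \mathcal{F}$. The only subtle step is the use of flat base change to identify global sections of the twists with a tensor product, which is precisely where the hypothesis that $A \to C$ is flat is consumed; no further flatness is needed (and in particular $B \hookrightarrow C$ need not be flat), since the argument only uses the right exactness of $\phi^*$ at the end.
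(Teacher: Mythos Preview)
Your proof is correct and follows the paper's approach exactly: present $\mathcal{F}$ by twists of $\sO_{X_C}(1)$, use flat base change along $A \to C$ to descend the matrix of the presenting map to a finitely generated $A$-subalgebra $B \subseteq C$, and take $\mathcal{G}$ to be the cokernel over $X_B$. One small imprecision: your claim that $H^0(X, \sO_X(a-b)) \otimes_A B$ equals $H^0(X_B, \sO_{X_B}(a-b))$ ``by flat base change again'' is unjustified, since $A \to B$ need not be flat; but you only need the natural map $H^0(X, \sO_X(a-b)) \otimes_A B \to H^0(X_B, \sO_{X_B}(a-b))$ to define $\Phi_B$, and the identity $\phi^*\Phi_B = \Phi$ then follows from functoriality, so the argument stands.
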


\begin{proof}
We fix an ample invertible sheaf $\sO_{X}(1)$ on $X$.
Take integers $m,n,a,b \in \Z$ such that there exists an exact sequence
\[
  \sO_{X_C}(a)^{\oplus m} \xrightarrow{f} \sO_{X_C}(b)^{\oplus n} \to \mathcal{F} \to 0.
\]
Then the assertion follows if we take $B$ such that the element
\[
  f \in \Hom(\sO_{X_C}(a)^{\oplus m}, \sO_{X_C}(b)^{\oplus n}) \cong \Hom(\sO_{X}(a)^{\oplus m}, \sO_{X}(b)^{\oplus n}) \otimes_A C
\]
is defined over $B$.
\end{proof}

\begin{lem}\label{spreading out schemes}
Let $A \subseteq C$ be a flat extension between Noetherian rings and $Y$ be a projective scheme over $\Spec (C)$.
Then there is a sub $A$-algebra $B \subseteq C$ of finite type over $A$ with a projective scheme $Z$ over $\Spec (B)$ such that 
\[
  Y \cong Z \times_B \Spec(C).
  \]
\end{lem}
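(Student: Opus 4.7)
The plan is to realize $Y$ as $\Proj$ of a finitely presented graded $C$-algebra and then spread out the finitely many coefficients of the defining relations to a subring $B \subseteq C$. First I would use projectivity of $Y$ over $\Spec(C)$ to fix a closed embedding $Y \hookrightarrow \PP^N_C$ for some $N$. Writing $S = C[x_0, \ldots, x_N]/I$ for the homogeneous coordinate ring of this embedding gives $Y = \Proj(S)$, and the Noetherian property of $C[x_0, \ldots, x_N]$ provides finitely many homogeneous generators $f_1, \ldots, f_m$ of $I$.

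Next, I would let $B \subseteq C$ be the $A$-subalgebra generated by the finitely many coefficients of $f_1, \ldots, f_m$ appearing in $C$. Then $B$ is of finite type over $A$, and each $f_i$ can be viewed as an element of $B[x_0, \ldots, x_N]$. I set $S_B := B[x_0, \ldots, x_N]/(f_1, \ldots, f_m)$ and take $Z := \Proj(S_B)$, which is projective over $\Spec(B)$ by construction.

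Finally, to verify $Z \times_B \Spec(C) \cong Y$, it suffices to note that the natural map $S_B \otimes_B C \to C[x_0, \ldots, x_N]/(f_1, \ldots, f_m) = S$ is an isomorphism, and that $\Proj$ of a finitely generated graded ring commutes with base change along ring homomorphisms. I do not foresee any substantive obstacle; in fact the flatness hypothesis in the statement is never used, and the argument is entirely formal once one invokes the Noetherian hypothesis on $C$. One could alternatively try to reduce to Lemma \ref{spreading out sheaves} applied to the coherent ideal $\mathcal{I} \subseteq \sO_{\PP^N_C}$ cutting out $Y$, but then one would additionally have to spread out the inclusion morphism $\mathcal{I} \hookrightarrow \sO_{\PP^N_C}$, so the direct $\Proj$ approach above seems cleaner.
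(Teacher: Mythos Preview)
Your argument is correct and is essentially the paper's proof in different packaging: both fix a closed immersion $Y \hookrightarrow \PP^N_C$ and spread out the defining ideal over the $A$-subalgebra generated by finitely many coefficients. The paper in fact takes the route you describe at the end as the alternative --- it invokes the method of Lemma~\ref{spreading out sheaves} to spread out the coherent ideal sheaf of $Y$ in $\PP^N_C$ --- while you work directly with the homogeneous coordinate ring; the two are equivalent, and your worry about separately spreading out the inclusion $\mathcal{I} \hookrightarrow \sO_{\PP^N_C}$ is unfounded, since that inclusion is already encoded in the surjection $\sO_{\PP^N_C} \twoheadrightarrow \sO_Y$ whose presenting map (given exactly by your $f_1,\dots,f_m$) is what one spreads out.
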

  
\begin{proof}
  We fix a closed immersion $Y \hookrightarrow \PP_C^N$.
  As in the proof of Lemma \ref{spreading out sheaves}, we may find a sub $A$-algebra $B \subseteq C$ of finite type over $A$ with a coherent ideal sheaf $I \subseteq \sO_{\PP^N_B}$ such that the extension $I \cdot \PP^N_{C}$ is the ideal of $Y$.
\end{proof}

\begin{lem}[\textup{\cite[Th\'{e}or\`{e}me (8.8.2)]{EGA}}]\label{spreading out morphisms}
Let $A \subseteq C$ be an extension between Noetherian rings and $X, Y$ be projective schemes over $\Spec (A)$.
\begin{enumerate}[label=\textup{(\arabic*)}]
  \item Suppose that $f: X_C \to Y_C$ is a $C$-morphism.
  Then there is a sub $A$-algebra $B \subseteq C$ of finite type over $A$ with a $B$-morphism $g : X_B \to Y_B$ such that the following diagram commutes:
  \[
    \xymatrix{ X_B \times_B \Spec(C) \ar^-{g \times_B \id_C}[d] \ar^-{\sim}[r] & X_C \ar^-{f}[d] \\
    Y_B \times_B \Spec(C) \ar^-{\sim}[r] & Y_C }
  \]
  \item Suppose that $f,g : X \to Y$ are $A$-morphisms such that $f \times_A \id_C = g \times_A \id_C$.
  Then there is a sub $A$-algebra $B \subseteq C$ of finite type over $A$ such that $f \times_A \id_B = g \times_A \id_B$.
\end{enumerate}
\end{lem}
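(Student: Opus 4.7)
The plan is to deduce both parts from spreading out of closed subschemes (Lemma \ref{spreading out schemes}), after reinterpreting morphisms via their graphs or equalizers, combined with the following elementary descent lemma: a coherent sheaf on a projective $B_0$-scheme that vanishes after base change to $C = \varinjlim B$ already vanishes after base change to some finitely generated $A$-subalgebra $B \subseteq C$ containing $B_0$. This lemma is straightforward because tensor products commute with filtered colimits, so a finite set of local generators can simultaneously be killed at some finite stage.

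For (1), consider the graph $\Gamma_f \subseteq X_C \times_C Y_C = (X \times_A Y)_C$, a closed subscheme whose first projection $\Gamma_f \to X_C$ is an isomorphism. Arguing as in Lemma \ref{spreading out schemes}, one finds a finitely generated $A$-subalgebra $B_0 \subseteq C$ and a closed subscheme $\Gamma \subseteq (X \times_A Y)_{B_0}$ with $\Gamma \times_{B_0} C = \Gamma_f$. The first projection $\pi: \Gamma \to X_{B_0}$ is projective and becomes an isomorphism after base change to $C$. Provided we can enlarge $B_0$ to some finitely generated $B_0 \subseteq B \subseteq C$ so that $\pi_B := \pi \times_{B_0} \id_B$ is itself an isomorphism, composing $\pi_B^{-1}$ with the second projection $\Gamma_B \to Y_B$ yields the desired morphism $g: X_B \to Y_B$. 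For (2), set $h := (f, g): X \to Y \times_A Y$ and let $\mathcal{I} \subseteq \sO_{Y \times_A Y}$ denote the ideal sheaf of the diagonal $\Delta_Y$. The hypothesis $f \times_A \id_C = g \times_A \id_C$ is equivalent to $h^*\mathcal{I}$ pulling back to zero on $X_C$, and the descent lemma above then furnishes a finitely generated $A$-subalgebra $B \subseteq C$ with $(h^*\mathcal{I}) \otimes_A B = 0$, which is precisely $f \times_A \id_B = g \times_A \id_B$.

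The main obstacle is justifying the last step in (1): that $\pi_B$ becomes an isomorphism at some finite stage $B$, not merely a proper morphism whose base change to $C$ is an isomorphism. My approach would be to exploit flat base change for $\pi_*\sO_\Gamma$ and apply the descent lemma to the coherent sheaves $\Ker(\sO_{X_{B_0}} \to \pi_*\sO_\Gamma)$ and $\Cok(\sO_{X_{B_0}} \to \pi_*\sO_\Gamma)$, both of which pull back to zero on $X_C$. This upgrades $\pi_B$ (for $B$ sufficiently large) to a projective morphism satisfying $(\pi_B)_*\sO_{\Gamma_B} = \sO_{X_B}$; a further descent for the flatness locus and for set-theoretic bijectivity of $\pi$ (using that each is governed by the vanishing of a suitable coherent sheaf, or equivalently by a constructible condition as in EGA IV, \S 8) then forces $\pi_B$ itself to be an isomorphism, completing the reduction.
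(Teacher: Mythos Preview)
The paper gives no proof of this lemma; it is simply stated with a citation to EGA IV, Th\'{e}or\`{e}me (8.8.2). Your graph/equalizer strategy is the standard one underlying that reference, and the overall architecture of your argument is sound.

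There is, however, a genuine gap in your treatment of (1). You propose to show that $\pi_B$ becomes an isomorphism by first invoking flat base change to identify $(\pi_*\sO_\Gamma)\otimes_{B_0} C$ with $(\pi_C)_*\sO_{\Gamma_C}$, and then descending the vanishing of the kernel and cokernel of $\sO_{X_{B_0}} \to \pi_*\sO_\Gamma$. But unlike Lemmas~\ref{spreading out sheaves} and~\ref{spreading out schemes}, the present statement carries \emph{no flatness hypothesis} on $A \subseteq C$, and $\pi$ is not yet known to be flat; so the base-change identification for $\pi_*$ is unavailable, and the sheaves you write down need not pull back to zero on $X_C$. Your parenthetical fallback to ``a constructible condition as in EGA IV, \S 8'' is in fact the correct route and should be the main argument rather than an afterthought: working affine-locally, $\pi$ is given by a finitely presented ring map $R \to S = R[t_1,\dots,t_n]/(f_1,\dots,f_m)$, and since $R\otimes_{B_0} C \to S\otimes_{B_0} C$ is an isomorphism, the images of the $t_i$ under the inverse, together with the finitely many relations witnessing that the two composites are the identity, all live in $R\otimes_{B_0} B$ for some finitely generated $B_0 \subseteq B \subseteq C$. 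This is how EGA actually proceeds (cf.\ \cite[8.8.2, 8.10.5]{EGA}); the pushforward argument should either be dropped or postponed until after flatness of $\pi_B$ has been obtained by this direct method.

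For (2) your idea is correct, with one cosmetic correction: the sheaf whose vanishing you should descend is the image ideal $\mathcal{J} := h^{-1}\mathcal{I}\cdot\sO_X \subseteq \sO_X$, not $h^*\mathcal{I}$ itself. The condition $f\times_A\id_B = g\times_A\id_B$ is precisely $\mathcal{J}\cdot\sO_{X_B}=0$, and since filtered colimits are exact this follows from $\mathcal{J}\cdot\sO_{X_C}=0$ at some finite stage, exactly as your descent lemma predicts.
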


\subsection{Bounded family}

We recall the definitions and basic properties of boundedness.

\begin{defn}\label{defn bdd}
Let $k$ be an algebraically closed field and $\mathcal{D}$ be a set of pairs $(X, \Delta)$ which consists of a (not necessarily normal) projective variety $X$ over $k$ and an effective $\Q$-Weil divisor $\Delta$ on $X$.
\begin{enumerate}[label=\textup{(\arabic*)}]
\item We say that $\mathcal{D}$ is \emph{bounded} if there exist a projective morphism $\mathcal{Z} \to T$ between schemes of finite type over $k$ and a closed subset $B$ on $\mathcal{Z}$ which satisfies the following property:\\
For every $(X,\Delta) \in \mathcal{D}$, there is a closed point $t \in T$ with an isomorphism 
\[
f: \mathcal{Z}_t \xrightarrow{\ \sim\ } X
\] 
which satisfies $f(B_t)=\Supp(\Delta)$.

\
\item We say that $\mathcal{D}$ is \emph{log birationally bounded} if there exists a bounded family $\mathcal{E}$ which satisfies the following property:
For every $(X,\Delta) \in \mathcal{D}$, there is an element $(Y, \Gamma) \in \mathcal{E}$ with a birational morphism 
\[
f: Y \dashrightarrow X
\]
such that the support $\Supp(\Gamma)$ of $\Gamma$ coincides with the union of the support of the strict transform $f^{-1}\Delta$ and the image of $\nu_*F$ of all $g$-exceptional divisors $F$, where $\nu : Y^n \to Y$ is the normalization and $g : Y^n \dashrightarrow X$ is the induced birational map.

\
\item Let $\mathcal{E} : = \{X_j\}_{j \in J}$ be a set of normal projective varieties over $k$.
We say that $\mathcal{E}$ is \emph{bounded} (resp.~\emph{birationally bounded}) if the set $\{(X_j,0)\}_{j \in J}$ of projective log pairs is bounded (resp.~log birationally bounded).
\end{enumerate}
\end{defn}

The following lemma should be well-known to experts, but we include a proof here for the sake of completeness.

\begin{lem}[\textup{\cite[Lemma 2.4.2]{HMX13}, cf.~\cite[Proposition 5.3]{Tan}}]\label{bb criterion}
Fix a positive integer $n$.
Let $k$ be an algebraically closed field and $\mathcal{D}$ be a set of pairs $(X, \Gamma)$ which consists of an $n$-dimensional projective variety $X$ over $k$ and a reduced Weil divisor $\Gamma$ on $X$.
We further assume that there are constants $V_1$ and $V_2$ such that for every $(X, \Gamma) \in \mathcal{D}$, we may find a very ample invertible sheaf $A$ on $X$ with $(A^n)<V_1$ and $(A^{n-1} \cdot \Gamma)<V_2$.
Then $\mathcal{D}$ is bounded.
\end{lem}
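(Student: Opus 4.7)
The plan is to embed all pairs $(X,\Gamma) \in \mathcal{D}$ simultaneously as closed subschemes of a single projective space $\PP_k^N$ of bounded dimension, and then obtain the bounding family from the Hilbert scheme of that ambient space.

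First, for each $(X,\Gamma) \in \mathcal{D}$ I would use the given very ample invertible sheaf $A$ to obtain the complete linear system closed immersion $\iota : X \hookrightarrow \PP^M := \PP(H^0(X,A)^*)$, under which $\iota(X)$ is non-degenerate of degree $(A^n) < V_1$. The classical inequality that a non-degenerate irreducible subvariety of $\PP^M$ of dimension $n$ and degree $d$ satisfies $M \le d+n-1$ then forces $M < V_1+n-1$. Fixing $N := \lceil V_1 \rceil + n - 1$ and composing with a linear embedding $\PP^M \hookrightarrow \PP^N$, every $X \in \mathcal{D}$ is realized as a closed $n$-dimensional subvariety of the \emph{same} ambient $\PP^N$, of degree less than $V_1$; in the same embedding, $\Supp(\Gamma)$ becomes a reduced, purely $(n-1)$-dimensional closed subscheme of $\PP^N$ of degree $(A^{n-1}\cdot\Gamma) < V_2$.

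Next, I would invoke the Hilbert scheme $\mathrm{Hilb}(\PP^N)$. Subvarieties of $\PP^N$ of fixed dimension $n$ and degree less than $V_1$ realize only finitely many Hilbert polynomials --- by Mumford's regularity bound, their Castelnuovo--Mumford regularity is uniformly bounded --- so the corresponding finite union of components $T_1 \subseteq \mathrm{Hilb}(\PP^N)$ is quasi-projective of finite type, and carries a universal family $\mathcal{X}_1 \to T_1$. A second application of the same principle to the relative Hilbert scheme of $\mathcal{X}_1/T_1$, restricted to reduced, equidimensional subschemes of dimension $n-1$ and fiberwise degree less than $V_2$, yields a finite-type scheme $T_2 \to T_1$ together with a universal closed subscheme $\mathcal{B} \subseteq \mathcal{X}_2 := \mathcal{X}_1 \times_{T_1} T_2$. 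I then take $\mathcal{Z} := \mathcal{X}_2$ and $B := \mathcal{B}_{\mathrm{red}}$; by construction every $(X,\Gamma) \in \mathcal{D}$ corresponds via the first step to a closed point $t \in T_2$ with $\mathcal{Z}_t \cong X$ and $B_t = \Supp(\Gamma)$, so $(\mathcal{Z}, B) \to T_2$ is the desired bounding family.

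The principal obstacle I anticipate is justifying that the Hilbert polynomials of $n$-dimensional subvarieties of $\PP^N$ of bounded degree form a finite set (equivalently, the uniform bound on Castelnuovo--Mumford regularity), and likewise for the divisor data; this is the input that turns the Hilbert scheme into a finite-type parameter space. Both statements are classical --- they can alternatively be bypassed by working directly with the Chow variety $\mathrm{Chow}_{n,<V_1}(\PP^N)$, which is itself of finite type and carries a tautological family --- so once this is granted, the remainder is routine bookkeeping with universal families.
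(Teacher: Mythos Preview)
Your proposal is correct and follows the same overall strategy as the paper: embed every $X$ into a fixed $\PP^N$ with $N = V_1 + n - 1$ via the very ample sheaf $A$, invoke the finiteness of Hilbert polynomials for subschemes of bounded dimension and degree, and assemble the bounding family from finitely many components of the Hilbert scheme.

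The one organizational difference lies in how $\Gamma$ is parametrized. The paper decomposes $\Gamma = \sum_{i=1}^m E_i$ into irreducible components, notes that $m < V_2$ and $\deg(E_i) < V_2$, parametrizes each $E_i$ separately by a second Hilbert scheme $H_2$ of $(n-1)$-dimensional subvarieties of $\PP^N$, and takes the disjoint union $\coprod_{m=1}^{V_2} H_1 \times H_2^{\times m}$ as the base. You instead keep $\Supp(\Gamma)$ as a single (possibly reducible) subscheme and use the relative Hilbert scheme of $\mathcal{X}_1/T_1$. Both are valid; the paper's component-by-component route has the small advantage that the finiteness of Hilbert polynomials need only be verified for \emph{irreducible} subvarieties of bounded degree, which is exactly the form supplied by the cited \cite[Proposition~5.3]{Tan}, whereas your route requires it for reduced equidimensional subschemes---still standard, but a slightly stronger input. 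Conversely, your relative construction avoids the disjoint union over $m$ and keeps $\Gamma$ inside $X$ automatically.
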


\begin{proof}
We set $N : = V_1+n-1$.
By the proof of \cite[Proposition 5.3]{Tan}, the set of all Hilbert polynomials of an $n$-dimensional closed subvariety of $\PP^N_k$ with degree smaller than $V_1$ is a finite set.
Therefore, by using the Hilbert schemes, we obtain a projective $k$-scheme $H_1$ and a closed subscheme $\mathcal{X}_1 \subseteq H_1 \times_k \PP^N_k$ such that for every $n$-dimensional closed subvariety $X \subseteq \PP^N_k$ with $\deg (X) <V_1$, there is a closed point $t \in H_1$ such that 
\[
  X = (\mathcal{X}_1)_t
\]
as closed subschemes of $\PP^N_k$.
Similarly, we have a projective $k$-scheme $H_2$ and a closed subscheme $\mathcal{X}_2 \subseteq H_2 \times_k \PP^N_k$ which bounds all $n-1$-dimensional closed subvarieties of $\PP^N_k$ with degree smaller than $V_2$.

For every integer $m \ge 0$, we set $T^{(m)}  : = H_1 \times_k H_2^{\times m}$ and let 
\begin{align*}
  p_{1,m+2}: T^{(m)} \times_k \PP^N_k \to H_1 \times_k \PP^N_k & \\
  p_{i, m+2} : T^{(m)} \times_k \PP^N_k \to H_2 \times_k \PP^N_k & \ (i=2, \dots, m+1)
\end{align*}
be the projections. 
We also write
\begin{align*}
  \mathcal{X}^{(m)} & : = p_{1,m+2}^{-1}(\mathcal{X}_1) \subseteq T^{(m)} \times_k \PP^N_k \\
  B^{(m)} & : = \mathcal{X}^{(m)} \cap \bigcup_{i=1}^{m} p_{i+1,m+2}^{-1}(\mathcal{X}_2) \subseteq T^{(m)} \times_k \PP^N_k.
\end{align*}

Take an element $(X, \Gamma) \in \mathcal{D}$.
By repeatedly applying \cite[Excercise I.7.7]{Har}, we can identify $X$ as an $n$-dimensional closed subvariety of $\PP^N_k$ with $\deg(X)<V_1$.
Let $\Gamma = \sum_{i=1}^m a_iE_i$ be the irreducible decomposition.
Then we have $m < V_2$ and $\deg(E_i)<V_2$.
This shows that $\mathcal{D}$ is bounded by
\[
  (\coprod_{m=1}^{V_2} \mathcal{X}^{(m)}, \coprod_{m=1}^{V_2} B^{(m)}) \to \coprod_{m=1}^{V_2} T^{(m)}.
\]
\end{proof}

\begin{lem}\label{bdd of normalization}
  Let $\mathcal{E} : = \{(Y_j, \Gamma_j)\}_{j \in J}$ be a bounded family where $Y_i$ is a projective variety over an algebraically closed field $k$.
  Then the set 
  \[\mathcal{E}^n : = \{(Y^n_j, \nu_j^{-1}(\Gamma_j))\}_{j \in J}\]
  is bounded, where $\nu_j : Y^n_j \to Y_j$ is the normalization.
  \end{lem}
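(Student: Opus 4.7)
The plan is to unwind the definition of boundedness of $\mathcal{E}$ and then bootstrap to $\mathcal{E}^n$ by spreading out the fiberwise normalization, combined with a Noetherian induction on the base of the bounding family.

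By the definition of boundedness, fix a projective morphism $\pi : \mathcal{Z} \to T$ of $k$-schemes of finite type together with a closed subset $B \subseteq \mathcal{Z}$ such that for each $j \in J$ we have a closed point $t_j \in T$ and an isomorphism $f_j : \mathcal{Z}_{t_j} \xrightarrow{\sim} Y_j$ with $f_j(B_{t_j}) = \Supp(\Gamma_j)$. By Noetherian induction on $T$, it is enough to bound the subfamily of $\mathcal{E}^n$ corresponding to closed points in some dense open $U \subseteq T$; the complement $T \setminus U$ is handled by the induction hypothesis. Decomposing $T$ into its irreducible components and then decomposing $\mathcal{Z}$ and discarding components whose image in $T$ is not dense, we may assume $T$ and $\mathcal{Z}$ are integral and $\pi$ is flat with geometrically integral generic fiber (the last two after shrinking $T$, using generic flatness together with openness of geometric integrality in flat families).

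Let $\nu : \tilde{\mathcal{Z}} \to \mathcal{Z}$ be the normalization; since $\mathcal{Z}$ is integral and of finite type over $k$, the map $\nu$ is finite and birational. The technical heart of the proof is to find a dense open $U \subseteq T$ such that for every closed point $t \in U$, the induced morphism $\nu_t : \tilde{\mathcal{Z}}_t \to \mathcal{Z}_t$ is the normalization of $\mathcal{Z}_t$. For this I would invoke standard constructibility results from EGA IV: the loci in $T$ over which the fibers $\tilde{\mathcal{Z}}_t$ are geometrically normal and over which $\nu_t$ is birational are each constructible and contain the generic point (here using that the generic fiber of $\pi$ is geometrically integral and $\nu$ is generically an isomorphism). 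Shrinking to a dense open $U$ on which both hold, and noting that residue fields at closed points of $T$ equal the algebraically closed field $k$, so that geometric normality on fibers becomes ordinary normality, we conclude that $\nu_t$ is a finite birational morphism from a normal scheme onto $\mathcal{Z}_t$, hence is the normalization of $\mathcal{Z}_t$.

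Setting $\tilde{B} := \nu^{-1}(B) \subseteq \tilde{\mathcal{Z}}$, the data $(\tilde{\mathcal{Z}}|_U \to U,\, \tilde{B}|_U)$ then realizes the boundedness of the subfamily $\{(Y_j^n, \nu_j^{-1}(\Gamma_j)) : t_j \in U\}$: by the previous paragraph the fiber $(\tilde{\mathcal{Z}})_{t_j}$ is the normalization of $\mathcal{Z}_{t_j}$, hence is canonically isomorphic to $Y_j^n$ via $f_j$, and $\tilde{B}_{t_j}$ corresponds under this isomorphism to $\nu_j^{-1}(\Supp(\Gamma_j)) = \Supp(\nu_j^{-1}(\Gamma_j))$. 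The main obstacle is the third paragraph, namely ensuring that the formation of the normalization truly commutes with base change to closed fibers over a dense open of $T$; once this spreading-out is in hand, the Noetherian induction on $T \setminus U$ closes the argument.
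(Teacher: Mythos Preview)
Your overall strategy---Noetherian induction on $T$ together with showing that normalization commutes with restriction to general closed fibers---is exactly the paper's approach. However, there is a genuine gap in your third paragraph in positive characteristic.

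You normalize the \emph{total space} $\tilde{\mathcal{Z}} \to \mathcal{Z}$ and then assert that the locus of $t \in T$ where $\tilde{\mathcal{Z}}_t$ is geometrically normal contains the generic point $\eta$. Your justification (``the generic fiber of $\pi$ is geometrically integral and $\nu$ is generically an isomorphism'') does not establish this. The generic fiber $\tilde{\mathcal{Z}}_\eta$ is the normalization of $\mathcal{Z}_\eta$ over the (typically imperfect) field $K(T)$, so it is normal, but in positive characteristic a normal variety over an imperfect field need not be geometrically normal. For instance, over $K = \F_p(s)$ with $p$ odd, the regular curve $y^2 = x^p - s$ becomes the cuspidal curve $y^2 = (x - s^{1/p})^p$ after base change to $K(s^{1/p})$. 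So the constructible locus you invoke may well miss the generic point, and your argument stalls.

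The paper sidesteps this by instead normalizing the \emph{geometric} generic fiber $\nu : (\mathcal{Y}_{\overline{\eta}})^n \to \mathcal{Y}_{\overline{\eta}}$ and spreading this out (via Lemmas~\ref{spreading out schemes} and~\ref{spreading out morphisms}) to a morphism $\mu : \mathcal{Y}' \to \mathcal{Y}$ after shrinking $T$. By construction the geometric generic fiber of $\mathcal{Y}'$ is normal, so openness of geometric normality in flat families gives normality of general closed fibers; and since $\nu$ is finite and birational, $\mu$ and hence each $\mu_t$ is too after further shrinking. Your argument is easily repaired by making this same move, but as written the key step does not go through in characteristic $p$.
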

  
  \begin{proof}
  Let $\pi : (\mathcal{Y}, B) \to T$ be a family which bounds $\mathcal{E}$, that is, $\pi : \mathcal{Y} \to T$ is a projective morphism with integral fibers between schemes of finite type over $k$ and $B \subseteq \mathcal{Y}$ is a closed subset such that for every $j \in J$, there is a closed point $t_j \in T$ and an isomorphism
  \[
    f_j : \mathcal{Y}_{t_j} \xrightarrow{\sim} X_j
  \]
  with $f_j(B_{t_j}) = \Supp(\Gamma_j)$.
  If there is a surjective morphism $S \to T$ from a scheme $S$ of finite type over $k$, then we may replace $T$ by $S$ and $\mathcal{Y}$ by $\mathcal{Y} \times_T S$.
  In particular, we may assume that $T$ is reduced.

  For every locally closed subset $U \subseteq T$, we denote by $J_U$ the subset
  \[
    J_U : = \{j \in J \mid t_j \in U\}
  \]
  of $J$ and by $\mathcal{E}_U^n$ the subset
  \[
    \mathcal{E}^n_U : =\{(Y_j^n, \nu_j^{-1}(\Gamma_j))\}_{j \in J_U}
  \]
  of $\mathcal{E}^n$.
  By Noether induction on $T$, we may assume that $\mathcal{E}_C^n$ is bounded for every proper closed subset $C \subsetneq T$.
  Therefore, it suffices to show that there exists a non-empty open subset $U \subseteq T$ such that $\mathcal{E}^n_U$ is bounded.
  In order to show this, we may always shrink $T$.
  In particular, we may assume that $T$ is a variety.
  
  Take the normalization 
  \[
    \nu : (\mathcal{Y}_{\overline{\eta}})^n \to \mathcal{Y}_{\overline{\eta}}
    \]
  of the geometric generic fiber $\mathcal{Y}_{\overline{\eta}}$.
  It then follows from Lemma \ref{spreading out schemes} and Lemma \ref{spreading out morphisms} that after replacing $T$, we may find a morphism 
  \[\mu: \mathcal{Y}' \to \mathcal{Y}\]
  whose geometric generic fiber is isomorphic to $\nu$.

  Since $\nu$ is finite and birational, after replacing $T$ again, we may assume that $\mu$ is finite and birational.
  Then the morphism 
  \[\mu_t : \mathcal{Y}'_t \to \mathcal{Y}_t\]
  induced on a general closed fiber is also finite and birational.
  On the other hand, since the geometric generic fiber of $\mathcal{Y}'$ is normal, so is a general closed fiber $\mathcal{Y}'_t$.
  This shows that $\mu_t$ is the normalization of $\mathcal{Y}_t$ for general $t$.
  After shrinking $T$, $\mathcal{E}^n$ is bounded by the pair $(\mathcal{Y}', \mu^{-1}(B))$ over $T$, as desired.
  \end{proof}

\subsection{Non-vanishing and base point free theorems}

In this subsection, we recall several facts about non-vanishing theorem and base point free theorem in dimension $3$ of characteristic $p>5$.

\begin{thm}[\textup{\cite[Theorem 3]{Jak}}]\label{NV}
  Let $X$ be a $3$-dimensional projective over a field $k$ of characteristic $p>5$ and $\Delta \ge 0$ be an effective $\Q$-Weil divisor on $X$ such that $K_X+\Delta$ is $\Q$-Cartier.
  Suppose that the base change $X_{\overline{k}} : = X \times_k \Spec \overline{k}$ of $X$ to the algebraic closure of $k$ is a normal variety. 
  We further assume that 
  \begin{enumerate}[label=\textup{(\roman*)}]
    \item $K_X+\Delta$ is pseudo-effective, and
    \item $(X_{\overline{k}}, \overline{\Delta})$ is klt, where $\overline{\Delta}$ is the flat pullback of $\Delta$ by the natural morphism $X_{\overline{k}} \to X$.
  \end{enumerate}
  Then $\kappa(X, K_X+\Delta) \ge 0$.
\end{thm}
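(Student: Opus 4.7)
The plan is to reduce the pseudo-effective case to the nef case via the $(K_X+\Delta)$-MMP available for klt $3$-fold pairs in characteristic $p>5$, and then invoke the deep non-vanishing statement for nef klt pairs in this setting. First I would pass to the algebraic closure. Since $X_{\overline{k}}$ is a normal variety by hypothesis, $X$ is geometrically integral, so by the convention adopted in Section \ref{Notation} we have $\kappa(X, K_X+\Delta)=\kappa(X_{\overline{k}}, K_{X_{\overline{k}}}+\overline{\Delta})$; pseudo-effectivity of $K_X+\Delta$ is inherited by its flat pullback; and klt of $(X_{\overline{k}}, \overline{\Delta})$ is exactly hypothesis (ii). Hence we may assume from the outset that $k$ is algebraically closed.

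Next I would run a $(K_X+\Delta)$-MMP. For klt $3$-fold pairs over an algebraically closed field of characteristic $p>5$, such an MMP exists and terminates, by the now-standard results of Hacon--Xu, Birkar, and others. Because $K_X+\Delta$ is pseudo-effective, the program cannot end in a Mori fibre space; it must produce a birational contraction $\phi:X\dashrightarrow Y$ to a minimal model, where $(Y,\Delta_Y:=\phi_{*}\Delta)$ is klt and $K_Y+\Delta_Y$ is nef. Since $\phi$ is a composition of divisorial contractions and flips with respect to $K_X+\Delta$, it preserves pluricanonical sections, so that
\[
\kappa(X, K_X+\Delta)\;=\;\kappa(Y, K_Y+\Delta_Y).
\]
Thus it is enough to produce one nonzero pluricanonical section on the nef minimal model $(Y,\Delta_Y)$.

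The remaining step --- showing $\kappa(Y, K_Y+\Delta_Y)\ge 0$ when $(Y,\Delta_Y)$ is a klt $3$-fold pair with $K_Y+\Delta_Y$ nef in characteristic $p>5$ --- is where the hard work lies and constitutes the main obstacle. This is the positive-characteristic, three-dimensional analogue of the classical Kawamata--Shokurov non-vanishing theorem, and it is not accessible via Kodaira-type vanishing, which may fail in this setting. The standard route is a case analysis according to the numerical dimension of $K_Y+\Delta_Y$: one combines the abundance package for $3$-folds in positive characteristic (\cite{BirMMP}, \cite{DWAb}, \cite{WalAb}) with Frobenius-splitting and $F$-singularity techniques on suitable birational models of $Y$, together with Keel's basepoint-free theorem to upgrade semiampleness in the cases where the nef threshold is positive. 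Once a nonzero section of $\sO_Y(m(K_Y+\Delta_Y))$ is exhibited for some $m\ge 1$, the MMP reduction of the preceding paragraph immediately yields the desired $\kappa(X,K_X+\Delta)\ge 0$.
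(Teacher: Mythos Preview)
Your base-change reduction to the algebraic closure is exactly what the paper does, and it is the entire content of the paper's proof: the statement is quoted verbatim from \cite[Theorem 3]{Jak}, so after replacing $(X,\Delta)$ by $(X_{\overline{k}},\overline{\Delta})$ the paper simply invokes that reference as a black box and stops.

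Your proposal goes further by inserting an MMP step to reduce the pseudo-effective case to the nef case, and then gesturing at how the nef non-vanishing would be established. This is not wrong, but it is unnecessary here: Witaszek's theorem already treats the pseudo-effective case directly, so the MMP reduction buys nothing. More importantly, your final paragraph is not a proof but an outline of where the difficulty lies; you explicitly flag the nef non-vanishing as ``the main obstacle'' and then cite the same circle of abundance papers without carrying out any argument. If the intent is to reprove \cite[Theorem 3]{Jak} rather than cite it, that paragraph would need to be replaced by the actual content of Witaszek's paper. If the intent is merely to quote the result, as the paper does, then the MMP detour should be dropped and the citation applied immediately after base change.
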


\begin{proof}
  After replacing $(X, \Delta)$ by $(\overline{X}, \overline{\Delta})$, the assertion follows from \cite[Theorem 3]{Jak}.
\end{proof}

\begin{thm}\label{bpf}
Let $X$ be a $3$-dimensional projective normal variety over a field $k$ of characteristic $p>5$ and $\Delta \ge 0$ be an effective $\R$-Weil divisor on $X$ such that $K_X+\Delta$ is a nef $\R$-Cartier divisor.
Then $K_X+\Delta$ is semiample if one of the following properties holds:
\begin{enumerate}[label=\textup{(\alph*)}]
  \item $(X,\Delta)$ is klt and there is a big $\Q$-Cartier divisor $A$ with $0 \le A \le \Delta$, or
  \item $k$ is algebraically closed, $\Delta$ is a $\Q$-Wei divisor and $\kappa(X, K_X+\Delta) \ge 1$.
\end{enumerate}
\end{thm}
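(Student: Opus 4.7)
The plan is to handle (a) and (b) separately by invoking recent advances on the minimal model program for $3$-folds in characteristic $p>5$, with a perturbation argument needed in (a) to reduce from $\R$- to $\Q$-coefficients.

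For (b), since $\Delta$ is a $\Q$-divisor and $K_X+\Delta$ is a nef $\Q$-Cartier divisor with positive Iitaka dimension, semiampleness is exactly the content of the abundance theorem for $3$-folds of positive Iitaka dimension in characteristic $p>5$, established in \cite{DWAb} and \cite{WalAb} (cf.~\cite{BirMMP}). Thus I expect (b) to be a direct citation.

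For (a), I would apply the base point free theorem for klt $3$-fold pairs with a big $\Q$-boundary proved in \cite{BirMMP}, after reducing the $\R$-coefficient situation to the $\Q$-coefficient one. First, by Kodaira's lemma I write the big $\Q$-Cartier divisor as $A\sim_\Q A'+E$ with $A'$ ample and $E\ge 0$, and apply Lemma~\ref{Bertini} to a generic small multiple of $A'$ to absorb a perturbation while keeping the pair klt. Next, I would express $\Delta$ as a finite convex $\R$-combination $\Delta=\sum r_i\Delta_i$ of $\Q$-divisors $\Delta_i$ chosen inside a Shokurov-type polytope so that each $(X,\Delta_i)$ is klt, each $\Delta_i$ still contains a big $\Q$-Cartier divisor, and each $K_X+\Delta_i$ is nef. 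The $\Q$-version of the base point free theorem then makes every $K_X+\Delta_i$ semiample, and, arranging the polytope so that the resulting contractions factor through a common morphism, one concludes that $K_X+\Delta$ is semiample as a positive $\R$-combination of $\Q$-semiample divisors sharing an Iitaka fibration.

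The main obstacle I anticipate is the polytope step in (a): constructing the decomposition of $\Delta$ so that the perturbed boundaries $\Delta_i$ simultaneously retain kltness, nefness of $K_X+\Delta_i$, and the big-boundary hypothesis, and so that the semiample contractions associated to the various $K_X+\Delta_i$ factor through a common target. This uses Shokurov's polytope together with the rationality/length-of-extremal-ray statements in dimension $3$, characteristic $p>5$, both of which should be available from the MMP developed in \cite{BirMMP} and the works cited in the introduction.
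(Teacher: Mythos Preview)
Your treatment of (b) matches the paper's: it is a direct citation of the abundance results \cite{DWAb}, \cite{WalAb}, \cite{BirMMP}.

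For (a), your approach is sound but considerably more elaborate than the paper's. The paper avoids the Shokurov polytope decomposition entirely. Writing $\Delta=A+B$ and $A=F+H$ with $F\ge 0$ effective and $H$ ample (Kodaira), it sets $\Delta'=(1-\epsilon)\Delta+\epsilon(B+F)$ for small $\epsilon>0$; then $(X,\Delta')$ is klt and
\[
(K_X+\Delta)-(K_X+\Delta')=\epsilon H
\]
is ample. At this point the paper simply invokes \cite[Theorem 1.1]{Wal}, Waldron's base point free theorem over imperfect fields, which is stated for $\R$-divisors and applies directly to the nef divisor $D=K_X+\Delta$ with $D-(K_X+\Delta')$ ample. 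So the reduction from $\R$- to $\Q$-coefficients is absorbed into the cited result rather than carried out by hand.

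Your polytope route would also work, and it is the classical way to bootstrap $\Q$-statements to $\R$-statements; but it requires the cone theorem with bounded extremal ray length and the rationality of the nef polytope (Proposition~\ref{Cone theorem} in this paper), plus the compatibility of the semiample fibrations near $\Delta$ that you flagged as the main obstacle. The paper's perturbation trick sidesteps all of that: instead of preserving ``$\Delta$ contains a big part'' after perturbation, it arranges for the \emph{difference} $(K_X+\Delta)-(K_X+\Delta')$ to be ample, which is exactly the hypothesis of the base point free theorem in \cite{Wal}.
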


\begin{proof}
We first assume that the first assumption holds.
We write $\Delta = A + B$ and $A=F+H$, where $F$ is effective and $H$ is ample.
Take a real number $\epsilon > 0$ such that $(X, \Delta' : = (1-\epsilon)\Delta + \epsilon (B+F) )$ is klt.
Noting that $(K_X+\Delta) - (K_X+\Delta')$ is ample, the assertion follows from \cite[Theorem 1.1]{Wal}.

In the second case, the assertion follows from \cite[Theorem A]{DWAb}, \cite[Theorem 1.3]{WalAb} and \cite[Theorem 1.4]{BirMMP}.
\end{proof}

\subsection{Models of pairs}

Let $\phi: X \dashrightarrow Y$ be a birational map between normal varieties $X,Y$ over a (not-necessarily algebraically closed) field $k$ and $U_\phi \subseteq X$ be the maximal open subset on which $\phi$ is a morphism.
We say that $\phi$ is \emph{birational contraction} if $\phi : U_\phi \to Y$ is surjective in codimension one.

\begin{defn}
  Let $X,Y$ be normal varieties over a field $k$, $D$ be an $\R$-Cartier divisor on $X$ and $\phi: X \dashrightarrow Y$ be a birational contraction.
  Suppose that $p : W \to X$ and $q: W \to Y$ be proper birational morphisms from a normal variety $W$ with $\phi \circ p = q$.
  \[\xymatrix{ & W \ar_-{p}[ld] \ar^-{q}[rd]& \\ X \ar@{.>}^-{\phi}[rr] && Y}\]
  $\phi$ is said to be \emph{$D$-non-positive} (resp.~\emph{$D$-negative}) if $\phi_*D$ is $\R$-Cartier and $E : =p^*D - q^*(\phi_*D)$ is effective (resp.~$E$ is effective and its support contains the strict transform $p^{-1}_*F$ of every $\phi$-exceptional divisor $F$).
  This definition is independent of the choice of $W$.
\end{defn}

\begin{rem}\label{convexity}
  Let $X,Y$ be normal varieties over a field $k$, $D, D'$ be $\R$-Cartier divisors on $X$ and $\phi: X \dashrightarrow Y$ be a birational contraction.
  If $\phi$ is both $D$-non-positive (resp.~$D$-negative) and $D'$-non-positive, then for any real numbers $a>0$ and $b \ge 0$, $\phi$ is $aD+bD'$-non-positive (resp.~$aD+bD'$-negative).
\end{rem}

\begin{defn}
  Let $\mathcal{X},\mathcal{Y}, T$ be normal varieties over a field $k$ such that $\mathcal{X}$ and $\mathcal{Y}$ are projective over $T$, $\mathcal{D}$ be an $\R$-Cartier divisor on $\mathcal{X}$ and $\Phi: \mathcal{X} \dashrightarrow \mathcal{Y}$ be a birational contraction over $T$.
  $\Phi$ is said to be an \emph{ample model} (resp.~\emph{semiample model}) of $\mathcal{D}$ if 
  \begin{enumerate}
    \item $\Phi$ is $\mathcal{D}$-non-positive and 
    \item $\Phi_* \mathcal{D}$ is ample (resp.~semiample) over $T$.
  \end{enumerate}
\end{defn}

Let $\mathcal{X}, \mathcal{Y}, T$ be normal varieties over a field $k$ such that $\mathcal{X}$ and $\mathcal{Y}$ are projective over $T$, and $\Phi: \mathcal{X} \dashrightarrow \mathcal{Y}$ be a birational map over $T$.
We further assume that $\pi_X: \mathcal{X} \to T$ and $\pi_Y: \mathcal{Y} \to T$ are surjective with geometrically integral fibers.
Then for a general point $t \in T$, $\Phi$ defines a birational map
\[\Phi_t : \mathcal{X}_t \dasharrow \mathcal{Y}_t.\]

Similarly, for an open subset $U \subseteq T$, we write $\mathcal{X}_U : = \pi_X^{-1}(U)$ and $\mathcal{Y}_U : = \pi^{-1}(U)$.
We also denote by 
\[\Phi_U : \mathcal{X}_U \dashrightarrow \mathcal{Y}_U\]
the induced birational map.
  
\begin{lem}\label{model and fiber}
  With the above notation, let $\eta \in T$ be the generic point and $\mathcal{D}$ be an $\R$-Cartier divisor on $\mathcal{X}$.
  Then the following holds:
    \begin{enumerate}[label=\textup{(\arabic*)}]
      \item If $\Phi_{\eta}$ is a birational contraction (resp.~$\mathcal{D}_{\eta}$-non-positive, $\mathcal{D}_{\eta}$-negative, ample model of $\mathcal{D}_{\eta}$, or semiample model of $\mathcal{D}_{\eta}$), then there is an open dense subset $U \subseteq T$ such that the restriction $\Phi_U$ is a birational contraction (resp.~$\mathcal{D}_U$-non-positive, $\mathcal{D}_U$-negative, ample model of $\mathcal{D}_U$, or semiample model of $\mathcal{D}_U$).
      \item We further assume that general fibers $\mathcal{X}_t$ and $\mathcal{Y}_t$ are normal.
      If $\Phi$ is a birational contraction (resp.~$\mathcal{D}$-non-positive, $\mathcal{D}$-negative, ample model of $\mathcal{D}$, or semiample model of $\mathcal{D}$), then for a general point $t \in T$, $\Phi_t$ is a birational contraction (resp.~$\mathcal{D}_t$-non-positive, $\mathcal{D}_t$-negative, ample model of $\mathcal{D}_t$, or semiample model of $\mathcal{D}_t$).
      \item We further assume that general fibers $\mathcal{X}_t$ and $\mathcal{Y}_t$ are normal, $\Phi$ is a birational contraction and $\Phi_*\mathcal{D}$ is $\R$-Cartier.
      Then we have
      \[
        (\Phi_*\mathcal{D})_t =(\Phi_t)_*\mathcal{D}_t
        \]
        for a general point $t \in T$.
    \end{enumerate}
\end{lem}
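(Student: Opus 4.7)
The plan is to pick a common resolution $p\colon \mathcal{W} \to \mathcal{X}$, $q\colon \mathcal{W}\to\mathcal{Y}$ of $\Phi$ over $T$ --- for instance, let $\mathcal{W}$ be the normalization of the graph of $\Phi$ inside $\mathcal{X}\times_T\mathcal{Y}$. By shrinking $T$ (which is permissible throughout) we may assume that $p$ and $q$ are projective birational morphisms and that, in the setting of (2) and (3), general closed fibers $\mathcal{X}_t,\mathcal{Y}_t,\mathcal{W}_t$ are normal integral. Each of the five properties we must control admits a uniform reformulation on the triangle $(p,q)$: $\Phi$ is a birational contraction iff every $q$-exceptional prime divisor on $\mathcal{W}$ is $p$-exceptional; $\mathcal{D}$-non-positivity amounts to effectivity of $E := p^*\mathcal{D}-q^*\Phi_*\mathcal{D}$; $\mathcal{D}$-negativity adds that $\Supp(E)$ contains the strict transform of every $\Phi$-exceptional divisor on $\mathcal{X}$; and the model conditions additionally demand that $\Phi_*\mathcal{D}$ be relatively ample, respectively semiample, over $T$.

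For part (1), each such property is an open condition on $T$, so we simply exhibit the required open $U$. Exceptionality of a horizontal prime divisor on $\mathcal{W}$ with respect to $p$ or $q$ is determined on the generic fiber, hence is insensitive to shrinking $T$; moreover there are only finitely many horizontal exceptional divisors. Effectivity of $E|_{\mathcal{W}_\eta}$ persists on $\mathcal{W}_U$ once we remove from $T$ the images of the finitely many vertical prime divisors with negative coefficient in $E$. Relative ampleness of $\Phi_*\mathcal{D}_\eta$ spreads by the relative Kleiman criterion. For semiampleness, write $\Phi_*\mathcal{D}_\eta \sim_\R f_\eta^*A_\eta$ for a projective morphism $f_\eta\colon \mathcal{Y}_\eta \to Z_\eta$ with $A_\eta$ ample on $Z_\eta$; by Lemmas \ref{spreading out sheaves}, \ref{spreading out schemes} and \ref{spreading out morphisms}, the target $Z_\eta$, the morphism $f_\eta$, the divisor $A_\eta$, and the defining $\R$-linear equivalence all extend over some open $U\subseteq T$, producing a semiample model there.

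For part (2), after a further shrinking of $T$ we may assume that $p_t$ and $q_t$ are proper birational morphisms of normal varieties and that intersection with $\mathcal{W}_t$ drops dimension by exactly one on every horizontal subvariety, so that the $(p_t)$- and $(q_t)$-exceptional prime divisors of $\mathcal{W}_t$ are exactly the restrictions of the horizontal $p$- and $q$-exceptional prime divisors of $\mathcal{W}$. The birational-contraction, non-positivity and negativity conditions then transfer fiberwise, and relative (semi)ampleness of $\Phi_*\mathcal{D}$ restricts to (semi)ampleness on general fibers. For (3), $\Phi$ being a birational contraction with normal general fiber forces the indeterminacy locus $\mathrm{Ind}(\Phi)$ to meet $\mathcal{X}_t$ in codimension $\ge 2$ for general $t$; away from this codim-$2$ set $\Phi_t$ is a morphism and $(\Phi_*\mathcal{D})|_{\mathcal{X}_t}$ coincides with $(\Phi_t)_*\mathcal{D}_t$ as a Weil divisor, and both extend uniquely to $\mathcal{Y}_t$ by normality. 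The main obstacle will be the semiample case of part (1): unlike effectivity or ampleness, semiampleness forces us to spread out an entire auxiliary structure (target, morphism, divisor, and linear equivalence) in a coherent way, and this is precisely the task the spreading-out toolkit recalled earlier is designed to handle.
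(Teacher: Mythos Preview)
Your overall strategy---take a common resolution $p\colon\mathcal{W}\to\mathcal{X}$, $q\colon\mathcal{W}\to\mathcal{Y}$, rephrase each property in terms of exceptional loci and the divisor $E=p^*\mathcal{D}-q^*\Phi_*\mathcal{D}$, then spread out from the generic fiber or restrict to a general closed fiber---is exactly the route the paper takes, including the separate spreading-out lemma for the ample and semiample cases.

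There is, however, one genuine slip. You write that $\Phi$ is a birational contraction iff every $q$-exceptional prime divisor on $\mathcal{W}$ is $p$-exceptional, i.e.\ $\Exc^1(q)\subseteq\Exc^1(p)$. The correct direction is the reverse: $\Phi$ is a birational contraction iff $\Exc^1(p)\subseteq\Exc^1(q)$. Indeed, $\Phi$ being a birational contraction means every prime divisor on $\mathcal{Y}$ has a divisorial strict transform on $\mathcal{X}$; equivalently, if a prime divisor $F\subseteq\mathcal{W}$ is not $q$-exceptional (so $q(F)$ is a divisor on $\mathcal{Y}$) then it is not $p$-exceptional. This is the inclusion the paper uses when it writes $\Exc^1(p)_\eta\subseteq\Exc^1(q)_\eta$. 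With the inclusion reversed your argument for the birational-contraction case in both (1) and (2) would prove the wrong statement. Once you swap the roles of $p$ and $q$ here, the rest of your outline goes through and matches the paper's proof.
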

    
\begin{proof}
Take a proper birational morphism $p : \mathcal{W} \to \mathcal{X}$ and $q: \mathcal{W} \to \mathcal{Y}$ from a normal variety $\mathcal{W}$ such that $q = \Phi \circ p$.
\[
  \xymatrix{ & \mathcal{W} \ar_-{p}[ld] \ar^-{q}[rd] & \\
  \mathcal{X} \ar@{.>}^-{\Phi}[rr] && \mathcal{Y}}
\]
Noting that $\mathcal{X}_{\eta}$ is geometrically integral over $K(T)$, the generic fiber $\mathcal{W}_{\eta}$ is smooth over $K(T)$ in codimension $0$.
This shows that there exists an open dense subset $T^{\circ} \subseteq T$ such that $\mathcal{W}_t$ is integral for every $t \in T^{\circ}$.
Let $\nu_t: \mathcal{W}_t^n \to \mathcal{W}_t$ be the normalization and 
\[
  p_t^n : = p_t \circ \nu_t : \mathcal{W}_t^n \to \mathcal{X}_t
\] be the induced morphism.
Then it is straightforward to show that after shrinking $T^{\circ}$, for every point $t \in T^{\circ}$, we have
\[
\Exc^1(p_{t}^n) = \nu_t^{-1}((\Exc^1(p))_t),
\]
where $\Exc^1(p_t^n)$ (resp.~$\Exc^1(p)$) is the union of all $p_t^n$-exceptional (resp.~$p$-exceptional) divisors.
Similarly, if we set $q_t^n : = q_t \circ \nu_t$, one has
\[
  \Exc^1(q_{t}^n) = \nu_t^{-1}((\Exc^1(q))_t).
\]
Therefore, if $\Phi_{\eta}$ is a birational contraction, then we have
\[
  \Exc^1(p)_{\eta} = \Exc^1(p_{\eta}) \subseteq \Exc^1(q_{\eta}) = \Exc^1(q)_{\eta}.
\]
Then we may find an open dense subset $U \subseteq T$ such that $\Exc^1(p) \cap \mathcal{W}_U \subseteq \Exc^1(q) \cap \mathcal{W}_U$.
This shows the assertion in (1) for the "birational contraction" case.
The proof of the assertion in (2) for the "birational contraction" case is similar.

For the rest of the proof, we assume that $\Phi$, $\Phi_{\eta}$ and $\Phi_t$ are birational contractions.
We write $\mathcal{E} : = \Phi_*\mathcal{D}$.
Take a closed subsets $\mathcal{Z}_1 \subseteq \mathcal{X}$ and $\mathcal{Z}_2 \subseteq \mathcal{Y}$ such that $\cod(\mathcal{Z}_2, \mathcal{Y}) \ge 2$ and that the open subvarieties $\mathcal{X}^{\circ} : = \mathcal{X} \setminus \mathcal{Z}_1$ and $\mathcal{Y}^{\circ} : = \mathcal{Y} \setminus \mathcal{Z}_2$ are isomorphic.
Then the flat pullback $\mathcal{E}_{\eta}$ of $\mathcal{E}$ coincides with the strict transform $(\Phi_{\eta})_*\mathcal{D}_{\eta}$ after restricting to the open subset $\mathcal{Y}_{\eta}^{\circ}$.
Combining this with $\cod((\mathcal{Z}_2)_{\eta}, \mathcal{Y}_{\eta}) \ge 2$, we conclude that 
\begin{align}\label{compatible of push}
  \mathcal{E}_{\eta} = (\Phi_{\eta})_* \mathcal{D}_{\eta}.
\end{align}
In particular, if $(\Phi_{\eta})_* \mathcal{D}_{\eta}$ is $\R$-Cartier, then $\mathcal{E}$ is $\R$-Cartier around $\mathcal{Y}_{\eta}$.
Therefore, for the rest of the proof, we may always assume that $\mathcal{E}$ is $\R$-Cartier.

The assertion in (3) follows from the similar argument as in the proof of the inequality \eqref{compatible of push}.
The assertions in (1) and (2) for the "$\mathcal{D}$-non-positive" case and the "$\mathcal{D}$-negative" case follows from the equality
\[
  (p^*\mathcal{D} - q^*(\mathcal{E}) )_{t} = p_{t}^* \mathcal{D}_t - q_{t}^* ((\Phi_{t})_* \mathcal{D}_{t}).
\]

For the "ample model" case and the "semiample model" case, the assertion in (2) is obvious and the assertion in (1) follows from the following well-known lemma.
\end{proof}

\begin{lem}
Let $ \mathcal{Y} \to T$ be a projective morphism between normal varieties over a field $k$ and $\mathcal{E}$ be an $\R$-Cartier divisor on $\mathcal{Y}$.
If $\mathcal{E}_{\eta}$ is ample (resp.~semiample), then after shrinking $T$, $\mathcal{E}$ is ample (resp.~semiample) over $T$.
\end{lem}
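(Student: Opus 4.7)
The plan is to spread out the data certifying (semi)ampleness from $\mathcal{Y}_\eta$ to $\mathcal{Y}_U$ for a dense open $U \subseteq T$, using the spreading-out machinery developed earlier in the preliminaries.

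For the semiample case, the hypothesis provides a projective $K(T)$-scheme $Z$, a $K(T)$-morphism $f : \mathcal{Y}_\eta \to Z$, and an ample $\R$-Cartier divisor $H$ on $Z$ with $\mathcal{E}_\eta \sim_{\R} f^*H$. First I would apply Lemma \ref{spreading out schemes} to realize $Z$ as the generic fiber of a projective $U$-scheme $\widetilde{Z}$, and Lemma \ref{spreading out morphisms}(1) to extend $f$ to a $U$-morphism $\widetilde{f} : \mathcal{Y}_U \to \widetilde{Z}$ after shrinking $U$. Writing $H = \sum_j \mu_j H_j$ as an $\R$-combination of Cartier divisors, Lemma \ref{spreading out sheaves} applied to each $\sO_Z(H_j)$ extends $H_j$ to a Cartier divisor $\widetilde{H}_j$ on $\widetilde{Z}$, producing $\widetilde{H} := \sum_j \mu_j \widetilde{H}_j$. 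The relation $\mathcal{E}_\eta - f^*H = \sum_l \lambda_l \Div(\phi_l)$ witnessing $\R$-linear equivalence involves only finitely many rational functions on $\mathcal{Y}_\eta$, each of which comes from a rational function on $\mathcal{Y}$; after further shrinking $U$, the relation globalizes up to an $\R$-Cartier divisor $E$ on $\mathcal{Y}_U$ with $E_\eta = 0$. Writing $E$ as an $\R$-Weil combination of prime divisors on the normal variety $\mathcal{Y}_U$, the components of $E$ that dominate $U$ must carry coefficient zero (since prime divisors on $\mathcal{Y}_U$ dominating $U$ correspond bijectively to prime divisors of the integral scheme $\mathcal{Y}_\eta$), so $E$ is supported over a proper closed subset of $U$; removing this subset yields $\mathcal{E}|_{\mathcal{Y}_U} \sim_{\R} \widetilde{f}^*\widetilde{H}$. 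The statement then reduces to the ample case applied to $\widetilde{Z} \to U$ to guarantee that $\widetilde{H}$ is ample over $U$ after a final shrinking.

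For the ample case, I would use that $\mathcal{E}_\eta$ lies in the open cone spanned by classes of ample Cartier divisors, and write $\mathcal{E}_\eta \sim_{\R} \sum_{i=1}^m c_i A_i$ with $c_i > 0$ real and $A_i$ ample Cartier on $\mathcal{Y}_\eta$. Each $A_i$ spreads to a Cartier divisor $\widetilde{A}_i$ on $\mathcal{Y}_U$ by Lemma \ref{spreading out sheaves}; by the standard openness of relative ampleness for invertible sheaves in a proper family (EGA IV, 9.6.4), after shrinking $U$ each $\widetilde{A}_i$ is relatively ample over $U$, hence so is the positive combination $\sum_i c_i \widetilde{A}_i$. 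The same spreading-out-of-$\R$-linear-equivalence argument as in the previous paragraph upgrades $\mathcal{E}_\eta \sim_{\R} \sum c_i A_i$ to $\mathcal{E}|_{\mathcal{Y}_U} \sim_{\R} \sum c_i \widetilde{A}_i$ after yet another shrinking, which finishes the ample case.

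The hard part is the repeated claim that an $\R$-linear equivalence valid on $\mathcal{Y}_\eta$ lifts to one on $\mathcal{Y}_U$ for a suitable shrinking: the candidate discrepancy is an $\R$-Cartier divisor whose generic-fiber restriction vanishes, and one must argue that after writing it as an $\R$-Weil combination of prime divisors on the normal variety $\mathcal{Y}_U$, only components not dominating $T$ can appear, so that shrinking $T$ to avoid the images of those finitely many components kills the discrepancy. Once this bookkeeping step is in hand, everything else is a direct application of the spreading-out lemmas combined with the classical openness of relative ampleness.
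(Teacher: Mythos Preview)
Your proposal is correct and follows essentially the same architecture as the paper: spread out the target scheme, the morphism, and the divisor data from the generic fiber, then invoke openness of relative ampleness. The semiample reduction to the ample case is identical in both.

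There are two minor differences worth noting. First, in the ample case the paper reduces to the \emph{very ample} case and argues directly via an explicit closed immersion $\mathcal{Y}_\eta \hookrightarrow \PP^N_{K(T)}$ which it spreads out and checks remains a closed immersion; you instead cite the general openness-of-relative-ampleness theorem from EGA, which is equally valid but a different black box. Second, the paper writes $\mathcal{E}_\eta = \sum a_i H_i$ as an \emph{equality} of $\R$-Cartier divisors (this is the definition of an ample $\R$-divisor), so after spreading out the $H_i$ to $\mathcal{H}_i$ the discrepancy $\mathcal{E} - \sum a_i \mathcal{H}_i$ has vanishing generic-fiber restriction and is killed by shrinking $T$; your use of $\sim_{\R}$ in the ample case is not wrong but forces you through the principal-divisor bookkeeping you flag as the ``hard part,'' which the paper's formulation sidesteps in that case (though both arguments need it for the semiample case).
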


\begin{proof}
We first consider the case where $\mathcal{E}$ is Cartier and $\mathcal{E}_{\eta}$ is very ample.
Take a closed immersion $\iota : \mathcal{Y}_{\eta} \hookrightarrow \PP^N_{K(T)}$ and a hyperplane $H \subseteq \PP^N_{K(T)}$ such that $\mathcal{E}_{\eta}=\iota^*H$.
By Lemma \ref{spreading out morphisms}, after shrinking $T$, there exists a $T$-morphism 
\[
  f: \mathcal{Y} \to \PP^N_T
\]
whose base change to $\Spec(K(T))$ is isomorphic to $\iota$.
It follows from \cite[Proposition 12.93]{GW} that after shrinking $T$ again, we may assume that $f$ is a closed immersion.
After spreading out $H$, we conclude that $\mathcal{E}$ is also very ample.

We next assume that $\mathcal{E}_{\eta}$ is an ample $\R$-Cartier divisor.
Then we may write 
\[
  \mathcal{E}_{\eta} = \sum_{i=1}^r a_i H_i,
\]
where $a_i > 0$ is a real number and $H_i$ is a very ample Cartier divisor on $\mathcal{Y}_{\eta}$.
After shrinking $T$, we may find a Cartier divisor $\mathcal{H}_i$ on $\mathcal{Y}$ such that $\mathcal{H}_{\eta}=H_i$ and 
\[
  \mathcal{E} = \sum_{i=1}^r a_i \mathcal{H}_i.
\]
Then the ampleness of $\mathcal{E}$ follows from that of $\mathcal{H}_i$, which we already proved.

Finally, we assume that $\mathcal{E}_{\eta}$ is semiample.
Take a morphism $f : \mathcal{Y}_{\eta} \to Z$ to a projective $K(T)$-scheme $Z$ and an ample $\R$-Cartier divisor $H$ on $Z$ such that $\mathcal{E}_{\eta} \sim_{\R} f^*H$.
By Lemma \ref{spreading out schemes} and Lemma \ref{spreading out morphisms}, after shrinking $T$, there exists a $T$-morphism 
\[
  g: \mathcal{Y} \to \mathcal{Z}
\]
to a projective $T$-scheme $\mathcal{Z}$ whose generic fiber is isomorphic to $f$.
By spreading out $H$, we may find an $g$-ample $\R$-Cartier divisor $\mathcal{H}$ on $\mathcal{Z}$ such that $\mathcal{E} \sim_{\R} g^* \mathcal{H}$, which completes the proof.
\end{proof}

\begin{defn}
  Let $\mathcal{X},\mathcal{Y}, T$ be normal varieties over a field $k$ such that $\mathcal{X}$ and $\mathcal{Y}$ are projective over $T$, and $\Phi: \mathcal{X} \dashrightarrow \mathcal{Y}$ be a birational contraction over $T$.
  Suppose that $\Delta \ge 0$ is an effective $\R$-Weil divisor on $\mathcal{X}$ such that $K_{\mathcal{X}}+\Delta$ is $\R$-Cartier.
  \begin{enumerate}
  \item $\Phi$ is said to be a \emph{lc model} (resp.~\emph{weak lc model}) of $(\mathcal{X}, \Delta)$ if
  \begin{enumerate}
    \item $(\mathcal{X}, \Delta)$ is lc,
    \item $\Phi$ is $K_{\mathcal{X}}+\Delta$-non-positive, and
    \item $\Phi_*(K_{\mathcal{X}}+\Delta)=K_{\mathcal{Y}}+\Phi_*\Delta$ is an ample (resp.~nef) $\R$-Cartier divisor over $T$.
  \end{enumerate}
  \item $\Phi$ is said to be a \emph{log minimal model} of $(\mathcal{X}, \Delta)$ if
  \begin{enumerate}
    \item $(\mathcal{X}, \Delta)$ is dlt (see \cite[Definition 2.37]{KM} for definition) and $\mathcal{Y}$ is $\Q$-factorial,
    \item $\Phi$ is $K_{\mathcal{X}}+\Delta$-negative, and
    \item $\Phi_*(K_{\mathcal{X}}+\Delta)=K_{\mathcal{Y}}+\Phi_*\Delta$ is nef $\R$-Cartier divisor.
  \end{enumerate}
  \end{enumerate}
\end{defn}

A \emph{small $\Q$-factorization} of a normal variety $X$ is a small proper birational morphism $f : Y \to X$ from a normal $\Q$-factorial variety $Y$.
As is well-known, the existence of log resolutions and log minimal models implies that of small $\Q$-factorization.

\begin{lem}\label{Q-factorization}
Let $X$ be a $3$-dimensional normal quasi-projective variety over an $F$-finite field $k$ of characteristic $p>5$.
Assume that there exists an effective $\R$-Weil divisor $\Delta$ on $X$ such that $(X,\Delta)$ is klt.
Then $X$ admits a small $\Q$-factorization.
\end{lem}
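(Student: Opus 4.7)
The plan is to take a log resolution and then run a relative MMP whose only possible divisorial contractions contract exceptional divisors, ending at the desired small $\Q$-factorialization.

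First, by \cite[Proposition 2.14]{BMPS+}, choose a projective log resolution $f : Y \to X$ of $(X, \Delta)$, and let $E_1, \ldots, E_n$ be the $f$-exceptional prime divisors. Set $a_i := a_{E_i}(X, \Delta)$, so $a_i > 0$ because $(X, \Delta)$ is klt. Pick a positive rational number $\epsilon$ with $\epsilon < \min_i a_i$, and define
\[
  \Gamma := \tilde\Delta + (1 - \epsilon) \sum_{i=1}^n E_i.
\]
Since $(Y, \mathrm{Supp}(\tilde\Delta) + \sum_i E_i)$ is SNC and all coefficients of $\Gamma$ lie in $[0, 1)$, the pair $(Y, \Gamma)$ is klt, and $Y$ is regular hence $\Q$-factorial. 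By construction,
\[
  K_Y + \Gamma = f^*(K_X + \Delta) + F, \qquad F := \sum_{i=1}^n (a_i - \epsilon)E_i,
\]
so $F$ is an effective, $f$-exceptional $\R$-divisor, and $K_Y + \Gamma \sim_{\R, X} F$.

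Next I would run a $(K_Y + \Gamma)$-MMP over $X$. Because $f$ is birational, $K_Y + \Gamma$ is $\R$-linearly equivalent over $X$ to the effective divisor $F$, and the MMP for $\Q$-factorial klt threefold pairs in characteristic $p > 5$ over an affine base (here $X$, or rather an affine cover) is available through the work of Hacon--Witaszek and Waldron, together with the relevant termination statements in this regime. Each step is either a flipping contraction (no divisors contracted) or a divisorial contraction; in the latter case, the contracted divisor must lie in the support of the strict transform of $F$, hence must be the strict transform of some $E_i$. Therefore throughout the MMP the strict transform $F_i$ of $F$ remains effective and exceptional over $X$, and one obtains a log minimal model
\[
  \Phi: Y \dashrightarrow Y', \qquad f' : Y' \to X,
\]
with $Y'$ $\Q$-factorial and $K_{Y'} + \Phi_*\Gamma$ nef over $X$.

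Finally, I would apply the negativity lemma to the proper birational morphism $f' : Y' \to X$. The divisor $K_{Y'} + \Phi_*\Gamma - f'^*(K_X + \Delta) = \Phi_* F$ is effective and $f'$-exceptional, while $K_{Y'} + \Phi_*\Gamma$ (and hence $\Phi_* F$) is nef over $X$; the negativity lemma forces $\Phi_* F = 0$. Since every $E_i$ appears in $F$ with strictly positive coefficient $a_i - \epsilon$, this means every $E_i$ has been contracted by $\Phi$, i.e.\ $f'$ has no exceptional divisors. Thus $f' : Y' \to X$ is a small proper birational morphism with $Y'$ $\Q$-factorial, as required.

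The main obstacle is stepping through the relative MMP and establishing its termination for the klt pair $(Y, \Gamma)$ over $X$ in dimension three and characteristic $p > 5$. Existence of divisorial contractions and flips in this setting is by now known, but termination has to be extracted carefully — one uses that the MMP is run with respect to an effective divisor $F$ exceptional over the birational base $X$, so that the exceptional support is strictly decreased at each divisorial step and termination of flips in the klt threefold case in characteristic $p > 5$ applies; all other steps of the argument (constructing $\Gamma$ and applying the negativity lemma) are routine.
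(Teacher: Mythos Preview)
Your argument is correct and follows essentially the same route as the paper: take a log resolution, run a relative MMP over $X$ for the log-resolution pair with the exceptional divisors thrown into the boundary, and use the negativity lemma to see that the resulting minimal model is small over $X$. The only cosmetic difference is that the paper uses coefficient $1$ on the exceptional divisors (so the pair is dlt) and simply invokes \cite[Theorem 1.6]{DW} for existence and termination of the log minimal model, whereas you use coefficient $1-\epsilon$ to stay in the klt range; both choices lead to the same conclusion by the same negativity-lemma step.
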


\begin{proof}
Take a log resolution $f : Y \to X$ of $(X,\Delta)$.
By \cite[Theorem 1.6]{DW}, there exists a log minimal model $\phi: Y \dashrightarrow Z$ of $(Y, f^{-1}_*\Delta+ \Exc(f))$ over $X$.
It then follows from the negativity lemma that the induced morphism $Z \to X$ is small.
\end{proof}

\begin{lem}\label{rmk on wlc}
  Let $(X, \Delta)$ be a $3$-dimensional projective klt pair over an $F$-finite field $k$ of characteristic $p>5$ and $\phi : X \dashrightarrow Y$ be a birational contraction to a normal projective variety $Y$.
  We further assume that one of the following conditions holds.
  \begin{enumerate}[label=\textup{(\alph*)}]
    \item There is a big $\Q$-Cartier divisor $A$ such that $0 \le A \le \Delta$, or
    \item $\Delta$ is a $\Q$-Weil divisor, $\kappa(X,K_X+\Delta) \ge 1$ and $(X_{\overline{k}}, \Delta_{\overline{k}})$ is klt, where $\Delta_{\overline{k}}$ is the flat pullback of $\Delta$ to $X_{\overline{k}} : =X \times_k \Spec(\overline{k})$.
  \end{enumerate}
  Then $\phi$ is a weak lc model of $(X, \Delta)$ if and only if it is a semiample model of $K_X+\Delta$.
\end{lem}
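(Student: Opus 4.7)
The implication ``semiample model $\Rightarrow$ weak lc model'' is immediate from the definitions: a semiample $\Q$-Cartier divisor is automatically nef, and klt implies lc, so all three conditions defining a weak lc model follow. Neither hypothesis (a) nor (b) enters here. The content lies in the converse direction: granted that $K_Y+\phi_*\Delta$ is nef and $\Q$-Cartier, I must upgrade this to semiampleness, which is exactly the content of the base point free theorem (Theorem \ref{bpf}) applied to $(Y, \phi_*\Delta)$.

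As a preparatory step, I would verify that $(Y, \phi_*\Delta)$ is klt. Taking a common resolution $p : W \to X$, $q : W \to Y$, the $K_X+\Delta$-non-positivity of $\phi$ gives $E := p^*(K_X+\Delta) - q^*(K_Y+\phi_*\Delta) \ge 0$, and comparing the two discrepancy formulas on $W$ yields $a_F(Y, \phi_*\Delta) = a_F(X,\Delta) + \operatorname{coeff}_F(E) > 0$ for every prime divisor $F$ over $Y$.

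In case (a), I would pass to a small $\Q$-factorization $\pi : Y' \to Y$, which exists by Lemma \ref{Q-factorization} applied to the klt pair $(Y, \phi_*\Delta)$. Setting $\phi' := \pi^{-1} \circ \phi$, smallness of $\pi$ gives $\pi^*(K_Y+\phi_*\Delta) = K_{Y'}+\phi'_*\Delta$, still nef and $\Q$-Cartier, making $\phi'$ a weak lc model of $(X, \Delta)$ over the now $\Q$-factorial $Y'$. On $Y'$, the divisor $\phi'_*A$ is $\Q$-Cartier by $\Q$-factoriality, effective, bounded above by $\phi'_*\Delta$, and big (since $\phi'$ is a birational contraction, one obtains an inclusion $H^0(X, mA) \hookrightarrow H^0(Y', m\phi'_*A)$ for divisible $m$). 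Theorem \ref{bpf}(a) then gives semiampleness of $\pi^*(K_Y+\phi_*\Delta)$, which I would descend to $Y$ along the small birational morphism $\pi$ using $\pi_*\sO_{Y'}=\sO_Y$. In case (b), I would use instead that $\phi$ being a $K_X+\Delta$-non-positive birational contraction preserves spaces of global sections of multiples of $K_X+\Delta$, hence $\kappa(Y, K_Y+\phi_*\Delta) = \kappa(X, K_X+\Delta) \ge 1$; the klt property similarly transfers to the base change $(Y_{\bar k}, (\phi_*\Delta)_{\bar k})$ by repeating the discrepancy computation over $\bar k$. Theorem \ref{bpf}(b) applied over $\bar k$ then yields semiampleness of $K_{Y_{\bar k}}+(\phi_*\Delta)_{\bar k}$, which descends to $Y$ along the faithfully flat extension $k \to \bar k$.

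The main obstacle is in case (a): without $\Q$-factoriality on $Y$, the natural candidate big sub-divisor $\phi_*A$ need not be $\Q$-Cartier on $Y$, blocking a direct application of Theorem \ref{bpf}(a). The small $\Q$-factorization detour, enabled by the characteristic $p>5$ Lemma \ref{Q-factorization}, is designed precisely to resolve this, and verifying that the auxiliary pair $(Y', \phi'_*\Delta)$ still fits the framework of a weak lc model is the only non-trivial bookkeeping step.
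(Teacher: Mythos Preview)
Your proposal is correct and follows essentially the same route as the paper: verify that $(Y,\phi_*\Delta)$ is klt, in case (a) pass to a small $\Q$-factorization (via Lemma \ref{Q-factorization}) so that $\phi_*A$ becomes $\Q$-Cartier and Theorem \ref{bpf}(a) applies, and in case (b) base change to $\overline{k}$, preserve the Iitaka dimension, apply Theorem \ref{bpf}(b), and descend. The only point the paper makes explicit that you pass over is that in case (b) one must first check $Y_{\overline{k}}$ is normal (since $k$ need not be perfect); the paper observes that $\phi$ being a birational contraction forces $Y$ to be geometrically regular in codimension one, hence $Y_{\overline{k}}$ is normal, before running the discrepancy argument there.
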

  
\begin{proof}
The if part is obvious.
For the other direction, we assume that $\phi$ is a weak lc model.
We first assume (a).
Since $(Y, \phi_* \Delta)$ is klt, we can take a small $\Q$-factorization $\pi: \tilde{Y} \to Y$ by Lemma \ref{Q-factorization}.
After replacing $Y$ by $\tilde{Y}$, we may assume that $Y$ is $\Q$-factorial, and in particular, $\phi_*A$ is $\Q$-Cartier.
Therefore, the assertion follows from the base point free theorem (Theorem \ref{bpf} (a)).

We next assume (b).
Since $\phi$ is a birational contraction, $Y$ is geometrically regular over $k$ in codimension one, and in particular, the base change $Y_{\overline{k}}$ is a normal variety.
Let 
\[
  \phi_{\overline{k}} : X_{\overline{k}} \dashrightarrow Y_{\overline{k}}
\]
be the birational map induced by $\phi$.
By \cite[Proposition II.8.10]{Har}, the flat pullback $p^*K_Y$ of a canonical divisor $K_Y$ to $Y_{\overline{k}}$ is also a canonical divisor, where $p : Y_{\overline{k}} \to Y$ is the natural morphism.
Combining this with the equation 
\[
(\phi_{\overline{k}})_* \Delta_{\overline{k}} = p^* (\phi_*\Delta),
\]
the birational contraction $\phi_{\overline{k}}$ is a weak lc model of $(X_{\overline{k}}, \Delta_{\overline{k}})$.
Therefore, $(Y_{\overline{k}}, (\phi_{\overline{k}})_*\Delta_{\overline{k}})$ is klt and 
\[
  \kappa(Y_{\overline{k}}, K_{Y_{\overline{k}}} + (\phi_{\overline{k}})_*\Delta_{\overline{k}}) = \kappa(X_{\overline{k}}, K_{X_{\overline{k}}} + \Delta_{\overline{k}}) \ge 1.
\]
It then follows from Theorem \ref{bpf} (b) that 
\[
  K_{Y_{\overline{k}}} + (\phi_{\overline{k}})_*\Delta_{\overline{k}} =p^*(K_Y+\phi_*\Delta)
  \]
  is semiample.
  This shows that $K_Y+\phi_* \Delta$ is semiample.
\end{proof}

\section{Lower bounds for Seshadri constants}\label{sec3}

In this section, we give a lower bound for the Seshadri constant of a nef and big invertible sheaf at a very general closed point of a projective variety $X$ with $\dim X \le 3$.
The strategy is similar to that of \cite{EKL}.
Instead of \cite[Proposition 2.3]{EKL}, we use Proposition \ref{key lemma for lower bound dim3} below.
We first consider the case where $X$ is one-dimensional.

\begin{eg}\label{ses dim1}
Let $X$ be a projective variety of dimension one over an algebraically closed field $k$ and $L$ be an ample invertible sheaf.
Then for a regular closed point $x \in X$, we have
\[
\ses{L} =\frac{\deg_X(L)}{\mult_x(X)} = \deg_X(L) \ge 1.
\]
\end{eg}

\subsection{Two dimensional case}
We next consider the case where $X$ is two-dimensional.

\begin{defn}
Let $\Sigma$ be a set of pointed curves in a variety $X$ and $S \subseteq X$ be a closed subset.
We define the subset $\Sigma \cdot S$ of $X(k)$ by
\[
\Sigma \cdot S : = \bigcup_{(C,x) \in \Sigma, \ x \in S} C(k).
\]
\end{defn}

\begin{lem}\label{tower}
Let $X$ be a projective variety over an algebraically closed field $k$, $\Sigma$ be a set of pointed curves in $X$ and $L$ be an invertible sheaf such that $h^0(X,L) \ge 2$.
Assume that there exists a closed point $y \in X$ and a sequence of closed subsets of $X$
\[
\{y\} = Z_0 \subseteq Z_1 \subseteq \cdots \subseteq Z_n=X
\]
such that $Z_{i+1}$ is contained in the closure $\overline{\Sigma \cdot Z_i}$ of $\Sigma \cdot Z_i$ for every $i=0, \dots, n-1$.
Then there exists a pointed curve $(C, x) \in \Sigma$  such that
\[
\frac{(L \cdot C)}{\mult_{x}C} \ge 1.
\]
\end{lem}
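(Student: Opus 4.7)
I would argue by contradiction. Suppose that for every pointed curve $(C,x) \in \Sigma$ one has
\[
(L \cdot C) < \mult_x(C).
\]
The goal is to derive a contradiction by producing a single divisor $D \in |L|$ whose support swallows all of $X$, which is impossible since $\Supp(D)$ is a proper (pure codimension one) closed subset.

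The starting divisor comes from Lemma \ref{movable divisor}: since $h^0(X,L) \ge 2$, there exists $D \in |L|$ with $y \in \Supp(D)$. I will then prove by induction on $i$ that $Z_i \subseteq \Supp(D)$ for $0 \le i \le n$. The base case $i=0$ is the choice of $D$. For the inductive step, assume $Z_i \subseteq \Supp(D)$, and let $(C,x) \in \Sigma$ be any pointed curve with $x \in Z_i$. Then $x \in \Supp(D)$, so $\mult_x(D) \ge 1$. If $C \not\subseteq \Supp(D)$, then the classical inequality between intersection multiplicity and Hilbert--Samuel multiplicity (applied to a local equation $f$ of the Cartier divisor $D$ near $x$, which lies in $\m_x^{\mult_x(D)}$) gives
\[
(L \cdot C) \;=\; (D \cdot C) \;\ge\; (D \cdot C)_x \;\ge\; \mult_x(D)\cdot \mult_x(C) \;\ge\; \mult_x(C),
\]
contradicting our standing assumption. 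Hence $C \subseteq \Supp(D)$, so $\Sigma \cdot Z_i \subseteq \Supp(D)$, and taking closures yields
\[
Z_{i+1} \;\subseteq\; \overline{\Sigma \cdot Z_i} \;\subseteq\; \Supp(D).
\]

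Feeding $i=n$ into this inductive conclusion gives $X = Z_n \subseteq \Supp(D)$, contradicting that $\Supp(D)$ is a proper closed subset of the irreducible variety $X$. This finishes the proof.

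The only non-cosmetic input is the local multiplicity inequality $(D \cdot C)_x \ge \mult_x(D)\cdot \mult_x(C)$ for a Cartier divisor $D$ and a curve $C \not\subseteq \Supp(D)$ on the possibly singular variety $X$; I expect this to be the one place where care is needed, but it follows directly from the fact that a local defining equation $f$ of $D$ at $x$ lies in $\m_x^{\mult_x(D)}$, together with the Hilbert--Samuel estimate $\length(\mathcal{O}_{C,x}/f\mathcal{O}_{C,x}) \ge \mult_x(D)\cdot \mult_x(C)$. Once this is in place, the rest of the argument is a clean induction driven by the tower hypothesis $Z_{i+1} \subseteq \overline{\Sigma \cdot Z_i}$.
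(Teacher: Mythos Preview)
Your argument is correct and is essentially the contrapositive of the paper's proof: where you run an induction showing $Z_i \subseteq \Supp(D)$ for all $i$ until reaching the contradiction $X \subseteq \Supp(D)$, the paper instead picks the maximal $i$ with $Z_i \subseteq \Supp(E)$ and extracts the desired pointed curve directly at that stage. Both arguments hinge on the same local inequality $(L \cdot C) \ge \dim_k(\sO_{D \cap C,x}) \ge \mult_x(C)$ when $x \in \Supp(D)$ and $C \not\subseteq \Supp(D)$, which the paper attributes to \cite[Theorem 14.9]{Mat}; your detour through $\mult_x(D)$ and the claim $f \in \m_x^{\mult_x(D)}$ is unnecessary (and slightly delicate on singular $X$), but since you only use $\mult_x(D) \ge 1$ the conclusion is unaffected.
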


\begin{proof}
By Lemma \ref{movable divisor}, there exists an element $E \in |L|$ such that $y \in \Supp(E)$.
Take the maximal number $i$ such that $Z_i \subseteq \Supp(E)$.
Since $\Sigma \cdot Z_{i} \not\subseteq \Supp(E)$, there exists a pointed curve $(C,x) \in \Sigma$ such that $x \in Z_i$ and $C \not\subseteq \Supp(E)$. 

It follows from the definition of the intersection number (\cite[Definition 2.4.2]{Ful}) that
\[
(L \cdot C) = \sum_{y \in D \cap C} \dim_k (\sO_{E \cap C, y}) \ge \dim_k(\sO_{E \cap C,x}) \ge \mult_x(C),
\]
where the last inequality follows from \cite[Theorem 14.9]{Mat}.
\end{proof}  

\begin{prop}\label{ses dim2}
Let $X$ be a $2$-dimensional projective variety of over an uncountable algebraically closed field $k$ and $L$ be a nef and big invertible sheaf such that $h^0(X,L) \ge 2$.
Then for a very general closed point $x \in X$, we have
\[
\ses{L} \ge 1.
\]
\end{prop}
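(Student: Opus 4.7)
The plan is to argue by contradiction using Lemma \ref{tower} together with a Hilbert-scheme spreading-out argument that exploits the uncountability of $k$. Suppose the set $S := \{y \in X(k) : \ses{L} < 1\}$ is not contained in any countable union of proper Zariski-closed subsets of $X$. I will produce a single irreducible family $T$ of ``bad'' pointed curves whose marked-point map dominates $X$, and then apply Lemma \ref{tower} to the tower $\{y\} \subseteq C_0 \subseteq X$ for a very general $y$ to derive a contradiction.

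First, since $L$ is big, $\B_+(L)$ is a proper closed subset of $X$ by Remark \ref{B+ basic}, so $S \setminus \B_+(L)$ is still not in any countable union of proper closed subsets. For such $y$ and a witnessing curve $C \ni y$ with $(L \cdot C) < \mult_y(C)$, we have $C \not\subseteq \B_+(L)$, so Lemma \ref{B+ big} gives that $L|_C$ is big and hence $(L \cdot C) \ge 1$; thus $\mult_y(C) \ge 2$. For each pair of positive integers $(d, m)$ with $d < m$, the pointed curves $(C, x)$ with $(L \cdot C) = d$, $\mult_x(C) \ge m$, and $C \not\subseteq \B_+(L)$ are parameterized by a scheme $H_{d, m}$ of finite type over $k$: writing $nL \sim A + E$ with $A$ ample and $E$ effective (so $\Supp(E) \subseteq \B_+(L)$), any curve $C \not\subseteq \B_+(L)$ satisfies $(A \cdot C) \le n(L \cdot C) = nd$, giving bounded $A$-degree and hence a bounded family. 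Each $H_{d, m}$ has finitely many irreducible components, and together their marked-point maps cover $S \setminus \B_+(L)$; the hypothesis on $S$ therefore forces at least one component $T$ of some $H_{d, m}$ to have dominant marked-point map $\pi \colon T \to X$, whence $\dim T \ge \dim X = 2$.

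Now pick $y \in X$ very general (in particular, lying in the generic-flatness locus of $\pi$) and choose $t_0 = (C_0, y) \in \pi^{-1}(y)$; I will verify the hypotheses of Lemma \ref{tower} for the tower $\{y\} = Z_0 \subseteq Z_1 := C_0 \subseteq Z_2 := X$ with $\Sigma := T$. The inclusion $C_0 \subseteq \overline{\Sigma \cdot \{y\}}$ is immediate. For $X \subseteq \overline{\Sigma \cdot C_0}$, generic flatness gives $\dim \pi^{-1}(C_0) \ge \dim T - 1 \ge 1$; the forgetful map $\tau \colon T \to \mathrm{Chow}_1(X)$, $(C, x) \mapsto C$, has finite fibers since each curve has only finitely many points of multiplicity $\ge m \ge 2$, so the image $T''_0 := \tau(\pi^{-1}(C_0))$ satisfies $\dim T''_0 \ge 1$. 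The universal curve over $T''_0$ has total space of dimension $\dim T''_0 + 1 \ge 2$; if its image in $X$ were not dense, it would lie in a proper closed subset $Z \subsetneq X$ of dimension $\le 1$, and each curve parameterized by $T''_0$ would be one of the finitely many irreducible components of $Z$, contradicting $\dim T''_0 \ge 1$. Hence $\overline{\Sigma \cdot C_0} = X$, and applying Lemma \ref{tower} (whose hypothesis $h^0(X, L) \ge 2$ is in force) produces some $(C, x) \in T$ with $(L \cdot C)/\mult_x(C) \ge 1$, contradicting the defining property of $T$ that every such pointed curve satisfies $(L \cdot C)/\mult_x(C) \le d/m < 1$.

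The main obstacle I expect is the boundedness argument for $H_{d, m}$: standard, but genuinely using bigness of $L$ via the decomposition $nL \sim A + E$. A secondary point of care is choosing $y$ outside the non-flat locus of $\pi$ and a few other countably many proper closed subsets in order to guarantee the fiber-dimension bound $\dim \pi^{-1}(C_0) \ge \dim T - 1$; this is permitted since $y$ is taken very general over the uncountable field $k$. The crucial two-dimensional input is that a proper closed subset $Z \subsetneq X$ has dimension $\le 1$, hence contains only finitely many curves as irreducible components; this is precisely what breaks in higher dimensions and necessitates the more elaborate tower used for $3$-folds.
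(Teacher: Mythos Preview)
Your proposal is correct and follows the same overall architecture as the paper: assume the bad locus is very general, then contradict via Lemma~\ref{tower} applied to a tower $\{y\}\subseteq C\subseteq X$. Two differences are worth recording.

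First, you build the parameter spaces $H_{d,m}$ explicitly, whereas the paper simply cites \cite[Paragraph (3.3)]{EKL} to obtain that the bad marked-point locus contains an open dense set. In your boundedness step the parenthetical ``so $\Supp(E)\subseteq\B_+(L)$'' is backwards: a Kodaira decomposition $nL\sim A+E$ gives only $\B_+(L)\subseteq\Supp(E)$. This is harmless in dimension $2$, since $\Supp(E)$ contains at most finitely many irreducible curves, each contributing only finitely many pointed curves with $\mult\ge 2$; but the inclusion as written is false.

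Second, and more interestingly, your verification of $\overline{\Sigma\cdot C_0}=X$ proceeds by restricting to a single irreducible component $T$, then counting dimensions of $\pi^{-1}(C_0)$ and invoking finiteness of fibres of the forgetful map $\tau$. The paper's argument is shorter and does not need to pass to a single component: it takes $\Sigma$ to be \emph{all} bad pointed curves, observes that at a general (hence smooth) point $z\in C$ one has $\mult_z(C)=1$ while $(L\cdot C)\ge 1$ by Lemma~\ref{B+ big} and Example~\ref{ses dim1}, so $(C,z)\notin\Sigma$; since $z\in X^\circ$ there is nevertheless some $(\Gamma,z)\in\Sigma$, forcing $\Gamma\neq C$. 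A pigeonhole on the finitely many curve components of any putative proper $\overline{\Sigma\cdot C}$ then gives the contradiction. Both routes ultimately rest on the same fact---a curve has only finitely many points of multiplicity $\ge 2$---but the paper's formulation sidesteps the fiber-dimension bookkeeping and the need to isolate a single $T$.
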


\begin{proof}
Let $\Sigma$ be the set of all pointed curves $(C, x)$ in $X$ such that
\[
\frac{(L \cdot C)}{\mult_{x}C} < 1.
\]
If the assertion of the proposition fails, then it follows from \cite[Paragraph (3.3)]{EKL} that the locus $\{y \in X(k) \mid \exists (C, y) \in \Sigma\} \subseteq X$ contains an open dense subset $X^{\circ}$ of $X(k)$.
After shrinking $X^{\circ}$, we may assume that $X^{\circ} \cap \B_+(L) = \emptyset$.

Take a closed point $y \in X^{\circ}$ and a curve $C \subseteq X$ such that $(C,y) \in \Sigma$.
Let $z \in C$ be a general closed point.
We may assume that $z \in X^{\circ}$ and therefore, there exists a curve $\Gamma$ such that $(\Gamma, z) \in \Sigma$.
On the other hand it follows from Lemma \ref{B+ big} and Example \ref{ses dim1} that $\Gamma \neq C$.
Since $z$ is general, we conclude that 
\[
\overline{\Sigma \cdot C}=X.
\]
It then follows from Lemma \ref{tower} that there exists a pointed curve $(C, x) \in \Sigma$  such that
\[
\frac{(L \cdot C)}{\mult_{x}C} \ge 1,
\]
which is a contradiction to the choice of $\Sigma$.
\end{proof}

\subsection{Three dimensional case}
We consider three-dimensional case and prove Theorem \ref{C}.

\begin{defn}
Let $X$ and $T$ be varieties over an algebraically closed field $k$ and $\Sigma$ be a set of pointed curves in $X$.
Then for any subset $\mathcal{Z} \subseteq X \times_k T$, we define the subset $\Sigma \cdot \mathcal{Z} \subseteq X(k) \times T(k)$ by
\[
\Sigma \cdot \mathcal{Z} : = \{ (x,t) \in X(k) \times T(k) \mid x \in \Sigma \cdot \mathcal{Z}_t \}.
\] 
\end{defn}

\begin{lem}[\textup{cf.~\cite[Lemma 3.5.1]{EKL}}]\label{constructible}
Let $X, T$ be varieties over an algebraically closed field $k$ and $\mathcal{Z} \subseteq X \times_k T$ be a closed subset.
Suppose that $\Sigma$ is a set of pointed curves in $X$ parametrized by a variety $B$, that is, there is a closed subset $\mathcal{C} \subseteq X \times_k B$ and a $k$-morphism $g: B \to X$ such that
\[
  \Sigma = \{(\mathcal{C}_b, g(b))\}_{b \in B(k)}.
\] 
Then $\Sigma \cdot \mathcal{Z}$ is a constructible subset of $(X \times_k T)(k)$.
\end{lem}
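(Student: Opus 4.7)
The approach is to realize $\Sigma \cdot \mathcal{Z}$ as the image of a closed subscheme under a morphism of finite type, and then invoke Chevalley's theorem on constructible sets. The indexing variety $B$ will provide the extra parameter that we project out.

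More precisely, consider the product $X \times_k B \times_k T$, together with its three partial projections
\[
  p_{XB} : X \times B \times T \to X \times B, \quad p_{BT} : X \times B \times T \to B \times T, \quad p_{XT} : X \times B \times T \to X \times T.
\]
Let $\varphi : B \times T \to X \times T$ be the morphism $(b,t) \mapsto (g(b), t)$, and set
\[
  W \;:=\; p_{XB}^{-1}(\mathcal{C}) \;\cap\; p_{BT}^{-1}\bigl(\varphi^{-1}(\mathcal{Z})\bigr) \;\subseteq\; X \times B \times T.
\]
Since $\mathcal{C}\subseteq X\times B$ and $\mathcal{Z}\subseteq X\times T$ are closed and $\varphi$ and the partial projections are morphisms of varieties of finite type over $k$, the subset $W$ is closed in $X \times B \times T$.

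Next I would check that $p_{XT}(W)(k) = \Sigma \cdot \mathcal{Z}$ as subsets of $(X\times T)(k)$. Indeed, a $k$-point $(x,b,t)$ lies in $W$ precisely when $x \in \mathcal{C}_b$ and $g(b) \in \mathcal{Z}_t$; projecting to $(x,t)$, this says there exists $b \in B(k)$ with $x \in \mathcal{C}_b(k)$ and $g(b) \in \mathcal{Z}_t$. Under the parametrization $(C,y)=(\mathcal{C}_b, g(b))$ of $\Sigma$ by $B(k)$, this is exactly the condition that there is a pointed curve $(C,y)\in\Sigma$ with $y \in \mathcal{Z}_t$ and $x\in C(k)$, i.e.\ $x \in \Sigma\cdot \mathcal{Z}_t$. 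Hence $(x,t) \in \Sigma\cdot \mathcal{Z}$.

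Finally, since $p_{XT}$ is a morphism of finite type $k$-schemes and $W$ is closed (hence constructible), Chevalley's theorem on constructible images (cf.\ \cite[Th\'eor\`eme (1.8.4)]{EGA}) implies that $p_{XT}(W)$ is a constructible subset of $X \times_k T$. Passing to $k$-points (using that $k$ is algebraically closed, so constructible subsets are determined by their closed points), we conclude that $\Sigma \cdot \mathcal{Z}$ is constructible in $(X\times_k T)(k)$. The only mildly subtle point is the bookkeeping identifying $p_{XT}(W)(k)$ with the set-theoretic definition of $\Sigma \cdot \mathcal{Z}$, but once the incidence variety $W$ is written down the argument is a direct application of Chevalley.
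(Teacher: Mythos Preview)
Your proof is correct and is essentially identical to the paper's argument: the paper defines the map $G:\mathcal{C}\times_k T\to X\times_k T$ by $(x,b,t)\mapsto(g(b),t)$ and the map $F:\mathcal{C}\times_k T\to X\times_k T$ by $(x,b,t)\mapsto(x,t)$, observes that $\Sigma\cdot\mathcal{Z}$ is the set of closed points of $F(G^{-1}(\mathcal{Z}))$, and applies Chevalley. Your incidence locus $W$ is exactly $G^{-1}(\mathcal{Z})$ viewed inside $X\times B\times T$ via $p_{XB}^{-1}(\mathcal{C})\cong\mathcal{C}\times T$, and your $p_{XT}$ is $F$, so the two arguments coincide up to notation.
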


\begin{proof}
We define the map $G : \mathcal{C} \times_k T \to X \times_k T$ by the composite 
\[
  \mathcal{C} \times_k T \hookrightarrow (X \times_k B) \times_k T \xrightarrow{p_{2,3}} B \times_k T \xrightarrow{g \times \id} X \times_k T.
\] 
Then $\Sigma \cdot \mathcal{Z}$ is the set of closed points of the image of $G^{-1}(\mathcal{Z})$ by the morphism 
\[
  F: \mathcal{C} \times_k T \hookrightarrow (X \times_k B) \times_k T \xrightarrow{p_{1,3}} X \times_k T.
\]
Therefore, the assertion follows from Chevalley's theorem.
\end{proof}

\begin{lem}[\textup{cf.~\cite[Lemma 3.6.1]{EKL}}]\label{swipe out}
Let $X$ be a variety over an algebraically closed field $k$ and $\Sigma$ be a set of pointed curves in $X$ parametrized by a variety.
We further assume that the subset $\{x \in X(k) \mid \exists (C,x) \in \Sigma \} \subseteq X(k)$ contains an open dense subset $X^{\circ}$ of $X(k)$.
Then there exist a dominant $k$-morphism $f: T \to X$ from a variety $T$ and a sequence
\[
  \mathcal{Z}_0 \subseteq \mathcal{Z}_1 \subseteq \dots \subseteq \mathcal{Z}_m
\]
of irreducible closed subsets of $X \times_k T$ which satisfies the following properties:
\begin{enumerate}[label=\textup{(\roman*)}]
\item $\mathcal{Z}_0$ is the graph of $f$.
\item Every closed fiber $(\mathcal{Z}_i)_t$ is irreducible for every $i=0,1, \dots, m$.
\item $(\mathcal{Z}_{i+1})_t$ is contained in the closure $\overline{\Sigma \cdot (\mathcal{Z}_i)_t}$ of $\Sigma \cdot (\mathcal{Z}_i)_t$ for every $t \in T(k)$ and every $i=0,1, \dots, m-1$.
\item The subset $Q(t) : = \{x \in (\mathcal{Z}_m)_t(k) \mid \Sigma \cdot \{x\} \subseteq (\mathcal{Z}_m)_t\} $ contains an open dense subset of $(\mathcal{Z}_m)_t(k)$ for every $t \in T(k)$.
\end{enumerate}
\end{lem}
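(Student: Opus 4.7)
The plan is to mimic the inductive tower construction of Ein--K\"uchle--Lazarsfeld \cite[Lemma 3.6.1]{EKL}, building the chain $\mathcal{Z}_0 \subseteq \mathcal{Z}_1 \subseteq \cdots$ by iteratively sweeping with curves from $\Sigma$ and shrinking $T$ whenever needed to keep fibers irreducible. For the base case, I would take $T$ to be an open dense subset of $X^\circ$ (and hence a variety), set $f : T \hookrightarrow X$ to be the inclusion, and let $\mathcal{Z}_0 \subseteq X \times_k T$ be the graph of $f$; its fibers are single $k$-points, so (i) and (ii) at level $0$ are immediate.

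For the inductive step, suppose $\mathcal{Z}_0 \subseteq \cdots \subseteq \mathcal{Z}_i$ has been produced over the current $T$, with each $\mathcal{Z}_j$ irreducible, dominant over $T$, and with irreducible fibers satisfying (iii) for every $t \in T(k)$. I would test whether the locus $Q(t)$ contains a dense open subset of $(\mathcal{Z}_i)_t$ for all $t$. If yes, stop and set $m := i$; the required openness can be arranged after a final shrinking of $T$ by applying constructibility (Lemma \ref{constructible}) to $\Sigma \cdot \mathcal{Z}_i$ and invoking Chevalley on the projection to $T$. If no, then for generic $t \in T$ a dense subset of $(\mathcal{Z}_i)_t$ carries curves in $\Sigma$ not contained in $(\mathcal{Z}_i)_t$, so $\overline{\Sigma \cdot (\mathcal{Z}_i)_t}$ strictly contains $(\mathcal{Z}_i)_t$. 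I would form $W := \overline{\Sigma \cdot \mathcal{Z}_i}$ in $X \times_k T$ (well-defined as a closed set thanks to Lemma \ref{constructible}), pick an irreducible component $\mathcal{Z}_{i+1} \subseteq W$ that contains $\mathcal{Z}_i$ and dominates $T$, and then shrink $T$ further so that (a) the geometric fibers of $\mathcal{Z}_{i+1}$ over $T$ are irreducible, and (b) for every $t \in T(k)$ the fiber-wise closure coincides with the fiber of the closure, i.e.\ $(\mathcal{Z}_{i+1})_t \subseteq \overline{\Sigma \cdot (\mathcal{Z}_i)_t}$. Both (a) and (b) hold over a dense open subset of $T$ by standard generic-fiber theorems, and shrinking $T$ does not disturb the earlier $\mathcal{Z}_j$ (it just restricts them).

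Termination is automatic: at each inductive step the dimension of the generic fiber $(\mathcal{Z}_i)_\eta$ strictly increases, and it is bounded by $\dim X$, so the process halts after at most $\dim X$ iterations. When it halts, conditions (i)--(iv) are satisfied on the current (and final) open $T$, and $f : T \to X$ remains dominant.

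The main obstacle is bookkeeping: every iteration requires shrinking $T$, yet the statement demands that (ii), (iii) and (iv) hold at \emph{every} closed point of the final $T$, not merely on a dense open subset. I expect to handle this by arguing that each property is ``generic'' in the base (using constructibility in Lemma \ref{constructible}, generic geometric irreducibility of fibers of a dominant morphism between varieties, and compatibility of closure with fibers over a dense open) so that finitely many simultaneous shrinkings of $T$ suffice. A secondary technical point is that the irreducibility of $(\mathcal{Z}_{i+1})_t$ has to survive the choice of an irreducible component of $W$; this is where picking the unique component dominating $T$ with strictly larger relative dimension, and then using generic irreducibility of fibers, is the key input.
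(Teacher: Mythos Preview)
Your overall strategy matches the paper's: build the tower inductively by sweeping with $\Sigma$, picking a component of $\overline{\Sigma\cdot\mathcal{Z}_i}$, and controlling the base. The termination argument and stopping criterion are also essentially the same. However, there is a real gap in your inductive step, and it is precisely the point where the paper does something you omit.

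You write: ``shrink $T$ further so that (a) the geometric fibers of $\mathcal{Z}_{i+1}$ over $T$ are irreducible \ldots\ Both (a) and (b) hold over a dense open subset of $T$ by standard generic-fiber theorems.'' This is false in general. An irreducible component $\mathcal{Z}_{i+1}$ of $W=\overline{\Sigma\cdot\mathcal{Z}_i}$ dominating $T$ has irreducible generic fiber over $K(T)$, but $K(T)$ is not algebraically closed even though $k$ is, so the generic fiber need \emph{not} be geometrically irreducible. When it is not, the closed fibers over $k$-points are reducible on a dense set of $T$, and no amount of shrinking fixes this. (Concretely: $V(x^2-t)\subseteq\mathbb{A}^2_{x,y}\times\mathbb{A}^1_t$ is irreducible and dominates $\mathbb{A}^1_t$, yet every closed fiber over $t\neq 0$ is a union of two lines.) The constructibility result you have in mind says that the locus of $t$ with geometrically irreducible fiber is constructible; it does not say it is nonempty.

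The paper handles this in its Step~1 by replacing $T_n$ not with an open subset but with a flat dominant cover $g:T_n'\to T_n$ chosen so that every irreducible component of $W\times_{T_n}T_n'$ has geometrically irreducible generic fiber; only then does one pick the component and shrink. This is also why the final $f:T\to X$ in the statement is merely a dominant morphism rather than an open immersion: each base-change step may enlarge or alter $T$, and your insistence on $T\subseteq X^\circ$ with $f$ the inclusion cannot survive the construction. Once you allow these covers, the rest of your outline (including the verification of (iii) via constructibility and the dimension-count termination) goes through as in the paper.
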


\begin{proof}
By induction on $n \ge 0$, we will construct, until the algorithm terminates, a dominant morphism $f_n : T_n \to X$ from a variety $T_n$ and a sequence
\[
  \mathcal{Z}_0^{(n)} \subseteq \mathcal{Z}_1^{(n)} \subseteq \dots \subseteq \mathcal{Z}_n^{(n)}
\]
of closed subsets of $X \times_k T_n$ which satisfies the following properties:
\begin{enumerate}[label=\textup{(\roman*)$_n$}]
\item $\mathcal{Z}_0^{(n)}$ is the graph of $f_n$.
\item A general closed fiber $(\mathcal{Z}_i^{(n)})_t$ is irreducible for every $i=0,1, \dots, n$.
\item $(\mathcal{Z}_{i+1}^{(n)})_t$ is contained in $\overline{\Sigma \cdot (\mathcal{Z}_i^{(n)})_t}$ for general $t \in T(k)$ and every $i=0,1, \dots, n-1$.
\end{enumerate}

\textbf{Step.0}:
When $n=0$, then we set $T_0 : = X$ and $f_0 : = \id_X$.
Then the diagonal $\mathcal{Z}_0^{(0)}$ satisfies the assertion.

\textbf{Step.1}:
We assume that we already constructed $f_n : T_n \to X$ and $\mathcal{Z}_i^{(n)}$ as above.
We first show that after replacing $T_n$, we may also assume that a general fiber of every irreducible component of 
\[
  \mathcal{W} : = \overline{\Sigma \cdot \mathcal{Z}_n^{(n)}}.
\]
over $T_n$ is irreducible.
In order to do this, take a flat dominant morphism $g: T_{n}' \to T_n$ from a variety $T_{n}'$ such that the generic fiber of every irreducible component of $\mathcal{W}' : = \mathcal{W} \times_{T_n} T_{n}'$ over $T_{n}'$ is geometrically irreducible.
We note that the dominant morphism $f_{n}' :  = f_n \circ g : T_{n}' \to X$ and the sequence 
\[
  \{ (\mathcal{Z}_i^{(n)})' : = \mathcal{Z}_i^{(n)} \times_{T_n} T_n'\}_{i=0}^n
\] 
also satisfy the properties (i)$_n$, (ii)$_n$ and (iii)$_n$.
Moreover, we have 
\[
  \Sigma \cdot (\mathcal{Z}_n^{(n)})' =((\id \times g)(k))^{-1} ( \Sigma \cdot \mathcal{Z}_n^{(n)}).\]
Since $(\id \times g) (k)$ is an open map, the closure of $\Sigma \cdot (\mathcal{Z}_n^{(n)})'$ in $(X \times T_n')(k)$ is equal to the pullback of $\mathcal{W}(k) = \overline{\Sigma \cdot \mathcal{Z}_n^{(n)}}$ by $(\id \times g)(k)$.
After taking closure of them in $X \times_k T_n'$, we have
  \[
    \overline{\Sigma \cdot (\mathcal{Z}_n^{(n)})'} =\mathcal{W}' .
  \]
  
Therefore, after replacing $T_n$ by $T_n'$, $\mathcal{Z}_i^{(n)}$ by $(\mathcal{Z}_i^{(n)})'$ and $\mathcal{W}$ by $\mathcal{W}'$, we may assume that a general fiber of every irreducible component of $\mathcal{W}$ is irreducible, as desired.

\textbf{Step.2}:
After shrinking $T_n$, we also assume that $\mathcal{Z}_i^{(n)}$ is flat over $T_n$ and that every closed fiber of $\mathcal{Z}_i^{(n)}$ over $T_n$ is irreducible for every $i$.
In particular, $\mathcal{Z}_i^{(n)}$ is irreducible.
We set $\mathcal{Z} : = \mathcal{Z}_n^{(n)}$.
Noting that the open dense subset $(X^{\circ} \times T_n(k)) \cap \mathcal{Z}$ of $\mathcal{Z}(k)$ is contained in $\Sigma \cdot \mathcal{Z}$, we have the inclusion
\[
  \mathcal{Z} \subseteq \mathcal{W}  = \overline{\Sigma \cdot \mathcal{Z}}.
\]

\textbf{Case.1}:
We first consider the case where $\mathcal{Z}$ is an irreducible component of $\mathcal{W}$.
In this case, we can take an open subset $\mathcal{W}^{\circ}$ of $\mathcal{W}$ such that 
\[
  \mathcal{W}^{\circ} \subseteq \mathcal{Z}.
\]
Take a general closed point $t \in T_n(k)$ so that $\mathcal{W}^{\circ}_t$ is non-empty.
We also take any element $(C,x) \in \Sigma$ with $x \in \mathcal{W}^{\circ}_t$.
Since we have 
\[
  C \subseteq \Sigma \cdot \mathcal{Z}_t \subseteq \mathcal{W}_t,
\]
the subset $C \cap \mathcal{W}_t^{\circ} \subseteq C$ is open, which shows that
\[
  C =\overline{C \cap \mathcal{W}_t^{\circ} } \subseteq \overline{\mathcal{W}_t^{\circ}} \subseteq \mathcal{Z}_t.
\]
Therefore, the set $Q(t)=\{x \in \mathcal{Z}_t(k) \mid \Sigma \cdot \{x\} \subseteq \mathcal{Z}_t\}$ contains the open dense subset $\mathcal{W}^{\circ}_t(k)$.
We finish the algorithm and the assertion of this lemma follows by setting $m : =n$, $T: = T_n$, $f : = f_n$ and $\mathcal{Z}_i : = \mathcal{Z}_i^{(n)}$.

\textbf{Case.2}:
We assume that $\mathcal{Z}$ is not an irreducible component of $\mathcal{W}$.
Since $\Sigma \cdot \mathcal{Z}$ is a constructible subset by Lemma \ref{constructible}, we have
\[
  \mathcal{W}_t= \overline{\Sigma \cdot (\mathcal{Z}_t)}
\]
for general $t \in T(k)$.
In this case, we set $T_{n+1} : = T_n$, $f_{n+1} : = f_n$ and $\mathcal{Z}_i^{(n+1)} : = \mathcal{Z}_i^{(n)}$ for $i=0,1,\dots, n$.
We also take an irreducible component $\mathcal{Z}_{n+1}^{(n+1)}$ of $\mathcal{W}$ which contains $\mathcal{Z}$.
Then the sequence
\[
\mathcal{Z}_0^{(n+1)} \subseteq \mathcal{Z}_1^{(n+1)} \subseteq \dots \subseteq \mathcal{Z}_{n+1}^{(n+1)}
\]
satisfies the properties (i)$_{n+1}$, (ii)$_{n+1}$ and (iii)$_{n+1}$.
Then we go back to Step.1 and start anew.
\end{proof}

The following result is a key ingredient in the proof of Theorem \ref{C}.

\begin{prop}\label{key lemma for lower bound dim3}
  Let $X$ be a smooth projective variety over an algebraically closed field $k$, $\Gamma \subseteq X$ be a prime divisor and $N$ be an invertible sheaf on $X$.
  We assume that the following conditions are satisfied:
  \begin{enumerate}[label=\textup{(\roman*)}]
  \item $h^0(X, \sO_X(\Gamma)) \ge 2$.
  \item There exists a nef and big invertible sheaf $H$ on $X$ with
  \[
  h^0(X, N) > (H^{\dim X-1} \cdot N) +1.
  \]
    \end{enumerate}
  Then we have
  \[
    h^0(\Gamma, N|_{\Gamma}) \ge 2.
    \]
  \end{prop}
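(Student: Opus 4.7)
The plan is to argue by contradiction, assuming $h^0(\Gamma, N|_\Gamma) \le 1$. Since $X$ is smooth, $\Gamma$ is Cartier, and the exact sequence
\[
0 \to N(-\Gamma) \to N \to N|_\Gamma \to 0
\]
immediately yields $h^0(X, N(-\Gamma)) \ge h^0(X, N) - 1$. I will bootstrap this to the estimate
\[
h^0(X, N(-m\Gamma)) \ge h^0(X, N) - m \qquad \text{for every integer } m \ge 1,
\]
using a moving pencil in $|\sO_X(\Gamma)|$, and then contradict hypothesis (ii) by applying Lemma \ref{intersection with nef big} at a carefully chosen $m$.

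Hypothesis (i) supplies a two-dimensional subspace $W \subseteq H^0(X, \sO_X(\Gamma))$ containing the defining section $\sigma_\Gamma$ of $\Gamma$. The primality of $\Gamma$ forces the fixed part of the pencil $\PP(W)$ to be zero: a fixed divisor $E > 0$ would satisfy $E \le \Gamma$, hence $E = \Gamma$ (since $\Gamma$ is prime), which would confine $W$ to the line $k \cdot \sigma_\Gamma$ and contradict $\dim W = 2$. A standard consequence is that any two distinct members of $\PP(W)$ share no common prime component: a shared component would force every section in the two-dimensional pencil to vanish along it, contradicting the vanishing of the fixed part. Since the ground field is algebraically closed and hence infinite, for any prescribed $m \ge 1$ I can pick pairwise distinct members $D_1, \dots, D_m \sim \Gamma$ of the pencil, which then pairwise share no components.

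For each $D_i$ with defining section $\sigma_i \in W$, set $M_i := \sigma_i \cdot H^0(X, N(-D_i)) \subseteq H^0(X, N)$. Since $D_i \sim \Gamma$, each $M_i$ has dimension $h^0(X, N(-\Gamma)) \ge h^0(X, N) - 1$, hence codimension at most $1$ in $H^0(X, N)$. The pairwise-no-common-components condition implies that any $s \in \bigcap_i M_i$ satisfies $\Div_N(s) \ge D_1 + \cdots + D_m$ (as an effective Cartier divisor dominating finitely many effective Cartier divisors that share no prime components must dominate their sum), so multiplication by $\sigma_1 \cdots \sigma_m$ identifies $\bigcap_i M_i$ with $H^0\bigl(X, N(-\sum_i D_i)\bigr) \cong H^0(X, N(-m\Gamma))$. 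Combined with the elementary bound $\mathrm{codim}\bigl(\bigcap_i M_i\bigr) \le \sum_i \mathrm{codim}(M_i) \le m$, this yields $h^0(X, N(-m\Gamma)) \ge h^0(X, N) - m$.

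To conclude, hypothesis (ii) forces $h^0(X, N) \ge 2$, so Lemma \ref{intersection with nef big} applied to $N$ and to $\Gamma$ respectively gives $(H^{\dim X - 1} \cdot N) \ge 1$ and $(H^{\dim X - 1} \cdot \Gamma) \ge 1$. Set $m := (H^{\dim X - 1} \cdot N) \ge 1$. Hypothesis (ii) implies $h^0(X, N) \ge m + 2$, so the above bound gives $h^0(X, N(-m\Gamma)) \ge 2$. A second application of Lemma \ref{intersection with nef big}, now to the divisor $N - m\Gamma$, yields $(H^{\dim X - 1} \cdot (N - m\Gamma)) \ge 1$, which rearranges to
\[
(H^{\dim X - 1} \cdot N) \ge 1 + m (H^{\dim X - 1} \cdot \Gamma) \ge 1 + m = 1 + (H^{\dim X - 1} \cdot N),
\]
the desired contradiction. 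The most delicate step I anticipate is justifying that the fixed part of the pencil vanishes and the resulting pairwise-no-common-components property of distinct pencil members, which is precisely where the primality of $\Gamma$ enters; once that is in hand, the identification $\bigcap_i M_i \cong H^0(X, N(-m\Gamma))$ and the codimension count run smoothly.
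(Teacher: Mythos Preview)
Your proof is correct and takes a genuinely different, more elementary route than the paper's.

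The paper argues via the universal family $\mathcal{Z}_\Gamma \to \PP(H^0(X,\sO_X(\Gamma))^*)$: it first uses semicontinuity to propagate the hypothesis $h^0(\Gamma, N|_\Gamma) \le 1$ to a general member of $|\Gamma|$, then proves (through a geometric argument about which divisors in $|N|$ must contain a given member of $|\Gamma|$) that this forces $h^0(X,\sO_X(\Gamma)) = 2$, and finally extracts the maximal $m$ for which the addition map $|N(-m\Gamma)| \times |\Gamma|^m \to |N|$ is surjective, obtaining a surjection $(\PP^1)^m \twoheadrightarrow \PP(H^0(X,N)^*)$ and hence $h^0(X,N)-1 \le m \le (H^{\dim X-1}\cdot N)$. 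Your argument sidesteps all of this: the observation that a fixed-part-free pencil in $|\Gamma|$ (guaranteed by the primality of $\Gamma$) furnishes $m$ members with pairwise disjoint supports lets you bootstrap the single codimension-one bound from the restriction sequence to $h^0(X, N(-m\Gamma)) \ge h^0(X,N) - m$ by a pure linear-algebra count on intersections of hyperplanes. You then close with two further applications of Lemma~\ref{intersection with nef big} at $m = (H^{\dim X-1}\cdot N)$. In effect you recover directly the numerical inequality the paper reaches only after its Steps~1--4, without universal families, semicontinuity, or the reduction to $\dim\PP = 1$. The paper's approach arguably gives a more structural picture of $|N|$ under the contradiction hypothesis, but for the stated proposition your route is shorter and uses less machinery.
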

  
  \begin{proof}
  Assume to the contrary that we have $h^0(\Gamma, N|_{\Gamma}) \le 1$.
  We write 
  \[\PP : = \PP(H^0(X, \sO_X(\Gamma))^*)
  \] 
  and let $\mathcal{Z} : = \mathcal{Z}_{\Gamma} \subseteq X \times_k \PP$ be the effective divisor as in Lemma \ref{universal family}.

  \textbf{Step.1}
  Since the second projection $p_2: \mathcal{Z} \to \PP$ is flat and proper, and some closed fiber is isomorphic to $\Gamma$, it follows from the semicontinuity \cite[Theorem III.12.8]{Har} that there is an open dense subset $\PP^{\circ} \subseteq \PP$ such that we have
  \[
  h^0(\mathcal{Z}_t , N|_{\mathcal{Z}_t}) \le 1
  \]
  for every closed point $t \in \PP^{\circ}$.
  Similarly, it follows from \cite[(12.2.1)]{EGA} that after shrinking $\PP^{\circ}$, the fiber of $\mathcal{Z}_t$ is an integral scheme for every $t \in \PP^{\circ}$.
  Combining this with the flatness of $p_2$, we see that $\mathcal{Z}$ is irreducible.
  
  Since $p_2^{-1}(\PP^\circ)$ dominates $X$ by Lemma \ref{movable divisor}, the subset
  \[
  \mathcal{Z}^{\circ} : = p_2^{-1}(\PP^{\circ}) \cap p_1^{-1}(X \setminus \Bs(|N|))
  \]
  is non-empty.
  We will prove the following claim:
  \begin{claim}
  For any closed point $(x, t) \in \mathcal{Z}^{\circ}$, the prime divisor $E: = \mathcal{Z}_t \in |\Gamma|$ corresponding to $t$ satisfies the following property:
  \begin{equation}\label{key trick}
    \textup{For any divisor $D \in |N|$, if $x \in \Supp(D)$, then $E \le D$}.
  \end{equation}
  \end{claim}
  
  \begin{proof}
  Assume to the contrary that there exist a closed point $(x, t) \in \mathcal{Z}^{\circ}$ and $D \in |N|$ such that $x \in \Supp(D)$ and $E : = \mathcal{Z}_t \not\subseteq \Supp(D)$.
  Then we obtain an effective divisor $D|_{E} \in |N|_{E}|$ whose support contains $x$.
  
  On the other hand, since $x \not\in \Bs(|N|)$, there exists an effective divisor $D' \in |N|$ such that $x \not\in \Supp(D')$.
  This defines an effective divisor $D'|_{E} \in |N|_{E}|$ with $x \not\in \Supp(D'|_E)$.
  Therefore, we have $\#|N|_{E}| \ge 2$, which is a contradiction to Lemma \ref{movable divisor}.
  \end{proof}
  
  \textbf{Step.2}
  In this step, we show that $\dim(\PP)=1$.
  Assume to the contrary that $\dim(\PP) \ge 2$.
  Take a general closed point $x \in X$ and a divisor $D \in |N|$ whose support contains $x$.
  Since $\mathcal{Z}^{\circ}$ dominates $X$, if $x$ is sufficiently general, then the dimension of the fiber $\mathcal{Z}^{\circ}_x$ at $x \in X$ is equal to 
  \[
  \dim(\mathcal{Z}^{\circ}) - \dim (X) =\dim(\PP)-1 \ge 1.
  \]
  Combining this with the claim in Step.1, there exist infinitely many prime divisors $E \in |\Gamma|$ such that $E$ is contained in $\Supp(D)$, which is a contradiction.
  
  \textbf{Step.3}
  Let $m \ge 0$ be the maximal integer such that the addition map
  \[
  |N(-m\Gamma)| \times |\Gamma|^m \to |N| \ ; \ (F, \Gamma_1, \dots, \Gamma_m) \mapsto F+\Gamma_1+ \cdots + \Gamma_m
  \]
  is surjective.
  In this step, we will show that $\#|N(-m\Gamma)|=1$.
  Assume to the contrary, the effective divisor $\mathcal{Z}_{N(-m\Gamma)} \subseteq X \times_k T$ dominates $X$ by Lemma \ref{movable divisor}, where we write $T : = \PP(H^0(X, \sO_X(N(-m\Gamma)))^*)$.
    
  By the maximality of $m$, the addition map
  \[
  |N(-(m+1)\Gamma)| \times |\Gamma| \to |N(-m\Gamma)|
  \]
  is not surjective.
  Combining this with Lemma \ref{universal family} (2), there exists an open dense subset $T^{\circ} \subseteq T$ such that for every closed point $t \in T^{\circ}$, the corresponding divisor $F = (\mathcal{Z}_{N(-m\Gamma)})_t$ satisfies the following condition:
  \begin{equation}\label{maximality of m}
  F - E \not\ge 0 \ \textup{for any divisor $E \in |\Gamma|$}
  \end{equation}
  
  Since $\mathcal{Z}_{N(-m\Gamma)}$ is flat over $T$, the pullback $p_2^{-1}(T^{\circ})$ is open dense in $\mathcal{Z}_{N(-m\Gamma)}$.
  Therefore, $p_1(p_2^{-1}(T^{\circ}))$ is dense in $X$, that is, for a general closed point $x \in X$, there exists a divisor $F \in |N(-m \Gamma)|$ with $x \in \Supp(F)$ and satisfying the condition \eqref{maximality of m}.
  
  Since $x$ is general, we may take a prime divisor $E \in |\Gamma|$ with $E \neq \Gamma$ which satisfies the condition \eqref{key trick} in Step.1.
  Noting that $F+m \Gamma \in |N|$, it follows from \eqref{key trick} that we have
  \[
  E \le F+m\Gamma,
  \]
  which is a contradiction to the condition \eqref{maximality of m}.
  
  \textbf{Step.4}
  In this step, we will prove the assertion in the proposition.
  We first observe that one has a surjective $k$-morphism
  \[
  (\PP^1)^m \onto \PP(H^0(X,N)^*)
  \]
  by Step.2, Step.3 and Lemma \ref{universal family} (2).
  Therefore, we have $\dim(\PP(H^0(X,N)^*)) \le m$.
  
  Noting that the linear system $|N(-m\Gamma)|$ is non-empty, there is an effective divisor $F \in |N(-m\Gamma)|$.
  Since $N \sim F+m\Gamma$, we have the equation
  \[
  (H^{\dim X-1} \cdot N) =(H^{\dim X-1} \cdot F) + m (H^{\dim X-1} \cdot \Gamma).
  \]
  It follows from Kleiman's theorem (\cite[Theorem 1.4.9]{Laz}) that $(H^{\dim X-1} \cdot F)$ is non-negative.
  On the other hand, by Lemma \ref{intersection with nef big}, we have $(H^{\dim X-1} \cdot \Gamma) \ge 1$.
  Therefore, we conclude that 
  \[
  (H^{\dim X-1} \cdot N) \ge m \ge \dim(\PP(H^0(X,N)^*))=h^0(X,N) -1,
  \] which is a contradiction.
  \end{proof}

\begin{thm}[Theorem \ref{C}]\label{Lower bound for seshadri}
  Let $X$ be a $3$-dimensional normal projective variety  with rational singularities over an uncountable algebraically closed field, $\ell \ge 1$ be an integer and $L$ be a nef and big invertible sheaf on $X$.
  We further assume that the following conditions are satisfied:
  \begin{enumerate}[label=\textup{(\roman*)}]
  \item $H^1(X, \sO_X)=0$.
  \item $h^0(X,L^{\ell}) > \ell \vol(L)+1$.
  \end{enumerate}
  Then we have
  \[
    \ses{L} \ge \frac{1}{\ell}
    \]
  for a very general closed point $x \in X$.
\end{thm}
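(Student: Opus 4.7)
The plan is to follow the swipe-out strategy of \cite{EKL} adapted to positive characteristic, using Proposition \ref{key lemma for lower bound dim3} as the substitute for Kodaira-type vanishing. Suppose for contradiction that $\ses{L} < 1/\ell$ holds on an open dense subset of $X(k)$, and define
\[
\Sigma := \{(C,x) \textup{ pointed curve in } X \mid \ell (L \cdot C) < \mult_x(C)\}.
\]
A bounded-family argument (cf.~\cite[Paragraph (3.3)]{EKL}) shows that $\Sigma$ is parametrized by a variety, and Lemma \ref{swipe out} then yields a dominant morphism $f : T \to X$ together with a chain $\mathcal{Z}_0 \subseteq \cdots \subseteq \mathcal{Z}_m$ in $X \times_k T$ whose general fiber over $T$ is an increasing chain of irreducible subvarieties starting at $\{f(t)\}$, with each step contained in $\overline{\Sigma \cdot (\textup{previous})}$ and satisfying property (iv).

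I then argue by case analysis on $d := \dim (\mathcal{Z}_m)_t$ for general $t$. If $d = 3$ then $(\mathcal{Z}_m)_t = X$, and since hypothesis (ii) gives $h^0(X, L^{\ell}) \ge 2$, Lemma \ref{tower} applied to $L^{\ell}$ produces $(C,x) \in \Sigma$ with $\ell (L \cdot C) \ge \mult_x(C)$, contradicting the definition of $\Sigma$. If $d = 1$, then for general $t$ the fiber $C_t = (\mathcal{Z}_m)_t$ is itself a $\Sigma$-curve at its generic point, forcing $(L \cdot C_t) = 0$; as these curves cover $X$, this contradicts $\B_+(L) \neq X$.

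The substantive case is $d = 2$. Write $S := (\mathcal{Z}_m)_t$ and take a log resolution $\pi : Y \to X$ of $(X, S)$, so that the strict transform $\tilde S \subseteq Y$ is smooth and prime. Since $X$ has rational singularities, hypothesis (i) propagates to $H^1(Y, \sO_Y) = 0$, so $\Pic^0(Y)_{\mathrm{red}}$ is trivial. The family $\{S_t\}_{t \in T}$ covers $X$ and is nonconstant (else $\mathcal{Z}_m$ could not dominate $X$), so the strict transforms $\{\tilde S_t\}$ form a positive-dimensional algebraic family of linearly equivalent prime divisors on $Y$; hence $h^0(Y, \sO_Y(\tilde S)) \ge 2$. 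Applying Proposition \ref{key lemma for lower bound dim3} on $Y$ with $\Gamma = \tilde S$, $H = \pi^{*}L$, $N = \pi^{*}L^{\ell}$ (where rational singularities give $h^0(Y, \pi^{*}L^{\ell}) = h^0(X, L^{\ell}) > \ell \vol(L) + 1 = ((\pi^{*}L)^{2} \cdot \pi^{*}L^{\ell}) + 1$) yields $h^0(\tilde S, \pi^{*}L^{\ell}|_{\tilde S}) \ge 2$. Since $S \not\subseteq \B_+(L)$ for general $t$, Lemma \ref{B+ big} shows $\pi^{*}L|_{\tilde S}$ is nef and big on $\tilde S$.

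Proposition \ref{ses dim2} applied to the $2$-dimensional projective $\tilde S$ and the nef and big invertible sheaf $\pi^{*}L^{\ell}|_{\tilde S}$ then gives $\ses[\tilde x]{\pi^{*}L^{\ell}|_{\tilde S}} \ge 1$ at a very general $\tilde x \in \tilde S$. Choosing $\tilde x$ lying above a point $x \in S$ that is smooth on both $X$ and $S$ and belongs to $Q(t)$ (possible since these conditions intersect to a very general subset over the uncountable $k$), property (iv) ensures that any $(C, x) \in \Sigma$ satisfies $C \subseteq S$; since $\pi$ is an isomorphism near $\tilde x$, one obtains
\[
\mult_x(C) = \mult_{\tilde x}(\tilde C) \le (\pi^{*}L^{\ell}|_{\tilde S} \cdot \tilde C) = \ell (L \cdot C),
\]
contradicting $(C,x) \in \Sigma$. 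The main obstacle is the dimension-two case, specifically the step of promoting algebraic movability of the covering family $\{S_t\}$ to linear equivalence of the strict transforms $\{\tilde S_t\}$ on the resolution; this is where hypothesis (i) and the rational-singularities assumption intervene essentially, and also where one must be careful in positive characteristic because $\Pic^0$ may be non-reduced (but its reduction is still trivial under $H^1 = 0$, which suffices for morphisms from the reduced parameter space $T$).
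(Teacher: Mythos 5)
Your proof is correct and follows essentially the same strategy as the paper's: the EKL-type reduction via the swipe-out lemma, the trichotomy on the dimension of the terminal fiber $(\mathcal{Z}_m)_t$, and, in the two-dimensional case, Proposition \ref{key lemma for lower bound dim3} combined with Proposition \ref{ses dim2} and the vanishing $H^1(X,\sO_X)=0$ to force linear equivalence of the covering family of divisors. The only differences are organizational: the paper replaces $X$ by a resolution once at the outset (so the surface case is run directly on $X$ rather than on strict transforms in a log resolution), and it applies Proposition \ref{ses dim2} in the contrapositive, deducing $h^0(S, L^{\ell}|_S) \le 1$ and contradicting the key proposition, rather than running the Seshadri bound forward against a specific curve of $\Sigma$ as you do.
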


\begin{proof}
After replacing $X$ by a resolution of singularities, we may assume that $X$ is smooth.
Assume to the contrary, it follows from \cite[Paragraph (3,3)]{EKL} that there exists a set $\Sigma$ of pointed curves in $X$ which is parametrized by a variety such that $\{x \in X \mid \exists (C,x) \in \Sigma\}$ contains an open dense subset $X^{\circ}$ of $X(k)$ and that for every $(C,x) \in \Sigma$, we have
  \begin{align}\label{choice of Sigma}
    \frac{(L \cdot C)}{\mult_x(C)} < \frac{1}{\ell}.
  \end{align}
  Take a dominant $k$-morphism $f: T \to X$ from a variety $T$ and a sequence
  \[
    \mathcal{Z}_0 \subseteq \mathcal{Z}_1 \subseteq \dots \subseteq \mathcal{Z}_m \subseteq X \times_k T
  \]
  as in Lemma \ref{swipe out}.
  
  \textbf{Case.1}:
  We first consider the case where $\mathcal{Z}_m=X \times_k T$.
  In this case, after restricting to a general closed fiber, we obtain a sequence of closed subsets of $X$ as in Lemma \ref{tower}.
  Noting that we have $2 \le h^0(X, L^{\ell})$, it follows from the lemma that we may find a pointed curve $(C, x) \in \Sigma$ with 
  \[
    \frac{(L^{\ell} \cdot C)}{\mult_x(C)} \ge 1,
  \]
  which is a contradiction to the inequality \eqref{choice of Sigma}.
  
  \textbf{Case.2}:
  We next consider the case where a general closed fiber $S : = (\mathcal{Z}_m)_t$ of $\mathcal{Z}_m$ is one-dimensional.
  By the property (iv) of Lemma \ref{swipe out}, for a general closed point $x \in S \cap X^{\circ}$, we can take an element $(C,x) \in \Sigma$ such that $C \subseteq S$.
  Combining this with the inequality \eqref{choice of Sigma}, we have 
  \[
    \ses{L|_S} \le \frac{(L|_S \cdot C)}{\mult_x(C)} <\frac{1}{\ell} \le 1.
  \]
  Since $t$ is general, we may also assume that one has $S \not\subseteq \B_+(L)$ and in particular, $L|_S$ is big by Lemma \ref{B+ big}.
  This is a contradiction to Example \ref{ses dim1}.
  
  \textbf{Case.3}:
  We finally consider the case where a general closed fiber $S : = (\mathcal{Z}_m)_t$ of $\mathcal{Z}_m$ is two-dimensional.
  As in Case.2, we may assume that $L|_S$ is big and one has
  \[
  \ses{L^{\ell}|_S} \le \frac{(L^{\ell}|_S \cdot C)}{\mult_x(C)} <1
  \]
  for general $x \in S$.
  It then follows from Proposition \ref{ses dim2} that 
  \[
    h^0(S, L^{\ell}|_S) \le 1.
  \]
  
  On the other hand, since $\bigcup_{s \in T(k)} (\mathcal{Z}_m)_s (k)$ is dense in $X(k)$, if $t' \in T(k)$ is another general point, then $S' : = (\mathcal{Z}_m)_{t'}$ is a prime divisor of $X$ with $S' \neq S$.
  It follows from the vanishing $H^1(X, \sO_X)=0$ that the algebraically equivalent divisors $S$ and $S'$ are linearly equivalent (cf.~\cite[Excercise III.12.6]{Har}).
  In particular, we have 
  \[
    h^0(X, \sO_X(S)) \ge 2.
  \]
  This is a contradiction by Proposition \ref{key lemma for lower bound dim3} with setting $N := L^{\ell}$ and $H : =L$.
  \end{proof}

We next give sufficient conditions for the assumption (ii) in Theorem \ref{Lower bound for seshadri}.
A \emph{numerical polynomial} is a polynomial $P(z) \in \Q[z]$ such that $P(n) \in \Z$ for all $n \in \Z$.

\begin{lem}\label{sansuu}
Let $d>0$ be an integer, $c \in \R$ be a real number and $g(z) \in \Q[z]$ be a polynomial with degree smaller than $d$.
Then there exists an integer $M>0$ such that there is no numerical polynomial $P(z) \in \Q[z]$ which satisfies the following properties:
\begin{enumerate}[label=\textup{(\roman*)}]
\item The degree of $P(z)$ is $d$.
\item The leading coefficient of $P(z)$ is positive.
\item $c< P(n)$ for every $n=0, 1, \dots, d$.
\item $P(n) \le g(n)$ for all integers $0 \le n \le M$.
\end{enumerate}
\end{lem}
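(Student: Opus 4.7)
The plan is to reduce the problem to a finite case analysis, exploiting the fact that a numerical polynomial of bounded degree is determined by its first $d+1$ values, and then use a growth argument at infinity.

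First, I will use that every numerical polynomial of degree at most $d$ can be written uniquely as
\[
P(z)=\sum_{i=0}^{d} a_i \binom{z}{i}, \qquad a_i\in\Z,
\]
with $a_i=\Delta^i P(0)$ computable from $P(0),\dots,P(d)$ via finite differences. Hence such a $P$ is uniquely determined by the tuple $(P(0),P(1),\dots,P(d))$. Assuming $M\ge d$, conditions (iii) and (iv) together force
\[
c<P(n)\le g(n)\quad\text{for every }n=0,1,\dots,d,
\]
so each coordinate $P(n)$ is constrained to a fixed finite set of integers. Consequently, only finitely many numerical polynomials $P$ can satisfy (i), (ii), (iii) together with $P(n)\le g(n)$ for $n=0,1,\dots,d$. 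Call this finite collection $\mathcal{P}$.

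Next, for each $P\in\mathcal{P}$ the difference $P-g$ has degree exactly $d$ (because $\deg g<d$ by hypothesis) and its leading coefficient equals that of $P$, which is positive by (ii). Therefore $(P-g)(n)\to+\infty$ as $n\to\infty$, and there exists $N(P)\in\N$ with $P(n)>g(n)$ for all $n\ge N(P)$. I then set
\[
M:=\max\bigl(d,\ \max_{P\in\mathcal{P}} N(P)\bigr),
\]
which is well-defined since $\mathcal{P}$ is finite.

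Finally, I will verify that this $M$ works: any $P$ satisfying (i)--(iv) for this $M$ must lie in $\mathcal{P}$ (because (iv) implies $P(n)\le g(n)$ for $n=0,\dots,d$ since $M\ge d$), so by the choice of $M$ we have $P(M)>g(M)$, contradicting (iv). No genuine obstacle arises; the only point requiring care is to observe that $\mathcal{P}$ is finite, which follows immediately from the integrality of $P(0),\dots,P(d)$ in the binomial basis combined with the two-sided bound $c<P(n)\le g(n)$.
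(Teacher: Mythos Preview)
Your proof is correct and follows essentially the same approach as the paper's: both reduce to the finite set of numerical polynomials (your $\mathcal{P}$ is the paper's $\Theta_d$) determined by the integer values $P(0),\dots,P(d)$ trapped between $c$ and $g(n)$, then pick $M$ large enough to violate (iv) for each member of this finite set via the degree comparison. The only cosmetic differences are that you invoke the binomial basis explicitly and use a threshold $N(P)$ rather than a single witness point $m_P$.
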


\begin{proof}
For an integer $M>0$, we denote by $\Theta_M$ the set of all numerical polynomials $P(z)$ which satisfy the conditions (i), (ii), (iii) and (iv).
Since a numerical polynomial $P(z) \in \Q[z]$ with degree $d$ is uniquely determined by the tuple $(P(0),P(1), \dots, P(d)) \in \Z^{d+1}$, it follows from the conditions (iii) and (iv) that the set $\Theta_d$ is a finite set.
On the other hand, noting that we have $\deg(g)<d$, for any polynomial $P(z)$ of degree $d$ with positive leading coefficient, there exists an integer $m_P >0$ such that $P(m_P)>g(m_P)$.
We put 
\[
M : = \max \{d\} \cup \{m_P \mid P \in \Theta_d\}.
\]
Then we can see $\Theta_M=\emptyset$, as desired.
\end{proof}

\begin{cor}\label{lower bound for seshadri vanishing}
  Let $V,A>0$ be integers.
  Then there exists a real number $\delta =\delta(V, A)>0$ with the following property:
  For any $3$-dimensional normal projective variety $X$ with rational singularities over an uncountable algebraically closed field $k$ and any nef and big invertible sheaf $L$ on $X$, we have 
  \[
    \ses{L} \ge \delta
  \] for a very general closed point $x \in X$ if the following conditions are satisfied:
  \begin{enumerate}[label=\textup{(\roman*)}]
  \item $\vol(L) < V$.
  \item $H^1(X, \sO_X)=0$.
  \item $H^2(X, L^i)=0$ for every integer $i \ge 0$.
  \item $h^1(X,L^i) +h^3(X, L^i) <A$ for $i=0, 1,2,3$.
  \end{enumerate}
  \end{cor}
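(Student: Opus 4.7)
The plan is to deduce this corollary from Theorem \ref{Lower bound for seshadri} by producing, for every such $(X, L)$, a uniformly bounded integer $n \in \{1, \dots, M\}$ with $h^0(X, L^n) > n \vol(L) + 1$, where $M = M(V, A)$; Theorem \ref{Lower bound for seshadri} applied with $\ell = n$ then gives $\ses{L} \ge 1/n \ge 1/M$ at a very general point, and one sets $\delta(V, A) := 1/M$.

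Consider the Hilbert polynomial $P(n) := \chi(X, L^n)$, which is a numerical polynomial in $n$ of degree exactly three with leading coefficient $\vol(L)/6 > 0$, since $L$ is nef and big and $\vol(L) = L^3 \in \Z_{>0}$. The vanishing (iii) gives
\[
h^0(X, L^n) \;=\; P(n) + h^1(X, L^n) + h^3(X, L^n) \;\ge\; P(n)
\]
for every $n \ge 0$, while (iv) together with $H^2(X, \sO_X) = 0$ yield
\[
P(i) \;=\; h^0(X, L^i) - h^1(X, L^i) - h^3(X, L^i) \;>\; -A
\]
for $i = 0, 1, 2, 3$.

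Now apply Lemma \ref{sansuu} with $d := 3$, $c := -A - 1$, and $g(z) := Vz + 1$ (of degree $1 < 3$), producing an integer $M = M(V, A) > 0$ independent of $(X, L)$. Since $P$ has degree three, positive leading coefficient, and $P(i) > c$ for $i = 0, 1, 2, 3$, the lemma forces the existence of some $n \in \{0, 1, \dots, M\}$ with $P(n) > g(n) = nV + 1$. Combined with $\vol(L) \le V - 1$ (an integer inequality, as $\vol(L) \in \Z$), this yields $h^0(X, L^n) \ge P(n) > n \vol(L) + 1$; moreover $n = 0$ is ruled out automatically, since $P(0) = \chi(\sO_X) = 1 - h^3(X, \sO_X) \le 1 = g(0)$ by (ii) and (iii). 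Thus $1 \le n \le M$, and Theorem \ref{Lower bound for seshadri} completes the argument.

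No real obstacle is expected: the only substantive input is Lemma \ref{sansuu}, which is purely elementary, and the role of the cohomological hypotheses (ii)--(iv) is precisely to convert the positivity requirement $h^0(X, L^n) > n\vol(L)+1$ into a finite check on integer values of the numerical polynomial $P$. The one point to verify carefully is that the constants $c = -A-1$ and $g(z) = Vz+1$ in the application of Lemma \ref{sansuu} depend only on $V$ and $A$, which they do by construction, so that the resulting $M = M(V, A)$ gives the required uniform bound.
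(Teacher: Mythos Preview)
Your proof is correct and follows essentially the same route as the paper: both arguments feed the Hilbert polynomial $P(n)=\chi(L^n)$ into Lemma~\ref{sansuu} (with $d=3$, $g(z)=Vz+1$, and a lower bound on $P(0),\dots,P(3)$ coming from (iv)), extract a uniformly bounded $\ell$ with $h^0(X,L^\ell)\ge P(\ell)>\ell\vol(L)+1$ via the vanishing (iii), and then invoke Theorem~\ref{Lower bound for seshadri}. Your remark that $\vol(L)\in\Z$ is harmless but unnecessary, since $\vol(L)<V$ already gives $n\vol(L)+1<nV+1<P(n)$ for $n\ge 1$.
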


    \begin{proof}
    By Lemma \ref{sansuu}, there exists an integer $M=M(V,A)$ such that for any numerical polynomial $P(z)$ of degree $3$ with positive leading coefficient, if we have $-A<P(n)$ for $n=0,1,2,3$, then one has $V\ell+1 <P(\ell)$ for some $0 \le \ell \le M$.
    
    Fix $X$ and $L$ as in the statement of the corollary.
    Let $P(n) = \chi(L^n)$ be the Hilbert polynomial.
    It follows from the assumption (iv) that we have
    \[
      P(n) \ge -h^1(X, L^n) - h^3(X, L^n) > -A
    \]
    for $n=0,1,2,3$.
    By the definition of $M$, we may find an integer $0 \le \ell \le M$ such that one has 
    \begin{align}\label{dim3 key ineq}
      \ell \vol(L) +1 < V\ell +1 < P(\ell) \le h^0(X, L^{\ell}) + h^2(X, L^{\ell}) = h^0(X, L^{\ell}).
    \end{align}
    We note that $\ell \neq 0$ by this inequality.
    If we set $\delta : = 1/M$, then it follows from Theorem \ref{Lower bound for seshadri} that we have
    \[
      \delta \le \frac{1}{\ell} \le \ses{L}
    \]
    for a very general closed point $x \in X$.
    \end{proof}
    
    \begin{cor}\label{lower bound for seshadri weak Fano}
      Let $r, V,A>0$ be integers.
      Then there exists a real number $\delta =\delta(r, V, A)>0$ with the following property:
      For any $3$-dimensional normal projective variety $X$ with rational singularities over an uncountable algebraically closed field $k$, we have 
      \[
        \ses{-K_X} \ge \delta
      \] for a very general closed point $x \in X$ if the following conditions are satisfied:
      \begin{enumerate}[label=\textup{(\roman*)}]
      \item $-rK_X$ is a nef and big Cartier divisor.
      \item $\vol(-K_X) < V$.
      \item $H^1(X, \sO_X(irK_X))=0$ for every integer $i \ge 0$.
      \item $h^2(X,\sO_X(irK_X)) <A$ for $i=0, 1,2,3$.
    \end{enumerate}
    \end{cor}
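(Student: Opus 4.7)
The plan is to reduce to Corollary \ref{lower bound for seshadri vanishing} by applying it to the nef and big Cartier divisor $L := \sO_X(-rK_X)$. Since the Seshadri constant is homogeneous of degree one, $\ses{-K_X} = \ses{L}/r$, so a uniform lower bound $\ses{L} \ge 1/M$ with $M = M(r, V, A)$ will give the desired $\delta := 1/(rM)$. The volume condition holds with $V' := r^3 V$, because $\vol(L) = r^3 \vol(-K_X) < r^3 V$; note also the lower bound $\vol(L) \ge 1$, since $(L)^3 = r^3(-K_X)^3$ is a positive integer. The vanishing $H^1(X, \sO_X) = 0$ is hypothesis (iii) at $i = 0$.

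For the remaining two conditions of Corollary \ref{lower bound for seshadri vanishing} --- namely the vanishing $H^2(X, L^i) = 0$ for every $i \ge 0$ and a uniform bound on $h^1(L^i) + h^3(L^i)$ at $i = 0, 1, 2, 3$ --- the key tool is Serre duality on $X$, which is available because rational singularities force $X$ to be Cohen--Macaulay. Explicitly,
\[
    h^j(X, L^i) \;=\; h^j(X, \sO_X(-irK_X)) \;=\; h^{3-j}(X, \sO_X((ir+1)K_X))
\]
for $i \ge 0$ and $0 \le j \le 3$, and since $-K_X$ is big we have $h^0(X, \sO_X(mK_X)) = 0$ for every $m \ge 1$, killing $h^3(L^i)$ at once. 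The remaining vanishings and bounds will be read off from hypotheses (iii) and (iv) after Serre-dualising the relevant divisorial sheaves; once all four hypotheses of Corollary \ref{lower bound for seshadri vanishing} are in place with constants depending only on $r, V, A$, Theorem \ref{Lower bound for seshadri} yields $\ses{L} \ge 1/M$ at a very general point, which is the desired conclusion.

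The main obstacle will be the $H^2$-vanishing condition for $L^i$: by Serre duality it amounts to $H^1(X, \sO_X((ir+1)K_X)) = 0$, but the canonical indices $ir+1$ lie in the residue class $1 \pmod r$ and so are not directly covered by hypothesis (iii), which controls only the class $0 \pmod r$. When $r = 1$ the two classes coincide and the verification is immediate. For $r \ge 2$ I expect to mirror the quantitative core of the proof of Corollary \ref{lower bound for seshadri vanishing} directly, without going through the intermediate $H^2$-vanishing: evaluate the Hilbert polynomial $P(z) = \chi(X, L^z)$, which has degree $3$ and leading coefficient $\vol(L)/6 \in [1/6,\, r^3 V/6]$, at the four small integers $n = 0, 1, 2, 3$; use Serre duality together with (iii) and (iv) to obtain a uniform lower bound $P(n) > -A'$ for these $n$; then apply Lemma \ref{sansuu} with $g(z) = (r^3 V)\,z + 1$ (enlarged by a uniform additive constant to absorb the controllable $h^2$-contribution coming from (iv)) to produce an $\ell \le M(r, V, A)$ with $P(\ell) > (r^3 V)\ell + 1$. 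Since $h^3(L^\ell) = 0$, rearranging $\chi(L^\ell) = h^0 - h^1 + h^2$ then yields $h^0(X, L^\ell) \ge P(\ell) - h^2(L^\ell) > \ell\,\vol(L) + 1$, and Theorem \ref{Lower bound for seshadri} finishes the proof.
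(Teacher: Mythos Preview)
Your diagnosis of the obstacle is correct, but your proposed workaround inherits the very same defect. With $L=\sO_X(-rK_X)$ and $P(z)=\chi(X,L^z)$, Serre duality gives
\[
h^1(X,L^n)=h^2\bigl(X,\sO_X((nr+1)K_X)\bigr),\qquad h^2(X,L^\ell)=h^1\bigl(X,\sO_X((\ell r+1)K_X)\bigr),
\]
so both the lower bound $P(n)>-A'$ for $n=0,1,2,3$ and your ``controllable $h^2$-contribution'' at the chosen $\ell$ require control of cohomology of $\sO_X(mK_X)$ with $m\equiv 1\pmod r$. Hypotheses (iii) and (iv) only speak about $m\equiv 0\pmod r$, so neither bound is available when $r\ge 2$; the argument stalls at exactly the point you flagged for the direct application of Corollary~\ref{lower bound for seshadri vanishing}.

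The paper resolves this by shifting the Hilbert polynomial itself: it works with $P_X(n):=\chi(X,\omega_X\otimes L^n)$ rather than $\chi(X,L^n)$. After Serre duality the twist by $\omega_X$ converts all the relevant indices from $nr+1$ back to $nr$, so (iii) and (iv) apply verbatim: one gets $P_X(n)\ge -h^2(\sO_X(nrK_X))-h^0(\sO_X(nrK_X))>-A-1$ for $n=0,1,2,3$, and $P_X(n)\le h^0(\omega_X\otimes L^n)$ for all $n\ge 0$ since $h^2(\omega_X\otimes L^n)=h^1(\sO_X(nrK_X))=0$. Lemma~\ref{sansuu} then produces $m\le M(r,V,A)$ with $h^0(X,\sO_X(-(rm-1)K_X))=h^0(\omega_X\otimes L^m)$ large enough to feed into Theorem~\ref{Lower bound for seshadri} with $\ell=rm-1$. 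The single missing idea in your plan is this $\omega_X$-twist; once you insert it, the rest of your outline goes through essentially as written.
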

    
\begin{proof}
For a variety $X$ as in the statement, we set $L_X : = \sO_X(-rK_X)$ and consider the numerical polynomial 
\[
  P_X (n) : = \chi(\omega_X \otimes L_X^{n})
\]
of degree $3$ with a positive leading coefficient.
By the Serre duality, we have
\[
  P_X(i) \ge -h^0(X, L_X^{-i}) - h^2(X, L_X^{-i}) \ge -1 - h^2(X, L_X^{-i})> -1 -A
\]
for $i=0,1,2,3$.
Similarly, by the assumption (iv), we have
\[
  P_X(i) \le h^0(X, \omega_X \otimes L_X^i) + h^2(X, \omega_X \otimes L_X^i) = h^0(X, \omega_X \otimes L_X^i)
\]
for every $i \ge 0$.

It then follows from Lemma \ref{sansuu} that there exists an integer $M= M(r,V,A)>0$ such that for any $X$ as in the statement, there exists an integer $0 \le m \le M$ with the following property:
If we set $\ell : = rm -1$, then one has
\[
  h^0(X, \sO_X(-\ell K_X)) = h^0(X, \omega_X \otimes L_X^m) > \ell r^3V +1.
\]
Therefore, we have
\begin{align*}
  h^0(X, L_X^{\ell}) =h^0(X, \sO_X( r (-\ell K_X))) &\ge h^0(X, \sO_X(-\ell K_X)) \\
  & > \ell r^3V +1 \\
  & > \ell \vol(L_X) +1.
\end{align*}
It then follows from Theorem \ref{Lower bound for seshadri} that
\[
  \ses{L_X} \ge \frac{1}{\ell} \ge \frac{1}{rM-1}
\]
for a very general point $x \in X$, as desired.
\end{proof}

\section{Birationally boundedness}\label{sec4}

In this section, we will prove Theorem \ref{B}.

\subsection{Birationality of the adjoint linear system}

In this subsection, we give a criterion in terms of Seshadri constants for the adjoint divisor $K_X+D$ to define a birational map.
When $D$ is ample, this is proved in \cite[Theorem 3.1]{MS}.
We extend this to the case where $D$ is nef and big.

\begin{lem}[\textup{cf. \cite[Remark 2.10]{MS}}]\label{MS 2.10}
Let $D$ be a nef and big Cartier divisor on a projective variety $X$ over an algebraically closed field $k$, $x \in X \setminus \B_+(D)$ be a closed point and $\m_x \subseteq \sO_X$ be the maximal ideal at $x$.
Then there exists an integer $m>0$ such that for any integers $n, \ell>0$, if $nD$ separates $\ell$-jets at $x$, then 
\[
\m_x^{\ell} \otimes \sO_X( (m+n)D)
\]
is globally generated around $X \setminus \B_+(D)$.
\end{lem}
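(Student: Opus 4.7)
For generation at the distinguished point $x$, the $\ell$-jet separation hypothesis directly produces the required sections: the surjection $H^0(X, \sO_X(nD)) \twoheadrightarrow \sO_X(nD) \otimes \sO_X/\m_x^{\ell+1}$ restricts to a surjection onto the graded piece $\sO_X(nD) \otimes \m_x^\ell/\m_x^{\ell+1}$, so one obtains $s_1, \ldots, s_r \in H^0(X, \m_x^\ell \otimes \sO_X(nD))$ whose reductions modulo $\m_x^{\ell+1}$ span $\sO_X(nD) \otimes \m_x^\ell/\m_x^{\ell+1}$ as a $k$-vector space. Nakayama's lemma then shows $\m_x^\ell \otimes \sO_X(nD)$ is globally generated at $x$, and multiplying by any section $t_0 \in H^0(X, \sO_X(mD))$ with $t_0(x) \neq 0$ produces generators $s_i t_0 \in H^0(X, \m_x^\ell \otimes \sO_X((m+n)D))$ of the stalk at $x$; such $t_0$ exists once $m$ is chosen sufficiently divisible, since $x \notin \B(D) \subseteq \B_+(D)$.

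To handle points $y \neq x$ in $X \setminus \B_+(D)$, I would exploit $x \notin \B_+(D) = \bigcap_{A}\B(D-A)$ to fix an ample $\Q$-Cartier divisor $A$ and an integer $p > 0$, sufficiently divisible, so that $\Bs(|p(D-A)|) = \B_+(D)$; in particular neither $x$ nor any $y \in X \setminus \B_+(D)$ is a base point of $|p(D-A)|$. The integer $m$ is then chosen of the form $m = pq$ with $q$ large enough that $pqA$ is very ample and $\sO_X(pqA) \otimes \m_x$ is globally generated on $X$. For each $y \in X \setminus \B_+(D) \setminus \{x\}$ one finds a section $w_y \in H^0(X, \sO_X(p(D-A)))$ with $w_y(x), w_y(y) \neq 0$ (possible because the sections vanishing at $x$, respectively at $y$, form two proper subspaces of $H^0(X, \sO_X(p(D-A)))$ which cannot jointly cover the whole space) together with a section $v \in H^0(X, \sO_X(pqA))$ satisfying $v(x) = 0$ and $v(y) \neq 0$; the product $u_y := v \cdot w_y^q \in H^0(X, \m_x \otimes \sO_X(mD))$ is then nonvanishing at $y$. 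Provided one can also exhibit $s \in H^0(X, \m_x^{\ell-1} \otimes \sO_X(nD))$ with $s(y) \neq 0$, the product $u_y \cdot s$ lies in $H^0(X, \m_x^\ell \otimes \sO_X((m+n)D))$ and does not vanish at $y$, giving the required generation at $y$.

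The main obstacle is producing such an $s$, i.e., verifying $y \notin \Bs(|\m_x^{\ell-1} \otimes \sO_X(nD)|)$ uniformly for every admissible $y$. Iterating the factorization argument reduces to the base case $\ell = 1$, where the issue becomes whether $y \notin \Bs(|\sO_X(nD)|)$. Since the separation of $\ell$-jets at $x$ forces $h^0(X, \sO_X(nD)) \geq \dim_k \sO_X/\m_x^{\ell+1}$ and hence $n$ to grow at least linearly in $\ell$, for all admissible $n$ the base locus $\Bs(|\sO_X(nD)|)$ is expected to lie inside $\B_+(D)$; a careful uniform treatment of this comparison --- possibly by enlarging $m$ further to absorb residual base-point behavior from small values of $n$ --- is the key technical step that fixes the final value of $m$.
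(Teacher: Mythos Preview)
Your approach has a genuine gap in the uniformity of $m$. The iteration in your third paragraph, passing from $\ell$ to $\ell-1$, consumes a copy of $mD$: you produce $u_y \in H^0(X,\m_x \otimes \sO_X(mD))$ nonvanishing at $y$ and seek $s \in H^0(X,\m_x^{\ell-1} \otimes \sO_X(nD))$ nonvanishing at $y$. But finding such an $s$ amounts to global generation of $\m_x^{\ell-1} \otimes \sO_X(nD)$ at $y$ with the \emph{same} $n$ and \emph{no} extra twist available, so you cannot invoke the lemma inductively. If instead you spend a fresh $mD$ at each step, the total twist becomes $(\ell m + n')D$ and $m$ depends on $\ell$, contradicting the statement. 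The base case is equally uncontrolled: for $D$ merely nef and big, the loci $\Bs(|nD|)$ need not equal $\B(D)$ for any particular $n$ (only their intersection over all $n$ is $\B(D)$), and the lemma demands the conclusion for \emph{every} $n$ with the jet-separation property, not just sufficiently divisible ones. Your suggestion to ``absorb residual base-point behavior from small values of $n$'' does not address this, since the issue persists for arbitrary $n$.

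The paper's proof avoids explicit section-building and instead uses cohomology. One fixes an ample $A$ with $\B_+(D)=\B(D-A)$ and an $N$ with $NA$ very ample and $\B(D-A)=\Bs(|nN(D-A)|)$ for all $n>0$; by Fujita's vanishing theorem one may also arrange $H^i(X,\sO_X(uD+vNA))=0$ for all $i,u,v>0$. Setting $m=(\dim X+1)N$, the factorization
\[
\m_x^\ell \otimes \sO_X((m+n)D)\;\cong\;\bigl(\m_x^\ell \otimes \sO_X(nD+mA)\bigr)\otimes \sO_X(m(D-A))
\]
reduces the claim to global generation of $\m_x^\ell \otimes \sO_X(nD+mA)$, since $\sO_X(m(D-A))$ is already globally generated off $\B_+(D)$. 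From the exact sequence $0\to \m_x^\ell \otimes \sO_X(nD+vNA)\to \sO_X(nD+vNA)\to \sO_X/\m_x^\ell\to 0$, the jet-separation hypothesis plus Fujita vanishing force $H^i(X,\m_x^\ell \otimes \sO_X(nD+vNA))=0$ for all $i,v>0$, so $\m_x^\ell \otimes \sO_X(nD+mA)$ is $0$-regular with respect to $NA$ and hence globally generated. The crucial feature is that Fujita's vanishing is uniform in $u$, so a single $m$ works for all $n,\ell$; this is exactly what your constructive approach cannot deliver.
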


\begin{proof}
Take an ample $\Q$-Cartier divisor $A$.
By \cite[Proposition 1.5]{ELMNP},  after replacing $A$, we have
\[
\B_+(D) = \B(D-A).
\]
Take an integer $N>0$ such that $NA$ is very ample and $\B(D-A)=\Bs(|nN(D-A)|)$ for every integer $n>0$.
By Fujita's vanishing theorem (cf. ~\cite[Theorem 1.4.35]{Laz}),  after replacing $N$ by its multiple,  we may assume that for any integers $i, u,v>0$, one has
\[
H^i(X, \sO_X(uD+vNA))=0.
\]

We set $m : =(\dim X+ 1)N$.
Take integers $n, \ell>0$ such that $nD$ separates $\ell$-jets at $x$ and we will show that $\m_x^{\ell} \otimes \sO_X((m+n)D)$ is globally generated around $X \setminus \B_+(D)$.
Since we have
\[
\m_x^{\ell} \otimes \sO_X((m+n)D) \cong (\m_x^{\ell} \otimes \sO_X(nD+mA)) \otimes \sO_X(m(D -A)) 
\]
and $\B_+(D)=\Bs(|m(D-A)|)$, it suffices to show that $\m_x^{\ell} \otimes \sO_X(nD+mA)$ is globally generated.

Let $v>0$ be a positive integer and consider the exact sequence
\[
0 \to \m_x^{\ell} \otimes \sO_X(nD + vNA) \to \sO_X(nD +vNA) \xrightarrow{\phi} \sO_X/\m_x^{\ell} \to 0.
\]
Since $nD$ separates $\ell$-jets at $x$ and $A$ is globally generated at $x$,  the morphism 
\[
H^0(X, \phi) : H^0(X, \sO_X(nD+vNA)) \to \sO_X/\m_x^{\ell}
\] 
is surjective.
Combining this with the Fujita's vanishing,  we conclude that 
\[
H^i(X, \m_x^{\ell} \otimes \sO_X(nD+vNA))=0
\]
for every $v, i>0$.
Therefore,  $\m_x^{\ell} \otimes \sO_X(nD+mA)$ is $0$-regular with respect to $NA$ (cf.~\cite[Section 1.8]{Laz}) and in particular, it is globally generated, as desired.
\end{proof}

\begin{prop}[\textup{\cite[Proposition 2.16]{MS}}]\label{MS 2.16}
Let $D_1, \dots, D_r$ be nef and big $\Q$-Cartier divisors on a projective variety over an algebraically closed field $k$ and $Z =\{x_1, \dots, x_r \} \subseteq X$ be a set of closed points.
Assume that $\B_+(D_i) \cap Z = \emptyset$ for every $i$.
Then we have
\[
\ejet[Z]{D_1+\cdots+D_r} \ge \min_{i} \ejet[x_i]{D_i}.
\]
\end{prop}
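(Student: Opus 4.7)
I would first reduce the statement to the case that every $D_i$ is an integral Cartier divisor and then explicitly construct, for every rational $\beta < \alpha := \min_i \ejet[x_i]{D_i}$ and every sufficiently large integer $M$, a section of $\sO_X(M(D_1+\cdots+D_r))$ realizing any prescribed jet at $Z$ of order roughly $\beta M$. Dividing by $M$, letting $M \to \infty$, and then $\beta \nearrow \alpha$ will yield the desired inequality.

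For the reduction, multiply all $D_i$ by a common positive integer clearing denominators; by the very definition of $\ejet[Z]{D}$ for $\Q$-Cartier $D$ and the invariance of $\B_+$ under positive integer multiplication, both sides of the inequality scale by the same factor, so we may assume each $D_i$ is Cartier. Fix $\beta < \alpha$. By Lemma \ref{ejet basic} (2), $s(\sO_X(mD_j); x_j)/m$ converges to $\ejet[x_j]{D_j} \ge \alpha > \beta$, so there exists $P > 0$ with $s(\sO_X(mD_j); x_j) > \beta m$ for every $m \ge P$ and every $j$. Applying Lemma \ref{MS 2.10} to each $D_j$ at $x_j$ (allowed because $x_j \notin \B_+(D_j)$) gives an integer $m_j > 0$; set $m := \max_j m_j$. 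For $M \ge P + m$, set $L := \lceil \beta(M-m) \rceil - 1$. Then for every ordered pair $i \ne j$, the divisor $(M - m_j)D_j$ separates $(L+1)$-jets at $x_j$ (since $L + 1 \le \lceil \beta(M-m_j)\rceil \le s(\sO_X((M-m_j)D_j); x_j)$), and Lemma \ref{MS 2.10} therefore produces a section $t_{ij} \in H^0(X, \sO_X(MD_j))$ vanishing to order $\ge L+1$ at $x_j$ and with $t_{ij}(x_i) \ne 0$ (using $x_i \notin \B_+(D_j)$).

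Given any target $(a_1, \dots, a_r) \in \bigoplus_i \sO_{X,x_i}/\m_{x_i}^{L+1}$, I would produce a section of $\sO_X(M(D_1+\cdots+D_r))$ with $L$-jet $a_i$ at each $x_i$ as follows. For each $i$, let $\gamma_i$ denote the class of $\prod_{j \ne i} t_{ij}$ in the Artinian local ring $\sO_{X,x_i}/\m_{x_i}^{L+1}$; this is a unit because its constant term $\prod_{j \ne i} t_{ij}(x_i)$ is nonzero. Since $MD_i$ separates $L$-jets at $x_i$ (as $L \le s(\sO_X(MD_i); x_i)$), choose $s_i \in H^0(X, \sO_X(MD_i))$ whose $L$-jet at $x_i$ equals $\gamma_i^{-1} a_i$, and set
\[
\sigma_i := s_i \cdot \prod_{j \ne i} t_{ij} \in H^0(X, \sO_X(M(D_1+\cdots+D_r))).
\]
Then $\sigma_i$ has $L$-jet $a_i$ at $x_i$, and for every $l \ne i$ the factor $t_{il}$ forces $\sigma_i$ to vanish to order $\ge L+1$ at $x_l$. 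Hence $\sigma := \sum_i \sigma_i$ realizes the target, giving $s(\sO_X(M(D_1+\cdots+D_r)); Z) \ge L$, and dividing by $M$ and sending $M \to \infty$ yields $\ejet[Z]{D_1+\cdots+D_r} \ge \beta$.

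The hard part is purely the bookkeeping: ensuring that one integer $M$ simultaneously provides enough jet-separation by $MD_i$ at $x_i$ and, via Lemma \ref{MS 2.10}, enough vanishing of sections of $MD_j$ at $x_j$ without losing nonvanishing at $x_i$. The algebraic device that glues these together is the invertibility of the class $\gamma_i$ in the local Artinian ring $\sO_{X,x_i}/\m_{x_i}^{L+1}$, which lets us \emph{divide out} the auxiliary factor $\prod_{j \ne i} t_{ij}$ at $x_i$ and reduce to a standard jet-realization problem for $MD_i$ alone.
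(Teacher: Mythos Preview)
Your proof is correct and follows essentially the same approach as the paper's: both arguments use Lemma \ref{MS 2.10} to produce, for each pair $i \neq j$, a section of a large multiple of $D_j$ that vanishes to high order at $x_j$ while remaining a unit at $x_i$, and then exploit the invertibility of the product of these auxiliary sections in the Artinian local ring $\sO_{X,x_i}/\m_{x_i}^{L+1}$ to reduce the jet-realization at $x_i$ to a problem for $D_i$ alone. The only organizational difference is in the passage to the limit: the paper fixes an auxiliary $\epsilon>0$, works at the single level $(m+n)D$, and obtains $\ejet[Z]{D} \ge \alpha/(\epsilon m \alpha + 1)$ before letting $\epsilon \to 0$, whereas you let the multiple $M \to \infty$ directly, which is slightly cleaner.
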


\begin{proof}
The proof is similar to that of \cite[Proposition 2.16]{MS}.

\textbf{Step.1}
Noting that $\B(D_i) \subseteq \B_+(D_i)$, by Lemma \ref{ejet basic} (3), we may assume that $D_i$ is an effective Cartier divisor with $\Supp(D_i) \cap Z=\emptyset$ for every $i$.
Fix real numbers $0<\epsilon$ and $0 < \alpha < \min_i \ejet[x_i]{D_i}$.
For every $i$, there exist integers $n_i, \ell_i>0$ such that $\alpha<\frac{\ell_i}{n_i}$ and that $n_iD_i$ separates $\ell_i$-jets at $x_i$.
By Lemma \ref{ejet basic},  after replacing $n_i$ and $\ell_i$ by its multiple,  we may assume that $\ell_i=\ell$ for all $i$ and $1/\ell < \epsilon$.

On the other hand,  it follows from Lemma \ref{MS 2.10} that for every $i$, there exists an integer $m_i>0$ such that the sheaf $\m_{x_i}^{\ell} \otimes \sO_X( (m_i+n_i)D_i)$ is globally generated around $Z$.
Moreover, since $\sO_X(D_i)$ is globally generated around $Z$, for any integer $M \ge m_i+n_i$,  the sheaf $\m_{x_i}^{\ell} \otimes \sO_X( MD_i)$ is globally generated around $Z$.
We set $n : = \max_{i} n_i$, $m : = \max_{i} m_i$.
Then the sheaf
\[
\m_{x_i}^{\ell} \otimes \sO_X( (m+n)D_i)
\]
is globally generated around $Z$.

\textbf{Step.2}
We set $D : = D_1 + \cdots + D_r$ and we will show that the restriction morphism
\[
\Phi: H^0(X, \sO_X( (m+n)D)) \to \sO_X/I_Z^{\ell}.
\]
is surjective.
It suffices to show that for every $i$ and for every element $a \in \sO_X/\m_{x_i}^{\ell}$, the element
\[
(0, \dots , 0, a, 0 ,  \dots, 0 ) \in \bigoplus_{i=1}^r \sO_X/\m_{x_i}^{\ell} \cong \sO_X/I_{Z}^{\ell}
\]
is contained in $\Im(\Phi)$.
For simplicity, we only consider the case of $i=1$.
By the Step. 1,  for every $i \neq 1$,  there exists a global section 
\[
  s_i \in H^0(X, \m_{x_i}^{\ell} \otimes \sO_X((m+n)D_i)) \subseteq H^0(X, \sO_X((m+n)D_i))
  \] whose stalk $(s_i)_{x_1}$ at $x_1$ is not contained in $\m_{x_1} \sO_X((m+n)D_i)_{x_1}$.
On the other hand, since $(m+n)D_1$ separates $\ell$-jets at $x_1$, there exists an element $t \in H^0(X, \sO_X( (m+n)D_1))$ whose image in $\sO_X/\m_{x_1}^{\ell}$ is 
\[
(s_2^{-1})_{x_1} (s_{3})^{-1}_{x_1} \cdots (s_{r})^{-1}_{x_1} a.
\]
Then we have
\[
\Phi(t s_2 s_3 \cdots s_r) = (a,0,0, \dots, 0),
\]
as desired.
Since $\Phi$ is surjective, we have
\[
\ejet[Z]{D} \ge \frac{\ell}{m+n} = \frac{1}{(m/\ell) +(n/\ell)}>\frac{1}{\epsilon m+(1/\alpha)} = \frac{1}{\epsilon m \alpha +1} \alpha.
\]
By taking limits as $\epsilon \to 0$ and $\alpha \to \min_i \ejet[x_i]{D_i}$, we complete the proof.
\end{proof}

\begin{prop}[\textup{\cite[Theorem 7.3.2]{Mur} cf.~\cite[Theorem 3.1]{MS}}]\label{Mur 7.3.1}
Let $X$ be a normal projective variety of dimension $n$ over an algebraically closed field $k$ of positive characteristic, $D$ be a nef Cartier divisor on $X$ and $Z \subseteq X$ be a finite set of closed points such that $X$ is $F$-injective around $Z$.
If we have $\ejet[Z]{D}>n+\ell$ for an integer $\ell \ge 0$,  then the sheaf $\omega_X \otimes \sO_X(D)$ separates $\ell$-jets at $Z$.
\end{prop}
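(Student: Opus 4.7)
The plan is to transfer jet-separation from $p^e D$ to $\omega_X \otimes \sO_X(D)$ via the Grothendieck trace of a high iterate of the Frobenius $F^e : X \to X$, and to use the $F$-injectivity of $X$ at $Z$ to keep this trace sufficiently surjective on the thickened infinitesimal neighbourhoods of $Z$.

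First, I would fix $e \gg 0$. By Lemma~\ref{ejet basic}(2) the sequence $s(\sO_X(p^e D); Z)/p^e$ converges to $\ejet[Z]{D} > n+\ell$, so for $e$ large enough $p^e D$ separates $M$-jets at $Z$ with $M$ comfortably exceeding $p^e(n+\ell)$. Twisting the Grothendieck trace $F^e_* \omega_X \to \omega_X$ attached to $F^e$ by $\sO_X(D)$ and applying the projection formula yields a morphism of coherent sheaves
\[
  T^e_D : F^e_*\bigl(\omega_X \otimes \sO_X(p^e D)\bigr) \to \omega_X \otimes \sO_X(D).
\]

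I would then chase through the commutative square
\[
  \xymatrix@C=0.2cm{
    H^0\bigl(X, F^e_*(\omega_X \otimes \sO_X(p^e D))\bigr) \ar[d]_-{\rho_e} \ar[r]^-{T^e_D} & H^0\bigl(X, \omega_X \otimes \sO_X(D)\bigr) \ar[d]^-{\rho} \\
    H^0\bigl(X, F^e_*(\omega_X \otimes \sO_X(p^e D)) \otimes \sO_X/I_Z^{\ell+1}\bigr) \ar[r]_-{\bar T^e_D} & H^0\bigl(X, (\omega_X \otimes \sO_X(D)) \otimes \sO_X/I_Z^{\ell+1}\bigr),
  }
\]
and reduce the desired surjectivity of $\rho$ to that of $\rho_e$ and $\bar T^e_D$. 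For $\rho_e$, multiplication by $I_Z$ on the Frobenius pushforward coincides with multiplication by the Frobenius power $I_Z^{[p^e]}$ on the source; the standard comparison between ordinary and Frobenius powers of ideals in an $n$-dimensional local ring then gives $I_Z^{M+1} \subseteq I_Z^{[p^e](\ell+1)}$ for $M$ of the indicated size, so $\rho_e$ factors through the $M$-jet quotient of $\omega_X \otimes \sO_X(p^e D)$ and is surjective by Step~1.

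For the surjectivity of $\bar T^e_D$, one works stalk-by-stalk at $x \in Z$. Matlis-dualizing the $F$-injectivity hypothesis on each $H^i_{\m_x}(\sO_{X,x})$ translates, via a devissage on local cohomology, into a surjectivity statement for $T^e$ modulo $\m_x^{\ell+1}$ at the level of stalks of $\omega_X$; the summand $n$ in the hypothesis $\ejet[Z]{D} > n+\ell$ is present precisely to absorb the loss of jet order $n(p^e-1)$ that the Frobenius trace inflicts on an $n$-dimensional local ring, the worst case being the regular one where the trace extracts the coefficient of $x_1^{p^e-1} \cdots x_n^{p^e-1}$. The main obstacle is carrying out this Matlis-dual analysis in the possibly non-Cohen--Macaulay setting: one must extract information from every $H^i_{\m_x}(\sO_{X,x})$ rather than only the top one, and match the resulting filtration with the $\m_x$-adic filtration of $\omega_X$. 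Once this local surjectivity is established, the global conclusion follows from the diagram chase above.
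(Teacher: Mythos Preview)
Your outline is the Frobenius--trace argument of Musta\c{t}\u{a}--Schwede/Murayama, which is precisely what the paper invokes: the paper's own proof only observes that $\ejet{D} \ge \ejet[Z]{D} > 0$ for each $x \in Z$, cites \cite[Lemma~7.2.6]{Mur} to conclude $Z \cap \B_+(D) = \emptyset$, and then defers to the proof of \cite[Theorem~7.3.1]{Mur}. So you are expanding what the paper leaves as a citation, and the skeleton---Grothendieck trace, the commutative square, and the use of $F$-injectivity via Matlis duality for $\bar T^e_D$---is the right one.

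One point needs correction. Your ``standard comparison between ordinary and Frobenius powers of ideals in an $n$-dimensional local ring''---the inclusion $\m_x^{M+1} \subseteq (\m_x^{[p^e]})^{\ell+1}$ for $M$ roughly $(n+\ell)p^e$---is obtained by pigeonhole on monomials in a minimal generating set of $\m_x$; the exponent that argument actually produces involves the \emph{embedding dimension} of $\sO_{X,x}$, not its Krull dimension $n$. These agree at regular points but not at singular ones, so as written your justification of the surjectivity of $\rho_e$ covers only the smooth case (which is \cite{MS}). Handling $F$-injective singular points with the bound $n+\ell$ is exactly where Murayama's extension does nontrivial work, and it is not just this pigeonhole. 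Relatedly, your surjectivity of $\rho_e$ is asserted for $\omega_X \otimes \sO_X(p^eD)$ while the jet-separation input from Step~1 is for $\sO_X(p^eD)$; when $\omega_X$ is not invertible at a point of $Z$ these are not interchangeable. You have rightly flagged the non-Cohen--Macaulay situation as the main obstacle, but be aware that the gap already appears at this ideal-theoretic step, not only in the Matlis-dual analysis of $\bar T^e_D$.
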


\begin{proof}
We first note that for any point $x \in Z$,  we have 
\[
\ejet{D} \ge \ejet[Z]{D}>0.
\]
Therefore, it follows from \cite[Lemma 7.2.6]{Mur} that $\B_+(D) \cap Z=\emptyset$.
Then the proof is similar to that of \cite[Theorem 7.3.1]{Mur}.
\end{proof}

\begin{thm}[\textup{cf.~\cite[Corollary 3.2]{MS}}]\label{MS 3.2}
Let $X$ be a normal projective variety of dimension $n$ over an algebraically closed field of positive characteristic and $D$ be a nef and big Cartier divisor on $X$.
If $\ses{D} >2n$ at a smooth closed point $x \in X$,  then the rational map $\Phi_{|K_X+D|}$ induces a birational map onto the image.
\end{thm}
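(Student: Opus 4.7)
The plan is to exhibit an open subset $U \ni x$ contained in the smooth locus of $X$ on which the rational map $\Phi := \Phi_{|K_X+D|}$ is defined, injective on closed points, and has injective differential; this will force $\Phi$ to be birational onto its image.

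Since $\ses{D} > 2n > 0$ and $D$ is nef, the point $x$ does not lie in $\B_+(D)$ (a standard fact about Seshadri constants of nef divisors at smooth points), so Lemma \ref{ses vs ejet} yields $\ejet{D} = \ses{D} > 2n$. By the semicontinuity in Proposition \ref{ejet semiconti}, the locus
\[
U := \{y \in X(k) \mid \ejet[y]{D} > 2n\}
\]
is open and contains $x$. After shrinking, we may assume $U$ lies in the smooth locus of $X$; moreover, since $\ejet[y]{D} > 0$ implies $y \notin \B_+(D)$ by \cite[Lemma 7.2.6]{Mur}, we have $U \cap \B_+(D) = \emptyset$. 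By Remark \ref{hierarchy}, every point of $U$ is $F$-injective.

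For any two distinct closed points $x_1, x_2 \in U$, Proposition \ref{MS 2.16} applied to $D_1 = D_2 := D$ and $Z := \{x_1, x_2\}$ gives $\ejet[Z]{2D} > 2n$, and by Lemma \ref{ejet basic}(3) this yields $\ejet[Z]{D} > n$. Proposition \ref{Mur 7.3.1} with $\ell = 0$ then shows that $\sO_X(K_X+D)$ separates $0$-jets at $\{x_1, x_2\}$: in particular $x_1, x_2 \notin \Bs|K_X+D|$ and $\Phi(x_1) \neq \Phi(x_2)$. Separately, for each $y \in U$, we apply Proposition \ref{Mur 7.3.1} to $Z = \{y\}$ with $\ell = 1$, which is valid since $\ejet[y]{D} > 2n \geq n+1$ for $n \geq 1$; this shows that $\sO_X(K_X+D)$ separates $1$-jets at $y$, i.e., the differential $d\Phi_y$ is injective.

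Thus $\Phi|_U$ is a morphism from the smooth variety $U$ that is injective on closed points and has injective differential everywhere. This forces $\Phi$ to be birational onto its image: generic injectivity on closed points restricts the induced function field extension $K(\overline{\Phi(X)}) \hookrightarrow K(X)$ to be purely inseparable of some degree $p^r$, but any purely inseparable morphism of positive degree factors generically through a Frobenius and hence has vanishing differential, contradicting the injectivity of $d\Phi$ unless $r = 0$. Hence $\Phi$ is birational onto its image. The argument is a direct synthesis of the previously established Seshadri-type separation results; the only mildly subtle input is that the Seshadri-constant hypothesis forces the smooth point $x$ to lie outside $\B_+(D)$ in the first place.
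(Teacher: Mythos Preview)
Your proof is correct and runs parallel to the paper's up through the separation of $0$- and $1$-jets: both use Lemma~\ref{ses vs ejet}, Proposition~\ref{ejet semiconti}, Proposition~\ref{MS 2.16}, and Proposition~\ref{Mur 7.3.1} in the same way (indeed you make the halving step $\ejet[Z]{2D}>2n \Rightarrow \ejet[Z]{D}>n$ explicit where the paper leaves it implicit).

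The endgame differs. The paper resolves the indeterminacy of $\Phi$ to obtain a proper morphism $g\colon Z\to Y$, then invokes the hands-on Lemma~\ref{birational criterion} (injectivity on closed points plus surjectivity of $\sO_{Y,g(z)}\to\sO_{Z,z}/\m_z^2$ forces $g$ to be an isomorphism after shrinking, via Nakayama). You instead argue directly on function fields: injectivity on closed points forces $K(X)/K(\overline{\Phi(X)})$ to be purely inseparable, and a nontrivial purely inseparable extension must kill injectivity of the differential. This is valid, but your justification ``factors generically through a Frobenius and hence has vanishing differential'' is imprecise on two counts: a purely inseparable morphism of degree $p^r>1$ need not factor through Frobenius (e.g.\ $k(s^{1/p^2},t)$ over $k(s,t)$), and the differential need not vanish identically (e.g.\ $(u,t)\mapsto(u^p,t)$ has rank-one Jacobian). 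The correct statement, which still gives what you need, is that a finite purely inseparable morphism of degree $>1$ between smooth varieties of the same dimension is nowhere \'etale, so its differential is nowhere an isomorphism---equivalently, $\Omega_{U_0/Y_0}$ is nonzero at the generic point and hence (by Nakayama) at every point, contradicting the $1$-jet separation. With that fix your argument is complete, and arguably more conceptual than the paper's since it avoids the resolution of indeterminacy.
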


\begin{proof}
By Lemma \ref{ses vs ejet} and Proposition \ref{ejet semiconti}, there is an open dense subset $V \subseteq X_{\mathrm{reg}}$ of $X$ for every closed point $y \in V$,  we have $\ejet[y]{D} >2n$.
We may also assume that $V \cap \B_+(D)=\emptyset$.
It then follows from Proposition \ref{Mur 7.3.1} that $\sO_{X}(K_X+D)$ separates $1$-jet at every closed point $y$.
In particular, we have $H^0(X, \sO_X(K_X+D)) \neq 0$.

Let $Y$ be the image of 
\[
\Phi_{|K_X+D|} : X \dashrightarrow \PP(H^0(X, \sO_X(K_X+D)).
\]
and $\phi: X \dashrightarrow Y$ be the induced rational map.
Then $\phi$ is defined around $V$ and for every $y \in V(k)$, the induced map
\[
\sO_{Y, \phi(y)} \to \sO_{X,y} \onto \sO_{X,y}/\m_y^2
\]
is surjective.

Take a resolution $f : Z \to X$ of the indeterminacy of $\phi$ such that $f$ is isomorphic around $V$ and we denote by $g : Z \to Y$ the induced morphism.
\[
\xymatrix{
 & Z \ar_-{f}[ld] \ar^-{g}[rd]& \\
X \ar@{.>}^-{\phi}[rr]&& Y
}\]
Then for every closed point $z \in f^{-1}(V)$, the ring homomorphism
\[
\sO_{Y, g(z)} \xrightarrow{g^{\#}} \sO_{Z,z} \onto \sO_{Z,z}/\m_z^2
\]
is surjective.

On the other hand, by Proposition \ref{MS 2.16}, for every two closed points $y \neq z \in V$, we have $\ejet[\{y,z\}]{2D}>2n$.
It again follows from Proposition \ref{Mur 7.3.1} that $\sO_{X}(K_X+D)$ separates $0$-jet at $\{y,z\}$, which implies that $\phi|_{V(k)}: V(k) \to Y$ is injective.
Then the restriction $g|_{f^{-1}(V(k))} : f^{-1}(V(k)) \to Y$ is also injective. 
By Lemma \ref{birational criterion} below,  $g$ is birational, and therefore, $\phi$ is birational as desired.
\end{proof}

The following lemma should be well-known to experts,  but we include a proof here for the sake of completeness.

\begin{lem}\label{birational criterion}
Let $g: Z \to Y$ be a proper surjective morphism between varieties over an algebraically closed field $k$.
Assume that there exists an open dense subset $U$ of $Z$ such that $g|_{U(k)}$ is injective and that $\sO_{Y, g(z)} \xrightarrow{g^{\#}} \sO_{Z,z} \onto \sO_{Z,z}/\m_z^2$ is surjective for every closed point $z \in U$.
Then $g$ is birational.
\end{lem}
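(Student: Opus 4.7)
The plan is to reduce the problem to Zariski's main theorem combined with a stalk-wise Nakayama argument. First I would convert the second-order hypothesis into a first-order one: for each closed point $z \in U$, composing the given surjection $\sO_{Y,g(z)} \twoheadrightarrow \sO_{Z,z}/\m_z^2$ with the further quotient onto $\m_z/\m_z^2$ yields $\m_z \subseteq \m_{g(z)}\sO_{Z,z}+\m_z^2$, and Nakayama applied to the finitely generated $\sO_{Z,z}$-module $\m_z$ upgrades this to the equality $\m_{g(z)}\sO_{Z,z}=\m_z$. In particular, the scheme-theoretic fiber $g^{-1}(g(z))$ has local ring $k$ at $z$, so $z$ is isolated in its fiber and $g$ is quasi-finite at $z$. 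Since the quasi-finite locus of the finite-type morphism $g$ is open in $Z$ and contains the dense set of closed points of $U$, it contains all of $U$, including the generic point; hence $g$ is generically quasi-finite and consequently $\dim Z = \dim Y$.

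Next I would restrict to a convenient open in $Y$. Set $V := Y \setminus g(Z\setminus U)$; this is open since $g$ is proper, and dense since $\dim g(Z\setminus U) \le \dim(Z\setminus U) < \dim Z = \dim Y$. By construction $g^{-1}(V) \subseteq U$, so the restriction $g' := g|_{g^{-1}(V)} : g^{-1}(V) \to V$ is proper and quasi-finite, hence finite by Zariski's main theorem.

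Finally I would show that $g'$ is an isomorphism. At each closed $v \in V$, injectivity of $g|_{U(k)}$ forces $g'^{-1}(v)(k)$ to consist of a single closed point $z$; combined with the finiteness of $g'$ and the identity $\m_v\sO_{Z,z}=\m_z$ established in the first step, this forces $g'^{-1}(v) = \Spec k$ as a scheme. Setting $\mathcal{F} := g'_*\sO_{g^{-1}(V)}$, a coherent $\sO_V$-module, Nakayama applied to the identity $\mathcal{F}_v/\m_v\mathcal{F}_v \cong k$ yields surjectivity of $\sO_{V,v} \to \mathcal{F}_v$ at every closed $v \in V$. The cokernel of $\sO_V \to \mathcal{F}$ is then a coherent sheaf on the variety $V$ vanishing at every closed point, hence zero (closed points are dense in any closed subset of a variety); the kernel is a torsion-free subsheaf of $\sO_V$ vanishing at the generic point, since $g'$ is dominant, hence also zero. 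Therefore $\sO_V \cong \mathcal{F}$, and since $g'$ is affine this forces $g'$ itself to be an isomorphism, so $g$ is birational.

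No serious obstacle is expected in this plan; the only place requiring genuine thought is the initial translation of the second-order hypothesis into the clean first-order statement $\m_{g(z)}\sO_{Z,z}=\m_z$, after which Zariski's main theorem and Nakayama close the argument.
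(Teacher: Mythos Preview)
Your argument is correct. Both proofs reduce to showing that, over a suitable open subset of $Y$, the map becomes finite and then an isomorphism via a Nakayama-type argument on stalks; the differences are in the order of use of the two hypotheses and in the auxiliary tools invoked.

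The paper first passes to an open subset over which $g$ is flat (generic flatness), so that all fibers are equidimensional of relative dimension $r=\dim Z-\dim Y$; the injectivity of $g|_{U(k)}$ then forces $r=0$, and properness plus finite fibers gives finiteness. Only at the very end does the paper invoke the tangent-space hypothesis, via \cite[Lemma II.7.4]{Har}, to conclude that the finite surjection is a closed immersion. You instead use the tangent-space hypothesis first, extracting $\m_{g(z)}\sO_{Z,z}=\m_z$ to see directly that $g$ is quasi-finite on $U$, then appeal to Zariski's main theorem for finiteness and to injectivity only at the end to pin down the single point in each fiber. Your route avoids generic flatness and the equidimensionality argument, at the cost of making explicit the Nakayama step hidden inside \cite[Lemma II.7.4]{Har}; the paper's route is slightly shorter but leans on that black box. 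Both are standard and either would serve.
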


\begin{proof}
After shrinking $Y$, we may assume that $g$ is flat.
Since $Z$ and $Y$ are irreducible,  every fiber of $g$ is purely $r$-dimensional,  where we set $r : =\dim Z - \dim Y$.
Take a closed point $x \in U$.
If $r$ is larger than $0$,  then there are infinitely many closed points in $U \cap g^{-1}(g(x))$, which is a contradiction to the injectivity of $g|_{U(k)}$.
Therefore, we have $r=0$.
Combining this with \cite[Lemma 02LS]{Sta}, we conclude that $g$ is finite.
Since $g(Z \setminus U)$ is a proper closed subset of $Y$,  after shrinking $Y$, we may assume that $Z=U$.

It follows from \cite[Lemma II. 7.4]{Har} that the morphism $g^{\#}: \sO_{Y} \to g_*\sO_Z$ is surjective, and in particular, $g$ is a closed immersion.
Since $g$ is surjective,  it is isomorphic, as desired.
\end{proof}

\subsection{Birationally boundedness}

\begin{cor}\label{bb dim2}
Let $X$ be a normal projective variety over an uncountable algebraically closed field of positive characteristic with $\dim(X)=2$ and $D$ be a nef and big Cartier divisor on $X$.
If we have $h^0(X, \sO_X(D)) \ge 2$,  then $\Phi_{|K_X+5D|}$ is birational.
\end{cor}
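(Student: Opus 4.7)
The plan is to combine Proposition \ref{ses dim2} with Theorem \ref{MS 3.2} directly. Since $\dim X = 2$, Theorem \ref{MS 3.2} tells us that $\Phi_{|K_X + E|}$ is birational whenever $E$ is a nef and big Cartier divisor with $\ses{E} > 2\dim X = 4$ at some smooth closed point $x \in X$. So it suffices to produce a smooth closed point $x$ at which $\ses{5D} > 4$, or equivalently, using the homogeneity of the Seshadri constant, $\ses{D} > 4/5$.

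For this, I would invoke Proposition \ref{ses dim2}: the hypothesis $h^0(X, \sO_X(D)) \ge 2$ is exactly what that proposition requires of the nef and big invertible sheaf $\sO_X(D)$, and it delivers $\ses{D} \ge 1$ at a very general closed point $x \in X$. Since the singular locus of the normal surface $X$ is a proper closed subset and $x$ is very general, we may further assume $x \in X_{\mathrm{reg}}$.

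Putting these together: at such a very general smooth point $x$, one has
\[
\ses{5D} \;=\; 5\, \ses{D} \;\ge\; 5 \;>\; 4 \;=\; 2\dim X,
\]
so Theorem \ref{MS 3.2}, applied to the nef and big Cartier divisor $5D$, yields that $\Phi_{|K_X + 5D|}$ is birational onto its image. There is no serious obstacle here; the argument is essentially a one-line assembly of the two-dimensional Seshadri lower bound (Proposition \ref{ses dim2}) with the Musta\c{t}\u{a}--Schwede type birationality criterion (Theorem \ref{MS 3.2}). The only subtlety worth checking is that the point $x$ produced by Proposition \ref{ses dim2} can simultaneously be arranged to lie in $X_{\mathrm{reg}}$, which is automatic since ``very general'' avoids any prescribed proper closed subset.
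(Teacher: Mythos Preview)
Your proof is correct and follows exactly the same approach as the paper, which simply states that the corollary follows from Proposition \ref{ses dim2} and Theorem \ref{MS 3.2}. You have accurately expanded this one-line argument, including the observation that the very general point from Proposition \ref{ses dim2} may be chosen smooth and the homogeneity step $\ses{5D}=5\,\ses{D}\ge 5>4$.
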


\begin{proof}
This follows from Proposition \ref{ses dim2} and Theorem \ref{MS 3.2}.
\end{proof}

\begin{cor}[Theorem \ref{B}]\label{bb dim3}
  Let $X$ be a $3$-dimensional normal projective variety with rational singularities over an uncountable algebraically closed field of positive characteristic, $\ell \ge 1$ be an integer and $D$ be a nef and big Cartier divisor on $X$.
  We further assume that the following conditions are satisfied:
  \begin{enumerate}[label=\textup{(\roman*)}]
  \item $H^1(X, \sO_X)=0$.
  \item $h^0(X,\sO_X({\ell}D)) > \ell \vol(D)+1$.
  \end{enumerate}
Then $\Phi_{|K_X+(6\ell+1)D|}$ is birational.
\end{cor}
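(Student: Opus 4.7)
The proof is a direct combination of Theorem \ref{Lower bound for seshadri} (Theorem C) and Theorem \ref{MS 3.2}. The strategy is to transfer the Seshadri constant lower bound obtained for $D$ into the hypothesis required to apply the birationality criterion to $(6\ell+1)D$.

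First, I would invoke Theorem \ref{Lower bound for seshadri}: conditions (i) and (ii) in the statement of Corollary \ref{bb dim3} are precisely those required, so for a very general closed point $x \in X$ we obtain
\[
  \ses{D} \ge \frac{1}{\ell}.
\]
Since $\ses{mD} = m\cdot\ses{D}$ for any positive integer $m$ (this is immediate from Definition \ref{defn ses}), the divisor $E := (6\ell+1)D$ then satisfies
\[
  \ses{E} \ge \frac{6\ell+1}{\ell} = 6 + \frac{1}{\ell} > 6 = 2\dim X
\]
at the same very general point $x$.

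Next, I would note that because $x$ is very general, we may further arrange it to lie outside the singular locus of $X$: the singular locus is a single proper closed subset (normal in dimension $3$ means $\Sing(X)$ has codimension $\ge 2$), so it only contributes one set to the countable family of loci a very general point must avoid. Thus $x$ is a smooth closed point of $X$ at which $\ses{E} > 2\dim X$.

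Finally, I would apply Theorem \ref{MS 3.2} directly to $E = (6\ell+1)D$: since $E$ is a nef and big Cartier divisor (as $D$ is) and since $\ses{E} > 2\dim X$ at a smooth closed point, the theorem yields that $\Phi_{|K_X + E|} = \Phi_{|K_X + (6\ell+1)D|}$ is birational onto its image, which is the desired conclusion. There is no real obstacle here, since all the heavy lifting has already been done in Theorem \ref{Lower bound for seshadri} (the Seshadri bound) and Theorem \ref{MS 3.2} (the birationality criterion via $F$-injectivity, which is available because $X$ has rational singularities and hence is Cohen--Macaulay with $F$-injective regular points along the smooth locus). The only thing to double-check is the compatibility of the scaling $\ses{mD} = m\,\ses{D}$ and that "very general" truly forces smoothness of $x$, both of which are immediate.
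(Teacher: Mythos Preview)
Your proof is correct and follows exactly the same approach as the paper, which simply states that the corollary follows from Theorem \ref{Lower bound for seshadri} and Theorem \ref{MS 3.2}. You have merely spelled out the evident intermediate step of scaling the Seshadri constant and the harmless observation that a very general point may be taken smooth.
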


\begin{proof}
This follows from Theorem \ref{Lower bound for seshadri} and Theorem \ref{MS 3.2}.
\end{proof}

We also give variants in terms of vanishing of cohomology.

\begin{cor}\label{bb vanishing}
  Let $V,A>0$ be integers.
  Then there exists an integer $N=N(V, A)>0$ with the following property:
  For any $3$-dimensional normal projective variety $X$ with rational singularities over an uncountable algebraically closed field $k$ of positive characteristic and any nef and big Cartier divisor $D$ on $X$, $\Phi_{|K_X+ND|}$ is birational if the following conditions are satisfied:
  \begin{enumerate}[label=\textup{(\roman*)}]
  \item $\vol(D) < V$.
  \item $H^1(X, \sO_X)=0$.
  \item $H^2(X, \sO_X(iD))=0$ for every integer $i \ge 0$.
  \item $h^1(X,\sO_X(iD)) +h^3(X, \sO_X(iD)) <A$ for $i=0, 1,2,3$.
  \end{enumerate}
  \end{cor}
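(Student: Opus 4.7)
The plan is to combine Corollary \ref{lower bound for seshadri vanishing}, which supplies a uniform lower bound on Seshadri constants, with Theorem \ref{MS 3.2}, which converts such a bound into birationality of an adjoint linear system. The hypotheses (i)--(iv) in Corollary \ref{bb vanishing} are word-for-word the hypotheses of Corollary \ref{lower bound for seshadri vanishing} applied to $L = \sO_X(D)$. Hence there exists a real number $\delta = \delta(V,A) > 0$, depending only on $V$ and $A$, such that for every pair $(X, D)$ satisfying the assumptions of Corollary \ref{bb vanishing}, one has
\[
\ses{D} \;\ge\; \delta
\]
at a very general closed point $x \in X$. Since $X$ is normal of dimension $3$, its smooth locus is open and dense, so $x$ may be chosen in the smooth locus.

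Now set $N := \lfloor 6/\delta \rfloor + 1$; this is an integer depending only on $V$ and $A$. Because the Seshadri constant is homogeneous in the divisor, at the point $x$ selected above we have
\[
\ses{ND} \;=\; N\,\ses{D} \;\ge\; N\delta \;>\; 6 \;=\; 2\dim X.
\]
The divisor $ND$ is nef and big (being a positive multiple of a nef and big Cartier divisor), and $x$ is a smooth closed point, so Theorem \ref{MS 3.2}, applied to $ND$ and the point $x$, shows that $\Phi_{|K_X + ND|}$ induces a birational map onto its image. This proves the corollary.

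There is no real obstacle here, as all the work has been done in the previous sections: the uniform bound on Seshadri constants is Corollary \ref{lower bound for seshadri vanishing} (whose proof runs through Theorem \ref{Lower bound for seshadri} together with the elementary counting Lemma \ref{sansuu}), and the passage from Seshadri constants to birationality of adjoint systems is exactly Theorem \ref{MS 3.2}. A conceivable alternative would be to go directly through Corollary \ref{bb dim3}, using that the proof of Corollary \ref{lower bound for seshadri vanishing} yields an integer $M = M(V,A)$ and some $1 \le \ell \le M$ with $h^0(X, \sO_X(\ell D)) > \ell\vol(D) + 1$; however, Corollary \ref{bb dim3} then only delivers birationality of $K_X + (6\ell+1)D$ for an $\ell$ depending on $(X,D)$, and upgrading this to birationality for a single uniform $N$ is cleanest via the Seshadri-constant route, since $\ses{ND}$ scales linearly in $N$ with no effectivity hypothesis on $D$.
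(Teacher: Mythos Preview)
Your proof is correct and takes essentially the same approach as the paper, which simply states that the result follows from Corollary \ref{lower bound for seshadri vanishing} and Theorem \ref{MS 3.2}. You have merely filled in the natural details (choosing $N = \lfloor 6/\delta \rfloor + 1$ and observing that a very general point may be taken smooth) that the paper leaves implicit.
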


  \begin{proof}
    This follows from Corollary \ref{lower bound for seshadri vanishing} and Theorem \ref{MS 3.2}.
    \end{proof}
    
    \begin{cor}\label{bb weak Fano}
      Let $r, V,A>0$ be integers.
      Then there exists a positive integer $N=N(r, V, A)>0$ with the following property:
      For any $3$-dimensional normal projective variety $X$ with rational singularities over an uncountable algebraically closed field $k$ of positive characteristic, $\Phi_{|-NK_X|}$ is birational if the following conditions are satisfied:
      \begin{enumerate}[label=\textup{(\roman*)}]
      \item $-rK_X$ is a nef and big Cartier divisor.
      \item $\vol(-K_X) < V$.
      \item $H^1(X, \sO_X(irK_X))=0$ for every integer $i \ge 0$.
      \item $h^2(X,\sO_X(irK_X)) <A$ for $i=0,1,2,3$.
    \end{enumerate}
    \end{cor}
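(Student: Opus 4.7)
The plan is to combine the uniform Seshadri lower bound supplied by Corollary \ref{lower bound for seshadri weak Fano} with the adjoint birationality criterion of Theorem \ref{MS 3.2}.

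First, I would invoke Corollary \ref{lower bound for seshadri weak Fano}: under hypotheses (i)--(iv), which match the hypotheses there verbatim, it produces a real number $\delta = \delta(r, V, A) > 0$ such that $\ses{-K_X} \ge \delta$ at a very general closed point $x \in X$. Since $X$ is normal, the smooth locus $X_{\mathrm{reg}} \subseteq X$ is open and dense, and since $k$ is uncountable (which is what makes ``very general'' meaningful), we may arrange that the chosen very general point $x$ is in addition smooth.

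Next, let $m$ be the smallest positive integer with $m r \delta > 6 = 2 \dim X$, and set
\[
  D \;:=\; -m r K_X \;=\; m\,(-r K_X).
\]
By hypothesis (i), $D$ is a nef and big Cartier divisor. Since the Seshadri constant of Definition \ref{defn ses} scales linearly under multiplication by a positive rational, we compute
\[
  \ses{D} \;=\; m r \cdot \ses{-K_X} \;\ge\; m r \,\delta \;>\; 6
\]
at the chosen smooth point $x$. Theorem \ref{MS 3.2} then applies and gives that $\Phi_{|K_X + D|}$ is birational onto its image. As Weil divisors one has $K_X + D = -(m r - 1)K_X$, so setting $N := m r - 1$ produces the required uniform bound, and $N$ depends only on $(r, V, A)$ via the single constant $\delta(r, V, A)$.

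Essentially all of the substantive work has already been carried out in earlier sections: the uniform Seshadri estimate in Corollary \ref{lower bound for seshadri weak Fano}, which itself rests on Theorem \ref{C} (and thereby on the characteristic-free arguments of Section \ref{sec3}), and the adjoint birationality criterion in Theorem \ref{MS 3.2}. The only assembly point worth flagging is the need for $D$ to be an \emph{integral} Cartier divisor before feeding it into Theorem \ref{MS 3.2}; this is exactly why one multiplies by $m r$ rather than by a rational scalar, and it is made possible precisely by the fixed Gorenstein-index hypothesis (i). No further obstacle is anticipated.
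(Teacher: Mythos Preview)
Your proposal is correct and follows exactly the route the paper takes: the paper's own proof of Corollary \ref{bb weak Fano} is the one-line citation ``This follows from Corollary \ref{lower bound for seshadri weak Fano} and Theorem \ref{MS 3.2},'' and you have simply unpacked that citation with the explicit choice $D=-mrK_X$ and $N=mr-1$. No gap and no meaningful difference in approach.
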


    \begin{proof}
      This follows from Corollary \ref{lower bound for seshadri weak Fano} and Theorem \ref{MS 3.2}.
      \end{proof}
  
\section{Log birationally boundedness}\label{sec5}

In this section, we provide an upper bound for intersection
numbers in terms of volumes (Proposition \ref{HMX lemma}).
As a corollary, we give a sufficient condition for a set of projective log pairs to be log birationally bounded (Theorem \ref{D}).

\begin{prop}[\textup{\cite[Theorem 8.1.1]{Mur}}]\label{Mur 8.1.1}
Let $X$ be a normal projective variety over an algebraically closed field $k$ of positive characteristic, $\Delta$ be an effective $\Q$-Weil divisor on $X$ such that $K_X+\Delta$ is $\Q$-Cartier and $D$ be a Cartier divisor on $X$.
Assume that $(X, \Delta)$ is $F$-pure at some closed point $x$ and 
\[
\ejet{D-(K_X+\Delta)}>\dim(X).
\]
Then $\sO_X(D)$ is globally generated at $x$.
\end{prop}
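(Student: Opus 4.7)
The plan is to adapt the Frobenius--trace strategy used in the proof of Proposition~\ref{Mur 7.3.1}, but to replace the trace arising from $F$-injectivity of $X$ by one arising from $F$-purity of $(X,\Delta)$, so that the output is a section of $\sO_X(D)$ rather than of $\omega_X \otimes \sO_X(D)$. Concretely, $F$-purity of $(X,\Delta)$ at $x$ gives, for every sufficiently large $e$, an $\sO_X$-linear map
\[
\phi_e : F^e_* \sO_X(\lceil (p^e-1)(K_X+\Delta) \rceil) \longrightarrow \sO_X
\]
which is surjective at $x$ (this is the Grothendieck-dual formulation of $F$-purity via the Cartier operator). Twisting by $\sO_X(D)$ and applying the projection formula, one obtains
\[
\Phi_e : F^e_* \sO_X(M_e) \longrightarrow \sO_X(D), \qquad M_e := p^e D + \lceil (p^e-1)(K_X+\Delta) \rceil,
\]
still surjective at $x$.

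Write $L := D - (K_X+\Delta)$, which is $\Q$-Cartier with $\ejet{L} > n := \dim X$. To conclude global generation of $\sO_X(D)$ at $x$, it suffices to exhibit, for some $e$, a global section of $\sO_X(M_e)$ whose image under $\Phi_e$ generates $\sO_X(D) \otimes k(x)$; since $\Phi_e$ is surjective at $x$, this reduces to hitting \emph{any} lift of a generator in the Frobenius-twisted quotient $\sO_X(M_e)_x / \m_x^{[p^e]} \sO_X(M_e)_x$ via the restriction map from $H^0(X, \sO_X(M_e))$. Noting that $M_e = p^e L + (2p^e-1)(K_X+\Delta)$ up to bounded round-up, Lemma~\ref{ejet basic}(1)--(3) together with Proposition~\ref{MS 2.16} produces, for all sufficiently large $e$, a global section of $\sO_X(M_e)$ that separates at least $(p^e-1)n$-jets at $x$, since the uniformly bounded contribution of the $(2p^e-1)(K_X+\Delta)$-term costs only $O(1)$ jets.

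The final step is to convert ordinary $(p^e-1)n$-jet separation into surjectivity onto the Frobenius-twisted quotient. When $\sO_{X,x}$ is regular of dimension $n$ this is immediate from the pigeonhole inclusion $\m_x^{(p^e-1)n+1} \subseteq \m_x^{[p^e]}$, and so the section constructed above maps to a unit in $\sO_X(D) \otimes k(x)$ under $\Phi_e$, proving global generation at $x$.

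The chief obstacle lies precisely in the last step when $x$ is singular on $X$: the inclusion $\m_x^{(p^e-1)n+1} \subseteq \m_x^{[p^e]}$ generally fails, and the Frobenius-twisted quotient can be larger than the ordinary jet quotient. The remedy is to exploit the \emph{strict} inequality $\ejet{L} > n$ (rather than $\geq n$) in order to absorb a bounded defect governed by the embedding dimension and Hilbert--Kunz behavior of $\sO_{X,x}$; $F$-purity at $x$ already forces $\sO_{X,x}$ to be reduced and controls its Frobenius closure, so that for $e \gg 0$ the extra jet-separation afforded by strictness suffices to ensure that the image of our global section under $\Phi_e$ remains nonzero modulo $\m_x$, completing the argument.
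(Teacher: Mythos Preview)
Your trace map has the wrong sign. The Grothendieck--dual (Cartier) formulation of $F$-purity of $(X,\Delta)$ at $x$ gives, for suitable $e$, a map
\[
\phi_e : F^e_* \sO_X\bigl(\lceil (1-p^e)(K_X+\Delta) \rceil\bigr) \longrightarrow \sO_X
\]
surjective at $x$, not with $(p^e-1)(K_X+\Delta)$ as you wrote. With the correct sign, twisting by $\sO_X(D)$ yields $M_e = p^e D + \lceil (1-p^e)(K_X+\Delta)\rceil$, which is $D + \lceil (p^e-1)L\rceil$ up to rounding; the extra term beyond $p^e L$ is then genuinely $O(1)$ and your jet--separation heuristic becomes plausible. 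With your sign, however, $M_e \approx p^e L + (2p^e-1)(K_X+\Delta)$, and the second summand grows like $p^e$: your assertion that it ``costs only $O(1)$ jets'' is simply false, and the argument collapses.

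Even after fixing the sign, the singular--point step is too vague to stand. You correctly identify that the pigeonhole inclusion $\m_x^{(p^e-1)n+1} \subseteq \m_x^{[p^e]}$ fails in general, but invoking ``Hilbert--Kunz behavior'' and ``Frobenius closure'' does not by itself produce the needed containment; what actually controls the defect is precisely the $F$-pure threshold $\fpt_x(X,\Delta;\m_x)$, which quantifies for which $t$ the trace $\phi_e$ remains surjective after premultiplying by $\m_x^{\lfloor t(p^e-1)\rfloor}$. The paper's proof is accordingly a two--line reduction: one bounds $\fpt_x(X,\Delta;\m_x) \le \fpt_x(X;\m_x) \le \dim X$ via \cite[Proposition~2.6]{TW}, and then the hypothesis $\ejet{D-(K_X+\Delta)} > \dim X$ feeds directly into \cite[Theorem~8.1.1]{Mur}, whose statement already has the $F$-pure threshold (not $\dim X$) in the inequality. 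So rather than improvising a pigeonhole argument at singular points, you should recognise that Murayama's theorem is the correct black box and that the only thing to supply is the threshold bound.
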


\begin{proof}
Let $\m_x$ be the maximal ideal at $x$.
It follows from \cite[Proposition 2.6]{TW} that we have
\[
\fpt_x(X,\Delta; \m_x) \le \fpt_x(X ; \m_x) \le \dim(X).
\]
Therefore, the assertion follows from \cite[Theorem 8.1.1]{Mur}.
\end{proof}

\begin{prop}[\textup{cf.~\cite[Lemma 3.2]{HMX13}}]\label{HMX lemma}
Let $X$ be a normal projective variety of dimension $n$ over an algebraically closed field $k$ of positive characteristic and $M$ be a base point free Cartier divisor such that $\Phi_{|M|}$ is birational onto its image.
We set $H : = 2(2n+1)M$.

Let $D$ be a reduced Weil divisor on $X$ such that a log resolution of $(X,D)$ exists.
Then we have
\[
(H^{n-1} \cdot D) \le 2^n \vol(K_X+D+H).
\]
\end{prop}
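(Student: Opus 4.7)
The plan is to adapt the characteristic-zero argument of \cite[Lemma 3.2]{HMX13}, substituting its use of Kawamata--Viehweg vanishing (and the resulting surjectivity of restriction on the short exact sequence
\[
0 \to \sO_X(K_X+H) \to \sO_X(K_X+D+H) \to \sO_D(K_X+D+H) \to 0
\]
used to lift sections from $D$ to $X$) by the $F$-singularity based global-generation and jet-separation results of Musta\c t\u a--Schwede and Murayama already available in the paper: concretely Proposition \ref{Mur 8.1.1}, Proposition \ref{Mur 7.3.1} and the subadditivity Proposition \ref{MS 2.16}.

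First I would pass to a log resolution $f\colon Y\to X$ of $(X,D)$ and replace $(X,D)$ by $(Y,\tilde D+\Exc(f)_{\mathrm{red}})$. The projection formula gives $(H^{n-1}\cdot D)=((f^*H)^{n-1}\cdot f^*D)$, and after a short manipulation with the discrepancy divisor the inequality on $X$ reduces to the analogous one on $Y$ with $f^*H$ in place of $H$. The gain is that the SNC pair on $Y$ is $F$-pure at every closed point by Remark \ref{hierarchy}, so Propositions \ref{Mur 8.1.1} and \ref{Mur 7.3.1} become applicable everywhere. We may therefore assume from now on that $X$ is smooth and $D$ is a reduced SNC divisor.

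Next, the hypothesis that $\Phi_{|M|}$ is birational onto its image guarantees that at a very general closed point $x\in X$ the morphism $\Phi_{|M|}$ is a local isomorphism, which forces $\ses{M}\ge 1$ and hence $\ejet{M}\ge 1$ at $x$ by Lemma \ref{ses vs ejet}. Applying the subadditivity Proposition \ref{MS 2.16} to sums of copies of $M$, together with the homogeneity of $\ejet[Z]{\cdot}$ from Lemma \ref{ejet basic}(3), yields positive lower bounds on $\ejet[Z_m]{mM}$ at carefully chosen finite sets $Z_m$ of very general points as $m\to\infty$. The factor $2(2n+1)$ in $H$ is tuned so that, after splitting $H=(2n+1)M+(2n+1)M$ into two halves, the first half (combined with $K_X+D$) delivers global generation at $Z_m$ via Proposition \ref{Mur 8.1.1}, while the second half upgrades this to separation of jets of order roughly $m$ via Proposition \ref{Mur 7.3.1}.

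The final counting step mirrors the characteristic-zero proof: choose $Z_m\subset D$ to contain on the order of $(H^{n-1}\cdot D)\,m^{n-1}/(n-1)!$ very general points---the leading-order count of points that can be separated by the restriction of $|mH|$ to the $(n-1)$-dimensional divisor $D$---and from the resulting jet separation deduce a lower bound of the form
\[
h^0\!\left(X,\sO_X(m(K_X+D+H))\right)\;\ge\;\frac{(H^{n-1}\cdot D)\,m^n}{2^n\,n!}+o(m^n),
\]
which after dividing by $m^n/n!$ and letting $m\to\infty$ gives $\vol(K_X+D+H)\ge (H^{n-1}\cdot D)/2^n$. I expect the main obstacle to be exactly this bookkeeping: one must balance the cardinality of $Z_m$ against the order of jets the Seshadri-jet invariant allows us to separate, in such a way that the jets contributed at different points assemble into genuinely independent sections and the precise numerical factor $2^n$ falls out of the $(2n+1)+(2n+1)$ splitting of $H$. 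In characteristic zero the factor $2^n$ emerges cleanly from a Riemann--Roch computation on $D$ after Kawamata--Viehweg vanishing has lifted sections; here the lifting step and the dimension count must both be effected through Propositions \ref{Mur 8.1.1} and \ref{Mur 7.3.1}, which is the technical heart of the argument.
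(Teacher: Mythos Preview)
Your strategy has a genuine quantitative gap in the final counting step. Proposition \ref{MS 2.16} gives $\ejet[Z]{D_1+\cdots+D_r}\ge \min_i\ejet[x_i]{D_i}$ only when the number of points $|Z|=r$ equals the number of summands. Writing $mH$ as a sum of copies of $M$ gives you on the order of $m$ summands, so the subadditivity controls $\ejet[Z_m]{mH}$ only for $|Z_m|$ growing linearly in $m$. But your scheme requires $|Z_m|$ of order $(H^{n-1}\cdot D)\,m^{n-1}/(n-1)!$; for $n\ge 3$ this vastly exceeds the number of available summands, and the lower bound on $\ejet[Z_m]{mH}$ you can extract from Proposition \ref{MS 2.16} collapses to something of size $m/|Z_m|\to 0$. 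Consequently Proposition \ref{Mur 7.3.1} yields no jet separation at all, and the asserted lower bound on $h^0(X,\sO_X(m(K_X+D+H)))$ does not follow. Even for $n=2$ the constants do not line up to produce the factor $2^n$.

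The paper's proof follows the cohomological template of \cite[Lemma 3.2]{HMX13} much more closely and uses the positive-characteristic input (Proposition \ref{Mur 8.1.1}) only once, and only for global generation rather than jet separation. Concretely: after reducing to $(X,D)$ smooth SNC, one uses $F$-purity of $(X,D)$ and $\ejet{M}\ge 1$ outside $\Exc(\phi)$ to see that $\sO_X(K_X+(2n+1)M)$ and $\sO_X(K_X+D+(2n+1)M)$ are globally generated along $D$; this produces a section $t\in H^0(X,\sO_X(2mA_1-A_m))$ nonvanishing on every component of $D$, where $A_m=K_X+D+mH$. The short exact sequence $0\to\sO_X(A_m-D)\to\sO_X(A_m)\to\sO_D(A_m)\to 0$ then gives $P(m)-R(m)\le Q(m)-Q(m-1)$ with $P(m)=h^0(D,A_m)$, $Q(m)=h^0(X,2mA_1)$, $R(m)=h^1(X,A_m-D)$. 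The crucial point is that $R(m)=o(m^{n-1})$: since $A_m-D=K_X+mH$, the Leray spectral sequence for $\phi$ identifies $R(m)$ for $m\gg 0$ with $h^0$ of a sheaf supported on the $(\le n-2)$-dimensional exceptional locus. Summing the inequality in $m$ and invoking asymptotic Riemann--Roch on $D$ yields the bound. In short, the paper circumvents Kawamata--Viehweg not by jet-counting at many points but by bounding the obstruction $H^1$ via its support, which is where your approach and the paper's diverge.
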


\begin{proof}
The main idea of the proof is similar to that of \cite[Lemma 3.2]{HMX13}, but we will use Proposition \ref{Mur 8.1.1} instead of Kodaira type vanishing.
As in the proof of \cite[Lemma 3.2]{HMX13}, we may assume that both $X$ and $D$ are smooth, and that every irreducible component of $D$ is not contracted by $\Phi_{|M|}$.
Let $Y$ be the normalization of $\Im(\Phi_{|M|})$ and $\phi :X \to Y$ be the birational morphism induced by $\Phi_{|M|}$.
For every integer $m>0$, we set 
\[
A_m : = K_X+D+mH.
\]

\textbf{Step.1}
Since there exists an ample and free Cartier divisor $A$ on $Y$ with $M = \phi^*A$, it follows from Lemma \ref{ses vs ejet} and \cite[Example 5.1.18]{Laz} that we have 
\[
\ejet{M} =\ejet[\phi(x)]{A} =\ses[\phi(x)]{A} \ge 1,
\]
for every closed point $x \in X \setminus \Exc(\phi)$.
Combining this with the fact that $X$ is $F$-pure (Remark \ref{hierarchy} (i)), it the follows from Proposition \ref{Mur 8.1.1} that $\sO_X(K_X+(2n+1)M)$ is globally generated outside $\Exc(\phi)$ and in particular, $H^0(X, \sO_X(K_X+(2n+1)M)) \neq 0$.
Therefore, we have 
\begin{equation}\label{inequality HMX}
H^0(X, \sO_X(2A_1-D))= H^0(X, \sO_X(2K_X+D+2H) ) \neq 0.
\end{equation}

Since $(X,D)$ is $F$-pure by Remark \ref{hierarchy} (ii), it follows from the same reason as in the previous paragraph that the sheaf $\sO_X(K_X+D+(2n+1)M)$ is globally generated outside $\Exc(\phi)$.
In particular,  it is globally generated around the generic point of every component of $D$.
Take global sections $s \in H^0(X, \sO_X(K_X+D+(2n+1)M))$ and $l \in H^0(X, \sO_X((2n+1)M))$ whose restrictions to each component of $D$ are non-zero.
Then the restriction of 
\[
t: = s^{\otimes 2m-1} \otimes l \in H^0(X, \sO_X(2mA_1 - A_m)),
\]
to every component of $D$ is non-zero.

\textbf{Step.2}
Consider the following commutative diagram
\[
\xymatrix{
0 \ar[r] & \sO_X(A_m-D) \ar[r] \ar@{^{(}->}[d] & \sO_X(A_m) \ar@{^{(}->}[d] \ar^-{\alpha}[r] & \sO_D(A_m) \ar[r] \ar@{^{(}->}^-{v}[d] & 0 \\
0 \ar[r] & \sO_X(2mA_1-D) \ar[r] & \sO_X(2mA_1) \ar^-{\beta}[r] & \sO_D(2mA_1) \ar[r] & 0 ,
}\]
where each line is exact and the vertical morphisms are injections induced by multiplying by $t$.
By taking global sections,  we obtain the following commutative diagram
\[
\xymatrix{
 H^0(X, \sO_X(A_m)) \ar@{^{(}->}[d] \ar^-{H^0(\alpha)}[r] & H^0(D, \sO_D(A_m))  \ar@{^{(}->}^-{H^0(v)}[d] \\
 H^0(X, \sO_X(2mA_1)) \ar^-{H^0(\beta)}[r] & H^0(D, \sO_D(2mA_1))    ,
}\]
with $\Ker(H^0(\beta))$ isomorphic to $H^0(X, \sO_X(2mA_1-D))$ and $\Cok(H^0(\alpha))$ contained in $H^1(X, \sO_X(A_m -D))$.
Now we set
\[
P(m) : = h^0(D, \sO_D(A_m)), \ Q(m):=h^0(X, \sO_X(2m A_1)), \ R(m) :=h^1(X, \sO_X(A_m-D)).
\]
Then we have
\begin{align*}
P(m) - R(m) &\le P(m) - \dim(\Cok(H^0(\alpha)))\\
&=\dim(\Im(H^0(\alpha))) \\
& \le \dim(\Im(H^0(\beta)))\\
&=Q(m) - h^0(X, 2mA_1-D)
\end{align*}
Since $h^0(2mA_1-D) \ge Q(m-1)$ by \eqref{inequality HMX},  we obtain the inequality
\begin{equation}\label{P,Q,R}
P(m) - R(m) \le Q(m)-Q(m-1).
\end{equation}

\textbf{Step.3}
Take an integer $m>0$ and consider the Leray Spectral sequence
\[
E_2^{p,q} = H^p(Y, R^q\phi_*\sO_X(K_X+mH)) \Rightarrow H^{p+q}(X, \sO_X(K_X+mH)).
\]
Noting that $E_2^{p,q} =H^p(Y, (R^q\phi_*\sO_{X}(K_X))(2m(2n+1)A))$ is zero if $p>0$ and $m$ is sufficiently large,  we have $R(m)=h^0(Y, (R^1\phi_*\sO_X(K_X))(2m(2n+1)A))$ for sufficiently large $m$.
Since the support of the sheaf $R^1\phi_*\sO_X(K_X)$ is contained in the non-isomorphic locus of $\phi$ on $Y$, we have $\dim(\Supp(R^1\phi_*\sO_X(K_X))) \le n-2$.
Therefore, it follows from the Asymptotic Riemann-Roch (cf.~\cite[Example 1.2.19]{Laz}) that we have 
\[
\lim_{\ell \to \infty} \frac{R(\ell)}{\ell^{n-1}} =0.
\]

Similarly, since $H|_D$ is nef, it follows from the Asymptotic Riemann-Roch (cf.~\cite[Corollary 1.4.41]{Laz}) that we have
\[
\lim_{\ell \to \infty} \frac{P(\ell)}{\ell^{n-1}} = \frac{(H^{n-1} \cdot D)}{(n-1)!}.
\]

\textbf{Step.4}
We fix a real number $\epsilon>0$.
Then it follows from Step.3 that there exists an integer $m_1$ such that for any integer $m \ge m_1$, we have
\[
\frac{P(m)}{m^{n-1}} \ge \frac{(H^{n-1} \cdot D)}{(n-1)!}-\epsilon , \textup{ and } \frac{R(m)}{m^{n-1}} \le \epsilon.
\]
On the other hand, it follows from the definition of the volume, after replacing $m_1$ by a larger number, we may assume that 
\[
\sup_{m \ge m_1} \frac{Q(m)}{(2m)^n} \le \frac{\vol(A_1)}{n!}+\epsilon.
\]
Take any integer $L \ge m_1$.
By the inequality \eqref{P,Q,R}, we have
\begin{align*}
\sum_{m=m_1+1}^L m^{n-1} \left( \frac{(H^{n-1}\cdot D)}{(n-1)!} - 2\epsilon \right) & \le \sum_{m=m_1+1}^L P(m)-R(m) \\
& \le Q(L) - Q(m_1) \\
& \le Q(L) \\
& \le (2L)^{n} \left(\frac{\vol(A_1)}{n!} + \epsilon \right).
\end{align*}
Combining this with the inequality $(L^n-m_1^n)/n = \int_{m_1}^L x^{n-1} dx \le \sum_{m=m_1+1}^L m^{n-1}$, one has
\[
\left(1-\left(\frac{m_1}{L}\right)^n\right) \left(\frac{(H^{n-1}\cdot D)}{n!} - \frac{2\epsilon}{n}\right) \le 2^{n} \left(\frac{\vol(A_1)}{n!} + \epsilon\right).
\]
By taking the limit as $L \to \infty$ and $\epsilon \to 0$, we have
\[
\frac{(H^{n-1}\cdot D)}{n!} \le 2^{n} \frac{\vol(A_1)}{n!} ,
\]
which completes the proof.
\end{proof}

\begin{thm}[Theorem \ref{D}]\label{criterion of lbb}
Let $V>0$ be an integer and $k$ be an algebraically closed field of positive characteristic. 
Suppose that $\mathcal{E}$ is a set of $n$-dimensional projective log pairs over $k$ such that for every $(X,\Gamma) \in \mathcal{E}$, the following conditions are satisfied:
\begin{enumerate}[label=\textup{(\roman*)}]
\item $\Gamma$ is reduced, 
\item there is a log resolution of $(X, \Gamma)$, and 
\item there exists a Weil divisor $E$ on $X$ such that $\Phi_{|E|}$ is birational onto its image, $\vol(E)<V$ and $\vol(K_X+\Gamma+2(2n+1) E)<V$.
\end{enumerate}
Then $\mathcal{E}$ is log birationally bounded.
\end{thm}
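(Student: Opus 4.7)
The plan is to construct, for each $(X,\Gamma)\in\mathcal{E}$, a concrete birational model $(W,\Gamma')$ satisfying the hypotheses of Lemma \ref{bb criterion}, by using the linear system $|E|$ from condition (iii) to embed a birational model of $X$ into projective space, and then controlling the intersection of the boundary with the resulting hyperplane section via Proposition \ref{HMX lemma}.

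First I would take a log resolution $g\colon Z\to X$ of $(X,\Gamma)$ that also resolves the base ideal of $|E|$, and write $g^{*}|E|=|M|+F$ on $Z$, where $M$ is a base-point-free Cartier divisor and $F\ge 0$ is the fixed part. Since $\Phi_{|E|}$ is birational onto its image, so is $\Phi_{|M|}$, and it factors through a surjective birational morphism $h\colon Z\to W\subseteq\mathbb{P}^{N}$ with $M=h^{*}A$ for the hyperplane section $A$. Set
\[
D \;:=\; g_{*}^{-1}\Gamma \;+\; \sum_{i} E_{i}, \qquad \Gamma' \;:=\; h_{*}D,
\]
where the $E_i$ are the $g$-exceptional prime divisors. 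A short check (using that a divisor on $W$ is $f$-exceptional for the induced map $f\colon W\dashrightarrow X$ precisely when its strict transform on $Z$ is $g$-exceptional but not $h$-exceptional) shows that $\Supp(\Gamma')$ equals the union of the strict transform of $\Gamma$ on $W$ and the images of the $f$-exceptional divisors, as required by the definition of log birational boundedness.

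Next I would verify the two numerical inputs of Lemma \ref{bb criterion}. Since $M$ is nef and $M\le g^{*}E$,
\[
(A^{n})_{W} \;=\; (M^{n})_{Z} \;=\; \vol(M) \;\le\; \vol(E) \;<\; V.
\]
By the projection formula $(A^{n-1}\cdot\Gamma')_{W}=(M^{n-1}\cdot D)_{Z}$, and applying Proposition \ref{HMX lemma} to $(Z,D)$ with $H:=2(2n+1)M$ yields
\[
(2(2n+1))^{n-1}(M^{n-1}\cdot D)_{Z} \;=\; (H^{n-1}\cdot D)_{Z} \;\le\; 2^{n}\vol_{Z}(K_{Z}+D+H).
\]
Writing $\Delta:=\Gamma+2(2n+1)E$ and using $M\le g^{*}E$ together with monotonicity of the volume, one finds an effective $g$-exceptional divisor $F_{0}$ on $Z$ such that $K_{Z}+D+H\le K_{Z}+g_{*}^{-1}\Delta+F_{0}$. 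Both sheaves $g_{*}\sO_{Z}(m(K_{Z}+g_{*}^{-1}\Delta+F_{0}))$ and $\sO_{X}(m(K_{X}+\Delta))$ are reflexive on the normal variety $X$ and agree outside $g(\Exc g)$, a set of codimension $\ge 2$; hence they coincide, and $\vol_{Z}(K_{Z}+g_{*}^{-1}\Delta+F_{0})=\vol_{X}(K_{X}+\Delta)<V$. Putting these estimates together gives a bound of the form $(A^{n-1}\cdot\Gamma')_{W}\le 2^{n}V/(2(2n+1))^{n-1}$, and Lemma \ref{bb criterion} then produces a bounded family containing every $(W,\Gamma')$, establishing the log birational boundedness of $\mathcal{E}$.

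The main obstacle is the volume comparison in the previous paragraph: because $(X,\Gamma)$ is allowed to have arbitrary singularities, no lower bound on the log discrepancies of the pair is available, so one cannot simply rewrite $K_{Z}+g_{*}^{-1}\Gamma$ as $g^{*}(K_{X}+\Gamma)$ plus an effective exceptional correction as in the log canonical case. The key point is that Proposition \ref{HMX lemma} is phrased purely in terms of volumes, and that the reflexive/$(S_{2})$ identification on the normal variety $X$ absorbs any contribution of effective exceptional divisors on $Z$ to the global sections; this is precisely what substitutes in characteristic $p$ for the Kodaira-type vanishing used in the HMX argument in characteristic zero.
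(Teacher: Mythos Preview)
Your overall strategy coincides with the paper's: pass to a resolution $Z$ with a free divisor $M$ inducing the birational map, apply Proposition~\ref{HMX lemma} on $Z$, and bound $(A^{n-1}\cdot\Gamma')$ and $(A^n)$ so that Lemma~\ref{bb criterion} applies. The difference lies almost entirely in the volume comparison, where you overcomplicate and misstate a step.

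The paper first strips the fixed part from $|E|$, so that $\mathrm{cod}\,\Bs|E|\ge 2$; then for any resolution of indeterminacy $f\colon Z\to X$ with $M=(\nu\circ g)^*A$, one has $f_*M=E$ on the nose. The needed bound is then a single line:
\[
\vol_Z\bigl(K_Z+\Gamma_Z+2(2n+1)M\bigr)\;\le\;\vol_X\bigl(f_*(K_Z+\Gamma_Z+2(2n+1)M)\bigr)\;=\;\vol_X\bigl(K_X+\Gamma+2(2n+1)E\bigr)<V,
\]
using only the trivial inclusion $H^0(Z,\sO_Z(D))\subseteq H^0(X,\sO_X(f_*D))$ for a proper birational $f$. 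No reflexivity, no exceptional correction $F_0$, and no discussion of log discrepancies is needed.

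Your route via $K_Z+D+H\le K_Z+g_*^{-1}\Delta+F_0$ has two soft spots. First, writing ``$M\le g^*E$'' is imprecise since $E$ is only a Weil divisor; what you really use is $M+F\sim g_*^{-1}E+(\text{exceptional})$, which does give the inequality after absorbing exceptional terms into $F_0$. Second, your claim that $g_*\sO_Z\bigl(m(K_Z+g_*^{-1}\Delta+F_0)\bigr)$ is reflexive, and hence coincides with $\sO_X(m(K_X+\Delta))$, is not justified: the pushforward of a line bundle under a proper birational morphism is torsion-free but need not be reflexive. Fortunately you only need the inequality $\vol_Z(\cdot)\le\vol_X(g_*(\cdot))$, which is immediate from the inclusion of global sections and does not use reflexivity at all. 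So your argument survives, but the detour through $F_0$ and the ``reflexive hence coincide'' step should be replaced by the one-line pushforward inequality above; your final paragraph about ``arbitrary singularities'' and log discrepancies is addressing a non-issue.
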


\begin{proof}
Take an element $(X, \Gamma) \in \mathcal{E}$ and a Weil divisor $E$ on $X$ satisfying the condition (iii).
After removing the fixed divisor of $|E|$, we may assume that 
\[
  \mathrm{cod}(\Bs(|E|)) \ge 2.
\]
Let $Y$ be the image of $\Phi_{|E|}$, $\nu: Y^n \to Y$ be the normalization and $A$ be the very ample Cartier divisor on $Y$ associated to $\Phi_{|E|}$.
We denote by $\Gamma_{Y^n}$ the sum of the strict transform $\phi_*\Gamma$ of $\Gamma$ and all $\phi^{-1}$-exceptional divisors, where $\phi : X \dashrightarrow Y^n$ is the induced birational map. 
By Lemma \ref{bb criterion}, it suffices to show that $(A^n)$ and $(A^{n-1} \cdot \nu_*\Gamma_{Y^n})$ are bounded above by a constant.

Take a resolution $f : Z \to X$ of the indeterminacy of $\phi$ and let $g : Z \to Y$ be the induced morphism.
\[
\xymatrix{
 & Z \ar_-{f}[ld] \ar^-{g}[rd]&& \\
X \ar@{.>}^-{\phi}[rr]&& Y^n \ar^-{\nu}[r] & Y
}\]
We set $M : = (\nu \circ g)^*A$ and let $\Gamma_Z$ be the sum of the strict transform $f^{-1}_*\Gamma$ of $\Gamma$ and all $f$-exceptional divisors.
Since we have $g_* \Gamma_Z=\Gamma_{Y^n}$ and $f_*M=E$, one has
\begin{align*}
  (A^n) = (M^n) =\vol(M) \le \vol(E)<V
\end{align*}
and
\begin{align*}
(A^{n-1} \cdot \nu_* \Gamma_{Y^n}) & = (M^{n-1} \cdot \Gamma_Z) \\
& \le 2^n \vol(K_Z+\Gamma_Z+2(2n+1)M) \\
& \le 2^n \vol(f_*(K_Z+\Gamma_Z+2(2n+1)M)) \\
&= 2^n \vol(K_X+\Gamma + 2(2n+1)E) \le 2^nV,
\end{align*}
where the inequality in the second line follows from Proposition \ref{HMX lemma}.
\end{proof}

\section{Finiteness of log minimal models and lc models}\label{sec6}

In this section, we consider a projective morphism $\pi : \mathcal{X} \to T$ between normal varieties with relative dimension $3$ (cf.~Setting \ref{relative setting}) and we prove the finiteness of log minimal models (Theorem \ref{finiteness of minimal model relative}) and lc models (Theorem \ref{finiteness of lc model relative}).
We also prove that after shrinking $T$, we may check the relative bigness of adjoint divisors after restricting to any closed fiber (Theorem \ref{bigness constancy}).

\subsection{Finiteness of models I --absolute case--}

In this subsection, We will work with the following setup.

\begin{setting}\label{absolute setting}
  Let $X$ be a $3$-dimensional projective normal variety over an $F$-finite field $k$ of characteristic $p>5$, $A \ge 0$ be a big effective $\Q$-Cartier divisor and $\mathcal{C} \subseteq \WDiv_{\R}(X)$ be a rational polytope (that is, the convex hull of a finite subset of $\WDiv_{\Q}(X)$). 
  We assume that $(X, \Delta)$ is klt and $A \le \Delta$ for every $\Delta \in \mathcal{C}$.
\end{setting}

\begin{prop}[\textup{cf.~\cite[Theorem 3.11.1]{BCHM}}]\label{Cone theorem}
  With notation as in Setting \ref{absolute setting}, let $\mathcal{N}(\mathcal{C})$ be the subset
  \[\mathcal{N}(\mathcal{C}) : = \{\Delta \in \mathcal{C} \mid K_X+\Delta \textup{ is nef}\}\] 
  of $\mathcal{C}$.
  Then $\mathcal{N}(\mathcal{C})$ is a rational polytope.
  \end{prop}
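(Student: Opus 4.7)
The plan is to follow the argument of \cite[Theorem 3.11.1]{BCHM} closely, with the characteristic-zero inputs (cone theorem, base point free theorem, length-of-extremal-ray bounds for klt pairs with an ample part in the boundary) replaced by their known analogues for $3$-dimensional klt pairs in characteristic $p>5$, available from \cite{Wal} and Theorem \ref{bpf}. The target is purely local: since $\mathcal{N}(\mathcal{C})$ is a closed (because nefness is a closed condition on $N^1(X)_{\R}$) convex subset of the compact rational polytope $\mathcal{C}$, it is a rational polytope as soon as each of its points has an open neighborhood $V \subseteq \mathcal{C}$ in which $V \cap \mathcal{N}(\mathcal{C})$ is cut out by finitely many rational linear inequalities.

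First I would fix $\Delta_0 \in \mathcal{N}(\mathcal{C})$ and use the hypothesis $A \le \Delta_0$ to produce an ample $\Q$-Cartier summand in the boundary. Writing $A \sim_{\Q} H + E$ with $H$ ample $\Q$-Cartier and $E \ge 0$, Lemma \ref{Bertini} allows us to replace $\Delta_0$ by a $\Q$-linearly equivalent klt boundary that contains a small multiple of $H$. In this setting the cone theorem for klt $3$-fold pairs in characteristic $p>5$ with an ample part in the boundary (cf.~\cite{Wal}) applies to $(X,\Delta_0)$. Combined with the bounded length of $(K_X+\Delta_0)$-negative extremal rays and the fact that $K_X+\Delta_0$ is nef, this produces finitely many $(K_X+\Delta_0)$-trivial extremal rays $R_1,\dots,R_m$ of $\NE(X)$, each spanned by a rational curve $C_i$ (so that $[C_i] \in N_1(X)_{\Q}$), such that any other extremal ray $R \subseteq \NE(X)$ satisfies $(K_X+\Delta_0)\cdot R > 0$.

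Next, I would choose the neighborhood $V$ of $\Delta_0$ in $\mathcal{C}$ small enough that $(X,\Delta)$ remains klt for every $\Delta \in V$ and that the perturbation controls the other extremal rays uniformly: for $V$ sufficiently small, nefness of $K_X+\Delta$ on $V$ is equivalent to the finite system
\[
(K_X+\Delta)\cdot [C_i] \ge 0, \qquad i=1,\dots,m.
\]
Here one uses that the extremal rays not among $R_1,\dots,R_m$ form a closed subset of $\NE(X)$ on which $K_X+\Delta_0$ is bounded below by a positive constant, so a small perturbation $\Delta-\Delta_0$ cannot make $K_X+\Delta$ negative on any such ray. Since the functionals $\Delta \mapsto (K_X+\Delta)\cdot [C_i]$ are $\Q$-linear on $\WDiv_{\R}(X)$ and $V$ is a rational polytope, this exhibits $V \cap \mathcal{N}(\mathcal{C})$ as a rational polytope.

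The main obstacle will be verifying that the cone theorem and the bounded-length property apply in the form I need in positive characteristic: namely, for klt $3$-fold pairs $(X,\Delta_0)$ with a big (rather than genuinely ample) part in the boundary, coupled with rationality of each trivial extremal ray. The first point is handled by perturbing via Lemma \ref{Bertini} to obtain an ample summand, as above; the second follows from the existence of Mori-type contractions for such pairs in characteristic $p>5$ established in \cite{Wal}, which produce rational curves generating the rays. Modulo these inputs, the proof reduces to the convex-analytic argument above.
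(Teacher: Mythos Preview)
Your approach is essentially the same as the paper's: both reduce to the cone theorem for klt $3$-folds in characteristic $p>5$ and then follow \cite[Theorem 3.11.1]{BCHM}. Two points are worth noting. First, the paper begins by replacing $X$ with a small $\Q$-factorization (Lemma \ref{Q-factorization}), which is the setting in which the cone theorem of \cite[Theorem 1.1(2)]{DW} is stated; you should check whether the version you invoke from \cite{Wal} requires this as well. Second, your justification for the finiteness of the $(K_X+\Delta_0)$-trivial extremal rays is slightly muddled: since $K_X+\Delta_0$ is nef there are no $(K_X+\Delta_0)$-negative rays, so ``bounded length of $(K_X+\Delta_0)$-negative extremal rays'' is vacuous. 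The correct mechanism (and what the paper does) is to apply the cone theorem to $K_X+\Delta-A/2$ with $A$ ample: the finitely many $(K_X+\Delta-A/2)$-negative rays are exactly the ones that can become non-positive as $\Delta$ varies in $\mathcal{C}$, and this finite set is independent of $\Delta$ because $A$ is fixed. With these two adjustments your outline matches the paper's proof.
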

  
  \begin{proof}
  By replacing $X$ by a small $\Q$-factorization, we may assume that $X$ is $\Q$-factorial.
  As in \cite[Remark 4.1]{BW}, we may also assume that $A$ is ample.
  Take any element $\Delta \in \mathcal{C}$.
  It then follows from the cone theorem (\cite[Theorem 1.1(2)]{DW}) that the number of $K_X+\Delta-A/2$-negative extremal rays is finite.
  Therefore, the proof is similar to that of \cite[Theorem 3.11.1]{BCHM}.
  \end{proof}

\begin{cor}[\textup{cf.~\cite[Corollary 3.11.2]{BCHM}}]\label{rational polytope}
  With notation as in Setting \ref{absolute setting}, let $\phi : X \dashrightarrow Y$ be a birational contraction.
  Then 
  \[\mathcal{W}_{\phi}(\mathcal{C}) : = \{ \Delta \in \mathcal{C} \mid \phi \textup{ is a weak lc model for $(X, \Delta)$}\}
    \] is a rational polytope.
  \end{cor}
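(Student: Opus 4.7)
The plan is to mirror the proof of \cite[Corollary 3.11.2]{BCHM}, with our Proposition \ref{Cone theorem} substituting for BCHM's cone theorem on $Y$. Since every $\Delta\in\mathcal{C}$ is klt by the hypothesis in Setting \ref{absolute setting}, the lc part of the definition of a weak lc model is automatic, so $\mathcal{W}_\phi(\mathcal{C})$ is cut out of $\mathcal{C}$ by just two further conditions: (b) $\phi_*(K_X+\Delta)$ is $\R$-Cartier and $\phi$ is $K_X+\Delta$-non-positive, and (c) $K_Y+\phi_*\Delta$ is nef.

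First I would fix a common log resolution $p\colon W\to X$, $q\colon W\to Y$ of $\phi$. The push-forward $\Delta\mapsto\phi_*\Delta$ is affine-linear, and the set of $\R$-Cartier classes is a rational linear subspace of $\WDiv_\R(Y)$, so the preimage $\mathcal{C}_0\subseteq\mathcal{C}$ of the $\R$-Cartier locus is a rational subpolytope. On $\mathcal{C}_0$ the divisor $E_\Delta:=p^*(K_X+\Delta)-q^*(K_Y+\phi_*\Delta)$ depends affine-linearly on $\Delta$, and the non-positivity condition $E_\Delta\ge 0$ amounts to finitely many linear inequalities (one per prime divisor on $W$), carving out a rational polytope $\mathcal{C}_1\subseteq\mathcal{C}_0$ on which $\phi$ is $K_X+\Delta$-non-positive.

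To handle (c) I would push the polytope forward to $\mathcal{C}_Y:=\phi_*(\mathcal{C}_1)\subseteq\WDiv_\R(Y)$, which is again a rational polytope. For each $\Gamma=\phi_*\Delta\in\mathcal{C}_Y$, the pair $(Y,\Gamma)$ is klt because discrepancies cannot decrease under the $K_X+\Delta$-non-positive contraction $\phi$, and $\phi_*A\le\Gamma$ is a big effective $\Q$-Cartier divisor on $Y$ (bigness is preserved under a birational contraction whenever the push-forward is $\R$-Cartier, as one sees by comparing $p^*A$ and $q^*(\phi_*A)$ on $W$). Hence the data $(Y,\phi_*A,\mathcal{C}_Y)$ satisfies the hypotheses of Setting \ref{absolute setting}, and Proposition \ref{Cone theorem} says that the nef locus $\mathcal{N}(\mathcal{C}_Y)\subseteq\mathcal{C}_Y$ is a rational polytope. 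Since the pre-image of a rational polytope under an affine-linear map is a rational polytope, $\mathcal{W}_\phi(\mathcal{C})=(\phi_*)^{-1}(\mathcal{N}(\mathcal{C}_Y))\cap\mathcal{C}_1$ is a rational polytope.

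The main technical obstacle is ensuring that $\phi_*A$ is $\Q$-Cartier on $Y$, as required by Proposition \ref{Cone theorem}. To circumvent this I would replace $Y$ by a small $\Q$-factorialization $\mu\colon Y'\to Y$, whose existence is guaranteed by Lemma \ref{Q-factorization} since $(Y,\phi_*\Delta)$ is klt for any $\Delta\in\mathcal{C}_1$. The composite $\phi':=\mu^{-1}\circ\phi\colon X\dashrightarrow Y'$ is still a birational contraction (divisors on $Y'$ correspond bijectively to divisors on $Y$, none of which is $\phi^{-1}$-exceptional), and since $\mu$ is small and crepant one checks directly that for each $\Delta\in\mathcal{C}_0$, $\phi$ is a weak lc model of $(X,\Delta)$ if and only if $\phi'$ is; thus $\mathcal{W}_\phi(\mathcal{C})=\mathcal{W}_{\phi'}(\mathcal{C})\cap\mathcal{C}_0$, reducing to the case where $Y$ is $\Q$-factorial and the argument above applies cleanly.
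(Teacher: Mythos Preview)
Your proposal is correct and follows essentially the same approach as the paper: reduce to a small $\Q$-factorialization of $Y$ (via Lemma \ref{Q-factorization}, after noting $\mathcal{W}_\phi(\mathcal{C})=\emptyset$ is trivially a rational polytope), carve out the $(K_X+\Delta)$-non-positivity condition as a rational subpolytope $\mathcal{C}'$, and then apply Proposition \ref{Cone theorem} on $Y$ to the push-forward $\phi_*\mathcal{C}'$. The paper performs these steps in a slightly different order (taking the $\Q$-factorialization first, which makes the auxiliary locus $\mathcal{C}_0$ unnecessary), but the content is the same.
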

  
  \begin{proof}
  The proof is similar to that of \cite[Corollary 3.11.2]{BCHM}, but we include it here for convenience.
  We may assume that $\mathcal{W}_{\phi}(\mathcal{C})$ is non-empty and therefore, $Y$ admits an effective $\R$-Weil divisor $\Gamma$ with $(Y,\Gamma)$ klt.
  After replacing $Y$ by a small $\Q$-factorization, we may assume that $\phi_*A$ is $\Q$-Cartier.

  After replacing $\mathcal{C}$ by the sub polytope
  \[\mathcal{C}' : = \{\Delta \in \mathcal{C} \mid \phi \textup{ is $(K_X+\Delta)$-non-positive}\},\]
  we may assume that $\mathcal{W}_{\phi}(\mathcal{C})$ is the pullback of $\mathcal{N}(\mathcal{D})$ by the $\R$-linear map
  \[
    \phi_* : \mathcal{C} \onto \mathcal{D} : = \phi_*(\mathcal{C}) \ ; \ \Delta \mapsto \phi_*\Delta .\]
    Therefore, the assertion follows from Proposition \ref{Cone theorem}.
  \end{proof}

\begin{prop}[\textup{cf.~\cite[Lemma 7.1]{BCHM}}]\label{finiteness of minimal model}
With notation as in Setting \ref{absolute setting}, there are finitely many birational contractions $\psi_i : X \dashrightarrow Z_i$ such that for each $\Delta \in \mathcal{C}$ with $K_X+\Delta$ pseudo-effective, there is an index $i$ such that $\psi_i$ is a log minimal model of $(X,\Delta)$.
\end{prop}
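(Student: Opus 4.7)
The plan is to adapt the argument of \cite[Lemma 7.1]{BCHM} to positive characteristic, using the rational polytope structure from Corollary \ref{rational polytope} together with the MMP results for $3$-folds in characteristic $p>5$. First I would observe that $\mathcal{P} := \{\Delta \in \mathcal{C} \mid K_X+\Delta \text{ is pseudo-effective}\}$ is a closed convex subset of the compact polytope $\mathcal{C}$ (pseudo-effectivity being a closed condition), and is therefore itself compact.

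Next, for each $\Delta_0 \in \mathcal{P}$ I would produce a log minimal model $\psi_0 : X \dashrightarrow Z_0$ of $(X,\Delta_0)$. Since $(X,\Delta_0)$ is klt and $A \le \Delta_0$ with $A$ big, one may pass to a small $\Q$-factorization via Lemma \ref{Q-factorization} and perturb $\Delta_0$ using Lemma \ref{Bertini} so that the boundary contains an ample component, then apply \cite[Theorem 1.6]{DW} to run a $(K_X+\Delta_0)$-MMP with scaling and extract a log minimal model. In particular $\Delta_0$ lies in the rational polytope $\mathcal{W}_{\psi_0}(\mathcal{C})$ from Corollary \ref{rational polytope}.

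The heart of the argument is to show that the set
\[
U_{\psi_0} := \{\Delta \in \mathcal{P} \mid \psi_0 \text{ is a log minimal model of } (X,\Delta)\}
\]
contains a relatively open neighborhood of $\Delta_0$ in $\mathcal{P}$. Fix a common resolution $p : W \to X$, $q : W \to Z_0$ with $q = \psi_0 \circ p$, and set
\[
E(\Delta) := p^*(K_X+\Delta) - q^*(K_{Z_0}+\psi_{0,*}\Delta),
\]
which depends $\R$-linearly on $\Delta$. For $\Delta \in \mathcal{W}_{\psi_0}(\mathcal{C})$, the pair $(X,\Delta)$ is klt (hence dlt), $Z_0$ is $\Q$-factorial independently of $\Delta$, and $K_{Z_0}+\psi_{0,*}\Delta$ is nef by definition of $\mathcal{W}_{\psi_0}(\mathcal{C})$. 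Thus $\psi_0$ is a log minimal model of $(X,\Delta)$ precisely when $E(\Delta)$ is effective with support containing every $p$-strict-transform of a $\psi_0$-exceptional divisor. At $\Delta = \Delta_0$, both conditions hold, and the strict positivity of the coefficients on $\psi_0$-exceptional strict transforms is a consequence of the negativity lemma; hence, by continuity and $\R$-linearity of $E$, both conditions persist on a relatively open neighborhood of $\Delta_0$ inside $\mathcal{W}_{\psi_0}(\mathcal{C}) \cap \mathcal{P}$.

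Finally, compactness of $\mathcal{P}$ extracts a finite subcover from the open cover $\{U_{\psi_{\Delta_0}}\}_{\Delta_0 \in \mathcal{P}}$, yielding finitely many $\psi_i : X \dashrightarrow Z_i$ with the required property. The main obstacle I expect is a careful verification of the openness in the third step: one must simultaneously control the global effectivity of $E(\Delta)$ on $W$ and the strictness of its support on the $p$-strict-transforms of $\psi_0$-exceptional divisors. Both are routine perturbative consequences of the negativity lemma applied at $\Delta_0$, but they must be checked uniformly over the (finitely many) exceptional components.
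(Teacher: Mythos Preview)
Your compactness strategy has a genuine gap in the ``openness'' step. You correctly observe that, for $\Delta \in \mathcal{W}_{\psi_0}(\mathcal{C})$, the map $\psi_0$ is a log minimal model of $(X,\Delta)$ exactly when $E(\Delta)$ is effective and strictly positive along the $\psi_0$-exceptional strict transforms, and that these discrepancy conditions are open in $\Delta$. But you never show that $\mathcal{W}_{\psi_0}(\mathcal{C}) \cap \mathcal{P}$ itself contains a neighborhood of $\Delta_0$ in $\mathcal{P}$. Membership in $\mathcal{W}_{\psi_0}(\mathcal{C})$ requires $K_{Z_0}+(\psi_0)_*\Delta$ to be nef, and nefness is a \emph{closed} condition: by Corollary~\ref{rational polytope} the set $\mathcal{W}_{\psi_0}(\mathcal{C})$ is a rational polytope, and $\Delta_0$ may well lie on its boundary (for instance whenever $K_X+\Delta_0$ lies on the boundary of the pseudo-effective cone, or more generally whenever the minimal model $Z_0$ admits a further $(K_{Z_0}+(\psi_0)_*\Delta)$-negative extremal ray for some nearby $\Delta$). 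For such $\Delta$ the map $\psi_0$ is not even a weak lc model, so $U_{\psi_0}$ fails to be a neighborhood of $\Delta_0$ in $\mathcal{P}$ and the compactness extraction does not go through.

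This is exactly why the original argument in \cite[Lemma~7.1]{BCHM} does not proceed by a single open-cover step but rather by induction on the dimension of $\mathcal{C}$, running the MMP with scaling and using the cone theorem at each stage to control how the nef polytopes fit together across boundaries. The paper sidesteps all of this by reducing to the $\Q$-factorial case via Lemma~\ref{Q-factorization} and then invoking \cite[Theorem~7.3]{Wal}, where the full inductive argument (adapted to threefolds over $F$-finite fields of characteristic $p>5$) is already carried out. If you wish to give a self-contained proof, you will need to reproduce that induction rather than appeal to a one-shot openness claim.
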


\begin{proof}
After replacing $X$ by its small $\Q$-factorization, the assertion follows from \cite[Theorem 7.3]{Wal}.
\end{proof}

\begin{cor}[\textup{cf.~\cite[Corollary 1.1.5]{BCHM}}]\label{rational polytope2}
With notation as in Setting \ref{absolute setting}, let $\mathcal{E}(\mathcal{C})$ be the subset
\[\mathcal{E}(\mathcal{C}) : = \{\Delta \in \mathcal{C} \mid K_X+\Delta \textup{ is pseudo-effective}\}\] 
of $\mathcal{C}$.
Then $\mathcal{E}(\mathcal{C})$ is a rational polytope.
\end{cor}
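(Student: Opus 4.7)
The plan is to emulate the BCHM strategy, packaging the finiteness of log minimal models from Proposition \ref{finiteness of minimal model} with the rational polyhedrality of weak-lc-model loci from Corollary \ref{rational polytope}. The first observation is that $\mathcal{E}(\mathcal{C})$ is closed and convex: it is the preimage inside $\mathcal{C}$ of the pseudo-effective cone under the affine map $\Delta \mapsto K_X+\Delta$, and the pseudo-effective cone is both closed and convex.

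Next, let $\psi_i : X \dashrightarrow Z_i$ ($i=1,\ldots,N$) be the finitely many birational contractions supplied by Proposition \ref{finiteness of minimal model}. The key step will be to prove
\[
\mathcal{E}(\mathcal{C}) = \bigcup_{i=1}^{N} \mathcal{W}_{\psi_i}(\mathcal{C}).
\]
For $\subseteq$, any $\Delta \in \mathcal{E}(\mathcal{C})$ admits some $\psi_i$ as a log minimal model by Proposition \ref{finiteness of minimal model}; since $(X,\Delta)$ is klt (hence lc) and $(K_X+\Delta)$-negative implies $(K_X+\Delta)$-non-positive, each such $\psi_i$ is a fortiori a weak lc model. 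For $\supseteq$, if $\psi_i$ is a weak lc model of $(X,\Delta)$, choose a common resolution $p : W \to X$, $q : W \to Z_i$ with $\psi_i \circ p = q$. Then $p^*(K_X+\Delta) = q^*(K_{Z_i}+\psi_{i,*}\Delta) + E$ with $E \ge 0$ and $q^*(K_{Z_i}+\psi_{i,*}\Delta)$ nef, so $p^*(K_X+\Delta)$ is pseudo-effective on $W$; since for any ample $H$ on $X$ the class $p^*H$ is nef and big, $p^*(K_X+\Delta+\varepsilon H)$ is big, hence $K_X+\Delta+\varepsilon H$ is big, and taking $\varepsilon \to 0$ shows $K_X+\Delta$ is pseudo-effective. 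By Corollary \ref{rational polytope} each $\mathcal{W}_{\psi_i}(\mathcal{C})$ is a rational polytope.

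To conclude I will invoke the standard convex-geometric fact that a closed convex subset of a Euclidean space which is a finite union of rational polytopes is itself a rational polytope: $\mathcal{E}(\mathcal{C})$ is compact (bounded since contained in $\mathcal{C}$, and closed), any extreme point of $\mathcal{E}(\mathcal{C})$ is necessarily a vertex of some $\mathcal{W}_{\psi_i}(\mathcal{C})$ and hence rational, and by the Krein--Milman theorem $\mathcal{E}(\mathcal{C})$ is the convex hull of its finite set of rational extreme points. The main obstacle is the set equality in the previous paragraph, where one must carefully translate the existence of a weak lc model into pseudo-effectivity upstairs via the negativity-of-contractions identity on a common resolution; once this covering is in hand, the convexity of $\mathcal{E}(\mathcal{C})$ does the remaining work of upgrading a finite union of rational polytopes to a single rational polytope.
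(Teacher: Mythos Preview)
Your proof is correct and follows essentially the same approach as the paper: establish $\mathcal{E}(\mathcal{C}) = \bigcup_i \mathcal{W}_{\psi_i}(\mathcal{C})$ using Proposition~\ref{finiteness of minimal model}, then invoke Corollary~\ref{rational polytope}. The paper's version is considerably terser, leaving both inclusions and the convex-geometry step implicit, whereas you have spelled out each of these points.
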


\begin{proof}
Take finitely many birational contractions $\psi_1, \psi_2, \dots, \psi_n$ as in Proposition \ref{finiteness of minimal model}.
Since we have 
\[\mathcal{E}(\mathcal{C}) = \bigcup_{i=1}^n \mathcal{W}_{\psi_i}(\mathcal{C}),\]
the assertion follows from Corollary \ref{rational polytope}.
\end{proof}

\begin{prop}[\textup{cf.~\cite[Section 7]{BCHM}}]\label{finiteness of lc model}
  With notation as in Setting \ref{absolute setting}, there are finitely many birational contractions $\psi_i : X \dashrightarrow Z_i$ such that for each $\Delta \in \mathcal{C}$ with $K_X+ \Delta$ big, there is an index $i$ such that $\psi_i$ is the lc model of $(X, \Delta)$.
\end{prop}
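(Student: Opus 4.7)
The strategy is to construct every lc model by first passing to a log minimal model and then taking the ample fibration produced by the base point free theorem. I begin by applying Proposition~\ref{finiteness of minimal model} to obtain finitely many birational contractions $\phi_1,\ldots,\phi_n\colon X \dashrightarrow Y_j$ such that every $(X,\Delta)$ with $\Delta \in \mathcal{C}$ and $K_X+\Delta$ pseudo-effective admits some $\phi_j$ as a log minimal model. By Corollary~\ref{rational polytope}, each $\mathcal{W}_j := \mathcal{W}_{\phi_j}(\mathcal{C})$ is a rational polytope, and by definition $Y_j$ is $\Q$-factorial, so $(\phi_j)_*A$ is a big $\Q$-Cartier divisor on $Y_j$ with $0\le (\phi_j)_*A \le (\phi_j)_*\Delta$ for every $\Delta \in \mathcal{W}_j$.

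Next, fix $j$ and $\Delta \in \mathcal{W}_j$ with $K_X+\Delta$ big. Since $\phi_j$ is $(K_X+\Delta)$-negative, the pushforward $K_{Y_j}+(\phi_j)_*\Delta$ is nef with the same volume as $K_X+\Delta$, hence also big, while $(Y_j,(\phi_j)_*\Delta)$ is klt. The base point free theorem (Theorem~\ref{bpf}\,(a)) then makes $K_{Y_j}+(\phi_j)_*\Delta$ semiample, producing an ample model $\rho_{j,\Delta}\colon Y_j \to Z_{j,\Delta}$; the composite $\psi_{j,\Delta} := \rho_{j,\Delta}\circ \phi_j$ is automatically $(K_X+\Delta)$-non-positive and pushes $K_X+\Delta$ to an ample divisor, so it is the lc model of $(X,\Delta)$. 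Thus it suffices to show that, for each fixed $j$, only finitely many distinct maps $\rho_{j,\Delta}$ occur as $\Delta$ ranges over $\mathcal{W}_j$ with $K_X+\Delta$ big.

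For this I would view $\Delta \mapsto K_{Y_j}+(\phi_j)_*\Delta$ as an affine-linear map from $\mathcal{W}_j$ into $N^1(Y_j)_\R$, whose image is a rational polytope $\mathcal{P}_j$ of nef classes. Two elements of $\mathcal{P}_j$ define the same ample model precisely when they lie in the relative interior of a common face of the nef cone cut out by finitely many extremal rays of $\NE(Y_j)$. By applying Proposition~\ref{Cone theorem} to the boundary polytope $(\phi_j)_*\mathcal{W}_j$ on $Y_j$, only finitely many extremal rays can become trivial on any such class, yielding a finite polyhedral subdivision of $\mathcal{P}_j$ on whose relatively open chambers $\rho_{j,\Delta}$ is constant. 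The main obstacle is making this chamber decomposition rigorous in our three-dimensional positive-characteristic setting: the analogous arguments in \cite[Section~7]{BCHM} rely on a scaling MMP over $\mathbb{C}$, so here one must run the parallel argument through the MMP developed in \cite{DW, Wal}, the delicate step being the finiteness and $\Delta$-independence of the pool of candidate extremal rays of $\NE(Y_j)$ that $\rho_{j,\Delta}$ can contract as $\Delta$ moves within $\mathcal{W}_j$.
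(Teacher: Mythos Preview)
Your overall architecture matches the paper's: reduce to finitely many weak lc models $\phi_j:X\dashrightarrow Y_j$ via Proposition~\ref{finiteness of minimal model} and Corollary~\ref{rational polytope}, then show that on each $Y_j$ only finitely many ample fibrations arise. The difference is in how the last step is executed, and your ``main obstacle'' is in fact not an obstacle at all.

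You try to pass to $N^1(Y_j)_\R$ and argue via a chamber decomposition governed by extremal rays of $\NE(Y_j)$, which forces you to worry about controlling an a priori infinite pool of rays. The paper sidesteps this entirely by staying in $\WDiv_\R(Y_j)$: since $(\phi_j)_*\mathcal{C}$ is a rational polytope satisfying the hypotheses of Setting~\ref{absolute setting} on $Y_j$ (with big divisor $(\phi_j)_*A$), Proposition~\ref{Cone theorem} says that $\mathcal{N}((\phi_j)_*\mathcal{C})$ is itself a rational polytope, hence has only finitely many faces. The key observation is then that the semiample fibration $g_\Delta:Y_j\to Z_\Delta$ of $K_{Y_j}+(\phi_j)_*\Delta$ depends only on the smallest face $G_\Delta$ of $\mathcal{N}((\phi_j)_*\mathcal{C})$ containing $(\phi_j)_*\Delta$: if $G_\Delta=G_{\Delta'}$, then $K_{Y_j}+(\phi_j)_*\Delta'$ is numerically trivial on the fibres of $g_\Delta$ (because it lies in the affine span of the same face), so the rigidity lemma forces $g_\Delta=g_{\Delta'}$. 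Finitely many faces gives finitely many lc models. No analysis of $\NE(Y_j)$ or its extremal structure is needed; the positive-characteristic MMP input is entirely contained in Proposition~\ref{Cone theorem} and Lemma~\ref{rmk on wlc}.
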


\begin{proof}
By Proposition \ref{finiteness of minimal model} and Corollary \ref{rational polytope}, after shrinking $\mathcal{C}$, we may assume that there is a birational contraction $\phi : X \dashrightarrow Y$ such that $\phi$ is a weak lc model of $K_X+\Delta$ for every $\Delta \in \mathcal{C}$.
We note that $K_Y+\phi_*\Delta$ is semiample by Lemma \ref{rmk on wlc}.
Take a birational morphism 
\[ g_{\Delta} : Y \to Z_{\Delta} \]
to a normal projective variety $Z_{\Delta}$ such that $K_Y+\phi_*\Delta$ is the pullback of an ample $\R$-Cartier divisor on $Z_{\Delta}$.
Then the composite birational map $g_{\delta} \circ \phi$ is the lc model of $(X, \Delta)$.
We denote by $G_{\Delta}$ the smallest face of the rational polytope $\mathcal{N}(\phi_* \mathcal{C})$ containing $\phi_*\Delta$, here $\phi_*\mathcal{C}$ is the image of $\mathcal{C}$ via the homomorphism 
\[\phi_* : \WDiv_{\R}(X) \to \WDiv_\R(Y).\]
It follows from the rigidity lemma (cf.~\cite[Lemma 1.15]{Deb}) that $g_{\Delta}=g_{\Delta'}$ if $G_{\Delta}=G_{\Delta'}$.
Since $\mathcal{N}(\phi_* \mathcal{C})$ has finitely many faces (Proposition \ref{Cone theorem}), this completes the proof of the proposition. 
\end{proof}

\subsection{Invariance of Iitaka dimensions}

\begin{prop}\label{Iitaka dimension of fiber}
Let $\pi : \mathcal{X} \to T$ be a projective morphism with connected and normal fibers between normal varieties over an algebraically closed field $k$ of characteristic $p>5$ such that the dimension of the generic fiber $\mathcal{X}_\eta$ is $3$.
Suppose that $\Delta \ge 0$ is a $\Q$-Weil divisor on $\mathcal{X}$ such that $K_{\mathcal{X}}+\Delta$ is $\Q$-Cartier.
We denote by $\Delta_{\eta}$ and $\Delta_{\overline{\eta}}$ the flat pullback of $\Delta$ to the generic fiber $\mathcal{X}_{\eta}$ and the geometric generic fiber $\mathcal{X}_{\overline{\eta}}$, respectively. 
We further assume that $(\mathcal{X}_{\eta}, \Delta_{\eta})$ is klt.
Then the following assertions hold.
  \begin{enumerate}[label=\textup{(\arabic*)}]
    \item If $K_{\mathcal{X}} + \Delta$ is pseudo-effective over $T$ and there is a $\Q$-Cartier divisor $A$ on $\mathcal{X}$ which is big over $T$ and $0 \le A \le \Delta$, then we have 
    \[\kappa(\mathcal{X}/T, K_{\mathcal{X}}+\Delta)=\kappa(\mathcal{X}_t, (K_{\mathcal{X}}+\Delta)_t) \ge 0\] 
    for general $t \in T$.
    \item If $\kappa(\mathcal{X}/T, K_{\mathcal{X}}+\Delta) \ge 1$ and the pair $(\mathcal{X}_{\overline{\eta}}, \Delta_{\overline{\eta}})$ on the geometric generic fiber $X_{\overline{\eta}}$ is klt, then we have 
    \[\kappa(\mathcal{X}/T, K_{\mathcal{X}}+\Delta)=\kappa(\mathcal{X}_t, (K_{\mathcal{X}}+\Delta)_t)\] 
    for general $t \in T$.
    \item If $K_{\mathcal{X}}+\Delta$ is not pseudo-effective over $T$, then the restriction $(K_{\mathcal{X}}+\Delta)_t$ to a general fiber $\mathcal{X}_t$ is not pseudo-effective.

  \end{enumerate}
\end{prop}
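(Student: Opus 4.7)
In all three parts the common strategy is to reduce the question to the generic fiber $\mathcal{X}_\eta$, a projective normal variety over the $F$-finite field $K(T)$ of characteristic $p>5$; run the appropriate MMP on $\mathcal{X}_\eta$ using the results of \cite{DW} and \cite{Wal}; spread the resulting birational model out over an open dense $U\subseteq T$ via Lemmas \ref{spreading out schemes} and \ref{spreading out morphisms}; and transfer the conclusion to a general closed fiber via Lemma \ref{model and fiber}. The claimed equalities of Iitaka dimension will then be read off from a semiample model, while the non-pseudo-effectivity in (3) will be read off from a Mori fiber space. In each case I first replace $\mathcal{X}_\eta$ by a small $\Q$-factorialization via Lemma \ref{Q-factorization}; because this morphism is small and birational, all the hypotheses (klt, bigness of $A_\eta$, the inequality $0\le A_\eta\le\Delta_\eta$, pseudo-effectivity or its failure, the value of $\kappa$) are preserved.

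\textbf{Parts (1) and (2).} For (1), $(\mathcal{X}_\eta,\Delta_\eta)$ is klt with $K_{\mathcal{X}_\eta}+\Delta_\eta$ pseudo-effective and $A_\eta$ a big $\Q$-Cartier divisor satisfying $0\le A_\eta\le\Delta_\eta$, so \cite[Theorem~1.6]{DW} yields a log minimal model $\phi_\eta:\mathcal{X}_\eta\dashrightarrow Y_\eta$, which Lemma \ref{rmk on wlc}(a) upgrades to a semiample model of $K_{\mathcal{X}_\eta}+\Delta_\eta$. For (2), the assumption $\kappa\ge 1$ already implies pseudo-effectivity, so after producing a log minimal model I instead invoke Lemma \ref{rmk on wlc}(b) to obtain semiampleness; this is precisely where the klt hypothesis on $\mathcal{X}_{\overline\eta}$ is used. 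In both cases the semiample model $\phi_\eta$ spreads, after shrinking $T$, to a birational contraction $\phi:\mathcal{X}\dashrightarrow\mathcal{Y}$ over $T$ with $K_\mathcal{Y}+\phi_*\Delta$ semiample over $T$. Lemma \ref{model and fiber} shows $\phi_t$ is a semiample model of $(K_\mathcal{X}+\Delta)_t$ for general $t$, and since the Iitaka dimension of a semiample divisor is the dimension of the image of the induced morphism, a property stable under specialization to a general fiber, the equalities of $\kappa$ in (1) and (2) follow.

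\textbf{Part (3).} If $(K_{\mathcal{X}_\eta}+\Delta_\eta)$ is not pseudo-effective, running the MMP on a small $\Q$-factorialization of $\mathcal{X}_\eta$ terminates with a Mori fiber space $\psi_\eta:\mathcal{Y}_\eta\to Z_\eta$, together with a $(K_{\mathcal{X}_\eta}+\Delta_\eta)$-negative birational contraction $\phi_\eta:\mathcal{X}_\eta\dashrightarrow\mathcal{Y}_\eta$, such that $-(K_{\mathcal{Y}_\eta}+\phi_{\eta,*}\Delta_\eta)$ is $\psi_\eta$-ample. Spreading out, after shrinking $T$, produces $\phi:\mathcal{X}\dashrightarrow\mathcal{Y}$ and $\psi:\mathcal{Y}\to\mathcal{Z}$ over $T$ whose general closed fibers retain both the Mori fiber structure and the $(K_\mathcal{X}+\Delta)$-negativity of $\phi$. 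On such a fiber, $(K_{\mathcal{Y}_t}+(\phi_t)_*\Delta_t)$ cannot be pseudo-effective because it is $\psi_t$-anti-ample with positive-dimensional general fiber, and hence $(K_\mathcal{X}+\Delta)_t$ is not pseudo-effective either, since the pushforward of a pseudo-effective divisor under a negative birational contraction would still be pseudo-effective.

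\textbf{Main obstacle.} The technical core of the argument is the spreading-out bookkeeping: the birational maps produced by the MMP on $\mathcal{X}_\eta$ must extend to $T$-birational maps over an open dense $U\subseteq T$; the $\R$-Cartier property of $\phi_*(K_\mathcal{X}+\Delta)$ must persist on $U$; and the positivity or negativity features (semiampleness over $T$ in (1) and (2); $\psi$-anti-ampleness of $K_\mathcal{Y}+\phi_*\Delta$ in (3)) must survive restriction to a general closed fiber. Lemma \ref{model and fiber} and its supporting lemma on the spreading of (semi)ampleness are designed exactly for this, so the remaining work is the routine verification that their hypotheses are met in each of the three cases.
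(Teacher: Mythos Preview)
Your treatment of parts (1) and (2) is essentially identical to the paper's: both run the MMP on the generic fiber via \cite[Theorem~1.6]{DW}, upgrade the resulting minimal model to a semiample model via Lemma~\ref{rmk on wlc} (case (a) for part (1), case (b) for part (2)), spread out via Lemmas~\ref{spreading out schemes}--\ref{spreading out morphisms}, and invoke Lemma~\ref{model and fiber} to conclude. Nothing to add there.

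For part (3) you take a genuinely different route. The paper does \emph{not} run the MMP on $(\mathcal{X}_\eta,\Delta_\eta)$ to a Mori fiber space. Instead it adds a $\pi$-ample effective $\Q$-divisor $H$ with $(\mathcal{X}_\eta,\Delta_\eta+H_\eta)$ still klt (Lemma~\ref{Bertini}), takes the pseudo-effective threshold $\lambda$ so that $K_{\mathcal{X}}+\Delta+\lambda H$ is pseudo-effective but not big over $T$ (rationality of $\lambda$ coming from Corollary~\ref{rational polytope2}), applies the already-proved part (1) to the pair $(\mathcal{X},\Delta+\lambda H)$ to deduce that $(K_{\mathcal{X}}+\Delta+\lambda H)_t$ is not big for general $t$, and then subtracts the ample $\lambda H_t$ to conclude non-pseudo-effectivity of $(K_{\mathcal{X}}+\Delta)_t$. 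This buys two things: it recycles part (1) rather than invoking a separate MMP termination statement, and it keeps all the spreading-out work within the scope of Lemma~\ref{model and fiber}, which is phrased only for \emph{birational} contractions. Your Mori-fiber-space approach is correct as well, but it requires you to spread out the non-birational morphism $\psi_\eta$ together with the $\psi$-anti-ampleness of $K_{\mathcal{Y}}+\phi_*\Delta$ and the strict inequality of fiber dimensions; these are routine (the unnamed lemma following Lemma~\ref{model and fiber} handles relative ampleness), but they are not literally instances of Lemma~\ref{model and fiber}, so your final sentence slightly understates the bookkeeping needed in case (3).
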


\begin{proof}
We first consider the case where $\mathcal{D} : = K_{\mathcal{X}}+\Delta$ is pseudo-effective over $T$.
By \cite[Theorem 1.6]{DW} and Lemma \ref{rmk on wlc}, there exists a semiample model
\[ \phi : \mathcal{X}_{\eta} \dashrightarrow Y \]
of $\mathcal{D}_{\eta}$ over $\Spec(K(T))$.
By Lemma \ref{spreading out schemes}, after shrinking $T$, there exists a normal variety $\mathcal{Y}$ which is projective over $T$ with $\mathcal{Y}_{\eta} \cong Y$.
Then $\phi$ defines a birational map 
\[\Phi: \mathcal{X}  \dashrightarrow \mathcal{Y} \]
such that $\phi=\Phi_{\eta}$. 
Then it follows from Lemma \ref{model and fiber} that the restriction
\[\Phi_t : \mathcal{X}_t \dashrightarrow \mathcal{Y}_t \]
is a semiample model of $\mathcal{D}_t$ for general $t \in T$.
Noting that 
\[
  H^0(\mathcal{X}_t, \sO_{\mathcal{X}_t} (m \mathcal{D}_t)) = H^0(\mathcal{Y}_t, \sO_{\mathcal{Y}_t} (m (\Phi_t)_*\mathcal{D}_t))
\]
for every integer $m>0$, we have
\[\kappa(\mathcal{X}_t, \mathcal{D}_t) = \kappa(\mathcal{Y}_t, (\Phi_t)_*\mathcal{D}_t) = \kappa(\mathcal{Y}_t, (\Phi_*\mathcal{D})_t).\] 
After replacing $\mathcal{X}$ by $\mathcal{Y}$ and $\mathcal{D}$ by $\Phi_* \mathcal{D}$, we may assume that $\mathcal{D}$ is semiample over $T$.
Take a semiample fibration $f : \mathcal{X} \to \mathcal{Z}$ of $\mathcal{D}$ over $T$.
Then we have 
\[
  \kappa(\mathcal{X}_t, \mathcal{D}_t)=\dim(\mathcal{Z}_t)\] for general $t \in T$, which proves the assertions in (1) and (2).

Finally, we consider the case where $\mathcal{D}=K_{\mathcal{X}}+\Delta$ is not pseudo-effective over $T$.
Take a $\pi$-ample $\Q$-Cartier divisor $H$ on $\mathcal{X}$ such that $\mathcal{D}+H$ is $\pi$-big.
By Lemma \ref{Bertini}, after replacing $H$ by its $\Q$-linear equivalent $\Q$-divisor and shrinking $T$, we may assume that $H$ is effective and that $(\mathcal{X}_\eta, \Delta_{\eta}+H_{\eta})$ is klt.
Let $\lambda>0$ be the smallest real number such that $\mathcal{D}+ \lambda H$ is pseudo-effective over $T$.
We note that $\lambda$ is a rational number by Corollary \ref{rational polytope2} and that $\mathcal{D}+\lambda H$ is not big over $T$.
Applying (1) to the pair $(\mathcal{X}, \Delta+ \lambda H)$, the $\Q$-divisor $\mathcal{D}_t+\lambda H_t$ is not big for general $t \in T$.
Therefore, 
\[
  \mathcal{D}_t=(\mathcal{D}_t+\lambda H_t) - \lambda H_t
  \]
  is not pseudo-effective, as desired.
\end{proof}

From now on, we will work with the following setup.

\begin{setting}\label{relative setting}
  Let $\pi : \mathcal{X} \to T$ be a projective surjective morphism between normal quasi-projective varieties over an algebraically closed field $k$ of characteristic $p>5$ such that a general fiber is $3$-dimensional normal variety.
  Suppose that $A \ge 0$ is a $\pi$-big effective $\Q$-Cartier divisor on $X$ and $\mathcal{C} \subseteq \WDiv_{\R}(\mathcal{X})$ is a rational polytope such that every element $\Delta \in \mathcal{C}$ satisfies the following properties:
  \begin{itemize}
    \item $A \le \Delta$,
    \item $K_{\mathcal{X}} + \Delta$ is $\R$-Cartier, and 
    \item $(\mathcal{X}_{\eta}, \Delta_{\eta})$ is klt, where $\eta \in T$ is the generic point.
  \end{itemize}
\end{setting}

\begin{thm}\label{bigness constancy}
  With notation as in Setting \ref{relative setting}, we further assume that there exists an element $\Phi \in \mathcal{C}$ such that $K_{\mathcal{X}}+\Phi$ is $\pi$-big.
  Then there exists an open dense subset $U$ of $T$ such that for any $\Delta \in \mathcal{C}$, if $(K_{\mathcal{X}}+\Delta)_t$ is big (resp.~pseudo-effective) at some closed point $t \in U$, then $K_{\mathcal{X}}+\Delta$ is big over $T$ (resp.~pseudo-effective over $T$).
\end{thm}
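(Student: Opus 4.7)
The plan is to reduce the theorem to the finite-minimal-model results on the generic fiber $\mathcal{X}_{\eta}$ and transfer the conclusion to closed fibers by spreading out, combined with a boundary-crossing trick to handle divisors outside the pseudo-effective sub-polytope.

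Since $K(T)$ is $F$-finite and $(\mathcal{X}_{\eta},\Delta_{\eta})$ is klt with $A_{\eta}\le\Delta_{\eta}$ big, the absolute results of the previous sections apply to $\mathcal{X}_{\eta}$ over $K(T)$. By Corollary~\ref{rational polytope2}, the subset $\mathcal{E}(\mathcal{C}):=\{\Delta\in\mathcal{C}\mid K_{\mathcal{X}_{\eta}}+\Delta_{\eta}\text{ is pseudo-effective}\}$ is a rational polytope; by Proposition~\ref{finiteness of minimal model} and Corollary~\ref{rational polytope} it decomposes as $\bigcup_{i}\mathcal{W}_{\psi_{i,\eta}}(\mathcal{C})$ into finitely many rational subpolytopes, where each $\psi_{i,\eta}\colon\mathcal{X}_{\eta}\dashrightarrow Y_{i}$ is a log minimal model of $(\mathcal{X}_{\eta},\Delta_{\eta})$ for every $\Delta\in\mathcal{W}_{\psi_{i,\eta}}(\mathcal{C})$. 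Lemma~\ref{rmk on wlc}(a) promotes each $\psi_{i,\eta}$ to a semiample model, so at each rational vertex $\Delta_{i,j}$ of $\mathcal{W}_{\psi_{i,\eta}}(\mathcal{C})$ I obtain a semiample fibration $f_{i,j}\colon Y_{i}\to Z_{i,j}$ with $(\psi_{i,\eta})_{*}(K_{\mathcal{X}_{\eta}}+\Delta_{i,j,\eta})\sim_{\Q}f_{i,j}^{*}H_{i,j}$, $H_{i,j}$ ample. Using Lemmas~\ref{spreading out schemes}--\ref{spreading out morphisms} I spread out the finitely many data $(\psi_{i,\eta},f_{i,j},Z_{i,j},H_{i,j})$ to objects $(\Psi_{i},F_{i,j},\mathcal{Z}_{i,j},\mathcal{H}_{i,j})$ over $T$, and then invoke Lemma~\ref{model and fiber}(1),(2) to find an open dense $U\subseteq T$ such that for every $t\in U$ and every $(i,j)$, $(\Psi_{i})_{t}$ is a semiample model of $(K_{\mathcal{X}}+\Delta_{i,j})_{t}$ with $(\Psi_{i})_{t,*}(K_{\mathcal{X}}+\Delta_{i,j})_{t}\sim_{\Q}(F_{i,j})_{t}^{*}(\mathcal{H}_{i,j})_{t}$.

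For $\Delta\in\mathcal{E}(\mathcal{C})$, writing $\Delta=\sum_{j}c_{j}\Delta_{i,j}$ as a convex combination inside some $\mathcal{W}_{\psi_{i,\eta}}(\mathcal{C})$ shows via Remark~\ref{convexity} and the closure of the $\R$-semiample cone under positive combinations that $(\Psi_{i})_{t}$ remains a semiample model of $(K_{\mathcal{X}}+\Delta)_{t}$ for every $t\in U$; hence $(K_{\mathcal{X}}+\Delta)_{t}$ is pseudo-effective on $\mathcal{X}_{t}$, and
\[
\vol_{\mathcal{X}_{t}}\!\bigl((K_{\mathcal{X}}+\Delta)_{t}\bigr)=\bigl((\Psi_{i})_{t,*}(K_{\mathcal{X}}+\Delta)_{t}\bigr)^{3}
\]
is a degree-$3$ polynomial in $(c_{j})$ whose coefficients are intersection numbers on the flat family $\mathcal{Y}_{i}/T$, hence independent of $t\in U$ and equal to $\vol(K_{\mathcal{X}_{\eta}}+\Delta_{\eta})$. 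Consequently, for $\Delta\in\mathcal{E}(\mathcal{C})$, bigness of $(K_{\mathcal{X}}+\Delta)_{t}$ at any $t\in U$ is equivalent to bigness of $K_{\mathcal{X}_{\eta}}+\Delta_{\eta}$.

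To handle $\Delta\in\mathcal{C}\setminus\mathcal{E}(\mathcal{C})$, I exploit that $\mathcal{E}(\mathcal{C})$ is closed and contains the hypothesized $\Phi$ with $K_{\mathcal{X}_{\eta}}+\Phi_{\eta}$ big: there is a smallest $\lambda\in(0,1)$ with $\Gamma:=(1-\lambda)\Delta+\lambda\Phi\in\mathcal{E}(\mathcal{C})$, and the minimality forces $K_{\mathcal{X}_{\eta}}+\Gamma_{\eta}$ to be pseudo-effective but not big (it lies on the boundary of the big cone along the segment). Applying the previous paragraph to $\Gamma$ and $\Phi$ yields, for every $t\in U$, that $(K_{\mathcal{X}}+\Gamma)_{t}$ is pseudo-effective but not big while $(K_{\mathcal{X}}+\Phi)_{t}$ is big. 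If $(K_{\mathcal{X}}+\Delta)_{t}$ were pseudo-effective, the identity
\[
(K_{\mathcal{X}}+\Gamma)_{t}=(1-\lambda)(K_{\mathcal{X}}+\Delta)_{t}+\lambda(K_{\mathcal{X}}+\Phi)_{t}
\]
would express $(K_{\mathcal{X}}+\Gamma)_{t}$ as ``pseudo-effective $+$ big'', hence big, contradicting its non-bigness. This settles the pseudo-effective direction of the theorem, and the ``big'' direction then follows by combining it with the volume equivalence on $\mathcal{E}(\mathcal{C})$. The main obstacle is that the ``general $t$'' in Proposition~\ref{Iitaka dimension of fiber} depends on $\Delta$; the polytope finiteness above together with the boundary-crossing trick is exactly what produces a single $U$ uniform in $\Delta\in\mathcal{C}$.
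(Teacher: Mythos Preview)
Your argument is correct, and it reaches the conclusion by a route that differs from the paper's.  Both proofs start by using Corollary~\ref{rational polytope2} to recognise that $\mathcal{E}(\mathcal{C})$ is a rational polytope, and both finish with a boundary-crossing trick on a segment towards a $\pi$-big boundary.  The difference lies in how uniformity over $t\in U$ is obtained on $\mathcal{E}(\mathcal{C})$ itself.  You invoke the full chamber decomposition $\mathcal{E}(\mathcal{C})=\bigcup_i\mathcal{W}_{\psi_{i,\eta}}(\mathcal{C})$ from Proposition~\ref{finiteness of minimal model}, spread out each log minimal model $\psi_{i,\eta}$, and deduce that for every $\Delta\in\mathcal{E}(\mathcal{C})$ the self-intersection $((\Psi_i)_{t,*}(K_{\mathcal{X}}+\Delta)_t)^3$ computes the volume and is constant in $t$; this yields the strong intermediate statement that $\vol_{\mathcal{X}_t}((K_{\mathcal{X}}+\Delta)_t)=\vol_{\mathcal{X}_\eta}((K_{\mathcal{X}}+\Delta)_\eta)$ uniformly on $U$.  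The paper instead works only with the face structure of $\mathcal{E}(\mathcal{C})$: it fixes a single point $\Delta_i$ in the relative interior of each face, applies Proposition~\ref{Iitaka dimension of fiber}(1) to make $\kappa(\mathcal{X}_t,(K_{\mathcal{X}}+\Delta_i)_t)$ constant in $t$ for those finitely many $\Delta_i$, and then uses a face-extension step (pushing from $\Gamma$ past $\Delta_i$ to some $\Theta\in F_i$) to transfer fiberwise bigness from an arbitrary boundary point to one of the fixed $\Delta_i$.  Your approach front-loads more spreading-out (one model per chamber rather than one Iitaka computation per face) but in return gives the cleaner volume-constancy statement and avoids the face-walk; the paper's approach is lighter on the model side but needs the extra convexity manoeuvre inside each face.
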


\begin{proof}
  By Proposition \ref{rational polytope2}, 
  \begin{align*}
    \mathcal{E}:=\{\Delta \in \mathcal{C} \mid K_{\mathcal{X}}+ \Delta \textup{ is pseudo-effective over $T$}\}
  \end{align*} 
  is a rational polytope.
  Take a generator $\Theta_1, \dots , \Theta_m \in \mathcal{E}$.
  By Lemma \ref{Iitaka dimension of fiber} (1), we may find an open dense subset $U \subseteq T$ such that $(K_{\mathcal{X}}+\Theta_{i})_t$ is pseudo-effective for every $i$ and $t \in U$.
  In particular, for every $t \in U$, the convex set
  \[\mathcal{E}_t : = \{\Delta \in \mathcal{C} \mid (K_{\mathcal{X}}+\Delta)_t \textup{ is pseudo-effective}\}\]
  contains $\mathcal{E}$.
  Let $F_0 = \mathcal{E}, F_1, \dots, F_n \subseteq \mathcal{E}$ be the all faces of $\mathcal{E}$.
  We take a $\Q$-divisor $\Delta_i \in \mathcal{C}$ which is contained in the relative interior $\RInt(F_i)$ of $F_i$.
  After shrinking $U$, we may assume that for every $i$, the Iitaka dimension $\kappa(\mathcal{X}_t, (K_{\mathcal{X}}+\Delta_i)_t)$ is independent of $t \in U$.
  
By the assumption, $K_{\mathcal{X}}+\Theta$ is $\pi$-big for every $\Theta \in \RInt(\mathcal{E})$. 
Therefore, it suffices to show that if $(K_{\mathcal{X}}+\Delta)_t$ is big (resp.~pseudo-effective) for some $\Delta \in \mathcal{C}$ and some $t \in U$, then one has $\Delta \in \RInt(\mathcal{E})$ (resp.~$\Delta \in \mathcal{E}$).

Assume to the contrary, we may find an element $\Delta \in \mathcal{C}$ such that $(K_{\mathcal{X}}+\Delta)_t$ is big (resp.~pseudo-effective) at some $t \in U$, but $\Delta \not\in \RInt(\mathcal{E})$ (resp.~$\Delta \not\in \mathcal{E}$).
Let $a \in [0,1]$ be the minimal number such that $\Gamma : = (1-a) \Delta + a \Delta_0 \in \mathcal{E}$.
  Since bigness is an open condition, we have $a<1$ and therefore, $\Gamma$ is contained in the boundary $\partial \mathcal{E}$ of $\mathcal{E}$.
Since we have $0 \le a < 1 $ (resp.~$0<a<1$), the $\R$-Cartier divisor 
\[
  (K_\mathcal{X}+\Gamma)_t = (1-a)(K_{\mathcal{X}}+\Delta)_t + a (K_{\mathcal{X}} + \Delta_0)_t
\]
is big.
Take an index $0<i$ such that $\Gamma$ is contained in the face $F_i$.
Then we may find a real number $\epsilon > 0$ such that 
\[\Theta : = \Gamma + (1+\epsilon)(\Delta_i - \Gamma)\]
is contained in $F_i$.
We note that $(K_{\mathcal{X}}+\Theta)_t$ is pseudo-effective by the inclusion $F_i \subseteq \mathcal{E} \subseteq \mathcal{E}_t$.
Combining this with the bigness of $(K_{\mathcal{X}}+\Gamma)_t$, we conclude that 
  \[
    (K_{\mathcal{X}}+\Delta_i)_t = \frac{(K_{\mathcal{X}}+\Theta)_t + \epsilon(K_{\mathcal{X}}+ \Gamma)_t}{1+\epsilon}\]
    is also big, which is a contradiction.
  \end{proof}

\subsection{Finiteness of models II --relative case--}

\begin{thm}\label{finiteness of minimal model relative}
  With notation as in Setting \ref{relative setting}, there exist an open dense subset $U \subseteq T$ and finitely many birational contractions 
  \[\Phi_i : \mathcal{X}_U : =\pi^{-1}(U) \dashrightarrow \mathcal{Z}_i\] 
  over $U$ with the following property:
  For every $\Delta \in \mathcal{C}$ with $K_{\mathcal{X}}+\Delta$ pseudo-effective over $T$, there is an index $i$ such that for every $t \in U$, the restriction
    \[(\Phi_i)_t : \mathcal{X}_t \dashrightarrow (\mathcal{Z}_i)_t\] 
    is $(K_{\mathcal{X}}+\Delta)_t$-negative semiample model of $(K_{\mathcal{X}}+\Delta)_t$.
  \end{thm}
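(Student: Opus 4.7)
The plan is to reduce the statement to Proposition \ref{finiteness of minimal model} on the generic fiber $\mathcal{X}_\eta$, spread the resulting finite family out over an open subset of $T$, and then use a convexity argument on the rational polytope $\mathcal{C}$. First, since $K(T)$ is $F$-finite of characteristic $p>5$, Proposition \ref{finiteness of minimal model} applied to $\mathcal{X}_\eta$, the big divisor $A_\eta$, and the rational polytope $\mathcal{C}_\eta$ obtained by flat pullback yields finitely many birational contractions
\[
\psi_i:\mathcal{X}_\eta\dashrightarrow Z_i\qquad (i=1,\dots,n)
\]
over $\Spec K(T)$ such that whenever $K_\mathcal{X}+\Delta$ is pseudo-effective over $T$, some $\psi_i$ is a log minimal model of $(\mathcal{X}_\eta,\Delta_\eta)$. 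Any such $\psi_i$ is $K_{\mathcal{X}_\eta}+\Delta_\eta$-negative by definition, and by Lemma \ref{rmk on wlc} (using that $A_\eta$ is big) it is simultaneously a semiample model.

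Using Lemmas \ref{spreading out schemes} and \ref{spreading out morphisms}, each $\psi_i$ spreads out after shrinking $T$ to an open dense $U$: we obtain schemes $\mathcal{Z}_i$ projective over $U$ and birational maps $\Phi_i:\mathcal{X}_U\dashrightarrow\mathcal{Z}_i$ over $U$ with $(\Phi_i)_\eta=\psi_i$, and after a further shrinking we may arrange the geometric-integrality and fiberwise-normality hypotheses of Lemma \ref{model and fiber}. To get uniformity over $\mathcal{C}$, for each $i$ let $\mathcal{V}_i^\circ\subseteq\mathcal{C}$ be the subset of $\Delta$ for which $\psi_i$ is a log minimal model of $(\mathcal{X}_\eta,\Delta_\eta)$. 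By Corollary \ref{rational polytope} the weak lc model locus $\mathcal{W}_i$ is a rational polytope, and the extra log-minimality constraint--strict positivity of the coefficient of each $\psi_i$-exceptional divisor in $p^*(K_\mathcal{X}+\Delta)-q^*\psi_{i,*}(K_\mathcal{X}+\Delta)$ for a fixed common resolution $p,q$ of $\psi_i$--is cut out from $\mathcal{W}_i$ by finitely many strict rational linear inequalities. Hence the closure $\overline{\mathcal{V}_i^\circ}\subseteq\mathcal{W}_i$ is again a rational polytope; pick its vertices $\Delta_{i,1},\dots,\Delta_{i,m_i}$ together with a single rational point $\Delta_i^*$ in the relative interior of $\overline{\mathcal{V}_i^\circ}$.

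For each of these finitely many chosen points Lemma \ref{model and fiber}(1) lets us shrink $U$ so that for every $t\in U$, the restriction $(\Phi_i)_t$ is a $(K_\mathcal{X}+\Delta_{i,j})_t$-non-positive semiample model and additionally $(K_\mathcal{X}+\Delta_i^*)_t$-negative; since only finitely many shrinkings occur, $U$ remains open dense. Given an arbitrary $\Delta\in\mathcal{V}_i^\circ$, write $\Delta=\epsilon\Delta_i^*+(1-\epsilon)\sum_j\lambda_j\Delta_{i,j}$ with $\epsilon\in(0,1]$, $\lambda_j\ge 0$, $\sum_j\lambda_j=1$. Remark \ref{convexity}, applied iteratively with the $\epsilon$-weighted strictly negative summand playing the distinguished role, shows that $(\Phi_i)_t$ is $(K_\mathcal{X}+\Delta)_t$-negative; the pushforward $(\Phi_i)_{t,*}(K_\mathcal{X}+\Delta)_t$ is a non-negative $\R$-linear combination of the semiample divisors $(\Phi_i)_{t,*}(K_\mathcal{X}+\Delta_{i,j})_t$ and $(\Phi_i)_{t,*}(K_\mathcal{X}+\Delta_i^*)_t$, so it is also semiample. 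Since $\bigcup_i\mathcal{V}_i^\circ$ covers every pseudo-effective $\Delta$, the theorem follows. The main obstacle is establishing the rational polytope structure of $\overline{\mathcal{V}_i^\circ}$ so that the uniform statement reduces to finitely many vertex-type verifications; the auxiliary interior point $\Delta_i^*$ is needed precisely to exploit Remark \ref{convexity}, which demands that at least one summand supply strict negativity.
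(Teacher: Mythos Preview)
Your overall strategy matches the paper's: apply Proposition \ref{finiteness of minimal model} on the generic fiber, spread out, and use convexity to pass from finitely many sample points to all of $\mathcal{C}$. The gap is in the last step. You claim that any $\Delta\in\mathcal{V}_i^\circ$ can be written as $\epsilon\Delta_i^*+(1-\epsilon)\sum_j\lambda_j\Delta_{i,j}$ with $\epsilon>0$, but this fails whenever $\Delta$ lies on the boundary of $\overline{\mathcal{V}_i^\circ}$. And $\mathcal{V}_i^\circ$ \emph{does} meet the boundary of its closure: the negativity condition only imposes strict inequalities $a_E>0$ for the finitely many $\phi_i$-exceptional $E$, whereas $\overline{\mathcal{V}_i^\circ}$ (which in fact equals $\mathcal{W}_i$ whenever $\mathcal{V}_i^\circ\neq\emptyset$) has boundary faces coming from the nef condition and from $\partial\mathcal{C}$, on which those strict inequalities may well continue to hold. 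For such a boundary point $\Delta$, the ray from $\Delta_i^*$ through $\Delta$ exits the polytope at $\Delta$, so $(\Delta-\epsilon\Delta_i^*)/(1-\epsilon)\notin\overline{\mathcal{V}_i^\circ}$ for every $\epsilon>0$.

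The paper's remedy is precisely to refine your single-polytope picture into a face decomposition. Because each hyperplane $\{a_E=0\}$ is supporting for $\mathcal{W}_i$, the locus $\mathcal{M}_i=\mathcal{W}_i\cap\{a_E>0\ \forall E\}$ decomposes as a disjoint union $\coprod_j\RInt(F_{i,j})$ of relative interiors of faces of $\mathcal{W}_i$. One then chooses, for each face $F_{i,j}$, its vertices together with a rational point $\Delta_{i,j}^{(0)}\in\RInt(F_{i,j})$, and shrinks $U$ for this finite collection. The crucial point is that if $\Delta\in\RInt(F_{i,j})$ then so is $\Delta_{i,j}^{(0)}$, so for small $\epsilon>0$ the point $(\Delta-\epsilon\Delta_{i,j}^{(0)})/(1-\epsilon)$ stays in $F_{i,j}$ and is a convex combination of its vertices; Remark \ref{convexity} then applies exactly as you intended. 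Your argument becomes correct once you replace the single polytope $\overline{\mathcal{V}_i^\circ}$ with this stratification by faces.
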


\begin{proof}
By Proposition \ref{finiteness of minimal model}, there are finitely many birational contractions 
\[\phi_i : \mathcal{X}_{\eta} \dashrightarrow Z_i\] such that for any divisor $\Delta \in \mathcal{C}$ with $K_{\mathcal{X}}+\Delta$ pseudo-effective over $T$, there exists an index $i$ such that $\phi_i$ is a log minimal model of $(\mathcal{X}_{\eta}, \Delta_{\eta})$.
Noting that $\phi_i$ is surjective in codimension one and $\mathcal{X}_{\eta}$ is smooth over $K(T)$ in codimension one, $Z_i$ is also geometrically normal over $K(T)$.

By Lemma \ref{spreading out schemes}, we may take an open dense subset $U \subseteq T$ and a normal variety $\mathcal{Z}_i$ projective over $T$ such that $(\mathcal{Z}_i)_{\eta} = Z_i$.
Moreover, the birational map $\phi_i$ defines a birational map $\Phi_i : \mathcal{X}_U \dashrightarrow \mathcal{Z}_i$ over $U$ such that $\Phi_\eta =\phi$.

For every $i$, we set 
\begin{align*}
\mathcal{W}_i &: = \{\Delta \in \mathcal{C} \mid \phi_i \textup{ is a weak lc model of $(\mathcal{X}_{\eta}, \Delta_{\eta})$}\},\textup{and} \\
\mathcal{M}_i & : =\{ \Delta \in \mathcal{C} \mid \phi_i \textup{ is a $(K_{\mathcal{X}}+\Delta)_{\eta}$-negative semiample model of $(K_{\mathcal{X}}+\Delta)_{\eta}$}\}.
\end{align*}
By Lemma \ref{rmk on wlc}, Corollary \ref{rational polytope} and Remark \ref{convexity}, there exist faces $F_{i,1} , \dots, F_{i,\ell_i}$ of $\mathcal{W}_i$ such that 
\[
  \mathcal{M}_i = \coprod_{j=1}^{\ell_i} \RInt(F_{i,j}).
\]
For each $i,j$, take a finite subset $\{\Delta_{i,j}^{(1)}, \dots, \Delta_{i,j}^{(a_{i,j})}\} \subseteq F_{i,j}$ whose convex hull is $F_{i,j}$ and take an element $\Delta_{i,j}^{(0)} \in \RInt(F_{i,j})$.

We fix $i,j \ge 1$.
By Lemma \ref{model and fiber}, after shrinking $U$, we may assume that for every $t \in U$ and every $\ell \ge 0$, the restriction $(\Phi_i)_t$ is a semiample model of $(K_{\mathcal{X}}+ (\Delta_{i,j}^{(\ell)}))_t$.
Similarly, it again follows from Lemma \ref{model and fiber} that $(\Phi_i)_t$ is $(K_{\mathcal{X}}+ (\Delta_{i,j}^{(0)}))_t$-negative for every $t \in U$.
Therefore, $(\Phi_i)_t$ is $(K_{\mathcal{X}}+\Delta)_t$-negative semiample model of $(K_{\mathcal{X}}+\Delta)_t$ for every $\Delta \in \RInt(F_{i,j})$, as desired. 
\end{proof}
  
\begin{thm}[cf. \textup{\cite[Lemma 9.1]{HMX2}}]\label{finiteness of lc model relative}
With notation as in Setting \ref{relative setting}, there exist an open dense subset $U \subseteq T$ and finitely many birational contractions 
\[\Phi_i : \mathcal{X}_U : =\pi^{-1}(U) \dashrightarrow \mathcal{Z}_i\] 
over $U$ with the following property:
For every $\Delta \in \mathcal{C}$ with $K_{\mathcal{X}}+\Delta$ big over $T$, there is an index $i$ such that for every $t \in U$, the restriction 
\[(\Phi_i)_t : \mathcal{X}_t \dashrightarrow (\mathcal{Z}_i)_t
\]
is the ample model of $(K_{\mathcal{X}}+\Delta)_t$.
\end{thm}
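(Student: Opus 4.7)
The strategy is to reduce to the absolute case (Proposition \ref{finiteness of lc model}) on the generic fiber and then spread out. By Theorem \ref{finiteness of minimal model relative}, after shrinking $T$ to an open dense $U$, we obtain finitely many birational contractions $\Phi_i : \mathcal{X}_U \dashrightarrow \mathcal{Z}_i$ with the stated fiberwise semiample-model property; let $\mathcal{C}_i \subseteq \mathcal{C}$ collect those $\Delta$ for which $\Phi_i$ satisfies this conclusion. On the generic fiber, each $(\Phi_i)_\eta$ is a weak lc model of $(\mathcal{X}_\eta, \Delta_\eta)$ for every $\Delta \in \mathcal{C}_i$, so by Lemma \ref{rmk on wlc}(a) (using the $\pi$-bigness of $A$ together with $A \le \Delta$) it is a semiample model; when in addition $(K_{\mathcal{X}}+\Delta)_\eta$ is big, the induced semiample fibration yields a birational morphism $g_{i,\Delta} : (\mathcal{Z}_i)_\eta \to W_{i,\Delta}$ such that $g_{i,\Delta}\circ (\Phi_i)_\eta$ is the ample model of $(K_{\mathcal{X}}+\Delta)_\eta$ over $K(T)$.

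The next step mirrors the proof of Proposition \ref{finiteness of lc model}: by Proposition \ref{Cone theorem} and Corollary \ref{rational polytope} applied over the $F$-finite field $K(T)$ of characteristic $p>5$, the set $\mathcal{N}((\Phi_i)_{\eta,*}\mathcal{C}_i)$ is a rational polytope with finitely many faces, and the rigidity lemma shows that $g_{i,\Delta}$ depends only on the smallest face $F$ of $\mathcal{N}((\Phi_i)_{\eta,*}\mathcal{C}_i)$ containing $(\Phi_i)_{\eta,*}(K_{\mathcal{X}}+\Delta)_\eta$. Denote the resulting finite family by $g_{i,F} : (\mathcal{Z}_i)_\eta \to W_{i,F}$. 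For each pair $(i,F)$ pick a $\Q$-divisor $\Delta_{i,F}^{(0)} \in \mathcal{C}_i$ whose pushforward lies in $\RInt(F)$, together with $\Q$-divisors $\Delta_{i,F}^{(1)},\dots,\Delta_{i,F}^{(a_{i,F})} \in \mathcal{C}_i$ whose pushforwards generate $F$. By Lemmas \ref{spreading out schemes} and \ref{spreading out morphisms}, after a further shrinking of $U$, each $g_{i,F}$ extends to a $U$-morphism $G_{i,F} : \mathcal{Z}_i \to \mathcal{W}_{i,F}$, and we set $\widetilde{\Phi}_{i,F} := G_{i,F}\circ \Phi_i$.

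Finally, applying Lemma \ref{model and fiber} to each of the finitely many triples $(i,F,\ell)$ and shrinking $U$ once more, we arrange that for every $t \in U$, $(\widetilde{\Phi}_{i,F})_t$ is the ample model of $(K_{\mathcal{X}}+\Delta_{i,F}^{(0)})_t$ and a semiample model of each $(K_{\mathcal{X}}+\Delta_{i,F}^{(\ell)})_t$, $\ell \ge 1$. For an arbitrary $\Delta \in \mathcal{C}_i$ with $K_{\mathcal{X}}+\Delta$ big over $T$, letting $F$ be the smallest face containing $(\Phi_i)_{\eta,*}(K_{\mathcal{X}}+\Delta)_\eta$, one writes $\Delta = \epsilon \Delta_{i,F}^{(0)} + (1-\epsilon)\sum_{\ell \ge 1} a_\ell \Delta_{i,F}^{(\ell)}$ with $\epsilon > 0$ and $a_\ell \ge 0$, $\sum a_\ell = 1$ (possible by polytope geometry, since $\Delta_{i,F}^{(0)}$ lies in the relative interior of the relevant subpolytope of $\mathcal{C}_i$); Remark \ref{convexity} then yields $(K_{\mathcal{X}}+\Delta)_t$-non-positivity of $\widetilde{\Phi}_{i,F}$ on every fiber $t \in U$, and the pushforward is $\epsilon$ times an ample divisor plus a nef divisor, hence ample, exhibiting $(\widetilde{\Phi}_{i,F})_t$ as the required ample model. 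The principal obstacle is precisely this uniformity over $U$: since $\mathcal{C}_i$ is uncountable one cannot shrink $U$ separately for each $\Delta$, so it is essential to reduce both the rigidity argument and the invocation of Lemma \ref{model and fiber} to the finite list of face generators $\Delta_{i,F}^{(\ell)}$ before spreading out.
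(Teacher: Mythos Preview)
Your overall strategy is sound and closely parallels the paper's, but there is a genuine gap in the final interpolation step. You chose $\Delta_{i,F}^{(1)},\dots,\Delta_{i,F}^{(a_{i,F})}\in\mathcal{C}_i$ merely so that their \emph{pushforwards} generate the face $F\subseteq\mathcal{N}((\Phi_i)_{\eta,*}\mathcal{C}_i)$, and then claimed that an arbitrary $\Delta\in\mathcal{C}_i$ with pushforward in $\RInt(F)$ can be written as a convex combination of $\Delta_{i,F}^{(0)}$ and the $\Delta_{i,F}^{(\ell)}$. This is false in general: the linear map $(\Phi_i)_{\eta,*}$ may have nontrivial fibers, so lifts of the vertices of $F$ need not span the preimage polytope $\{\Delta\in\mathcal{C}_i : (\Phi_i)_{\eta,*}(K_{\mathcal{X}}+\Delta)_\eta\in F\}$. (Concretely: if $\mathcal{C}_i$ is a square and the pushforward is projection to one coordinate, two lifts of the endpoints of $F$ cannot capture points with a different second coordinate.) Without this convex decomposition, Remark~\ref{convexity} does not yield $(K_{\mathcal{X}}+\Delta)_t$-non-positivity of $(\widetilde{\Phi}_{i,F})_t$.

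The fix is to take the face decomposition in $\mathcal{C}$ rather than in the pushforward. The paper does this more directly: it invokes Proposition~\ref{finiteness of lc model} on the generic fiber to obtain finitely many lc models $\phi_i:\mathcal{X}_\eta\dashrightarrow Z_i$, spreads them out to $\Phi_i$, and then sets $\mathcal{A}_i:=\{\Delta\in\mathcal{C}\mid\phi_i\text{ is the ample model of }(K_{\mathcal{X}}+\Delta)_\eta\}$. Exactly as in the proof of Theorem~\ref{finiteness of minimal model relative}, $\mathcal{A}_i$ is a disjoint union of relative interiors of faces $F_{i,j}$ of the rational polytope $\mathcal{W}_{\phi_i}(\mathcal{C})\subseteq\mathcal{C}$; one then picks the $\Delta_{i,j}^{(\ell)}$ as vertices of $F_{i,j}$ together with an interior point, and the convex combination $\Delta=\epsilon\Delta_{i,j}^{(0)}+(1-\epsilon)\Delta'$ with $\Delta'\in F_{i,j}$ is now legitimate polytope geometry inside $\mathcal{C}$. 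Your route through Theorem~\ref{finiteness of minimal model relative} followed by semiample fibrations can be salvaged the same way (choose the $\Delta_{i,F}^{(\ell)}$ as vertices of the preimage polytope in $\mathcal{C}_i$), but the paper's approach is shorter since Proposition~\ref{finiteness of lc model} already packages the passage to lc models.
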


\begin{proof}
By Proposition \ref{finiteness of lc model}, there are finitely many birational contractions 
\[\phi_i : \mathcal{X}_{\eta} \dashrightarrow Z_i\] such that for any divisor $\Delta \in \mathcal{C}$ with $K_{\mathcal{X}}+\Delta$ big over $T$, there exists an index $i$ such that $\phi_i$ is the lc model for $(\mathcal{X}_{\eta}, \Delta_{\eta})$.
As in the proof of Theorem \ref{finiteness of minimal model relative}, we may take an open dense subset $U \subseteq T$ and birational contractions $\Phi_i : \mathcal{X}_U \dashrightarrow \mathcal{Z}_i$ over $U$ such that $\Phi_\eta =\phi$.
Then the rest of the proof is similar to that of Theorem \ref{finiteness of minimal model relative} by considering
\begin{align*}
\mathcal{A}_i & : =\{ \Delta \in \mathcal{C} \mid \phi_i \textup{ is the ample model of $(K_{\mathcal{X}}+\Delta)_{\eta}$} \}
\end{align*}
instead of $\mathcal{M}_i$.
\end{proof}

\section{Boundedness of weak Fano varieties}\label{sec7}

In this section, we first consider the behavior of N\'{e}ron-Severi groups of fibers and prove a result about constancy of them (Proposition \ref{constancy of NS}).
We next give a sufficient condition for general closed fibers to be $\Q$-factorial (Proposition \ref{Q-factoriality on fibers}).
As a consequence, we prove Theorem \ref{E} (Theorem \ref{Bir bdd to bdd}) and Theorem \ref{A} (Theorem \ref{bdd of weak Fano}).
\subsection{N\'{e}ron-Severi groups of fibers}

Let $T$ be a Noetherian scheme and $\mathcal{X}$ be a proper $T$-scheme.
Then the \emph{N\'{e}ron-Severi group}
\[
  N^1(\mathcal{X}/T) : = (\Pic(\mathcal{X})/\equiv_T) \otimes_{\Z} \R
\]
is a finitely dimensional $\R$-vector space (cf.~\cite[IV, \S 4]{Kle}).
We denote by $\rho(\mathcal{X}/T) : = \dim_{\R}(N^1(\mathcal{X}/T))$ the \emph{Picard number} of $\mathcal{X}/T$.
When $T$ is the spectrum of a field, we simply write $N^1(\mathcal{X})$ and $\rho(\mathcal{X})$.

\begin{lem}\label{injection on NS}
Let $f : \mathcal{X} \to T$ be a flat projective morphism with normal and connected fibers between normal varieties over an algebraically closed field $k$, $t \in T$ be a point and $\ell$ be an algebraic extension of the residue field $\kappa(t)$.
We further assume that $T$ is smooth and $H^1(\mathcal{X}_t, \sO_{\mathcal{X}_t})=0$.
Then the restriction map
  \[
  \mathrm{res}_{\ell}:  N^1 (\mathcal{X}/T) \to N^1(\mathcal{X}_{\ell})\]
  is injective.
\end{lem}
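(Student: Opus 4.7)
The plan is to reduce to injectivity of $N^1(\mathcal{X}/T) \to N^1(\mathcal{X}_{\kappa(t)})$ and then exploit the rigidity of the relative Picard scheme forced by the hypothesis $H^1(\mathcal{X}_t, \sO_{\mathcal{X}_t}) = 0$.

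First I would observe that $N^1(\mathcal{X}_{\kappa(t)}) \to N^1(\mathcal{X}_\ell)$ is injective for any algebraic extension $\ell/\kappa(t)$: flat base change preserves the intersection product of a line bundle with a curve (up to degrees of residue-field extensions on irreducible components), so a class on $\mathcal{X}_{\kappa(t)}$ vanishing after extension to $\ell$ already pairs to zero with every curve on $\mathcal{X}_{\kappa(t)}$. This reduces the assertion to the case $\ell = \kappa(t)$.

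Next, upper semicontinuity of $s \mapsto h^1(\mathcal{X}_s, \sO_{\mathcal{X}_s})$ together with Grauert's theorem lets me shrink $T$ to an open neighborhood of $t$ on which $R^1 f_*\sO_{\mathcal{X}} = 0$, and hence $H^1(\mathcal{X}_s, \sO_{\mathcal{X}_s}) = 0$ for every $s \in T$. This replacement is harmless because the triviality of a class in $N^1(\mathcal{X}/T)$ is detected on the generic fiber $\mathcal{X}_\eta$, and $\eta$ is unaffected by shrinking; so it suffices to show that a line bundle $\mathcal{L}$ on $\mathcal{X}$ with $\mathcal{L}_t$ numerically trivial on $\mathcal{X}_t$ has $\mathcal{L}_\eta$ numerically trivial on $\mathcal{X}_\eta$.

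The main step is a Picard-scheme argument. The relative Picard scheme $P := \mathrm{Pic}_{\mathcal{X}/T}$ is representable as a scheme locally of finite type over $T$ (Grothendieck's theorem, using that $f$ is projective flat with geometrically integral fibers). Its tangent space along the identity section over $s$ is $H^1(\mathcal{X}_s, \sO_{\mathcal{X}_s})$, so by the vanishing above the identity component $P^0$ is \'etale over $T$; consequently the union $P^\tau$ of irreducible components of $P$ whose image in the component sheaf $P/P^0$ is torsion forms an open-and-closed subscheme of $P$. A line bundle $\mathcal{L}$ on $\mathcal{X}$ determines a classifying morphism $\sigma_\mathcal{L}: T \to P$ (well-defined up to pullback from $T$, which is immaterial), and the hypothesis that $\mathcal{L}_t$ is numerically trivial amounts to $\sigma_\mathcal{L}(t) \in P^\tau$. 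Since $T$ is smooth, hence irreducible and connected, $\sigma_\mathcal{L}$ factors through $P^\tau$, so $\mathcal{L}_\eta$ is numerically trivial on $\mathcal{X}_\eta$, giving $[\mathcal{L}] = 0$ in $N^1(\mathcal{X}/T)$.

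The main technical hurdle will be marshalling the Picard-scheme facts in our setting (positive characteristic, normal but possibly singular fibers): representability of $P$ as a scheme, \'etaleness of $P^0$ under $R^1 f_*\sO = 0$, and above all the open-and-closedness of $P^\tau$ in $P$ over the family. The last point is the subtlest because ``torsion in the component sheaf'' is \emph{a priori} a fiberwise condition that must be globalized; this is where \'etaleness of $P^0$ is crucial, since it forces $P^\tau$ to be itself \'etale and quasi-finite over $T$, and hence a union of connected components of $P$. The Leray spectral sequence for $\mathbb{G}_m$ also enters to identify the section $\sigma_\mathcal{L}$ coming from $\mathcal{L} \in \mathrm{Pic}(\mathcal{X})$ with an actual $T$-valued point of $P$.
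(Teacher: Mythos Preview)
Your approach via the relative Picard scheme is viable but genuinely different from the paper's, which is entirely elementary and never invokes $\mathrm{Pic}_{\mathcal{X}/T}$. The paper argues as follows: since $H^1(\mathcal{X}_\ell,\sO_{\mathcal{X}_\ell})=0$, a numerically trivial line bundle on $\mathcal{X}_\ell$ has some power that is actually \emph{principal} (numerically trivial $\Rightarrow$ a power lies in $\mathrm{Pic}^0$, and $\mathrm{Pic}^0$ is trivial when $H^1(\sO)=0$; cf.\ \cite[Cor.~1.4.38]{Laz} and \cite[Exc.~III.12.6]{Har}). A pushforward along a finite subextension $\ell'/\kappa(t)$ then shows $dmD_t$ is principal on $\mathcal{X}_t$, so $dmD$ on $\mathcal{X}$ is linearly equivalent to a Cartier divisor $E$ with $E_t=0$, whence $f(\Supp E)\neq T$. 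Now the remaining hypotheses do the work: flatness plus normal \emph{connected} fibers forces each irreducible component $E_i$ of $E$ to equal $f^*F_i$ for a prime divisor $F_i\subset T$ (here smoothness of $T$ makes $F_i$ Cartier), so $D\sim_{\Q}f^*(\text{divisor on }T)$ and $[D]=0$ in $N^1(\mathcal{X}/T)$. Every hypothesis is visibly used, and the conclusion is in fact stronger than needed ($\Q$-linear triviality modulo pullback, not just relative numerical triviality).

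Your route is essentially the one behind the general statement (cf.\ \cite[Prop.~1.4.37]{Laz}, quoting SGA~6, Exp.~XIII) that numerical triviality of a line bundle in a flat projective family over a connected base is an all-or-nothing condition; granting that, the lemma follows without ever using $H^1(\sO)=0$. One correction to your sketch: the chain ``$P^0$ \'etale $\Rightarrow$ $P^\tau$ \'etale and quasi-finite over $T$ $\Rightarrow$ $P^\tau$ is a union of connected components of $P$'' does not hold as written---an open, \'etale, quasi-finite $T$-subscheme of $P$ need not be closed in $P$. What actually gives closedness is the SGA~6 package (open\-ness and finite type of $\mathrm{Pic}^\tau$, plus properness over the base), and once you cite that, the shrinking of $T$ and the \'etaleness of $P^0$ become unnecessary. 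So the trade-off is: the paper's argument is self-contained and explains exactly why each hypothesis is there, while yours is shorter if one is willing to import the SGA~6 black box, at the cost of obscuring the role of ``$T$ smooth'' and ``fibers normal and connected'' (which in your approach serve only to secure representability).
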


\begin{proof}
Take a Cartier divisor $D$ on $\mathcal{X}$ such that the restriction $D_{\ell}$ is numerically trivial.
It follows from \cite[Corollary 1.4.38]{Laz} and \cite[Excercise III. 12.6]{Har} that there exists an integer $m>0$ such that $mD_{\ell}$ is principal.
Take a subfield $\ell' \subseteq \ell$ which is finite over $\kappa(t)$ such that the defining equation 
\[
  r \in K(\mathcal{X}_{\ell}) = K(\mathcal{X}_t) \otimes_{\kappa(t)} \ell
\]
of $mD_{\ell}$ lies in $K(\mathcal{X}_{\ell'})$,
It then follows from \cite[Proposition 1.4]{Ful} that $dmD_t$ is principal, where $d$ is the extension degree $[\ell':\kappa(t)]$.
Therefore, $dmD$ is linearly equivalent to a Cartier divisor $E$ such that $E_t=0$.
In particular, we have $f(\Supp(E)) \neq T$.

Let $E=\sum_{i} a_i E_i$ be the irreducible decomposition.
It then follows from assumptions of $f$ that $F_i: = f(E_i)$ is a prime divisor on $T$ and $E_i = f^* F_i$.
Therefore, we conclude that $D$ is $\Q$-linearly equivalent to $f^*(\sum_{i=1}^n \frac{a_i}{dm}F_i )$, as desired.
\end{proof}

\begin{prop}\label{constancy of NS}
Let $f : \mathcal{X} \to T$ be a surjective projective morphism with normal and connected fibers between normal varieties over an algebraically closed field $k$.
We further assume that $H^1(\mathcal{X}_t, \sO_{\mathcal{X}_t})=H^2(\mathcal{X}_t, \sO_{\mathcal{X}_t})=0$ for general $t \in T$.
Then there exists a dominant morphism $\mu: S \to T$ from a smooth variety $S$ such that the restriction map
\[
  \mathrm{res}_s: N^1 (\mathcal{X}_S/S) \to N^1((\mathcal{X}_S)_s)\]
  is isomorphic for every closed point $s \in S$, where $\mathcal{X}_S $ is the base change $\mathcal{X} \times_T S$ of $\mathcal{X}$.
\end{prop}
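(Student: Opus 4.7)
My strategy is to shrink $T$ and then pass to a connected étale cover $S \to T$ that spreads out a basis of the N\'{e}ron--Severi group of the geometric generic fiber, using the relative Picard scheme.

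\textbf{Step 1 (reductions).} After shrinking $T$, I may assume $T$ is smooth, $f$ is flat with geometrically normal and connected fibers, and $H^i(\mathcal{X}_t, \sO_{\mathcal{X}_t})=0$ for $i=1,2$ and every closed $t \in T$, by upper semicontinuity. Then $R^i f_* \sO_{\mathcal{X}}=0$ for $i=1,2$ by cohomology and base change.

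\textbf{Step 2 (Picard is étale).} Under these hypotheses the relative Picard functor is representable by a group algebraic space $P=\mathrm{Pic}_{\mathcal{X}/T}$ locally of finite type over $T$ (Artin representability). The Zariski tangent space to $P$ at the identity of $P_{\bar t}$ is $H^1(\mathcal{X}_{\bar t}, \sO)=0$, so $P \to T$ is unramified; deformations of line bundles on $\mathcal{X}_{\bar t}$ are unobstructed since obstructions lie in $H^2(\mathcal{X}_{\bar t}, \sO)=0$, so $P \to T$ is formally smooth. Hence $P \to T$ is étale, is therefore a scheme, and its identity component is the zero section.

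\textbf{Step 3 (spreading out).} Fix a geometric generic point $\bar \eta \to T$ and line bundles $L_1, \ldots, L_\rho$ on $\mathcal{X}_{\bar \eta}$ whose classes form an $\R$-basis of $N^1(\mathcal{X}_{\bar \eta})$. Each $L_i$ corresponds to a $\bar \eta$-point of $P$ lying on a connected component $P_i \subseteq P$, and $P_i \to T$ is étale and dominant. I pick a smooth connected variety $S$ with a dominant étale morphism $\mu : S \to T$ such that every $P_i \times_T S \to S$ admits a section $\sigma_i$ with $\sigma_i(\bar \eta_S) = L_i$, and such that each $\sigma_i$ is represented by an honest line bundle $\mathcal{L}_i \in \mathrm{Pic}(\mathcal{X}_S)$; the representability step may require a further étale refinement of $S$, which I absorb into $S$.

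\textbf{Step 4 (conclusion) and main obstacle.} Injectivity of $\mathrm{res}_s$ is Lemma \ref{injection on NS} applied to $\mathcal{X}_S \to S$. The classes $[\mathcal{L}_i]$ are $\R$-linearly independent in $N^1(\mathcal{X}_S/S)$ since their restrictions to $\mathcal{X}_{\bar \eta_S} = \mathcal{X}_{\bar \eta}$ are. Étaleness of $P \to S$ over connected $S$ forces the subgroup of $\mathrm{NS}((\mathcal{X}_S)_s)$ generated by $[\mathcal{L}_i|_{(\mathcal{X}_S)_s}]$ to have rank independent of $s$, equal to the generic value $\rho$; hence these classes form a basis of $N^1((\mathcal{X}_S)_s)$ for every closed $s \in S$, so $\mathrm{res}_s$ is also surjective. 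The principal subtlety is verifying that $H^2(\sO)=0$ really implies formal smoothness of $\mathrm{Pic}_{\mathcal{X}/T}$ in characteristic $p>0$ (free of Frobenius pathologies), together with the bookkeeping needed to promote a section of the Picard sheaf to an actual line bundle on $\mathcal{X}_S$.
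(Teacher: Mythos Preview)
Your approach is valid and genuinely different from the paper's. The paper does not invoke the relative Picard functor at all: instead it bounds the Picard numbers of fibers via \cite{CJLO}, sets $r(\mathcal{X}/T)=\max_t\rho(\mathcal{X}_t)-\rho(\mathcal{X}/T)$, and argues by induction on $r$. When $r>0$ it picks a closed point $t$ where $\mathrm{res}_t$ is not surjective, lifts a missing line bundle to $\mathcal{X}\times_T\Spec(\widehat{\sO_{T,t}})$ by formal deformation theory (using $H^2=0$ via \cite{Vis}), spreads this out to a finite-type $A$-subalgebra $B\subseteq\widehat{\sO_{T,t}}$, and replaces $T$ by $S=\Spec B$; this strictly increases $\rho(\mathcal{X}_S/S)$ and the induction terminates. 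Your route via \'etaleness of $\mathrm{Pic}_{\mathcal{X}/T}$ is more conceptual, avoids the induction and the external boundedness input, and yields the bonus that $S\to T$ can be taken \'etale rather than merely dominant. The paper's route, on the other hand, avoids Artin representability and the Brauer-type bookkeeping entirely.

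One point in Step~4 is too compressed to stand as written. You assert that the $[\mathcal{L}_i|_s]$ span a rank-$\rho$ subgroup of $\mathrm{NS}((\mathcal{X}_S)_s)$ and then write ``hence these classes form a basis''; but linear independence alone does not give surjectivity --- you still need $\rho((\mathcal{X}_S)_s)\le\rho$. This does follow from \'etaleness of $P$, but the mechanism should be spelled out: given any $M\in\Pic((\mathcal{X}_S)_s)$, \'etaleness of $P\to S$ produces an \'etale neighbourhood $S'\to S$ of $s$ and a line bundle $\mathcal{M}$ on $\mathcal{X}_{S'}$ restricting to $M$; its class on the geometric generic fibre (unchanged under \'etale base change) lies in the span of the $L_i$, and then Lemma~\ref{injection on NS} applied on $S'$ forces $\mathcal{M}-\sum a_i\mathcal{L}_i$ to be numerically trivial over $S'$, hence at $s$. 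Without this paragraph the surjectivity claim is unsupported. (Your worry about Frobenius pathologies is, by contrast, unnecessary: the deformation--obstruction theory of line bundles is governed by $H^1$ and $H^2$ of $\sO$ in any characteristic, so $H^1=H^2=0$ cleanly gives that $P\to T$ is \'etale.)
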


\begin{proof}
After shrinking $T$, we may assume that $T=\Spec (A)$ is smooth, $f$ is flat and we have $H^1(\mathcal{X}_t, \sO_{\mathcal{X}_t})=  H^2(\mathcal{X}_t, \sO_{\mathcal{X}_t})=0$ for every $t \in T$.
We first note that the set of Picard ranks $\{\rho(\mathcal{X}_t)\}_{t \in T}$ is bounded by \cite[Theorem 7.7]{CJLO}.
We will prove the assertion by induction on 
\[
  r(\mathcal{X}/T) : = \max_{t \in T(k)} \rho(\mathcal{X}_t) - \rho(\mathcal{X}/T).\]
It follows from Lemma \ref{injection on NS} that we have $r(\mathcal{X}/T) \ge 0$ and that if $r(\mathcal{X}/T)=0$, then the assertion holds true by putting $S=T$.

We assume $r(\mathcal{X}/T) >0$.
Take a closed point $t \in T$ such that the injective morphism
\[
  \mathrm{res}_t : N^1(\mathcal{X}/T) \to N^1(\mathcal{X}_t)
  \]
is not surjective.
Then we may find an invertible sheaf $L \in \Pic(\mathcal{X}_t)$ whose class $[L] \in N^1(\mathcal{X}_t)$ is not contained in the image of $\mathrm{res}_t$.
It follows from the proof of \cite[Proposition 6.3]{Vis} that there is an invertible sheaf $\mathcal{L}$ on $\mathcal{X} \times_T \Spec(\widehat{\sO_{T,t}})$ such that the restriction $\mathcal{L}_t$ of $\mathcal{L}$ to $\mathcal{X}_t$ is isomorphic to $L$.
By Lemma \ref{spreading out sheaves}, there exists a sub $A$-algebra $B$ in $\widehat{\sO_{T,t}}$ of finite type over $A$ with a coherent sheaf $\mathcal{F}$ on $\mathcal{X} \times_T \Spec(B)$ such that the pullback of $\mathcal{F}$ to $\mathcal{X} \times_T \Spec(\widehat{\sO_{T,t}})$ is isomorphic to $\mathcal{L}$.

After shrinking $S : = \Spec(B)$, we may assume that $S$ is a smooth variety and $\mathcal{F}$ is invertible.
Let $s \in S$ be the image of the unique maximal point of $\Spec(\widehat{\sO_{T,t}})$.
Then the image of
\[\mathrm{res}_s : N^1(\mathcal{X}_S/S) \to N^1((\mathcal{X}_S)_s) \cong N^1(\mathcal{X}_t)
\]
contains both $[L] \in N^1(\mathcal{X}_t)$ and $\Im(\mathrm{res}_t : N^1(\mathcal{X}/T) \to N^1(\mathcal{X}_t))) \subseteq N^1(\mathcal{X}_t)$.
Therefore, we have
\[\rho(\mathcal{X}_S/S) \ge \dim(\Im(\mathrm{res}_s))> \dim (\Im(\mathrm{res}_t)) =\rho(\mathcal{X}/T),\]
where the last equation follows from Lemma \ref{injection on NS}.
Therefore, we have $r(\mathcal{X}_S/S)<r(\mathcal{X}/T)$. 
The assertion now follows from the induction hypothesis.
\end{proof}

\subsection{$\Q$-factoriality of fibers}

Let $\pi:X \to T$ be a projective morphism between normal varieties over an algebraically closed field $k$.
We define 
\[
  N_1(X/T) : = \bigoplus_{C \subseteq X} \R [C]/\equiv_T,
\] 
where $C$ runs through all curves in $X$ over $T$, that is, $C$ is a curve with $\dim (\pi(C))=0$.
We also denote by $\NE(X/T)$ the closed convex cone generated by 
\[
  \{[C] \in N_1(X/T) \mid C \subseteq X \textup{ is a curve over $T$} \}.
\]
We note that $N_1(X/T) \cong N^1(X/T)^*$ and $\NE(X/T)$ is the dual cone of the nef cone $\mathrm{Nef(X/T)}$ (cf.~\cite[Proposition 1.4.28]{Laz}).
Let $Y$ be a normal variety over $k$ which is projective over $T$.
A morphism $f :X \to Y$ over $T$ is an \emph{extremal ray contraction} if $f_*\sO_X \cong \sO_Y$ and the closed convex cone $F$ generated by
\[ 
  \{[C] \in N_1(X/T) \mid C \subseteq X \textup{ is a curve over $Y$} \}
\]
is an extremal ray (cf.~\cite[Definition 1.15]{KM}) of $\NE(X/T)$.

\begin{lem}\label{criterion for extremal ray contraction}
Let $f : X \to Y$ be a projective morphism with $f_* \sO_X = \sO_Y$ between normal projective varieties over an algebraically closed field $k$.
We further assume that $\dim(X)=3$, $\mathrm{ch}(k)>5$ and that there exists an effective $\Q$-Weil divisor $\Delta$ on $X$ such that $(X, \Delta)$ is klt and $-(K_X+\Delta)$ is $f$-ample.
Then the following conditions are equivalent to each other.
\begin{enumerate}[label=\textup{(\alph*)}]
  \item $f$ is an extremal ray contraction.
  \item $\rho(X/T)-\rho(Y/T)=1$.
  \item $\rho(X/Y)=1$.
\end{enumerate}
\end{lem}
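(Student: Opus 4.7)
The plan is to reduce all three conditions to the single equation $\dim F = 1$, where $F$ is the cone defined in (a), by combining general convex geometry with the relative contraction theorem for klt threefolds in characteristic $p > 5$ from \cite[Theorem 1.1]{DW}.

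First, I would show that $F$ is always an extremal face of $\NE(X/T)$. Under $f_*\sO_X = \sO_Y$ and projectivity one has $F = \NE(X/T) \cap \Ker(f_*)$, where $f_* : N_1(X/T) \to N_1(Y/T)$ is the induced pushforward. The face property is then pure convex geometry: if $\alpha + \beta \in F$ with $\alpha, \beta \in \NE(X/T)$, then $f_*\alpha + f_*\beta = 0$ inside the pointed cone $\NE(Y/T)$, forcing $f_*\alpha = f_*\beta = 0$ and hence $\alpha, \beta \in F$. In particular, $F$ is an extremal ray exactly when $\dim F = 1$, giving the equivalence (a) $\Leftrightarrow$ (c).

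Next, the $f$-ampleness of $-(K_X+\Delta)$ places $F \setminus \{0\}$ in the $(K_X+\Delta)$-negative half-space of $N_1(X/T)$. The cone and contraction theorems of \cite[Theorem 1.1]{DW} then supply a morphism $\phi_F : X \to Z$ over $T$ with $(\phi_F)_*\sO_X = \sO_Z$, contracting precisely those curves whose classes lie in $F$, and satisfying $\rho(X/T) = \rho(Z/T) + \dim F$. Since $\phi_F$ and $f$ contract the same set of curves, the rigidity lemma yields a factorization $f = g \circ \phi_F$ with $g : Z \to Y$ a $T$-morphism, and the identity of contracted curves forces $g$ to be quasi-finite, hence finite by properness. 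Combined with $g_*\sO_Z = f_*\sO_X = \sO_Y$ and the normality of $Y$ and $Z$, this makes $g$ an isomorphism. Therefore $\rho(X/Y) = \dim F = \rho(X/T) - \rho(Y/T)$, which completes (b) $\Leftrightarrow$ (c).

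The main obstacle is the identification of $g$ as an isomorphism, for which one first needs the equality $F = \NE(X/T) \cap \Ker(f_*)$ beyond the evident inclusion. The delicate direction says that a pseudo-effective class killed by $f_*$ is in fact a limit of non-negative combinations of fiber curves; one shows this by decomposing representing cycles into a fiber part and a non-fiber part and invoking pointedness of $\NE(Y/T)$ to control the non-fiber contributions. Once this identification and the resulting coincidence of the fiber decompositions of $f$ and $\phi_F$ are in hand, the remainder of the argument is formal.
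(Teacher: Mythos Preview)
Your overall strategy is reasonable, but the argument has a genuine gap at its core. In paragraph~2 you invoke \cite[Theorem 1.1]{DW} to produce a contraction $\phi_F:X\to Z$ of the \emph{face} $F$ together with the relation $\rho(X/T)=\rho(Z/T)+\dim F$. The cone and contraction theorem in \cite{DW} (and \cite{BW}) is stated for extremal \emph{rays}, where it yields $\rho(X/T)-\rho(Z/T)=1$; it does not directly furnish a contraction of a higher-dimensional face with the asserted Picard number drop. Once you later identify $Z$ with $Y$, the relation you need is exactly $\rho(X/T)-\rho(Y/T)=\rho(X/Y)$, i.e.\ exactness of
\[
0\to N^1(Y/T)\xrightarrow{f^*} N^1(X/T)\xrightarrow{\alpha} N^1(X/Y)\to 0
\]
at the middle term. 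The nontrivial inclusion $\Ker(\alpha)\subseteq\Im(f^*)$ is the whole content of (b)$\Leftrightarrow$(c), and it does not come for free from a ray-contraction theorem: one must show that any $L$ with $L\equiv_Y 0$ descends to $Y$. This is what the paper does, by applying the base point free theorem \emph{over $Y$} \cite[Theorem 1.2]{BW} to conclude that such an $L$ is semiample over $Y$ and then using rigidity. Your citation of \cite[Theorem 1.1]{DW} is not a substitute for this step.

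Relatedly, your (a)$\Leftrightarrow$(c) argument leans on the equality $F=\NE(X/T)\cap\Ker(f_*)$, for which the cycle-decomposition sketch in paragraph~3 is not convincing: knowing $f_*\gamma_n^{\mathrm{non}}\to 0$ in $\NE(Y/T)$ does not force $\gamma_n^{\mathrm{non}}\to 0$ in $N_1(X/T)$ without a uniform bound on the horizontal curve classes, which you do not supply. The paper avoids this issue entirely. Once the exact sequence above is established, one identifies $\Ker(f_*)$ with $N_1(X/Y)$ inside $N_1(X/T)$; then $\NE(X/T)\cap\Ker(f_*)$ is an extremal face of dimension $\le\rho(X/Y)$, while $F\cong\NE(X/Y)$ already spans $N_1(X/Y)$. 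Both directions of (a)$\Leftrightarrow$(c) follow immediately from these two facts, with no need to prove that $F$ equals the full intersection.
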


\begin{proof}
We first show that the sequence
\[
  0 \to N^1(Y/T) \xrightarrow{f^*} N^1(X/T) \xrightarrow{\alpha} N^1(X/Y) \to 0.
\]
is exact, where $\alpha$ is the morphism induced by the identity.
It suffices to show that $\Ker(\alpha)$ is contained in $\Im(f^*)$.
Let $L \in \Pic(X)$ be an element whose class $[L] \in N^1(X/T)$ is contained in $\Ker(\alpha)$.
By the base point free theorem (\cite[Theorem 1.2]{BW}), $L$ is semiample over $Y$.
It then follows from the rigidity lemma (cf.~\cite[Lemma 1.15]{Deb}) that the semiample fibration of $L$ over $Y$ is isomorphic to $f: X \to Y$.
Therefore, we have $[L] \in \Im(f^*)$, as desired.
The equivalence of (b) and (c) follows from this exact sequence.

Let $F \subseteq N_1(X/T)$ be the closed convex cone generated by all curves over $Y$.
The implication (a) $\Rightarrow$ (c) follows from the isomorphism $\NE(X/Y) \cong F$ induced by the injection $f_* : N_1(X/Y) \hookrightarrow N_1(X/T)$.

Since the dual sequence
\[
  0 \to N_1(X/Y) \xrightarrow{\beta} N_1(X/T) \xrightarrow{f_*} N_1(Y/T) \to 0
\]
is exact, $N_1(X/Y) \cap \NE(X/T)$ is an extremal subcone of $\NE(X/T)$.
Combining this with the inclusion $F \subseteq N_1(X/Y) \cap \NE(X/T)$, if we assume (c), then $F$ is an extremal ray.
This proves the implication (c) $\Rightarrow$ (a).
\end{proof}

\begin{lem}\label{rationality of image}
  Let $f: X \to Y$ be a birational projective morphism between normal varieties over an algebraically closed field $k$ of characteristic $p>5$ with $\dim X = 3$.
  Assume that there exists an effective $\Q$-Weil divisor $\Delta$ on $X$ such that $(X, \Delta)$ is klt and $-(K_X+\Delta)$ is $f$-ample.
  Then $Y$ has rational singularities.
\end{lem}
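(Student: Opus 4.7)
The plan is to reduce the statement to Theorem \ref{ABL}: since $\mathrm{char}(k)>5$, it suffices to exhibit an effective $\Q$-Weil divisor $\Gamma$ on $Y$ such that $(Y,\Gamma)$ is klt, for then Theorem \ref{ABL} will force $Y$ to have rational singularities. I will produce such a $\Gamma$ by descending a suitably perturbed klt pair from $X$ via a Bertini-type construction.

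First, fix any ample $\Q$-Cartier divisor $A$ on $Y$. Since $-(K_X+\Delta)$ is $f$-ample and $f$ is birational, Kleiman's criterion implies that the $\Q$-Cartier divisor
\[
  A_0 := -(K_X+\Delta) + f^*(mA)
\]
is ample on $X$ for every $m \gg 0$. Applying Lemma \ref{Bertini} to $(X,\Delta)$ and $A_0$, one obtains $0 \le H \sim_{\Q} A_0$ such that $(X, \Delta+H)$ is klt. By construction $K_X+\Delta+H \sim_{\Q} f^*(mA)$. Setting $\Gamma := f_*(\Delta+H) \ge 0$ and choosing compatible canonical divisors so that $K_Y = f_*K_X$, pushforward yields $K_Y+\Gamma \sim_{\Q} mA$, so $K_Y+\Gamma$ is $\Q$-Cartier, and one checks
\[
  K_X+\Delta+H = f^*(K_Y+\Gamma)
\]
as $\Q$-Weil divisors.

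To verify that $(Y,\Gamma)$ is klt, take a log resolution $\pi: Z \to X$ of $(X,\Delta+H)$ for which $\psi := f \circ \pi : Z \to Y$ is again a morphism. The identity $\pi^*(K_X+\Delta+H) = \psi^*(K_Y+\Gamma)$, combined with writing out the two discrepancy expansions of $K_Z$, forces $a_E(X,\Delta+H) = a_E(Y,\Gamma)$ for every prime divisor $E$ on $Z$; since every prime divisor over $Y$ appears on some such $Z$ and since the coefficients of $\Gamma$ lie among those of $\Delta+H$ (hence are all strictly less than $1$), klt-ness descends from $(X,\Delta+H)$ to $(Y,\Gamma)$, completing the reduction to Theorem \ref{ABL}. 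The main technical step is the initial upgrade from $f$-ampleness of $-(K_X+\Delta)$ to global ampleness of $A_0$, which is required to invoke Lemma \ref{Bertini}; once the crepant identity $K_X+\Delta+H = f^*(K_Y+\Gamma)$ is in hand, preservation of log discrepancies under the birational contraction $f$ is routine.
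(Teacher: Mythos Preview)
Your argument is essentially the paper's: both reduce to exhibiting a klt pair on $Y$ and then invoke Theorem \ref{ABL}, and both produce that pair by a Bertini-type perturbation of $(X,\Delta)$ making $K_X+\Delta+H$ a pullback from $Y$, followed by pushforward. The paper first localizes on $Y$ and then applies \cite[Lemma 2.34]{BMPS+} to find $0\le A\sim_{\Q}-(K_X+\Delta)$ with $(X,\Delta+A)$ klt; you instead globalize by twisting with $f^*(mA)$ to make $A_0$ ample on $X$. Both routes lead to the same crepant identity and the same conclusion.

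There is one technical slip: Lemma \ref{Bertini} as stated in the paper requires $X$ to be a \emph{projective} variety over a field, whereas the hypotheses of the present lemma only give that $f$ is projective, so $X$ is merely quasi-projective. Your invocation of Lemma \ref{Bertini} is therefore not literally justified. The fix is immediate---either cite the quasi-projective Bertini \cite[Lemma 2.34]{BMPS+} directly (as the paper does), or note that rational singularities are local on $Y$ and reduce to $Y$ affine before running your argument. With that adjustment, your proof is correct and matches the paper's.
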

  
\begin{proof}
  We may assume that $Y$ is the spectrum of a local ring.
  By \cite[Lemma 2.34 and Proposition 2.14]{BMPS+}, we may find an effective $\Q$-Weil divisor $0 \le A \sim_{\Q} -(K_X+\Delta)$ such that $(X, \Delta+A)$ is klt.
  We set $\Delta_Y:=g_*(\Delta+A)$.
  Then it follows from the equation 
  \[
    f^*(K_Y+\Delta_Y) = K_X+\Delta+A
    \]
  that $(Y, \Delta_Y)$ is also klt.
  Therefore, the assertion follows from Theorem \ref{ABL}.
\end{proof}

\begin{prop}\label{fiber of ray cont}
Let $\mathcal{X},\mathcal{Y}, T$ be normal varieties over algebraically closed field $k$ of characteristic $p>5$ such that $\mathcal{X}$ and $\mathcal{Y}$ are projective over $T$ and $\Phi : \mathcal{X} \to \mathcal{Y}$ be a birational morphism over $T$.
We further assume that the following properties hold:
\begin{enumerate}[label=\textup{(\roman*)}]
  \item The geometric generic fibers $X : = \mathcal{X}_{\overline{\eta}}$ and $Y : = \mathcal{Y}_{\overline{\eta}}$ are $3$-dimensional normal varieties.
  \item There exists an effective $\Q$-Weil divisor $\Delta$ on $X$ such that $(X, \Delta)$ is klt and $-(K_X+\Delta)$ is ample over $Y$.
  \item We have $H^1(X, \sO_X)=H^2(X, \sO_X)=0$.
\end{enumerate}
If the birational morphism $\Phi_{\overline{\eta}} : X \to Y$ induced by $\Phi$ is an extremal ray contraction, then $\rho(\mathcal{X}_t) -\rho(\mathcal{Y}_t)=1$ for a general closed point $t \in T$.
\end{prop}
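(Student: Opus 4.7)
The strategy is to reduce the conclusion on a general closed fiber to the hypothesis on the geometric generic fiber by means of Proposition~\ref{constancy of NS}. First, Lemma~\ref{criterion for extremal ray contraction} applied to $\Phi_{\overline{\eta}}:X\to Y$ (whose hypotheses are satisfied by (i), (ii), and normality giving $(\Phi_{\overline{\eta}})_*\sO_X=\sO_Y$) tells us that the extremal-ray-contraction assumption is equivalent to $\rho(X)-\rho(Y)=1$. Next, Lemma~\ref{rationality of image} shows that $Y$ has rational singularities, and Theorem~\ref{ABL} shows the same for $X$. Taking a resolution of $X$ and chasing the Leray spectral sequence one finds $R^i(\Phi_{\overline{\eta}})_*\sO_X=0$ for $i>0$, so Leray for $\Phi_{\overline{\eta}}$ together with hypothesis (iii) yields $H^i(Y,\sO_Y)=H^i(X,\sO_X)=0$ for $i=1,2$.

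Armed with these vanishings on both $X$ and $Y$, semi-continuity of cohomology propagates $H^1=H^2=0$ to general closed fibers of $\mathcal{X}\to T$ and $\mathcal{Y}\to T$, so after shrinking $T$ the hypotheses of Proposition~\ref{constancy of NS} are fulfilled for both families. Working over a common smooth variety $S$ dominating $T$ (obtained, e.g., by refining the outputs of the two applications), the proposition provides isomorphisms
\[
N^1(\mathcal{X}_S/S)\xrightarrow{\sim}N^1((\mathcal{X}_S)_s),\qquad N^1(\mathcal{Y}_S/S)\xrightarrow{\sim}N^1((\mathcal{Y}_S)_s)
\]
for every closed point $s\in S$. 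In particular $\rho((\mathcal{X}_S)_s)$ and $\rho((\mathcal{Y}_S)_s)$ are independent of $s$, equal respectively to $\rho(\mathcal{X}_S/S)$ and $\rho(\mathcal{Y}_S/S)$.

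It remains to identify these relative Picard numbers with $\rho(X)$ and $\rho(Y)$. Applying Lemma~\ref{injection on NS} at $\eta_S\in S$ with $\ell=\overline{k(\eta_S)}$ gives an injection $N^1(\mathcal{X}_S/S)\hookrightarrow N^1((\mathcal{X}_S)_{\overline{\eta_S}})$; since $(\mathcal{X}_S)_{\overline{\eta_S}}$ is the base change of $X$ along the extension $\overline{k(\eta)}\hookrightarrow\overline{k(\eta_S)}$ of algebraically closed fields and the N\'eron--Severi group of a proper variety is invariant under such extensions, this gives $\rho(\mathcal{X}_S/S)\le\rho(X)$. The matching lower bound comes from lower semi-continuity of Picard rank under specialization of an $\overline{\eta_S}$ to a closed $s_0\in S$, which reads $\rho(X)=\rho((\mathcal{X}_S)_{\overline{\eta_S}})\le\rho((\mathcal{X}_S)_{s_0})=\rho(\mathcal{X}_S/S)$. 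Therefore $\rho(\mathcal{X}_S/S)=\rho(X)$ and, by the same reasoning, $\rho(\mathcal{Y}_S/S)=\rho(Y)$. Since $S\to T$ is dominant and of finite type between $k$-varieties, a general closed $t\in T$ lies in its image, and picking a closed preimage $s\in S$ (residue fields equal to the algebraically closed $k$) gives $(\mathcal{X}_S)_s=\mathcal{X}_t$ and $(\mathcal{Y}_S)_s=\mathcal{Y}_t$, so
\[
\rho(\mathcal{X}_t)-\rho(\mathcal{Y}_t)=\rho(X)-\rho(Y)=1.
\]

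\textbf{Main obstacle.} The most delicate step is the comparison of $\rho(\mathcal{X}_S/S)$ with $\rho(X)$: the injection supplied by Lemma~\ref{injection on NS} is immediate, but the matching lower bound hinges on lower semi-continuity of the Picard rank under specialization, which in positive characteristic with possibly singular fibers is not entirely routine. If this becomes problematic, a workaround is to deduce the equality directly from the constancy of Proposition~\ref{constancy of NS} by spreading out a basis of line bundles on the generic fiber to line bundles on $\mathcal{X}_S$ (using normality of $\mathcal{X}_S$ to extend across codimension-two loci after shrinking $S$).
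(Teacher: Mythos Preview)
Your proof is correct and follows essentially the same approach as the paper: deduce $\rho(X)-\rho(Y)=1$ from Lemma~\ref{criterion for extremal ray contraction}, use Theorem~\ref{ABL} and Lemma~\ref{rationality of image} plus Leray to transfer the cohomology vanishing from $X$ to $Y$, and then invoke Proposition~\ref{constancy of NS} for both families. The only difference is the order of the last two steps: the paper \emph{first} spreads out a basis of $N^1(X)$ to line bundles on $\mathcal{X}$ (exactly your ``workaround''), so that $N^1(\mathcal{X}/T)\to N^1(X)$ is already an isomorphism before invoking Proposition~\ref{constancy of NS}; this makes the identification $\rho(\mathcal{X}_S/S)=\rho(X)$ automatic and sidesteps the specialization argument you flag as delicate.
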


\begin{proof}
Fix invertible sheaves $L_1, \dots, L_n$ on $X$ whose classes generate $N^1(X)$.
It from Lemma \ref{spreading out sheaves} that after replacing $T$, we may assume that every $L_i$ lifts to an invertible sheaf on $\mathcal{X}$.
Combining this with Lemma \ref{injection on NS}, the restriction morphism
\[
  \mathrm{res}_{\overline{\eta}}^{\mathcal{X}} : N^1(\mathcal{X}/T) \to N^1(X)
\]
is isomorphic.
It then follows from Proposition \ref{constancy of NS} that we have 
\[
  \rho(\mathcal{X}_t) = \rho(X)
\]
for general closed point $t$.

On the other hand, it follows from Theorem \ref{ABL} and Lemma \ref{rationality of image} that $X$ and $Y$ have rational singularities.
Therefore, by the Leray spectral sequence, we have 
\[
  H^1(Y, \sO_Y)=H^2(Y, \sO_Y)=0.
\]
It then follows from the similar argument in the first paragraph that after replacing $T$, we have
\[
  \rho(\mathcal{Y}_t) = \rho(Y)
\]
for general closed point $t$.
The assertion now follows from Lemma \ref{criterion for extremal ray contraction}.
\end{proof}

\begin{prop}\label{Q-factoriality on fibers}
Let $\pi : \mathcal{X} \to T$ be a projective morphism between normal varieties over an algebraically closed field $k$ of characteristic $p>5$.
We further assume that the following properties hold:
\begin{enumerate}[label=\textup{(\roman*)}]
  \item The geometric generic fiber $X : = \mathcal{X}_{\overline{\eta}}$ is a $\Q$-factorial klt normal variety of dimension $3$.
  \item We have $H^1(X, \sO_X)=H^2(X, \sO_X)=0$.
\end{enumerate}
Then a general closed fiber $\mathcal{X}_t$ is also $\Q$-factorial.
\end{prop}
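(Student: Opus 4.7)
My strategy is to reduce $\Q$-factoriality of closed fibers to that of the geometric generic fiber $X$ by spreading out Weil divisors in the family, using Proposition~\ref{constancy of NS} together with the cohomological vanishings.

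First I would normalize the setting. After applying Proposition~\ref{constancy of NS} (replacing $T$ by a suitable smooth dominating variety $\mu : S \to T$) and shrinking $T$, I may assume that $\pi$ is flat with geometrically normal connected fibers, that $H^1(\mathcal{X}_t, \sO_{\mathcal{X}_t}) = H^2(\mathcal{X}_t, \sO_{\mathcal{X}_t}) = 0$ for every closed $t \in T$ (by semicontinuity together with the hypothesis on $X$), and that the restriction map $N^1(\mathcal{X}/T) \to N^1(\mathcal{X}_t)$ is an isomorphism for every closed $t$. In particular, every class in $N^1(\mathcal{X}_t)$ lifts to a class on $\mathcal{X}$.

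Next, for a closed point $t \in T$ and a prime Weil divisor $E$ on $\mathcal{X}_t$, I would produce an \'etale neighborhood $T' \to T$ of $t$ and a prime Weil divisor $\mathcal{E}$ on $\mathcal{X}_{T'}$ whose restriction above $t$ is $E$. The idea is to work with the relative Hilbert scheme of codimension-one subschemes of $\mathcal{X}/T$: the vanishings $H^i(\mathcal{X}_t, \sO_{\mathcal{X}_t}) = 0$ for $i = 1,2$ provide enough control on the deformation-theoretic obstructions at $[E]$ so that, after an \'etale base change, the component through $[E]$ dominates $T$ and yields the relative divisor $\mathcal{E}$.

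Finally, $\mathcal{E}_{\overline{\eta}}$ is a Weil divisor on $X$, so by $\Q$-factoriality of $X$ some multiple $m\mathcal{E}_{\overline{\eta}}$ is Cartier. Since being Cartier is an open condition on $\mathcal{X}_{T'}$ and $m\mathcal{E}$ is Cartier on a dense open subset containing the generic fiber, after further shrinking $T'$ we may assume $m\mathcal{E}$ is Cartier on all of $\mathcal{X}_{T'}$. Restricting to the fiber above $t$ shows $mE$ is Cartier, so $E$ is $\Q$-Cartier; since $E$ was an arbitrary prime Weil divisor on $\mathcal{X}_t$, the fiber $\mathcal{X}_t$ is $\Q$-factorial. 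The main obstacle is the spreading-out step: Weil divisors do not carry as clean a deformation theory as line bundles, so verifying that the Hilbert scheme argument works in our context---in particular, that the obstruction groups $\mathrm{Ext}^1(I_E/I_E^2, \sO_E)$ can be controlled by the hypothesized vanishings---requires care, and an alternative would be to work directly with a relative divisor class group sheaf and use \'etale descent from the two cohomological vanishings.
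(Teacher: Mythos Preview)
Your argument has a genuine logical gap in the final step, and it is not the deformation-theoretic issue you flag (though that is also a real obstacle). The circularity is here: having spread out $E$ to $\mathcal{E}$ on $\mathcal{X}_{T'}$ and used $\Q$-factoriality of $X$ to make $m\mathcal{E}_{\overline{\eta}}$ Cartier, you then write ``after further shrinking $T'$ we may assume $m\mathcal{E}$ is Cartier on all of $\mathcal{X}_{T'}$.'' But the non-Cartier locus of $m\mathcal{E}$ in $\mathcal{X}_{T'}$ projects to a closed subset of $T'$, and the point above $t$ lies in that subset \emph{precisely when} $mE$ is not Cartier on $\mathcal{X}_t$---which is exactly what you are trying to prove. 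So the shrinking cannot be performed while retaining $t$. Said differently: openness of the Cartier condition lets you pass from the generic fiber to a \emph{general} closed fiber, but you have already fixed $t$ before choosing $E$, and the open set you produce depends on $E$. Since $\mathcal{X}_t$ carries infinitely many prime Weil divisors, you would need to intersect infinitely many opens, each depending on a choice made on the fiber over $t$, and there is no mechanism here to make that intersection nonempty (let alone contain $t$).

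The paper avoids this by spreading out a \emph{finite} amount of data determined solely on the geometric generic fiber. Concretely: take a projective log resolution $\mu:Z\to X$, choose an exceptional klt boundary $D$ on $Z$, and run a $(K_Z+D)$-MMP over $X$; this terminates in a small modification of $X$, hence in $X$ itself because $X$ is $\Q$-factorial. This produces a finite diagram of extremal contractions and flips over $\overline{K(T)}$, which one spreads out to $\mathcal{X}$ after shrinking $T$ finitely many times. Using Proposition~\ref{fiber of ray cont} (which is where the vanishings $H^1=H^2=0$ and Proposition~\ref{constancy of NS} enter), the restriction of each step to a general closed fiber is again an extremal ray contraction or flip. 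Thus for general $t$ one obtains a $(K_{\mathcal{Z}_t}+\mathcal{D}_t)$-MMP starting from the smooth variety $\mathcal{Z}_t$ and ending at $\mathcal{X}_t$; since every step of an MMP preserves $\Q$-factoriality (\cite[Propositions~3.36, 3.37]{KM}), the fiber $\mathcal{X}_t$ is $\Q$-factorial. The point is that the finitely many shrinkings are dictated by the MMP on $X$, not by divisors chosen on individual closed fibers.
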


\begin{proof}
\textbf{Step.1}:
Let $\mu: Z \to X$ be a log resolution which is a projective morphism.
Take an exceptional effective $\Q$-divisor $D \ge 0$ on $Z$ such that $(Z, D)$ is klt and $D \ge -K_{Z/X}+\epsilon \Exc(\mu)$ for some $\epsilon>0$.
We fix an (absolute) ample $\Q$-divisor $A$ on $Z$ such that $K_Z+D+A$ is ample.
By Lemma \ref{Bertini}, we may assume that $(Z, D+A)$ is also klt.
Then it follows from \cite[Theorem 1.5]{BW} that we can run $(K_Z+D)$-MMP over $X$ which terminates, in other words, there exists a sequence of normal varieties
\begin{align}\label{MMP on geometric generic}
  \xymatrix{
    Z=Z_0 \ar_-{f_0}[rdd] && Z_1 \ar^-{g_0}[ldd] \ar_-{f_1}[rdd] &&  \ar^-{g_1}[ldd] && \ar_-{f_n}[rdd] && Z_{n+1} \ar^-{g_n}[ldd] \\
  & && && \cdots && & \\
  & W_0 && W_1 &&&&  W_n &
  }
\end{align}
over $X$ such that if we denote $D_i$ the strict transform of $D$ to $Z_i$, then the following properties hold:
\begin{itemize}
  \item $K_{Z_{n+1}}+D_{n+1}$ is nef over $X$.
  \item For every $i$, $f_i$ is an extremal ray contraction such that $-(K_{Z_i}+D_i)$ is $f_i$-ample.
  \item $g_i$ is an isomorphism (resp.~a flip of $f_i$) if $f_i$ is divisorial (resp.~small).
\end{itemize}

Since log minimal models are unique up to isomorphic in codimension one (\cite[Theorem 3.52 (2)]{KM}), the birational morphism $Z_{n+1} \to X$ is small.
Since $X$ is $\Q$-factorial, any divisor on $\mathcal{Z}_{n+1}$ is the pullback of some $\Q$-divisor on $X$, which implies that the morphism $Z_{n+1} \to X$ is isomorphic.

We also note that it follows from Lemma \ref{rationality of image} and \cite[Corollary 3.42]{KM} that $Z_i$ has rational singularities for all $i$.
By the Leray spectral sequence, we have
  \[
    H^1(Z_i, \sO_{Z_i})=H^2(Z_i, \sO_{Z_i})=0
  \]
for all $i$.
Moreover, it follows from \cite[Proposition 3.36, Proposition 3.37]{KM} that $Z_i$ is $\Q$-factorial for every $i$.

\textbf{Step.2}:
By Lemma \ref{spreading out schemes} and Lemma \ref{spreading out morphisms}, after replacing $T$, we can take a sequence
\begin{align}\label{MMP on total space}
  \xymatrix{
    \mathcal{Z} = \mathcal{Z}_0 \ar_-{F_0}[rdd] && \mathcal{Z}_1 \ar^-{G_0}[ldd] \ar_-{F_1}[rdd] &&  \ar^-{G_1}[ldd] && \ar_-{F_n}[rdd] && \mathcal{Z}_{n+1} \ar^-{G_n}[ldd] \\
  & && && \cdots && & \\
  & \mathcal{W}_0 && \mathcal{W}_1 &&&&  \mathcal{W}_n &
  }
\end{align}
of projective $\mathcal{X}$ schemes whose base change to the geometric generic point of $T$ is isomorphic to the sequence \eqref{MMP on geometric generic}.
After replacing $T$ again, we may also assume that the following properties hold:
\begin{itemize}
  \item A general closed fiber $(\mathcal{Z}_i)_t$ and $(\mathcal{W}_i)_t$ is a normal variety.
  \item For every $i$ with $f_i$ small, both $F_i$ and $G_i$ are small birational morphisms.
  \item For every $i$ with $f_i$ divisorial, $F_i$ is a non-small birational morphism and $G_i$ is an isomorphism. 
  \item $\mathcal{Z}_{n+1}$ is isomorphic to $\mathcal{X}$.
\end{itemize}

After replacing $T$ again, we may also assume that the pair $(\mathcal{Z}, \Exc(\mathcal{Z} \to \mathcal{X}))$ is relatively SNC over $T$ and that every irreducible component of $\Exc(\mathcal{Z} \to \mathcal{X})$ has integral fibers over $T$.
Let $\mathcal{D}$ be the effective $\Q$-divisor on $\mathcal{Z}$ whose support is exceptional over $\mathcal{X}$ and whose restriction to $Z=\mathcal{Z}_{\overline{\eta}}$ is $D$.
We denote by $\mathcal{D}_i$ the strict transform of $\mathcal{D}$ to $\mathcal{Z}_i$.
Then the flat pullback $(\mathcal{D}_i)_{\overline{\eta}}$ of $\mathcal{D}_i$ is $D_i$.
Similarly, we have $(K_{\mathcal{Z}_i})_{\overline{\eta}} = K_{Z_i}$.
It follows from the $\Q$-factoriality of $Z_i$ that after replacing $T$ again, we may assume that for every $i$, both $\mathcal{D}_i$ and $K_{\mathcal{Z}_i}$ are $\Q$-Cartier.
In particular, after shrinking $T$, we may assume that $K_{\mathcal{Z}_i} + \mathcal{D}_i$ is $F_i$-anti-ample and $G_i$-ample for every $i$.

\textbf{Step.3}:
After shrinking $T$, we may assume that $T$ is smooth and $\omega_T \cong \sO_T$.
It then follows from \cite[Lemma 0B6U, 0ATX and 0EA0]{Sta} that $\omega_{\mathcal{Z}} \cong \Omega_{\mathcal{Z}/T}^{\wedge 3}$.
Therefore, the restriction $(K_{\mathcal{Z}})_t$ is a canonical divisor of $\mathcal{Z}_t$.
Combining this with Lemma \ref{model and fiber} (3), the restriction $(K_{\mathcal{Z}_i})_t$ is a canonical divisor of $(\mathcal{Z}_i)_t$ for every $i$ and general $t \in T$.
Moreover, by the same lemma, the restriction $(\mathcal{D}_{i+1})_t$ is the strict transform of the restriction $(\mathcal{D}_i)_t$ for every $i$ and general $t$.
It then follows from Lemma \ref{criterion for extremal ray contraction} and Proposition \ref{fiber of ray cont} that the base change of the sequence \eqref{MMP on total space} to a general closed point $t \in T$ is a $(K_{\mathcal{Z}_t} + \mathcal{D}_t)$-minimal model program.
It then follows from \cite[Proposition 3.36, Proposition 3.37]{KM} that $\mathcal{X}_t \cong (\mathcal{Z}_{n+1})_t$ is $\Q$-factorial, as desired.
\end{proof}

\subsection{On boundedness of birationally bounded set}

In this subsection, we prove Theorem \ref{E}.
The strategy is similar to that of \cite{HX}.
However, we need some modifications because of the lack of "invariance of plurigenera".

\begin{defn}[\textup{\cite[Definition 2.6]{HX}}]
A normal projective variety $X$ over an algebraically closed field $k$ is said to be \emph{of Fano type} if $-K_X$ is big Weil divisor and there exists an effective $\Q$-Weil divisor $0 \le \Delta \sim_{\Q} -K_{X}$ on $X$ such that $(X,\Delta)$ is klt.
\end{defn}

The following lemma should be well-known to experts,  but we include a proof here for the sake of completeness.

\begin{lem}\label{of Fano basic}
Let $X$ be a normal projective variety of dimension $3$ over an algebraically closed field $k$.
Then $X$ is of Fano type if and only if there exists an effective $\Q$-Weil divisor $\Delta'$ on $X$ such that $(X,\Delta')$ is klt and $-(K_X+\Delta')$ is ample.
\end{lem}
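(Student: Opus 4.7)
The plan is to prove the two implications of the equivalence separately; the direction $(\Leftarrow)$ is short, while $(\Rightarrow)$ requires a small-perturbation argument.

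For $(\Leftarrow)$, I start from $(X,\Delta')$ klt with $H:=-(K_X+\Delta')$ ample $\Q$-Cartier. As $\Q$-Weil divisors $-K_X\sim_\Q \Delta'+H$, so for $m>0$ divisible enough to make $m\Delta'$ integral there is an inclusion $\sO_X(mH)\hookrightarrow \sO_X(m\Delta'+mH)\cong \sO_X(-mK_X)$ given by multiplication by a global section cutting out $m\Delta'$, whence $\vol(-K_X)\ge\vol(H)>0$ shows that $-K_X$ is big. To construct the klt complement, I apply Lemma~\ref{Bertini} to find $0\le H'\sim_\Q H$ with $(X,\Delta'+H')$ klt, and set $\Delta:=\Delta'+H'$; then $K_X+\Delta\sim_\Q K_X+\Delta'+H=0$, so $\Delta\sim_\Q -K_X$ and $X$ is of Fano type.

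For $(\Rightarrow)$, suppose $\Delta\sim_\Q -K_X$ is an effective $\Q$-Weil divisor with $(X,\Delta)$ klt, so that $K_X+\Delta\sim_\Q 0$. A Kodaira-type argument for big Weil divisors produces an integer $m>0$, an ample Cartier divisor $H$, and an effective Weil divisor $E\ge 0$ satisfying $m\Delta\sim H+E$: starting from the exact sequence
\[
0\to\sO_X(m\Delta-H)\to\sO_X(m\Delta)\to\sO_H(m\Delta|_H)\to 0,
\]
the cubic-in-$m$ growth of $h^0(X,\sO_X(m\Delta))$ coming from bigness of $\Delta\sim_\Q -K_X$ dominates the at most quadratic growth of the right-hand term, forcing the left-hand term to be nonzero for large $m$. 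I then put
\[
\Delta':=(1-\epsilon)\Delta+\tfrac{\epsilon}{m}E
\]
for a small rational $\epsilon>0$. Since $m\Delta-E\sim H$, the difference $\Delta'-\Delta=-\epsilon\Delta+\tfrac{\epsilon}{m}E$ is $\Q$-linearly equivalent, and hence $\Q$-Cartier up to a principal divisor, to $-\tfrac{\epsilon}{m}H$; this yields $K_X+\Delta'\sim_\Q -\tfrac{\epsilon}{m}H$, which is anti-ample.

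The main obstacle is verifying that $(X,\Delta')$ is klt for all sufficiently small $\epsilon>0$. This is a perturbation argument: fixing a log resolution $\pi\colon Y\to X$ of $(X,\Delta+E)$, on which it suffices to check klt of $(X,\Delta')$, every prime divisor $F$ on $Y$ satisfies
\[
a_F(X,\Delta')=a_F(X,\Delta)-\ord_F(\pi^*(\Delta'-\Delta)),
\]
and the correction term tends to zero linearly in $\epsilon$. Since $Y$ carries only finitely many such $F$ and $a_F(X,\Delta)>0$ holds throughout by the klt hypothesis, $a_F(X,\Delta')$ stays positive simultaneously for small $\epsilon$; the same smallness keeps the coefficients of $\Delta'$ within $[0,1)$, so $(X,\Delta')$ is klt.
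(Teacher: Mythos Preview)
Your proof is correct and follows the same overall strategy as the paper: the $(\Leftarrow)$ direction via Lemma~\ref{Bertini}, and the $(\Rightarrow)$ direction via a Kodaira-lemma decomposition $m\Delta \sim H + E$ followed by the perturbation $\Delta' = (1-\epsilon)\Delta + \tfrac{\epsilon}{m}E$. The one technical difference is how Kodaira's lemma is obtained. The paper first passes to a small $\Q$-factorization $f\colon Y \to X$ (available by Lemma~\ref{Q-factorization} since $(X,\Delta)$ is klt), applies Kodaira's lemma on $Y$ where $f^{-1}_*\Delta$ is $\Q$-Cartier, and then pushes forward. You instead argue directly on $X$ with the Weil divisor $\Delta$, which need not be $\Q$-Cartier. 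Your restriction sequence is valid (its cokernel is $\sO_X(m\Delta)\otimes\sO_H$ rather than $\sO_H(m\Delta|_H)$, a harmless abuse), but the claim that $h^0$ of this cokernel grows at most quadratically in $m$ is not entirely immediate when $\Delta$ is not $\Q$-Cartier: one needs, for instance, to restrict to a general $H$ missing the non-Cartier locus in codimension two and compare with sections of a fixed Weil divisor class on the normal surface $H$. This can be done, so your argument goes through, but the paper's detour through a $\Q$-factorization is precisely what makes this step automatic.
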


\begin{proof}
Since there exists a log resolution of $(X, \Delta')$, the "if" part follows from Lemma \ref{Bertini}.
For the "only if" part, we take a small $\Q$-factorization $f: Y \to X$.
Let $A$ be an ample Cartier divisor on $X$.
Since $f^{-1}_*\Delta$ is big, it follows from Kodaira's lemma that there exists an integer $n>0$ and an effective Weil divisor $F \ge 0$ on $Y$ such that 
\[
  F \sim nf^{-1}_*\Delta - f^*A.
\]
By taking $f_*$, we conclude that $f_*F \sim n\Delta - A$.
We then take a rational number $\epsilon>0$ such that $(X, (1-\epsilon)\Delta + (\epsilon f_*F)/n)$ is klt, which completes the proof.
\end{proof}

\begin{thm}[\textup{cf.~\cite[Proposition 2.9]{HX}}]\label{Sbb to Bdd}
Let $\pi : \mathcal{Y} \to T$ be a projective morphism between normal varieties over an algebraically closed field $k$ of characteristic $p>5$ and $B \subseteq \mathcal{Y}$ be a closed subset purely codimension one.
We further assume that the geometric generic fiber $ Y: = \mathcal{Y}_{\overline{\eta}}$ satisfies the following conditions:
\begin{enumerate}[label=\textup{(\roman*)}]
  \item $Y$ is $3$-dimensional and of Fano type.
  \item $H^1(Y, \sO_Y) =H^2(Y, \sO_Y) =0$.
\end{enumerate}

Then there exists an open dense subset $U \subseteq T$ such that any set $\mathcal{D}$ of a projective log pairs is bounded if for every $(X,\Delta) \in \mathcal{D}$, there exists a small birational map 
\[
  f : X \dashrightarrow \mathcal{Y}_t
\]
to the fiber $\mathcal{Y}_t$ at some point $t \in U(k)$ with $\Supp(f_*\Delta) = B_t$.
\end{thm}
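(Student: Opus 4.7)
The plan is to spread out the Mori-chamber decomposition of the Fano-type variety $Y$ and then invoke the relative finiteness of ample models (Theorem \ref{finiteness of lc model relative}). First, after shrinking $T$, I would use Lemma \ref{of Fano basic} together with the spreading-out lemmas (Lemmas \ref{spreading out sheaves}--\ref{spreading out morphisms}) to extend a klt boundary on $Y$ satisfying $-(K_Y+\Delta_0)$ ample to a $\Q$-Weil divisor $\Delta_\mathcal{Y}$ on $\mathcal{Y}$ with $(\mathcal{Y}_\eta,\Delta_{\mathcal{Y},\eta})$ klt and $-(K_\mathcal{Y}+\Delta_\mathcal{Y})$ ample over $T$. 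Combining Proposition \ref{Q-factoriality on fibers} with upper semicontinuity and further shrinking, I may assume every closed fiber $\mathcal{Y}_t$ is a $\Q$-factorial klt Fano-type threefold with $H^1(\mathcal{Y}_t,\sO)=H^2(\mathcal{Y}_t,\sO)=0$, each irreducible component $B_i$ of $B$ is $\Q$-Cartier with geometrically integral fibers over $T$, and a uniform log resolution of $(\mathcal{Y}_\eta,\Delta_{\mathcal{Y},\eta}+B_\eta)$ exists.

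Next, fix a $\pi$-ample $\Q$-Cartier divisor $A$ on $\mathcal{Y}$ and small rational $\epsilon_0,\delta>0$, and form the rational polytope
\[
  \mathcal{C} = \left\{ \tfrac12\Delta_\mathcal{Y} + \epsilon A + \textstyle\sum_i a_i B_i : \epsilon\in[\epsilon_0,2\epsilon_0],\ a_i\in[0,1-\delta]\right\}
\]
in $\WDiv_\R(\mathcal{Y})$, with parameters chosen so every $\Theta\in\mathcal{C}$ is klt on $\mathcal{Y}_\eta$ and dominates the $\pi$-big divisor $\epsilon_0 A$. This places us in Setting \ref{relative setting}, and Theorem \ref{finiteness of lc model relative} then yields an open dense $U\subseteq T$ together with finitely many birational contractions $\Phi_j:\mathcal{Y}_U\dashrightarrow\mathcal{Z}_j$ over $U$ such that whenever $K_\mathcal{Y}+\Theta$ is big over $T$, the fiber $(\Phi_j)_t$ is the ample model of $K_{\mathcal{Y}_t}+\Theta_t$ for each $t\in U$, for some $j$.

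The crucial step is to show that every $X$ appearing in $\mathcal{D}$ is isomorphic to some $(\mathcal{Z}_j)_t$. Given $f:X\dashrightarrow\mathcal{Y}_t$ small with $\Supp(f_*\Delta)=B_t$, both $X$ and $\mathcal{Y}_t$ are klt Fano-type threefolds with identical codimension-one geometry; replacing $X$ by its small $\Q$-factorialization $X'\to X$ (Lemma \ref{Q-factorization}), the variety $X'$ is $\Q$-factorial Fano-type and its ample cone is a Mori chamber inside the movable cone of $\mathcal{Y}_t$. Using that $-(K_{X'}+\tfrac12(f^{-1})_*\Delta_{\mathcal{Y},t})$ is ample and $(f^{-1})_*B_t$ is effective with full support $B_t$, one can choose coefficients $(\epsilon,a_i)$ in the interior of $\mathcal{C}$ so that the pushforward of $K_{\mathcal{Y}_t}+\Theta_t$ to $X'$ lies in this chamber and is ample, realizing $X'\cong(\mathcal{Z}_j)_t$. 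A further spreading-out argument applied to the finitely many small birational contractions emanating from each $(\mathcal{Z}_j)_\eta$ (itself bounded by Lemma \ref{bdd of normalization} and the spreading-out lemmas) then recovers the non-$\Q$-factorial $X$, and matching of supports is automatic because $f$ is small and $\Supp(f_*\Delta)=B_t$.

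The hard part is this chamber-realization step: verifying in characteristic $p>5$ that the Mori chambers of $\mathcal{Y}_\eta$ touched by the elements of our fixed polytope $\mathcal{C}$ cover every small $\Q$-factorial Fano-type modification. This reduces to establishing the rational polyhedrality of the movable cone of a $\Q$-factorial klt Fano-type threefold and ensuring the chambers are realized by divisors drawn from $\mathcal{C}$, which rests on Proposition \ref{finiteness of lc model} combined with the MMP results \cite{DW,BW,Wal}; a secondary technical obstacle is performing all spreading-out arguments uniformly while preserving kltness, $\Q$-Cartierness, and the SNC structure of $B$ on every closed fiber.
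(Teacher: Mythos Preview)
Your architecture matches the paper's, but the polytope you build is the wrong one and the gap you flag as ``the hard part'' is not a technicality---it is the whole point. The classes $[B_i]$ need not span $N^1(\mathcal{Y}_t)$, so the set $\{[K_{\mathcal{Y}_t}+\Theta_t]:\Theta\in\mathcal{C}\}$ with $\mathcal{C}=\{\tfrac12\Delta_{\mathcal{Y}}+\epsilon A+\sum a_iB_i\}$ can miss entire Mori chambers, and no polyhedrality statement about the movable cone rescues a polytope that simply points in too few directions. The paper's fix is to construct $\mathcal{C}$ so that $[-K_{\mathcal{Y}}]$ lies in the \emph{interior} of the image of $\mathcal{C}$ in $N^1(\mathcal{Y}/T)$: one chooses $\pi$-ample $\mathcal{H}_1,\dots,\mathcal{H}_m$ whose convex hull contains an open neighbourhood of $\mathcal{H}/2=-(K_{\mathcal{Y}}+\Gamma)/2$, and sets $\mathcal{C}=\mathrm{conv}\{\Gamma+\mathcal{A}+\mathcal{H}_i\}$ with $\mathcal{A}\sim_{\Q}\mathcal{H}/2$. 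Then for \emph{any} $\Q$-Cartier $\mathcal{D}$ on $\mathcal{Y}$ there exist $c>0$ and $\Theta\in\mathcal{C}$ with $K_{\mathcal{Y}}+\Theta\equiv_T c\mathcal{D}$. Given small $f:X\dashrightarrow\mathcal{Y}_t$, one picks an ample $D$ on $X$, lifts $f_*D$ to some $\mathcal{D}$ on $\mathcal{Y}$, and concludes $(K_{\mathcal{Y}}+\Theta)_t\sim_{\Q}cf_*D$ (numerical equivalence upgrades to $\Q$-linear via $H^1(\mathcal{Y}_t,\sO)=0$); hence $f^{-1}$ is the ample model of $(K_{\mathcal{Y}}+\Theta)_t$ and $X\cong(\mathcal{Z}_i)_t$ directly.

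Two further ingredients are missing from your outline. First, the lifting of $f_*D$ from $\mathcal{Y}_t$ to $\mathcal{Y}$ requires that $N^1(\mathcal{Y}/T)\to N^1(\mathcal{Y}_t)$ be surjective for every $t\in U$; this is Proposition~\ref{constancy of NS}, which uses the hypothesis $H^2(Y,\sO_Y)=0$ and is the reason that assumption appears at all. Second, to apply Theorem~\ref{finiteness of lc model relative} one must know $K_{\mathcal{Y}}+\Theta$ is big over $T$, not just on one fiber; this is Theorem~\ref{bigness constancy}. Finally, the detour through a small $\Q$-factorialization $X'\to X$ and the subsequent recovery of $X$ by spreading out small contractions is unnecessary: since $D$ is ample on $X$ itself and $f$ is small, $X$ already appears as a fiber $(\mathcal{Z}_i)_t$ by uniqueness of ample models.
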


\begin{proof}

\textbf{Step.1}:
Let $\Delta_0$ be a big $\Q$-Weil divisor on $Y$ such that $(Y, \Delta_0)$ is klt and $K_Y+\Delta_0 \sim_{\Q} 0$.
Take a small $\Q$-factorization $\mu : \widetilde{Y} \to Y$.
Noting that $(\widetilde{Y}, \mu^{-1}_* \Delta_0)$ is klt and $\mu^{-1}_*\Delta_0 \sim_{\Q} -K_{\widetilde{Y}}$ is big, $\widetilde{Y}$ is also of Fano type.
Since both $Y$ and $\widetilde{Y}$ are rational singularities, we also have
\[
  H^1(\widetilde{Y}, \sO_{\widetilde{Y}})=H^2(\widetilde{Y}, \sO_{\widetilde{Y}})=0.
\]
By Lemma \ref{spreading out schemes} and Lemma \ref{spreading out morphisms}, after replacing $T$, we obtain a small birational morphism $\nu : \widetilde{\mathcal{Y}} \to \mathcal{Y}$ from a normal variety $\widetilde{\mathcal{Y}}$ whose geometric generic fiber is isomorphic to $\mu$.
By replacing $\mathcal{Y}$ by $\widetilde{\mathcal{Y}}$, we may assume that $Y$ is $\Q$-factorial.

By Proposition \ref{Q-factoriality on fibers}, after shrinking $T$, we may assume that every fiber is $\Q$-factorial.
It follows from Proposition \ref{constancy of NS} that we may also assume that the restriction 
\[
  \mathrm{res}_t : N^1(\mathcal{Y}/T) \to N^1(\mathcal{Y}_t)
\]
is isomorphic for every $t \in T(k)$.
By the semicontinuity, after shrinking $T$ again, we have $H^1(\mathcal{Y}_t ,\sO_{\mathcal{Y}_t})=0$ for every $t \in T$.

\textbf{Step.2}:
Take an effective $\Q$-Weil divisor $\overline{\Gamma}$ on $Y$ such that $(Y, \overline{\Gamma})$ is klt and $-(K_{Y} + \overline{\Gamma})$ is ample.
After replacing $T$ again, there is an effective $\Q$-Weil divisor $\Gamma$ on $\mathcal{Y}$ whose flat pullback to the geometric generic fiber is $\overline{\Gamma}$.
Since $(Y, \overline{\Gamma})$ is klt, it follows from \cite[Lemma 6.17]{DW} that $(\mathcal{Y}_{\eta}, \Gamma_{\eta})$ is also klt.
We may also assume that $\mathcal{H} : = -(K_{\mathcal{Y}}+\Gamma)$ is $\Q$-Cartier and $\pi$-ample.

By Lemma \ref{Bertini}, after shrinking $T$, we can take an effective $\Q$-Cartier divisor $\mathcal{A}$ on $\mathcal{Y}$ which is $\Q$-linearly equivalent to $\mathcal{H}/2$ so that $(\mathcal{Y}_{\eta}, \Gamma_{\eta} + \mathcal{A}_{\eta})$ is klt.
It follows from the openness of the ample cone that we may find $\pi$-ample $\Q$-Cartier divisors $\mathcal{H}_1, \dots, \mathcal{H}_m$ such that the convex full of them in $N^1(\mathcal{Y}/T)$ contains an open neighborhood of $\mathcal{H}/2$.
By applying Lemma \ref{Bertini} again, we may assume that $(\mathcal{Y}_{\eta}, \Gamma_{\eta}+ \mathcal{A}_{\eta} + (\mathcal{H}_i)_{\eta})$ is klt for every $i$.

Let $\mathcal{C} \subseteq \WDiv_{\R}(\mathcal{Y})$ be the convex hull of $\{\Gamma+\mathcal{A} + \mathcal{H}_i \mid i =1, \dots, m\}$.
Then the numerical class
\[
  [-K_{\mathcal{Y}}] = [\Gamma + \mathcal{A}] + [\mathcal{H}/2] \in N^1(\mathcal{Y}/T)
\]
is contained in the interior of the image of $\mathcal{C}$ in $N^1(\mathcal{Y}/T)$.
Therefore, for all $\Q$-Cartier divisor $\mathcal{D}$ on $\mathcal{Y}$, there exists a rational number $c>0$ and an element $\Theta \in \mathcal{C}$ such that
\begin{align}\label{K is in interior}
  K_{\mathcal{Y}} + \Theta \equiv_T c \mathcal{D}.
\end{align}

By Theorem \ref{bigness constancy} and Theorem \ref{finiteness of lc model relative}, there is an open dense subset $U \subseteq T$ and finitely many birational contractions 
\[
  \Phi_i: \mathcal{Y}_U \dashrightarrow \mathcal{Z}_i
  \]
($i=1, \dots, \ell < \infty$) such that for every $\Theta \in \mathcal{C}$ and every $t \in U(k)$, if $(K_{\mathcal{Y}}+\Theta)_t$ is big, then $(\Phi_i)_t$ is the ample model of $(K_{\mathcal{Y}}+\Theta)_t$ for some $i$.
After shrinking $U$, we may assume that the support of $((\Phi_i)_*B)_t$ coincides with that of $((\Phi_i)_t)_* B$ for every $i$ and every $t \in U(k)$.

\textbf{Step.3}:
We will show that the set $\mathcal{D}$ as in the statement is bounded by 
\[
  (\coprod_{i} \mathcal{Z}_i, \coprod_i (\Phi_i)_*B )
\] over $\coprod_{i} T$.
Take an element $(X, \Delta) \in \mathcal{D}$.
Then there exists a closed point $t \in U(k)$ and a small birational map $f: X \dashrightarrow \mathcal{Y}_t$ such that $\Supp(f_*\Delta) = B_t$.
It suffices to show that there exists an isomorphism $g : X \xrightarrow{\sim} (\mathcal{Z}_i)_t$ such that $f = (\Phi_i)_t^{-1} \circ g $.

Take an ample divisor $D$ on $X$.
Since $\mathcal{Y}_t$ is $\Q$-factorial, the strict transform $f_*D$ is a big $\Q$-Cartier divisor.
By the surjectivity of 
\[
  \mathrm{res}_t : N^1(\mathcal{Y}/T) \to N^1(\mathcal{Y}_t),
\]
there exists a $\Q$-Cartier divisor $\mathcal{D}$ on $\mathcal{Y}$ with $\mathcal{D}_t \equiv f_*D$.
Combining this with the equation \eqref{K is in interior}, we may take a rational number $c>0$ and a rational element $\Theta \in \mathcal{C}$ such that 
\[
  (K_{\mathcal{Y}}+\Theta)_t \equiv cf_*D.
\]
Noting that we have $H^1(\mathcal{Y}_t, \sO_{\mathcal{Y}_t})=0$, this is equivalent to saying that 
\[
  (K_{\mathcal{Y}}+\Theta)_t \sim_{\Q} c f_*D.
\]
Since $D$ is ample, the small birational map $f^{-1}: \mathcal{Y}_t \dashrightarrow X$ is the ample model of $(K_{\mathcal{Y}}+\Theta)_t$.
The assertion now follows from the uniqueness of ample models (cf.~\cite[Theorem 3.52 (1)]{KM}).
\end{proof}

\begin{thm}[Theorem \ref{E}]\label{Bir bdd to bdd}
Let $0<\delta, \epsilon$ be rational numbers, $\pi : \mathcal{Z} \to T$ be a smooth projective morphism between smooth varieties over an algebraically closed field $k$ of characteristic $p>5$ and $B$ be a reduced divisor on $\mathcal{Z}$ such that $(\mathcal{Z}, B)$ is relatively SNC over $T$.

For an open dense subset $U \subseteq T$, we denote by $\mathcal{D}_U$ the set of all $3$-dimensional projective log pairs $(X,\Delta)$ with the following properties:
\begin{enumerate}[label=\textup{(\roman*)}]
  \item There exist a closed point $t \in U(k)$ and a birational contractions 
  \[
    f: \mathcal{Z}_{t} \dashrightarrow X
  \] 
  such that $B_t$ coincides with the union of $\Supp(f^{-1}_*\Delta)$ and all $f$-exceptional divisors.
  \item $-K_{X}$ is nef and big.
  \item $K_{X}+\Delta \sim_{\Q} 0$, $\epsilon < \mld(X,\Delta)$ and every coefficient of $\Delta$ is larger than $\delta$
  \item $H^1(X, \sO_{X})=H^2(X, \sO_{X})=0$.
\end{enumerate}
We further assume that we have 
\[
  \kappa(\mathcal{Z}/T, K_{\mathcal{Z}}+(1-\epsilon)B) \ge 1.
\]
Then there exists an open dense subset $U \subseteq T$ such that $\mathcal{D}_U$ is bounded.
\end{thm}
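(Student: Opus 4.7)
The plan is to reduce boundedness of $\mathcal{D}_U$ to Theorem~\ref{Sbb to Bdd} by producing finitely many relative families $\Phi_i : \mathcal{Z}_U \dashrightarrow \mathcal{Y}_i$ over a shrunken open $U \subseteq T$ such that every $(X,\Delta) \in \mathcal{D}_U$ is realized as a small birational modification of some fiber $(\mathcal{Y}_i)_t$, and then to invoke Theorem~\ref{Sbb to Bdd} on each $\mathcal{Y}_i$.

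For each $(X,\Delta) \in \mathcal{D}_U$ with associated $t \in U(k)$ and birational contraction $f : \mathcal{Z}_t \dashrightarrow X$, I first lift the boundary to
\[
  \Delta_{\mathcal{Z}_t} := f^{-1}_*\Delta + \sum_{E} \big(1 - a_E(X,\Delta)\big)\, E,
\]
the sum ranging over $f$-exceptional prime divisors on $\mathcal{Z}_t$. The hypothesis $\mld(X,\Delta) > \epsilon$ forces every coefficient to lie in $[0,1-\epsilon]$, the support is contained in $B_t$, and $K_{\mathcal{Z}_t} + \Delta_{\mathcal{Z}_t} \sim_{\Q} E_f$ for some effective $f$-exceptional $E_f \ge 0$. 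Next I fix a general $\pi$-ample $\Q$-Cartier divisor $H$ on $\mathcal{Z}$ with $\Supp H \cap \Supp B = \emptyset$ and a small rational $\epsilon_0 > 0$, and consider
\[
  \mathcal{C} = \Big\{\sum_{i=1}^N c_i B_i + \epsilon_0 H : c_i \in [0, 1-\epsilon] \cap \Q\Big\} \subseteq \WDiv_{\R}(\mathcal{Z}),
\]
where $B_1, \dots, B_N$ are the components of $B$. Then $A := \epsilon_0 H$ is a $\pi$-big $\Q$-divisor with $A \le \Theta$ for all $\Theta \in \mathcal{C}$, and $(\mathcal{Z}_\eta, \Theta_\eta)$ is klt for every $\Theta \in \mathcal{C}$ by the relative SNC hypothesis and generality of $H$, so Setting~\ref{relative setting} applies.

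Here the Iitaka dimension hypothesis enters decisively. Since $\kappa(\mathcal{Z}/T, K_\mathcal{Z} + (1-\epsilon)B) \ge 1$, the divisor $K_\mathcal{Z} + (1-\epsilon)B$ is pseudo-effective over $T$, so $K_\mathcal{Z} + (1-\epsilon)B + \epsilon_0 H$ is $\pi$-big and belongs to the image of $\mathcal{C}$. This is exactly the hypothesis of Theorem~\ref{bigness constancy}: after shrinking $U$, fiber-wise bigness of $K_{\mathcal{Z}_t} + \Theta_t$ at a closed point propagates to $\pi$-bigness of $K_\mathcal{Z} + \Theta$. For any $(X,\Delta) \in \mathcal{D}_U$, the divisor $\Theta_t := \Delta_{\mathcal{Z}_t} + \epsilon_0 H_t \in \mathcal{C}$ satisfies $K_{\mathcal{Z}_t} + \Theta_t \sim_\Q E_f + \epsilon_0 H_t$, which is big on $\mathcal{Z}_t$, so bigness over $T$ follows. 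Now Theorem~\ref{finiteness of lc model relative} applied to $(\mathcal{Z}, \mathcal{C}, A)$ produces, after further shrinking $U$, finitely many birational contractions $\Phi_i : \mathcal{Z}_U \dashrightarrow \mathcal{Y}_i$ with the property that for every $\Theta \in \mathcal{C}$ with $K + \Theta$ big over $T$, some $(\Phi_i)_t$ is the ample model of $K_{\mathcal{Z}_t} + \Theta_t$ for every $t \in U$. Taking $\epsilon_0$ sufficiently small, the MMP driven by $E_f + \epsilon_0 H_t$ contracts precisely $E_f$ up to a small $\Q$-factorial modification, identifying $(\mathcal{Y}_i)_t$ with a small birational modification of $X$.

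Finally, I verify that the geometric generic fiber of each $\mathcal{Y}_i$ is of Fano type (via Lemma~\ref{of Fano basic}, since $X$ is klt with $K_X + \Delta \sim_\Q 0$ and $\Delta \sim_\Q -K_X$ big; small birational modifications preserve this) and satisfies $H^1 = H^2 = 0$ (using rational singularities, the hypothesis~(iv), and invariance of these cohomologies under small modifications between klt threefolds in characteristic $p > 5$). Applying Theorem~\ref{Sbb to Bdd} to each $(\mathcal{Y}_i, (\Phi_i)_*B)$ yields a bounded family containing all $(X,\Delta) \in \mathcal{D}_U$ matched with $\mathcal{Y}_i$; since there are only finitely many $\mathcal{Y}_i$, this concludes boundedness of $\mathcal{D}_U$. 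The main obstacles I anticipate are (a) precisely matching the ample model of the perturbed pair with a small modification of $X$, which requires careful control of MMP steps and uniqueness of small $\Q$-factorializations in positive characteristic, and (b) verifying the cohomological vanishings along each $\mathcal{Y}_i$, since in positive characteristic one lacks invariance of plurigenera and must reconstruct the vanishings through the small birational equivalence with $X$. The Iitaka dimension hypothesis is precisely the positive-characteristic substitute for invariance of plurigenera, supplying a $\pi$-big element in the perturbed polytope that activates Theorem~\ref{bigness constancy}.
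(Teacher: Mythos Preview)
Your architecture matches the paper's: produce finitely many relative models $\Phi_i$ over a shrunk $U$, show each $(X,\Delta)\in\mathcal{D}_U$ is small birational to some fiber $(\mathcal{Y}_i)_t$, then invoke Theorem~\ref{Sbb to Bdd}. The gap is in how you connect the models to $X$.

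You apply Theorem~\ref{finiteness of lc model relative} (ample models) after perturbing by a $\pi$-ample $\epsilon_0 H$. But $f:\mathcal{Z}_t\dashrightarrow X$ is \emph{not} the ample model of $K_{\mathcal{Z}_t}+\Delta_{\mathcal{Z}_t}+\epsilon_0 H_t\sim_\Q E_f+\epsilon_0 H_t$: the pushforward $\epsilon_0 f_*H_t$ is only big, not ample, on $X$ (hypothesis (ii) gives $-K_X$ nef and big, not ample, and $f_*H_t$ has no reason to be nef at all). So the ample model of the perturbed pair is a genuine contraction of $X$, not a small modification, and there is no uniform $\epsilon_0$ that makes your claim ``the MMP\dots contracts precisely $E_f$'' true for all $(X,\Delta)$. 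Your obstacle (a) is not a technicality but the heart of the matter.

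The paper avoids this by two coordinated changes. First, it uses Theorem~\ref{finiteness of minimal model relative} (minimal/semiample models), not lc models, because $f_j$ \emph{is} a log minimal model of a suitable pair: choosing $a_j>0$ with $\epsilon<\mld(X_j,(1+a_j)\Delta_j)$ and boundary $(1+a_j)f_j^{-1}{}_*\Delta_j+(1-\epsilon)\Exc^1(f_j)$, the pushforward $K_{X_j}+(1+a_j)\Delta_j\sim_\Q -a_jK_{X_j}$ is \emph{nef}, which is exactly what a minimal model needs. Second, the big divisor $A$ placed in the polytope is not ample but rather $A=\tfrac{c}{1-c}D$ with $D\sim_{\Q,T}K_{\mathcal{Z}}+(1-\epsilon)B$ (this uses Step~1, which upgrades $\kappa\ge 1$ to bigness over $T$); the algebraic identity $K_{\mathcal{Z}}+\Theta_j\sim_{\Q,T}(1-c)(K_{\mathcal{Z}}+\Theta_j'+A)$ then keeps the effective boundary supported on $B$, so $f_j$ remains a minimal model of the perturbed pair and uniqueness of minimal models up to flops (\cite[Theorem~3.52(2)]{KM}) gives the desired smallness. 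Your verification that the geometric generic fiber of $\mathcal{Y}_i$ is of Fano type is also too quick: you argue on closed fibers, but Theorem~\ref{Sbb to Bdd} requires it on $\mathcal{Y}_{\overline\eta}$; the paper handles this by lifting the crepant boundary to $\mathcal{Z}$ and pushing forward to $W=\mathcal{W}_{\overline\eta}$, where bigness of $\Gamma$ again uses that $K_{\mathcal{Z}}+(1-\epsilon)B$ is big over $T$.
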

  
\begin{proof}
After replacing $T$, we may assume that every component of $B$ has integral fibers over $T$.

\textbf{Step.1}:
In this step, we will show that $K_{\mathcal{Z}}+(1-\epsilon)B$ is big over $T$.
By Proposition \ref{Iitaka dimension of fiber} (ii), after shrinking $T$, we may assume that 
\begin{align}\label{equation of iitaka}
  \kappa(\mathcal{Z}/T, K_{\mathcal{Z}} + (1-\epsilon)B) = \kappa(\mathcal{Z}_t, K_{\mathcal{Z}_t} + (1-\epsilon)B_t)
\end{align}
for every closed point $t \in T$.

We may assume that there exist a projective log pair $(X,\Delta)$, closed point $t \in T(k)$ and a birational contraction $f : \mathcal{Z}_t \dashrightarrow X$ satisfying conditions (i), (ii) and (iii).
(Otherwise, there is nothing to prove.)
Since we have $\epsilon< \mld(X,\Delta)$, there exists a rational number $a>0$ with $\epsilon < \mld(X, (1+a)\Delta)$.
Let $p: W \to \mathcal{Z}_t$ be a resolution of indeterminacy of $f$ and $q : W \to X$ be the induced morphism.
\[\xymatrix{ & W \ar_-{p}[ld] \ar^-{q}[rd]& \\ \mathcal{Z}_t \ar@{.>}^-{f}[rr] && X}\]
It then follows from the negativity lemma that we have
\[
  p^*(K_{\mathcal{Z}_t} + (1+a)f^{-1}_* \Delta + (1-\epsilon)(\Exc^1(f))_t) \ge q^*(K_{X} + (1+a)\Delta),
\]
where $\Exc^1(f)$ denotes the union of all $f$-exceptional divisors.
Therefore, we have
\begin{align*}
\vol(K_{\mathcal{Z}_t}+(1-\epsilon)B_t) & \ge \vol(K_{\mathcal{Z}_t}+(1+a)f^{-1}_*\Delta + (1-\epsilon)(\Exc^1(f))_t) \\
& \ge \vol(K_X+(1+a)\Delta)\\
& >0.
\end{align*}
Combining this with the equality \eqref{equation of iitaka}, we conclude that $K_{\mathcal{Z}}+(1-\epsilon)B$ is big over $T$.

\textbf{Step.2}:
Take an effective $\Q$-divisor
\[
0 \le D \sim_{\Q, T} K_{\mathcal{Z}}+(1-\epsilon) B.
\]
Let $0<c \le \delta/(1-\epsilon)$ be a rational number such that the pair $(\mathcal{Z} , (1-\epsilon)B+A)$ is klt around the generic fiber $\mathcal{Z}_{\eta}$, where we set
\[
A = \frac{c}{1-c} D.
\]
By applying Theorem \ref{bigness constancy} and Theorem \ref{finiteness of minimal model relative}, there exist an open dense subset $U \subseteq T$ and finitely many birational contractions 
\[
  \Phi_i: \mathcal{Z}_U \dashrightarrow \mathcal{W}_i
\]
($i=1, \dots, \ell<\infty $) over $U$ such that for every $\R$-Weil divisor $\Theta'$ with $0 \le \Theta' \le (1-\epsilon) B$ and every $t \in U(k)$ , if $(K_{\mathcal{Z}}+(\Theta'+A))_t$ is pseudo-effective, then $(\Phi_i)_t$ is a $(K_{\mathcal{Z}} + (\Theta'+A))_t$-negative semiample model of $(K_{\mathcal{Z}} + (\Theta'+A))_t$.

We write $\mathcal{D}_U=\{(X_j, \Delta_j)\}_{j \in J}$.
For every $j \in J$, take a closed point $t_j \in U(k)$ and a birational contraction 
\[
  f_j: Z_j : =\mathcal{Z}_{t_j} \dashrightarrow X_j
\]
as in (i).
We will show that the composite birational map
\[
  f_j \circ (\Phi_i)_{t_j}^{-1}: (\mathcal{W}_i)_{t_j} \dashrightarrow \mathcal{Z}_{t_j}  \dashrightarrow X_j
\]
is small for some $i$.
We may replace $X_j$ by its small $\Q$-factorization.

Take a rational number $0<a_j$ such that $\epsilon< \mld(X_j, (1+a_j)\Delta_j)$.
By the similar argument as in Step.1, we can show that $f_j$ is a log minimal model of $(Z_j, (1+a_j)(f_j^{-1})_* \Delta_j + (1-\epsilon)\Exc^1(f_j))$.
Let $\delta B \le \Theta_j \le (1-\epsilon)B$ be the $\Q$-divisor on $\mathcal{Z}$ such that 
\[
  (\Theta_j)_{t_j} =(1+a_j)(f_j^{-1})_* \Delta_j + (1-\epsilon)\Exc^1(f_j).
\]
We also take the $\Q$-divisor $0 \le \Theta'_j \le (1-\epsilon)B$ such that
\begin{align*} 
  \Theta_j = c (1-\epsilon)B + (1-c)\Theta_j'.
\end{align*} 
Noting that we have
\begin{align*} 
K_{\mathcal{Z}}+\Theta_j \sim_{\Q, T} (1-c) (K_{\mathcal{Z}}+A+\Theta_j'),
\end{align*}
the birational contraction $f_j$ is also a log minimal model of $(Z_j, (\Theta_j'+A)_{t_j})$.

On the other hand, since $(K_{\mathcal{Z}} + \Theta'_j +A)_{t_j} = K_{Z_j} + (\Theta'_j +A)_{t_j}$ is pseudo-effective, there is an index $i$ such that $(\Phi_i)_t$ is $(K_{Z_j}+(\Theta_j'+A)_{t_j})$-negative semiample model of $(K_{\mathcal{Z}} + \Theta' +A)_{t_j}$.
It then follows from the proof of \cite[Theorem 3.52 (2)]{KM} that the composite birational map
\[
  f_j \circ (\Phi_i)_{t_j}^{-1}: (\mathcal{W}_i)_{t_j} \dashrightarrow \mathcal{Z}_{t_j}  \dashrightarrow X_j
\]
is small.

\textbf{Step.3}:
We fix an index $i$ and a reduced divisor $0 \le B' \le B$ and we define
\[
  J(i, B') : = \{j \in J \mid f_j \circ (\Phi_i)_{t_j}^{-1} \textup{ is small and } \Exc^1(f_j)=B'_{t_j}\}.
\]
It suffices to show that after shrinking $U$, the subset 
\[
  \mathcal{D}_U(i, B') : =\{(X_j,\Delta_j)\}_{j \in J(i,B')}
\] 
of $\mathcal{D}_U$ is bounded.
We write $\mathcal{W} : =\mathcal{W}_i$ and $\Phi:=\Phi_i$.
We may assume that the subset $\{t_j \in T \mid j \in J(i,B')\}$ is dense in $T$.
It suffices to show that the assumptions in Theorem \ref{Sbb to Bdd} are satisfied for the geometric generic fiber $W : = \mathcal{W}_{\overline{\eta}}$.

Take $j \in J(i,B')$.
Since $g_j : = f_j \circ \Phi_{t_j}^{-1} : W_j : = \mathcal{W}_{t_j} \dashrightarrow X_j$ is small, the pair $(W_j, (g_j)^{-1}_* \Delta_j)$ is also klt.
It then follows from Theorem \ref{ABL} that both $X_j$ and $W_j$ are rational singularities.
This implies that $H^{\ell}(W_j, \sO_{W_j}) = H^{\ell}(X_j, \sO_{X_j}) = 0$ for $\ell=1,2$.
By the semicontinuity, we have 
\[
  H^1(W,\sO_W)=H^2(W,\sO_W)=0.
\]

\textbf{Step.4}:
In this step, we will show that $W$ is of Fano type.

We fix $j \in J(i, B')$.
Let $\Delta_j'$ be the $\Q$-divisor on $Z_j : = \mathcal{Z}_{t_j}$ such that $(Z_j, \Delta_j')$ is crepant to $(X_j, \Delta_j)$.
In other words, if we write $m( K_{X_j} + \Delta_j)=\Div_{X_j}(r)$, then $\Delta_j'$ is the $\Q$-divisor such that $m(K_{Z_j} + \Delta_j')=\Div_{Z_j}(r)$.
Let $P$ be the lift of $\Delta_j'$ to $\mathcal{Z}$, that is, $P$ is the $\Q$-divisor on $\mathcal{Z}$ with $\Supp(P) \subseteq B$ such that $P_{t_j}=\Delta_j'$.
Since we have $m(K_{\mathcal{Z}} + P)_{t_j} \sim 0$, one has 
\[
  m(K_{\mathcal{Z}}+P) \sim_{T} 0,
\] and in particular, we have $K_{\mathcal{Z}_{\overline{\eta}}} + P_{\overline{\eta}} \sim_{\Q} 0$.

Since we have $\Phi_*(B')=0$ and the support of the negative part $P^{-}$ of $P$ is contained in $B'$, the pushforward $\Phi_*P$ is effective.
Therefore, $\Gamma : = (\Phi_{\overline{\eta}})_*P_{\overline{\eta}}  = (\Phi_* P)_{\overline{\eta}}$ is also effective.
Moreover, since we have
\begin{align*}
  0 \sim_{\Q} K_{\mathcal{Z}_{\overline{\eta}}} + P_{\overline{\eta}} = (\Phi_{\overline{\eta}})^* (K_{W} + \Gamma),
\end{align*}
the pair $(W, \Gamma)$ is klt with $K_{W}+\Gamma \sim_{\Q} 0$.

Finally, we will show that $\Gamma$ is big.
Since the support of positive part $P^+$ of $P$ contains $B-B'$ and the support of the negative part $P^{-}$ is contained in $B'$, for sufficiently large integers $L_1, L_2>0$, we have 
\[
  (1-\epsilon)B \le L_1P + L_2 B'.
  \]
Since $K_{\mathcal{Z}}+(1-\epsilon)B$ is big over $T$ by Step.1, so is $K_{\mathcal{Z}} + L_1P + L_2 B' \sim_{\Q, T} (L_1-1)P + L_2 B'.$
Therefore, the restriction 
\[
  (L_1-1)P_{\overline{\eta}} + L_2 B'_{\overline{\eta}}
\]
is big.
By taking $(\Phi_{\overline{\eta}})_*$, we conclude that $(L_1-1)\Gamma$ is big, as desired.
The assertion of this theorem now follows from Theorem \ref{Sbb to Bdd}.
\end{proof}

\subsection{Boundedness}

In this subsection, we prove the main theorem of this paper (Theorem \ref{A}).

\begin{prop}\label{bdd of weak Fano with fixed complement index}
  Fix integers $N, M,V>0$ and an algebraically closed field $k$ of characteristic larger than $5$.
  Suppose that $\mathcal{D} =\{(X_j, \Delta_j)\}_{j \in J}$ is a set of $3$-dimensional projective log pairs over $k$ such that for every $j \in J$, the following conditions are satisfied:
  \begin{enumerate}[label=\textup{(\roman*)}]
    \item $N(K_{X_j}+\Delta_j) \sim 0$ and $(X_j, \Delta_j)$ is klt.
    \item $-K_{X_j}$ is nef and big with $\vol(-K_{X_j})<V$.
    \item $\Phi_{|-MK_{X_j}|}$ is a birational map onto a image.
    \item $H^1(X_j, \sO_{X_j}) = H^2(X_j, \sO_{X_j})=0$.
  \end{enumerate}
  Then $\mathcal{D}$ is bounded.
\end{prop}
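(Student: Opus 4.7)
The strategy combines Theorem \ref{criterion of lbb} (log birational boundedness) with Theorem \ref{Bir bdd to bdd} (upgrading log birational boundedness to boundedness).

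\emph{Step 1.} Let $\Gamma_j$ be the reduced divisor with $\Supp(\Gamma_j)=\Supp(\Delta_j)$. Since $N(K_{X_j}+\Delta_j)\sim 0$ with $(X_j,\Delta_j)$ klt, every nonzero coefficient of $\Delta_j$ lies in $\{1/N,\ldots,(N-1)/N\}$, so $\Gamma_j\le N\Delta_j$ and $\mld(X_j,\Delta_j)\ge 1/N$. Pick $E_j\in|-MK_{X_j}|$; by hypothesis $\Phi_{|E_j|}$ is birational. Using $K_{X_j}+\Delta_j\sim_{\Q}0$ one obtains
\[
\vol(E_j)=M^3\vol(-K_{X_j})<M^3V,\qquad \vol(K_{X_j}+\Gamma_j+14E_j)\le (N+14M)^3V,
\]
the second estimate by dominating the class in the big cone by $-(N+14M)K_{X_j}$. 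Log resolutions of $(X_j,\Gamma_j)$ exist by \cite[Proposition 2.14]{BMPS+}, so Theorem \ref{criterion of lbb} yields log birational boundedness of $\{(X_j,\Gamma_j)\}_{j\in J}$.

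\emph{Step 2.} A relative log resolution of the bounding family (with Lemma \ref{bdd of normalization} handling normalization) produces a smooth projective morphism $\pi:\mathcal{Z}\to T$ with $(\mathcal{Z},B)$ relatively SNC, where for each $j$ a point $t_j\in T$ yields a birational morphism $f_j:\mathcal{Z}_{t_j}\to X_j$ (after resolving indeterminacy) with $B_{t_j}=(f_j^{-1})_*\Gamma_j+\Exc(f_j)$. Passing to an irreducible component of $T$ and then to the Zariski closure of an infinite subset of $\{t_j\}$, we may assume $T$ is irreducible and $\{t_j\}$ is Zariski-dense in $T$. Set $\epsilon=\delta:=1/(2N)$; then $\epsilon<1/N\le\mld(X_j,\Delta_j)$ and $\delta<1/N\le$ every nonzero coefficient of $\Delta_j$, so conditions (ii)--(iv) of Theorem \ref{Bir bdd to bdd} are satisfied by all pairs in $\mathcal{D}$.

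\emph{Step 3 (the main obstacle): the Iitaka-dimension hypothesis.} The log-discrepancy computation on each fiber yields
\[
K_{\mathcal{Z}_{t_j}}+(1-\epsilon)B_{t_j}\sim_{\Q}(f_j^{-1})_*\bigl((1-\epsilon)\Gamma_j-\Delta_j\bigr)+\sum_E\bigl(a_E(X_j,\Delta_j)-\epsilon\bigr)E,
\]
an effective $\Q$-divisor with every coefficient $\ge 1/(2N)$. Its pushforward to $X_j$ is $\Q$-linearly equivalent to $(1-\epsilon)\Gamma_j-\Delta_j\ge (1/(2N))\Delta_j$, and birationality of $\Phi_{|-MK_{X_j}|}$ yields $\vol(-K_{X_j})\ge 1/M^3$ (the image has degree at least one). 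Replacing $M$ by $MN$ if necessary so that $M\Delta_j\sim -MK_{X_j}$, we get $h^0(X_j,\,m_0((1-\epsilon)\Gamma_j-\Delta_j))\ge h^0(X_j,-MK_{X_j})\ge 2$ for the uniform integer $m_0:=2NM$. Transferring this to $h^0(\mathcal{Z}_{t_j},\,m_0(K_{\mathcal{Z}}+(1-\epsilon)B)|_{t_j})\ge 2$ through $f_j$, and then promoting to the generic fiber via upper-semicontinuity of $h^0$ combined with density of $\{t_j\}$, yields $\kappa(\mathcal{Z}_\eta,K+(1-\epsilon)B_\eta)\ge 1$ by Lemma \ref{positive Iitaka dim}. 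Theorem \ref{Bir bdd to bdd} then produces an open dense $U\subseteq T$ with $\mathcal{D}_U$ bounded, and Noetherian induction on the dimension of $T\setminus U$ completes the proof.

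The principal technical difficulty is the transfer in Step 3: the natural inclusion $H^0(X_j,L)\hookrightarrow H^0(\mathcal{Z}_{t_j},f_j^*L)$ must be compared with sections of $m_0 D_j$ on $\mathcal{Z}_{t_j}$, and since $(X_j,\Gamma_j)$ need not be lc, the exceptional part of $f_j^*((1-\epsilon)\Gamma_j-\Delta_j)$ can exceed $\sum(a_E(X_j,\Delta_j)-\epsilon)E$ at non-lc centers; handling this requires passing to a small $\Q$-factorization (Lemma \ref{Q-factorization}) and leveraging the uniform bound $a_E(X_j,\Delta_j)\ge 1/N$ together with the smallness $\epsilon=1/(2N)$.
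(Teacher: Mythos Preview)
Your overall architecture (Steps 1--2, then invoke Theorem \ref{Bir bdd to bdd}) matches the paper, but Step 3 has a genuine gap, and the paper establishes the Iitaka-dimension hypothesis by a completely different mechanism.

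\textbf{The gap.} You correctly reduce to showing $h^0\bigl(Z_j,\,m_0(K_{Z_j}+(1-\epsilon)B_{t_j})\bigr)\ge 2$ on each closed fiber, and you obtain $h^0\bigl(X_j,\,m_0((1-\epsilon)\Gamma_j-\Delta_j)\bigr)\ge 2$ on the base. But for a birational morphism $f_j:Z_j\to X_j$ one only has the containment $H^0(Z_j,D)\subseteq H^0(X_j,f_{j*}D)$, which is the wrong direction. A section $s$ on $X_j$ lifts to $Z_j$ only if the exceptional coefficient $m_0(a_E(X_j,\Delta_j)-\epsilon)$ dominates $m_0\,\mathrm{ord}_E\bigl((1-\epsilon)\Gamma_j-\Delta_j\bigr)$ for every exceptional $E$. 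Writing $k_E=\mathrm{ord}_E(K_{Z_j/X_j})$ and $d_E=\mathrm{ord}_E(f_j^*\Delta_j)$, kltness gives only $d_E<1+k_E$, while $\Gamma_j\le N\Delta_j$ gives $\mathrm{ord}_E(\Gamma_j)\le Nd_E$; the required inequality becomes $1+k_E-d_E-\epsilon\ge (1-\epsilon)Nd_E-d_E$, which fails already for $N\ge 2$ once $d_E$ is close to $1+k_E$. Neither a small $\Q$-factorization of $X_j$ (which does not touch the exceptional geometry of $f_j$) nor the lower bound $a_E\ge 1/N$ addresses this; one would need an \emph{upper} bound on $\mathrm{ord}_E(\Gamma_j)$ in terms of $a_E$, which is unavailable precisely because $(X_j,\Gamma_j)$ is not lc.

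\textbf{How the paper proceeds.} The paper enlarges $\Gamma_j$ to $(\Delta_j+\Theta_j)_{\mathrm{red}}$, where $\Theta_j\in|N\Delta_j|/N$ with $\Theta_j\neq\Delta_j$ (available since $h^0(X_j,N\Delta_j)\ge 2$ after replacing $N$ by $NM$). Lifting the crepant pullbacks $\Delta_j',\Theta_j'$ to $\Q$-divisors $P,Q$ on $\mathcal{Z}$, one has $P\sim_{\Q,T}Q$ with $P\neq Q$ and $\mathrm{Supp}(P)\cup\mathrm{Supp}(Q^-)\subseteq B$; hence $\ell B\sim_{\Q,T}\ell B-P+Q\ge 0$ gives two distinct effective divisors in $|\ell B|$, so $\kappa(\mathcal{Z}/T,B)\ge 1$ by Lemma \ref{positive Iitaka dim}. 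Separately, $(K_{\mathcal{Z}}+(1-1/N)B)_{t_j}\ge K_{Z_j}+\Delta_j'\sim_\Q 0$ on a dense set of fibers, so by Proposition \ref{Iitaka dimension of fiber}(3) the divisor $K_{\mathcal{Z}}+(1-1/N)B$ is pseudo-effective over $T$, and non-vanishing (Theorem \ref{NV}) gives $\kappa\ge 0$. Adding $\tfrac{1}{2N}B$ then yields $\kappa\bigl(\mathcal{Z}/T,\,K_{\mathcal{Z}}+(1-\tfrac{1}{2N})B\bigr)\ge 1$. This two-step decomposition---movability of $B$ from the auxiliary $\Theta_j$, plus non-vanishing for the adjoint part---is what circumvents the section-lifting problem entirely.
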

  
\begin{proof}
  
\textbf{Step.1}:
After replacing $N$ by $NM$, we may assume that $h^0(X_j, \sO_{X_j}(N\Delta_j)) \ge 2$ for every $j \in J$.
We denote by $Y_j$ the image of $\Phi_{|-MK_{X_j}|}$, by $\phi_j : X_j \dashrightarrow Y_j$ the induced birational map and by $\nu_j:Y_j^n \to Y_j$ the normalization.
We fix an element 
\[
  \Theta_j \in \frac{|N\Delta_j|}{N} : = \{\Theta \ge 0 \mid N \Theta \sim N \Delta_j\}
\]
such that $\Theta_j \neq \Delta_j$.
Since the reduced divisor $\Gamma_j : = (\Delta_j + \Theta_j)_{\mathrm{red}}$ satisfies
\[
  \Gamma_j \le N\Delta_j + N \Theta_j \sim -2NK_{X_j},
\]
we have 
\[
  \vol(K_{X_j}+\Gamma_j-14MK_{X_j}) \le (-1+2N+14M)^3 V.
\]
By Lemma \ref{bdd of normalization} and Theorem \ref{criterion of lbb}, the set $\{(Y_j^n, (\psi_j^{-1})_*\Gamma_j + \Exc^1(\psi_j))\}$ is bounded, where 
\[
  \psi_j : Y_j^n \dashrightarrow X_j
  \]
is the composite of $\phi_j^{-1}$ and $\nu_j$, and $\Exc^1(\psi_j)$ is the union of all $\psi_j$-exceptional divisors.

Let $\pi : \mathcal{Y} \to T$ be a flat projective morphism between algebraic schemes over $k$, $C \subseteq \mathcal{Y}$ be a closed subset and $t_j \in T$ be a closed point such that 
\[
  (\mathcal{Y}_{t_j}, C_{t_j}) \cong (Y_j^n, (\psi_j^{-1})_* \Gamma_j + \Exc^1(\psi_j))
\]
for every $j \in J$.
By Noether induction, we may always shrink $T$ and therefore we may assume that $T$ is irreducible.
By spreading out (Lemma \ref{spreading out schemes} and Lemma \ref{spreading out morphisms}) a log resolution of geometric generic fiber $(\mathcal{Y}_{\overline{\eta}}, C_{\overline{\eta}})$, we may find a log resolution 
\[ \mu: \mathcal{Z} \to \mathcal{Y}\]
of the pair $(\mathcal{Y}, C)$ such that $(\mathcal{Z}, C' : = \mu^{-1}_* C + \Exc(\mu))$ is relatively SNC over $T$.
Moreover, as in the proof of \cite[Theorem 1.6]{HMX2}, after replacing $\mathcal{Z}$ by its blowup, we may assume that 
\[
\psi'_j : = \psi_j \circ \mu_{t_j} : Z_j : =\mathcal{Z}_{t_j}  \dashrightarrow X_j
\]
is a birational contraction for every $j \in J$.
Finally, after replacing $T$ again, we may assume that every irreducible components of $C'$ has geometrically integral fibers over $T$.

We fix a reduced divisor $0 \le B \le C'$ and we set
\[
  J(B) : = \{j \in J \mid \Supp((\psi'_j)^{-1}_* \Delta_j) \cup \Exc^1(\psi_j') = B_{t_j}\}.
\]
It suffices to show that the subset
\[
  \mathcal{D}(B) : = \{(X_j, \Delta_j)\}_{j \in J(B)}
\]
of $\mathcal{D}$ is bounded.
We may assume that $\{t_j \mid j \in J(B)\}$ is dense in $T$.

\textbf{Step.2}:
Noting that we have $\mld(X_j, \Delta_j) \ge 1/N > 1/2N$ for every $j$, by Theorem \ref{Bir bdd to bdd}, it suffices to show the inequality
\[
    \kappa(\mathcal{Z}/T, K_{\mathcal{Z}} + (1-\frac{1}{2N})B) \ge 1.
\]

Fix an index $j \in J(B)$. 
Let $\Delta_j'$ (resp.~$\Theta_j'$) be the $\Q$-divisor on $Z_j$ such that $(Z_j, \Delta_j')$ (resp.~$(Z_j, \Theta_j')$) is crepant to $(X_j, \Delta_j)$ (resp.~$(X_j,\Theta_j)$) and $P$ (resp.~$Q$) be the lift of $\Delta_j'$ (resp.~$\Theta_j'$) to $\mathcal{Z}$ (cf.~Step.4 of the proof of Theorem \ref{Bir bdd to bdd}).
We note that the support of $P$ is contained in $B$.
Since the restriction of the negative part $Q^{-}$ of the $\Q$-divisor $Q$ to $Z_j$ is the negative part of $\Theta_j'$, which is exceptional over $X_j$, we also have 
\[
  \Supp(Q^{-}) \subseteq B.
\]
Therefore, there exists an integer $\ell>0$ such that 
\[
  \ell B - P +Q  \ge \ell B - P-Q^{-} \ge 0.
\]
Combining this with the equivalence $\ell B \sim_{T, \Q} \ell B - P +Q$, it follows from Lemma \ref{positive Iitaka dim} that we have 
\begin{align}\label{inequality kappa 4}
  \kappa(\mathcal{Z}/T, B) \ge 1.
\end{align}

On the other hand, for every $q \in J(B')$, if we define $\Delta_{q}'$ similarly, then we have 
\[
  0 \sim_{\Q} K_{Z_{q}}+ \Delta_{q}' \le K_{Z_{q}} + (1-1/N) B_{t_{q}} = (K_{\mathcal{Z}} + (1-1/N)B)_{t_{q}}.
\]
Since $\{t_q \mid q \in J(B')\}$ is dense in $T$, it follows from Proposition \ref{Iitaka dimension of fiber} (3) that $K_{\mathcal{Z}} + (1-1/N)B$ is pseudo-effective over $T$.
By the non-vanishing theorem (Theorem \ref{NV}), we have $\kappa(\mathcal{Z}/T, K_{\mathcal{Z}} + (1-\frac{1}{N})B) \ge 0.$
Combining this with the inequality \eqref{inequality kappa 4}, we obtain the inequality
\begin{align*}
\kappa(\mathcal{Z}/T, K_{\mathcal{Z}} + (1-\frac{1}{2N})B) & = \kappa(\mathcal{Z}/T, K_{\mathcal{Z}} + (1 - \frac{1}{N})B + \frac{1}{2N}B) \\
& \ge \kappa(\mathcal{Z}/T, \frac{1}{2N} B) \ge 1,
\end{align*}
as desired.
Then the assertion of this proposition follows from Theorem \ref{Bir bdd to bdd}.
\end{proof}

\begin{thm}[Theorem \ref{A}]\label{bdd of weak Fano}
  Fix a DCC subset $I \subseteq [0,1] \cap \Q$, a rational number $\epsilon>0$, integers $r,A>0$ and an uncountable algebraically closed field $k$ of characteristic larger than $5$.
  Suppose that $\mathcal{D} =\{(X_j, \Delta_j)\}_{j \in J}$ is a set of $3$-dimensional projective log pairs over $k$ such that for every $j \in J$, the following conditions are satisfied:
  \begin{enumerate}[label=\textup{(\roman*)}]
    \item $-rK_{X_j}$ is a nef and big Cartier divisor.
    \item $K_{X_j}+\Delta_j \sim_{\Q} 0$, $\epsilon< \mld(X_j,\Delta_j)$ and every coefficient of $\Delta_j$ is contained in $I$.
    \item $H^1(X_j, \sO_{X_j}(irK_{X_j}))=0$ for every integer $i \ge 0$.
    \item $H^2(X_j, \sO_{X_j})=0$ and $h^2(X_j, \sO_{X_j}(irK_{X_j})) < A $ for $i=1,2,3$.
  \end{enumerate}
  Then $\mathcal{D}$ is bounded.
\end{thm}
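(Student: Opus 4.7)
The strategy is to reduce Theorem \ref{bdd of weak Fano} to Proposition \ref{bdd of weak Fano with fixed complement index} by supplying a uniform complement index $N$; all other numerical data needed by that proposition are already packaged in earlier results of the paper.

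First I would assemble the basic numerical bounds. Hypothesis (i) together with the klt hypothesis (automatic from $\mld(X_j, \Delta_j) > \epsilon > 0$) and the result of \cite{Das} give a uniform upper bound $V$ on $\vol(-K_{X_j})$. Feeding this together with the vanishing hypotheses (iii), (iv) into Corollary \ref{bb weak Fano} produces a uniform integer $M>0$ such that $\Phi_{|-MK_{X_j}|}$ is birational onto its image. From the DCC hypothesis I would then extract a common denominator $d$ for the coefficients of $\Delta_j$: since $\mld(X_j,\Delta_j) > \epsilon$ forces every nonzero coefficient to lie in $I \cap (0, 1-\epsilon]$, and this is a DCC set bounded strictly below $1$, it is a finite set of rationals. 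Consequently $d\Delta_j$ and $rd(K_{X_j}+\Delta_j)$ are integral Weil divisors, the latter being $\Q$-linearly trivial and hence representing a torsion class.

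Second, and this is the \textbf{main obstacle}, I would produce a uniform integer $N$ with $N(K_{X_j}+\Delta_j) \sim 0$ for every $j$. Applying Theorem \ref{criterion of lbb} with $E = -MK_{X_j}$ and reduced boundary $\Gamma_j = \Supp(\Delta_j)$ is the starting point: the volume bound on $K_{X_j}+\Gamma_j+14E$ follows from $\Gamma_j \le d\Delta_j \sim_{\Q} -rdK_{X_j}$ and the volume bound from the previous paragraph. This places $\{(X_j,\Gamma_j)\}$ in a log birationally bounded family, which after spreading out and log resolution is dominated by a smooth family $(\mathcal{Z}, B) \to T$ over a scheme of finite type. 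The cohomology vanishings $H^1(X_j,\sO_{X_j}) = H^2(X_j,\sO_{X_j}) = 0$ from (iii), (iv), combined with Proposition \ref{constancy of NS} and Lemma \ref{injection on NS}, control the relative N\'eron--Severi group of this family and its torsion uniformly on a dense open of $T$; a Noether induction on $T$ then yields the desired uniform $N$, since the torsion class represented by $rd(K_{X_j}+\Delta_j)$ is the fiberwise restriction of a global divisor on the total space.

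Third, with uniform constants $r$, $V$, $M$, $N$ in hand, Proposition \ref{bdd of weak Fano with fixed complement index} directly gives the boundedness of $\mathcal{D}$. The klt hypothesis of that proposition is automatic from $\mld > \epsilon$, and its cohomology hypotheses are precisely the $i=0$ cases of (iii), (iv).
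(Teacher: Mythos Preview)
Your second paragraph, the main step, contains two genuine gaps.

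First, the finiteness claim is false: a DCC subset of $(0,1-\epsilon]\cap\Q$ need not be finite. For instance $\{1/2-1/(2n):n\ge 1\}$ is DCC, lies in $(0,1/2]$, and has unbounded denominators. So you cannot extract a common denominator $d$ from the coefficient set alone. More seriously, even if the coefficients of $\Delta_j$ did have a common denominator, the crepant pullback $\Delta_j'$ on $Z_j=\mathcal{Z}_{t_j}$ has, on the $f_j$-exceptional components of $B_{t_j}$, coefficients equal to $1-a_E(X_j,\Delta_j)$; these are log discrepancies, not elements of $I$, and vary with $j$ in an uncontrolled way. So there is no single divisor on the total space $\mathcal{Z}$ whose fibre at $t_j$ is $K_{Z_j}+\Delta_j'$.

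Second, the appeal to Proposition \ref{constancy of NS} and Lemma \ref{injection on NS} cannot bound torsion. Both results concern $N^1=(\Pic/\!\equiv)\otimes_\Z\R$, which annihilates all torsion information; constancy of $\rho$ in the family says nothing about the order of a torsion class in $\Pic(\mathcal{Z}_t)$. You give no argument for why the minimal $n_j$ with $n_j(K_{X_j}+\Delta_j)\sim 0$ is bounded, and this is precisely the hard point.

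The paper sidesteps the need for a uniform $N$ altogether. After obtaining the same log birationally bounding family $(\mathcal{Z},B)\to T$, it argues by dichotomy on the set $\{n_j\}$. If $\{n_j\}$ is bounded, Proposition \ref{bdd of weak Fano with fixed complement index} applies directly. If $\{n_j\}$ is unbounded, pick $j_1,j_2$ with $n_{j_1}<n_{j_2}$ and lift the crepant boundaries to $\Q$-divisors $P_{j_1},P_{j_2}$ on $\mathcal{Z}$ supported in $B$. After shrinking $T$ one has $n_{j_1}(K_{\mathcal{Z}}+P_{j_1})\sim 0$, hence $P_{j_1}\sim_{\Q,T}P_{j_2}$; but minimality of $n_{j_2}$ forces $P_{j_1}\ne P_{j_2}$. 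This yields two distinct effective divisors in $|\ell B|$ for some $\ell$, so $h^0(\ell B_\eta)\ge 2$ and $\kappa(\mathcal{Z}/T,B)\ge 1$ by Lemma \ref{positive Iitaka dim}. Combined with pseudo-effectivity of $K_{\mathcal{Z}}+(1-\epsilon)B$ (via Proposition \ref{Iitaka dimension of fiber} and Theorem \ref{NV}) this gives $\kappa(\mathcal{Z}/T,K_{\mathcal{Z}}+(1-\epsilon)B)\ge 1$, and Theorem \ref{Bir bdd to bdd} concludes.
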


\begin{proof}
It follows from \cite{Das} that there exists an integer $V>0$ such that $\vol(-K_{X_j})<V$ for all $j \in J$.
By Corollary \ref{bb weak Fano}, there exists an integer $M>0$ such that $\Phi_{|-MK_{X_j}|}$ is birational onto the image $Y_{j}$.
We denote by $\phi_j : X_j \dashrightarrow Y_j$ the induced birational map and by $\nu_j: Y_j^n \to Y_j$ the normalization.
Let $\delta>0$ be the minimum number of non-zero elements of the DCC set $I$.
Since the reduced divisor $(\Delta_j)_{\mathrm{red}}$ satisfies 
\[
  (\Delta_j)_{\mathrm{red}} \le \frac{1}{\delta} \Delta_j,
  \] 
we have
\[
  \vol(K_{X_j}+(\Delta_j)_{\mathrm{red}} -14MK_{X_j}) \le (-1+\frac{1}{\delta}+14M)^3 V.
\]
By the similar argument as in Step.1 of Proposition \ref{bdd of weak Fano with fixed complement index}, we reduce to the case where there exists a log pair $(\mathcal{Z},B)$ which is projective and SNC over a smooth variety $T$ such that for every $j \in J$, we have a closed point $t_j \in T$ and a birational contraction 
\[
  f_j : Z_j : = \mathcal{Z}_{t_j} \dashrightarrow X_j
\]
with $B_{t_j}=\Supp((f_j^{-1})_*\Delta_j) \cup \Exc^1(f_j)$.
Moreover, as in Step.2 of the above proposition, it suffices to show the inequalities
\begin{align}\label{last}
  \kappa(\mathcal{Z}/T, B)  \ge 0 \textup{ and} \\
  \kappa(\mathcal{Z}/T, K_{\mathcal{Z}}+(1-\epsilon) B)  \ge 1.
\end{align}
The latter inequality follows from the similar argument as in the proof of Proposition \ref{bdd of weak Fano with fixed complement index}.
We will prove the former inequality.

For every $j \in J$, we denote by $n_j$ the minimal positive integer such that $n_j(K_{X_j}+\Delta_j)$ is a principal divisor and by $P_j$ the lift of the crepant $\Q$-divisor $\Delta_j'$ on $Z_j$ of $\Delta_j$ (cf.~Step.4 of Theorem \ref{Bir bdd to bdd}).
If $\{n_j \mid j \in J \}$ is bounded, then $\mathcal{D}$ satisfies the assumptions in Proposition \ref{bdd of weak Fano with fixed complement index}, and in particular, $\mathcal{D}$ is bounded.

We consider the case where $\{n_j \mid j \in J\}$ is unbounded.
We fix $j_1 \in J$.
Since we have $n_{j_1}(K_{Z_{j_1}} + \Delta_{j_1}') \sim 0$, after shrinking $T$, we may assume that 
\[
  n_{j_1}(K_{\mathcal{Z}}+P_{j_1}) \sim 0.
\]
Take another index $j_2 \in J(C')$ such that $n_{j_2} > n_{j_1}$.
Since we also have 
\[
  n_{j_2}(K_{\mathcal{Z}}+P_{j_2}) \sim_{T} 0,
\]
one has $P_{j_1} \sim_{\Q, T} P_{j_2}$.
On the other hand, noting that $n_1(K_{Z_{j_2}}+ P_{j_1}|_{t_{j_2}}) \sim 0$, it follows from the minimality of $n_2$ that we have $P_{j_1} \neq P_{j_2}$.
Take an integer $\ell>0$ such that $\ell B + P_{j_1} - P_{j_2}$ is effective.
Then the inequality \eqref{last} follows Lemma \ref{positive Iitaka dim}.
\end{proof}



\end{document}